\numberwithin{equation}{section}
\newtheorem{theorem}{Theorem}[section]
\newtheorem{proposition}[theorem]{Proposition}
\newtheorem{lemma}[theorem]{Lemma}
\newtheorem{corollary}[theorem]{Corollary}
\theoremstyle{definition}
\newtheorem{example}[theorem]{Example}
\theoremstyle{remark}
\newtheorem{remark}[theorem]{Remark}
\renewcommand{\hom}{\operatorname{Hom}}
\newcommand{\Z}{\mathbb{Z}}
\newcommand{\Q}{\mathbb{Q}}
\newcommand{\R}{\mathbb{R}}
\newcommand{\C}{\mathbb{C}}
\newcommand{\proj}{{\mathbb P}}
\newcommand{\Fgn}{\mathcal{F}_{g,n}}
\newcommand{\Fg}{\mathcal{F}_{g}}
\newcommand{\Fgcpt}{\mathcal{F}_{g}^{\Sigma}}
\newcommand{\OL}{{\rm O}^{+}(\Lambda)}
\newcommand{\OLR}{{\rm O}^{+}(\Lambda_{\mathbb{R}})}
\newcommand{\OLg}{\tilde{{\rm O}}(\Lambda_{g})}
\newcommand{\G}{\Gamma}
\newcommand{\Gg}{\Gamma_{g}}
\newcommand{\LK}{\Lambda_{K3}}
\newcommand{\Lg}{\Lambda_{g}}
\newcommand{\OLK}{{\rm O}(\Lambda_{K3})}
\newcommand{\DK}{\Omega_{K3}}
\newcommand{\Dg}{\Omega_{g}}
\newcommand{\Dgo}{\Omega_{g}^{\circ}}
\newcommand{\BR}{\tilde{\Omega}_{K3}} 
\newcommand{\BRg}{\tilde{\Omega}_{g}}
\newcommand{\Xg}{\mathcal{X}_{g}}
\newcommand{\Xgcont}{\bar{\mathcal{X}}_{g}}
\newcommand{\Xgn}{\mathcal{X}_{g,n}}
\newcommand{\Xgo}{\mathcal{X}_{g}^{\circ}}
\newcommand{\XBR}{\tilde{\mathcal{X}}_{K3}}
\newcommand{\XgBR}{\tilde{\mathcal{X}}_{g}}
\newcommand{\D}{\mathcal{D}}
\newcommand{\volD}{{\rm vol}_{\mathcal{D}}}
\newcommand{\LI}{\Lambda(I)}
\newcommand{\LIQ}{\Lambda(I)_{\mathbb{Q}}}
\newcommand{\LIR}{\Lambda(I)_{\mathbb{R}}}
\newcommand{\LIC}{\Lambda(I)_{\mathbb{C}}}
\newcommand{\GIZ}{\Gamma(I)_{\mathbb{Z}}}
\newcommand{\GIQ}{\Gamma(I)_{\mathbb{Q}}}
\newcommand{\UIZ}{U(I)_{\mathbb{Z}}}
\newcommand{\UIQ}{U(I)_{\mathbb{Q}}}
\newcommand{\UIC}{U(I)_{\mathbb{C}}}
\begin{document}

\title[]{Mukai models and Borcherds products}
\author[]{Shouhei Ma}
\thanks{Supported by JSPS KAKENHI 15H05738 and 17K14158.} 
\address{Department~of~Mathematics, Tokyo~Institute~of~Technology, Tokyo 152-8551, Japan}
\email{ma@math.titech.ac.jp}
\subjclass[2010]{}
\keywords{} 

\begin{abstract}
Let ${\Fgn}$ be the moduli space of $n$-pointed $K3$ surfaces of genus $g$ 
with at worst rational double points. 
We establish an isomorphism between 
the ring of pluricanonical forms on ${\Fgn}$ 
and the ring of certain orthogonal modular forms, 
and give applications to the birational type of ${\Fgn}$. 
We prove that the Kodaira dimension of ${\Fgn}$ stabilizes to $19$ when $n$ is sufficiently large. 
Then we use explicit Borcherds products to find a lower bound of $n$ 
where ${\Fgn}$ has nonnegative Kodaira dimension, 
and compare this with an upper bound where ${\Fgn}$ is unirational or uniruled 
using Mukai models of $K3$ surfaces in $g\leq 20$. 
This reveals the exact transition point of Kodaira dimension in some $g$.  
\end{abstract} 

\maketitle

\section{Introduction}\label{sec:intro}

The moduli space of primitively polarized $K3$ surfaces of genus $g$ 
is identified with a Zariski open set 
of a $19$-dimensional modular variety ${\Fg}={\Gg}\backslash \mathcal{D}_{g}$ 
of orthogonal type, 
thanks to the Torelli theorem \cite{PS}, \cite{BR}. 
Its complement is the $(-2)$-Heegner divisor 
and parametrizes $K3$ surfaces having rational double points. 
Over ${\Fg}$ we have the moduli space ${\Fgn}$ of 
$n$-pointed $K3$ surfaces of genus $g$ with at worst rational double points (\S \ref{sec:2}). 
The purpose of this paper is to establish 
a correspondence between pluricanonical forms on ${\Fgn}$ 
and modular forms on ${\Fg}$, 
and give applications to the birational type of ${\Fgn}$. 
Specifically, 
we prove that the Kodaira dimension $\kappa({\Fgn})$ stabilizes to $19$ when $n$ is sufficiently large; 
and for $g\leq 20$ 
we study the transition point of $\kappa({\Fgn})$ from $-\infty$ to $\geq 0$
by combining the modular form method using explicit Borcherds products 
and the geometric method using Mukai models of polarized $K3$ surfaces. 
In the course we will also observe a curious coincidence \eqref{eqn:intro mystery}
between the Borcherds products and the Mukai models.  

Let $H^{0}(\mathcal{F}_{g,n}^{\circ}, K_{\mathcal{F}_{g,n}^{\circ}}^{\otimes m})$ be the space of 
$m$-canonical forms on the regular locus $\mathcal{F}_{g,n}^{\circ}$ of ${\Fgn}$. 
Let $M_{k}({\Gg}, \chi)$ be the space of modular forms on $\mathcal{D}_{g}$ 
of weight $k$ and character $\chi$ with respect to ${\Gg}$, 
$S_{k}({\Gg}, \chi)$ be the subspace of cusp forms, 
and $M_{k}({\Gg}, \chi)^{(m)}$ be the subspace of modular forms 
which vanish to order $\geq m$ at the $(-2)$-Heegner divisor. 
We have $M_{k}({\Gg}, \det)^{(1)}$=$M_{k}({\Gg}, \det)$ 
(Corollary \ref{cor: vanish at H}). 
Our point of departure is the following correspondence. 

\begin{theorem}[\S \ref{sec: theorem 1}]\label{main thm 1}
Let $(g, n)\ne (2, 1)$. 
We have an isomorphism of graded rings 
\begin{equation}\label{main eqn 1}
\bigoplus_{m} H^{0}(\mathcal{F}_{g,n}^{\circ}, K_{\mathcal{F}_{g,n}^{\circ}}^{\otimes m}) 
\: \simeq \: 
\bigoplus_{m} M_{(19+n)m}({\Gg}, {\det}^{m})^{(m)}. 
\end{equation}
If $X$ is a smooth projective model of ${\Fgn}$, 
this gives an isomorphism 
\begin{equation}\label{main eqn 2}
H^{0}(X, K_{X}) \simeq S_{19+n}({\Gg}, \det). 
\end{equation}
\end{theorem}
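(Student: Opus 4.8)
The plan is to fibre $\Fgn$ over $\Fg$ by the forgetful map $p\colon\Fgn\to\Fg$, realise the pluricanonical sheaves as pullbacks of line bundles on $\Fg$, and then translate into modular forms by the automorphic-bundle dictionary. First I would compute the relative canonical bundle. Since every fibre of the universal family $\Xg\to\Fg$ is a $K3$ surface (with at worst rational double points), its dualizing sheaf is trivial along fibres, so the relative dualizing sheaf of $\Xg\to\Fg$ is the pullback of the Hodge line bundle $\lambda=\pi_{\ast}\omega_{\Xg/\Fg}$ (the weight-one automorphic bundle). Taking the $n$-fold fibre product and using that relative dualizing sheaves tensor under fibre product gives $\omega_{\Fgn/\Fg}\cong p^{\ast}\lambda^{\otimes n}$, whence
\[
K_{\Fgn}\;\cong\;p^{\ast}\bigl(K_{\Fg}\otimes\lambda^{\otimes n}\bigr).
\]
As the fibres are connected with $p_{\ast}\mathcal{O}=\mathcal{O}$, the projection formula yields $H^{0}(\Fgn,K^{\otimes m})\cong H^{0}(\Fg,(K_{\Fg}\otimes\lambda^{\otimes n})^{\otimes m})$, compatibly with the ring structure; working with distinct marked points only removes codimension-two diagonals, which change none of these $H^{0}$'s.

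Next I would insert the automorphic description of $K_{\Fg}$. On the domain $\mathcal{D}_g$ one has $K_{\mathcal{D}_g}\cong\lambda^{\otimes 19}$, equivariantly for the determinant character, and the quotient $\pi\colon\mathcal{D}_g\to\Fg$ ramifies to order two along the reflection divisors lying over the $(-2)$-Heegner divisor. Hence a section of $(K_{\Fg}\otimes\lambda^{\otimes n})^{\otimes m}$ pulls back to a $\Gg$-invariant section of $\lambda^{\otimes(19+n)m}$ that transforms by $\det^{m}$ and vanishes to order $\ge m$ along the $(-2)$-reflection locus, i.e. an element of $M_{(19+n)m}(\Gg,\det^{m})^{(m)}$, and conversely every such form descends. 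To upgrade this identification from the regular locus to a smooth projective model I would use that $\Fgn$ has canonical singularities: the fibrewise rational double points are canonical, and in this range the base quotient singularities are canonical by the Gritsenko--Hulek--Sankaran reflection analysis, so pluricanonical forms extend across both the singular and the branch loci. This gives the graded-ring isomorphism \eqref{main eqn 1}.

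For \eqref{main eqn 2} I specialise to $m=1$, where Corollary \ref{cor: vanish at H} makes the vanishing condition automatic, so $H^{0}(\Fgn,K)\cong M_{19+n}(\Gg,\det)$. A genuine top-degree form on a smooth projective model $X$ must extend holomorphically over the boundary of a toroidal compactification. The relative directions contributed by $\lambda^{\otimes n}$ extend as the $K3$ fibres and the marked points degenerate, so the sole obstruction sits over the cusps of $\Fg$; by the standard Fourier--Jacobi analysis a weight-$(19+n)$ form extends there exactly when it vanishes at every cusp, identifying $H^{0}(X,K_X)$ with $S_{19+n}(\Gg,\det)$.

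The main obstacle is the extension analysis rather than the bundle bookkeeping. One must verify that $\Fgn$ and a suitable compactification carry only canonical singularities, so that sections over the regular locus genuinely extend to $X$, and one must pin down the cusp condition in \eqref{main eqn 2} despite the weight $19+n$ being strictly smaller than $\dim\Fgn=19+2n$. The first point combines the canonicity of the fibrewise rational double points with the reflection and elliptic-element analysis on the base---this is also where the exceptional case $(g,n)=(2,1)$ must be excluded---while the second requires controlling how the relative holomorphic two-forms degenerate along the universal family at the cusps.
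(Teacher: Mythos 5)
Your proof of \eqref{main eqn 1} contains a genuine error in its key bundle formula. You claim $K_{\Fgn}\cong p^{\ast}(K_{\Fg}\otimes\lambda^{\otimes n})$ and deduce that sections of its $m$-th power correspond to modular forms of weight $(19+n)m$ and character $\det^{m}$ vanishing to order $\geq m$ exactly along the $(-2)$-reflection locus. This presumes that the branch divisor of $\mathcal{D}_{g}\to\Fg$ is only the image of $\mathcal{H}$, which is false for $g>2$: as noted right after Proposition \ref{prop: fixed divisor}, $\Gg$ also contains the $-$reflections $-s_{v}$ attached to splitting $(2-2g)$-vectors $v$; these fix the divisor $v^{\perp}\cap\mathcal{D}_{g}$ but act nontrivially on the $K3$ fibers, so they produce a branch divisor $H^{-}\subset\Fg$ with no corresponding ramification of $\mathcal{X}_{g,n}^{+}\to\Fgn$ (Proposition \ref{prop: ramification divisor}; see also the proof of Corollary \ref{cor: zero divisor}, where $\pi^{\ast}H'$ has multiplicity $2$ for exactly these components). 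The correct relation of ${\Q}$-divisor classes is
\begin{equation*}
K_{\Fgn}\;=\;p^{\ast}\bigl(K_{\Fg}+n\lambda\bigr)+E, \qquad 2E=p^{\ast}H^{-},
\end{equation*}
with $E$ a nonzero reduced effective divisor, and omitting $E$ imposes a spurious vanishing condition of order $\geq m$ along $H^{-}$. That condition is not automatic: $-s_{v}$ has determinant $+1$ on the rank-$21$ lattice $\Lg$ and acts on $\lambda^{\otimes(19+n)m}\otimes\det^{m}$ at its fixed points by $(-1)^{(19+n)m}$, so for every even $m$ (for example) your construction produces a space strictly smaller than $M_{(19+n)m}(\Gg,\det^{m})^{(m)}$. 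The paper avoids this trap by never pushing down to $\Fg$: it descends the equivariant isomorphism $K_{\mathcal{X}_{g,n}^{+}}\simeq\pi^{\ast}(\lambda^{\otimes 19+n}\otimes\det)$ through a torsion-free normal subgroup and applies the ramification formula only to the quotient $\mathcal{X}_{g,n}^{+}\to\Fgn$, whose sole ramification divisor for $(g,n)\neq(2,1)$ is $\pi^{-1}(\mathcal{H})$. (Your route can be repaired by inserting $E$, whereupon the $H^{-}$-conditions cancel.) Note also that \eqref{main eqn 1} concerns forms on the regular locus of $\Fgn$, not on a smooth projective model, so your appeal to canonical singularities is unnecessary here --- and unsupported: the paper proves no such statement, and explicitly warns that pluricanonical forms on $\Fgn$ need not extend over $X$.

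For \eqref{main eqn 2} your argument rests on a compactification of $\Fgn$ fibering over a toroidal compactification of $\Fg$ in codimension one, together with an extension statement for the relative forms at the boundary. This is precisely the hypothesis of the conditional Proposition \ref{prop: extension conditional}, and the paper states explicitly in \S\ref{ssec: main thm 2} that no such compactification is known; so as written this half of your proof does not go through. The paper's actual argument is unconditional and of a different nature: a canonical form on a Zariski open subset of a smooth proper variety extends holomorphically iff its $L^{2}$ norm is finite; Lemma \ref{lemma: Petersson and L2 norm} identifies $\int\Omega_{F}\wedge\bar{\Omega}_{F}$ with the global Petersson norm of $F$; and Proposition \ref{prop: cusp form Petersson} shows that for weight $\geq b=19$ finiteness of the Petersson norm characterizes cusp forms. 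In particular, your worry about the weight $19+n$ being smaller than $\dim\Fgn=19+2n$ dissolves: the relevant threshold is $b=19$, not the dimension of $\Fgn$. Without this $L^{2}$ mechanism (or an actual construction of the compactification you postulate), \eqref{main eqn 2} remains unproved in your approach.
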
 

If we consider only pluricanonical forms on 
the moduli space of \textit{smooth} $n$-pointed $K3$ surfaces, 
the modular forms can be meromorphic at the $(-2)$-Heegner divisor. 
Therefore, in order to fully understand the connection with modular forms, 
it is necessary to extend the universal family over the whole modular variety 
by allowing rational double points. 
(See also Remark \ref{remark: dispense with}.) 

As an application of \eqref{main eqn 2}, 
we study the birational type of ${\Fgn}$. 
By the Kodaira dimension $\kappa({\Fgn})$ of ${\Fgn}$, 
we mean the Kodaira dimension of its smooth projective model $X$. 
Then $\kappa({\Fgn})$ is nondecreasing with respect to $n$ (\cite{Ka}), 
and is bounded by $19$ (\cite{Ii}). 
The question whether $\kappa({\Fgn})$ indeed arrives at $19$ in large $n$ 
has remained (cf.~\cite{BM}). 
The interest is in the range $g\leq 62$, 
since $\kappa({\Fg})=19$ for $g \geq 63$ and some other $g$ 
by Gritsenko-Hulek-Sankaran \cite{GHS1}. 
The correspondence \eqref{main eqn 2} implies the following. 

\begin{corollary}\label{criteria for kappa}
Let $k_{0}$ be a weight such that $S_{k_{0}}({\Gg}, \det)\ne \{ 0 \}$, 
$k_{1}$ be a weight such that $\dim S_{k_{1}}({\Gg}, \det) > 1$, and 
$k_{2}$ be a weight such that $S_{k_{2}}({\Gg}, \det)$ 
gives a generically finite map 
${\Fg}\dashrightarrow {\proj}^{N}$. 
Then 

$(1)$ $\kappa({\Fgn})\geq 0$ for all $n\geq k_{0}-19$, 

$(2)$ $\kappa({\Fgn})>0$ for all $n\geq k_{1}-19$, and 

$(3)$ $\kappa({\Fgn})=19$ for all $n\geq k_{2}-19$. 

\noindent
In particular, $\kappa({\Fgn})$ stabilizes to $19$ for sufficiently large $n$. 
\end{corollary}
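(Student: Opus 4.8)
The plan is to feed the isomorphism \eqref{main eqn 2} of Theorem \ref{main thm 1} into three facts about Kodaira dimension: that $\kappa({\Fgn})$ is nondecreasing in $n$ (\cite{Ka}), that $\kappa({\Fgn})\le 19$ (\cite{Ii}), and that the geometric genus and the shape of the canonical map are read off from $H^{0}(X,K_{X})$. Throughout, $X$ denotes a smooth projective model of ${\Fgn}$ and $\pi\colon {\Fgn}\dashrightarrow {\Fg}$ the (dominant) forgetful fibration, whose general fibre over $[S]$ is the $n$-fold product $S^{n}$ of the corresponding $K3$ surface. The guiding point is that, under \eqref{main eqn 2}, every canonical form on $X$ is a pullback from the $19$-dimensional base ${\Fg}$, so the geometry of the canonical map is governed entirely by the modular forms $S_{19+n}({\Gg},\det)$.

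For $(1)$ and $(2)$ only the vector-space isomorphism \eqref{main eqn 2} is needed. Setting $n=k_{0}-19$, so that $19+n=k_{0}$, we get $p_{g}(X)=h^{0}(X,K_{X})=\dim S_{k_{0}}({\Gg},\det)\ge 1$, hence $\kappa({\Fgn})\ge 0$; monotonicity in $n$ then gives $(1)$ for all $n\ge k_{0}-19$. For $(2)$ I use that $\kappa(X)\le 0$ forces $h^{0}(X,K_{X}^{\otimes m})\le 1$ for all $m$, in particular $p_{g}(X)\le 1$. Taking $n=k_{1}-19$ gives $p_{g}(X)=\dim S_{k_{1}}({\Gg},\det)>1$, so $\kappa({\Fgn})\ge 1>0$, and monotonicity propagates this to all $n\ge k_{1}-19$.

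Part $(3)$ is the heart of the matter, and here I would use the \emph{construction} behind \eqref{main eqn 2}, not just the abstract isomorphism. Tracing the proof of Theorem \ref{main thm 1}, a canonical form $\omega$ on $X$ corresponding to $F\in S_{19+n}({\Gg},\det)$ has the shape $\omega=\pi^{*}F\cdot\eta$, where $\eta$ is a fixed relative top-form built from the everywhere-nonzero holomorphic symplectic forms on the $K3$ fibres; in particular $\eta$ is generically nonvanishing on $X$. Choosing a basis $F_{0},\dots,F_{N}$ of $S_{k_{2}}({\Gg},\det)$ with $k_{2}=19+n$, the common factor $\eta$ cancels in projective coordinates, so on the dense locus $\{\eta\ne 0\}$ the canonical map factors as
\[
\phi_{|K_{X}|}=\Phi\circ\pi, \qquad \Phi\colon {\Fg}\dashrightarrow {\proj}^{N},
\]
where $\Phi$ is precisely the map defined by $S_{k_{2}}({\Gg},\det)$. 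Since $\pi$ is dominant, $\overline{\phi_{|K_{X}|}(X)}=\overline{\Phi({\Fg})}$, which by the generic-finiteness hypothesis on $\Phi$ has dimension $19$. Therefore $\kappa({\Fgn})\ge \dim\phi_{|K_{X}|}(X)=19$, and with $\kappa({\Fgn})\le 19$ this gives $\kappa({\Fgn})=19$ at $n=k_{2}-19$; monotonicity and the same upper bound yield equality for all $n\ge k_{2}-19$.

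Finally, the ``in particular'' assertion reduces to producing a single admissible weight $k_{2}$. Since the Hodge (automorphic) line bundle $\lambda$ is ample on the Baily--Borel compactification of ${\Fg}$, it is big, and bigness is unaffected by the finite-order twist by $\det$ and, for large weight, by the boundary-vanishing (cusp) condition; thus for $k_{2}\gg 0$ the space $S_{k_{2}}({\Gg},\det)$ defines a generically finite map, such a $k_{2}$ exists, and $\kappa({\Fgn})=19$ for all sufficiently large $n$. I expect the genuine work to be concentrated in the factorization $\phi_{|K_{X}|}=\Phi\circ\pi$: one must extract from the proof of Theorem \ref{main thm 1} that \eqref{main eqn 2} is realized by multiplying $\pi^{*}F$ by a single relative form $\eta$ that is generically nonzero, so that passing to the smooth model $X$ and to projective coordinates does not destroy the factorization through $\pi$. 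Everything else—the two genus estimates, the monotonicity and Iitaka bounds, and the bigness input for the existence of $k_{2}$—is formal.
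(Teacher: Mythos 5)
Your treatment of parts $(1)$--$(3)$ is correct and coincides with the paper's own argument. Parts $(1)$ and $(2)$ are the standard plurigenus estimates fed by \eqref{main eqn 2}, propagated by the monotonicity of $\kappa({\Fgn})$ in $n$ from \cite{Ka}; and the factorization $\phi_{|K_{X}|}=\Phi\circ\pi$ that you extract in part $(3)$ is precisely what the paper isolates as Corollary \ref{cor: canonical map}, proved from the explicit formula \eqref{eqn: explicit correspondence} (your $\eta$ is the form $\pi^{\ast}\Omega_{1}\wedge\pi_{1}^{\ast}\Omega_{2}\wedge\cdots\wedge\pi_{n}^{\ast}\Omega_{2}$ appearing there). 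From this, $\dim\overline{\Phi({\Fg})}=19$, the inequality $\kappa({\Fgn})\geq\dim\phi_{|K_{X}|}(X)$, the upper bound $\kappa({\Fgn})\leq 19$ of \cite{Ii}, and monotonicity give $(3)$ exactly as you write.

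The genuine gap is in your final paragraph, i.e.\ the existence of an admissible $k_{2}$, on which the ``in particular'' clause rests. Bigness of $\lambda$ controls sections of its \emph{powers}, and the powers of $\lambda^{\otimes k}\otimes\det$ are $\lambda^{\otimes mk}\otimes{\det}^{m}$, whose character is trivial for every even $m$; so the assertion that ``bigness is unaffected by the finite-order twist by $\det$'' only recovers information about trivial-character forms, and produces neither a single nonzero element of $S_{k}({\Gg},\det)$ nor the statement that the sections of $\lambda^{\otimes k}\otimes\det$ itself (rather than of its multiples) define a generically finite map. Indeed, the claim as you state it --- for $k_{2}\gg 0$ the space $S_{k_{2}}({\Gg},\det)$ gives a generically finite map --- fails for $g=2$: one has $M_{k}(\Gamma_{2},\det)=\{0\}$ for every even $k$, because $-{\rm id}\in\Gamma_{2}$ acts by $-1$ on $\lambda^{\otimes k}\otimes\det$ (the Example in \S\ref{ssec: proof thm 1}), so no parity-blind bigness argument can work. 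The paper closes this step differently: it first exhibits one nonzero cusp form of character $\det$, namely the quasi-pullback $F(g)\in S_{k(g)}({\Gg},\det)$ of $\Phi_{12}$ (Proposition \ref{prop: quasi-pullback Phi12}), and then sets $k_{2}=k(g)+k$ where $M_{k}({\Gg})=M_{k}({\Gg},1)$ gives a generically finite map (such $k$ exists by \cite{BB}); the product space $F(g)\cdot M_{k}({\Gg})\subset S_{k(g)+k}({\Gg},\det)$ consists of cusp forms of character $\det$ and defines the same rational map as $M_{k}({\Gg})$ on the locus $\{F(g)\neq 0\}$, hence is generically finite. Your argument becomes complete once this multiplication step (or some other source of a nonzero $\det$-character cusp form) is inserted in place of the bigness claim.
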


Here (3) holds because 
the canonical map of a smooth projective model $X$ factors through 
${\Fgn}\twoheadrightarrow {\Fg} \dashrightarrow {\proj}^{N}$ 
where ${\Fg} \dashrightarrow {\proj}^{N}$ is the rational map defined by 
$S_{19+n}({\Gg}, \det)$ (Corollary \ref{cor: canonical map}), 
and hence its image has dimension $19$ when $n=k_{2}-19$. 
The existence of such a weight $k_{2}$ is guaranteed by the general theory of Baily-Borel (\cite{BB}) 
and Hirzebruch-Mumford proportionality (\cite{GHS4}). 
 
We can always find, for every $g$, 
an explicit cusp form $F(g)$ of character $\det$ 
by quasi-pullback of the Borcherds $\Phi_{12}$ form 
(\cite{Bo}, \cite{BKPSB}, \cite{Ko}, \cite{GHS1}). 
If $k(g)$ is the weight of $F(g)$ and $n(g)=k(g)-19$, 
then $\mathcal{F}_{g,n(g)}$ has positive geometric genus, 
and $\kappa({\Fgn})\geq 0$ for $n\geq n(g)$. 
Usually the Borcherds product $F(g)$ has been used only when 
$k(g)\leq 19$ to study the birational type of ${\Fg}$ (\cite{Ko}, \cite{GHS1}). 
But even when $k(g)>19$, it can be thus used to study the universal families over ${\Fg}$. 

On the other hand, for most $g\leq 22$, 
general $K3$ surfaces of genus $g$ have been explicitly studied by 
Mukai \cite{Mu1} -- \cite{Mu6}, Farkas-Verra \cite{FV}, \cite{FV2} and Barros \cite{Ba}. 
This tells us a bound $n'(g)$  
where ${\Fgn}$ is unirational or uniruled. 
We compare the arithmetic bound $n(g)$ for $\kappa({\Fgn})\geq0$ 
with the geometric bound $n'(g)$ for $\kappa({\Fgn})=-\infty$.

\begin{theorem}[\S \ref{sec: Mukai Borcherds}]\label{main thm Mukai Borcherds}
For $g \leq 22$,  
$n(g)$ and $n'(g)$ are as in the following table. 
In particular, $n(g)=n'(g)+1$ for $g=3, 4, 6, 12, 20$. 

\begin{center}
\begin{tabular}{cccccccccccc} \toprule  
$g$     & 2   & 3   & 4   & 5   & 6   & 7 & 8 & 9 & 10 & 11 & 12   
\\ \midrule  
$n(g)$ & 56 & 35 & 30 & 21 & 23 & 16 & 15 & 14 & 14 & 9 & 14  
\\ \midrule  
$n'(g)$ & 38 & 34 & 29 & 18 & 22 & 14 & 9 & 10 & 11 & 7 & 13  
\\ \bottomrule 
\end{tabular}
\end{center}
\begin{center}
\begin{tabular}{ccccccccccc} \toprule  
$g$    &  13 & 14 & 15 & 16 & 17 & 18 & 19 & 20 & 21 & 22 \\ 
\midrule  
$n(g)$ &  9 & 9 & 8 & 7 & 6 & 8 & 5 & 6 & 5 & 6 \\ 
\midrule  
$n'(g)$ &  7 & 1 &   & 4 &    & 5 &    & 5 &    & 1 \\ 
\bottomrule 
\end{tabular}
\end{center}
\end{theorem}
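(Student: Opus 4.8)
The plan is to compute the arithmetic bound $n(g)$ and the geometric bound $n'(g)$ by completely independent methods and then read off the coincidences from the resulting table. For the arithmetic bound I would, for each $g\le 22$, produce an explicit nonzero cusp form of character $\det$ of the smallest achievable weight by quasi-pulling back the Borcherds form $\Phi_{12}$ on the even unimodular lattice $II_{2,26}=2U\oplus 3E_8(-1)$. Writing ${\Lg}=2U\oplus 2E_8(-1)\oplus\langle 2-2g\rangle$, a primitive embedding ${\Lg}\hookrightarrow II_{2,26}$ is specified by a vector $h$ in the third copy of $E_8(-1)$ with $h^{2}=2-2g$, and the orthogonal complement is the rank-$7$ negative definite lattice $K_g=h^{\perp}\subset E_8(-1)$. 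By the quasi-pullback construction (\cite{Bo}, \cite{Ko}, \cite{GHS1}), whenever $R_{-2}(K_g)\ne\emptyset$ one divides $\Phi_{12}|_{\mathcal{D}_{g}}$ by the linear forms attached to the roots of $K_g$ to obtain a cusp form $F(g)\in S_{k(g)}({\Gg},\det)$ of weight $k(g)=12+r_g/2$, where $r_g=|R_{-2}(K_g)|$. Hence $n(g)=k(g)-19=r_g/2-7$, and to make the bound sharp I would, for each $g$, minimize $r_g$ over all $h$ with $h^{2}=2-2g$ subject to $R_{-2}(h^{\perp})\ne\emptyset$.

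This minimization is a finite root-system computation in $E_8$. For small $g$ the vector is forced to be orthogonal to many roots: $g=2$ forces $h$ to be a $(-2)$-vector with $h^{\perp}\cong E_7(-1)$, so $r_2=126$ and $n(2)=56$, and $g=3$ forces $h^{\perp}\cong D_7(-1)$, so $r_3=84$ and $n(3)=35$. The genus $g=5$ already shows why minimization is essential: an imprimitive $h$ gives again $E_7(-1)$ with $126$ roots, whereas a primitive norm-$(-8)$ vector gives $h^{\perp}\cong A_7(-1)$ with only $56$ roots, yielding the sharp value $n(5)=21$. As $g$ grows one may choose $h$ ever more generically and $r_g$ drops to the listed values. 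Once $F(g)$ is exhibited, Theorem \ref{main thm 1} gives $S_{19+n(g)}({\Gg},\det)\ne\{0\}$, so a smooth projective model of $\mathcal{F}_{g,n(g)}$ has $p_g>0$; since $\kappa({\Fgn})$ is nondecreasing in $n$ (\cite{Ka}), this yields $\kappa({\Fgn})\ge 0$ for all $n\ge n(g)$. For the geometric bound $n'(g)$ I would invoke the explicit birational models of genus-$g$ polarized $K3$ surfaces: the classical low-genus descriptions, Mukai's models as linear sections of homogeneous spaces, and the refinements of Farkas--Verra \cite{FV} and Barros \cite{Ba}. For each $g$ these realize the $n$-pointed universal family, for $n$ up to $n'(g)$, as a unirational or uniruled variety, the mechanism being that the incidence variety of a genus-$g$ $K3$ together with $n$ marked points fibers through the Mukai model over a rational base with rationally connected general fibre; hence $\kappa(\mathcal{F}_{g,n'(g)})=-\infty$. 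Assembling the two computations produces the table, and comparing columns gives $n(g)=n'(g)+1$ for $g=3,4,6,12,20$, pinning the transition of $\kappa$ between $n'(g)$ and $n'(g)+1$ in those cases.

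The main obstacle is making both bounds simultaneously sharp. On the arithmetic side the delicate point is the root-system minimization: for each $g$ one must verify that the tabulated $r_g$ is genuinely the least number of $(-2)$-vectors orthogonal to some norm-$(2-2g)$ vector admitting at least one such vector, since a larger value would weaken $n(g)$ and a spurious smaller configuration would falsify it; this demands an orbit analysis of vectors of each norm in $E_8$ (equivalently of rank-$7$ sublattices of prescribed discriminant) rather than a single uniform formula, as the $g=5$ case already illustrates. On the geometric side the difficulty is not the unirationality of ${\Fg}$ but controlling the exact number $n'(g)$ of marked points for which uniruledness persists, which hinges on the fine geometry of the Mukai model and on how many general points of a $K3$ can be prescribed while the fibres of the parametrization remain rationally connected. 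The equality $n(g)=n'(g)+1$ emerges only after both constants are computed optimally, so the heart of the argument is precisely these two sharp and independent estimates meeting.
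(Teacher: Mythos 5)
Your two-sided strategy is exactly the paper's: produce a cusp form $F(g)\in S_{k(g)}(\Gamma_g,\det)$ by quasi-pullback of $\Phi_{12}$ along a primitive $v_g\in E_8$ of norm $2-2g$, deduce $n(g)=r(g)/2-7$ via Theorem \ref{main thm 1}, and bound $n'(g)$ using classical and Mukai models via incidence correspondences together with the results of \cite{Mu1}--\cite{Mu6}, \cite{FV}, \cite{FV2}, \cite{Ba}. However, there are two genuine gaps. First, on the arithmetic side you make \emph{minimality} of $r_g$ over all admissible $v_g$ the ``heart of the argument'' and then never carry out the required orbit analysis. The theorem needs no such minimization: $n(g)$ is by definition $k(g)-19$ for the particular quasi-pullback exhibited, so the proof consists of writing down one explicit $v_g$ per genus and counting the roots orthogonal to it. For $6\le g\le 22$ this is the actual content of the arithmetic half (e.g.\ $v_{13}=(4,2,1,1,1,1,0,0)$ with its $32$ orthogonal roots enumerated by hand, $v_{20}=(5,2,2,2,1,0,0,0)$ with $26$), and you produce neither the vectors nor the counts; the substitute claim that ``as $g$ grows one may choose $h$ ever more generically and $r_g$ drops to the listed values'' proves nothing. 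Note also that at $g=5$ the imprimitive vector $2\delta$ is excluded from the outset because the construction requires $v_g$ primitive (so that $\Lambda_g\hookrightarrow II_{2,26}$ is primitive); no minimization is at play there.

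Second, your stated mechanism for $n'(g)$ --- the incidence variety ``fibers through the Mukai model over a rational base with rationally connected general fibre, hence $\kappa=-\infty$'' --- fails in precisely the cases where the table claims only uniruledness. For $g=7,13,16$ the dominant map from the quotient of the incidence variety to $\mathcal{F}_{g,n}$ has general fibers birational to $K3$ surfaces (Fourier--Mukai partners of the parametrized surface), which are not rationally connected; and in all of $g=7,13,16,18,20$ it is not established that the incidence variety dominates a \emph{rational} base: the paper deliberately works over the image $\pi(X_n)\subset Y^n$, whose rationality is unknown, which is exactly why only uniruledness (not unirationality) is obtained there. The paper's actual argument is different: the incidence variety is generically a projective-space or Grassmannian bundle with positive-dimensional fibers over $\pi(X_n)$, generic stabilizers in the group are finite (or of dimension smaller than the fiber), so the quotient is uniruled, and uniruledness then descends to $\mathcal{F}_{g,n}$ because rational curves through general points cannot lie in the $K3$ fibers. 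Without this argument, and without the remaining case-by-case constructions (e.g.\ rational connectedness of $\mathcal{F}_{12,13}$ via $13$-pointed Fano $3$-folds of genus $12$, and the explicit linear-system bundles for $g\le 6$), the $n'(g)$ row of the table is not established by your proposal.
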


Here $\mathcal{F}_{g,n'(g)}$ is uniruled for $g= 7, 11, 13, 16, 18, 20$, 
rationally connected for $g=12$, 
and (uni)rational for other $g$. 
The bound $n'(g)$ in $8 \leq g \leq 10$ is due to Farkas-Verra \cite{FV} using the Mukai models in \cite{Mu1}, 
and in $g=11, 14, 22$ due to Barros \cite{Ba} and Farkas-Verra \cite{FV}, \cite{FV2} respectively. 
For other $g$ we compute $n'(g)$ 
by a geometric argument using classical models ($g \leq 5$) 
and Mukai models \cite{Mu1} -- \cite{Mu6} ($g\geq 6$). 
When $g=7$, we improve the bound of \cite{FV}.  
The existence of a canonical form on $\mathcal{F}_{11,9}$  
matches nicely with the result 
$\kappa(\mathcal{F}_{11,9})=0$ of Barros-Mullane \cite{BM}. 
In $g=7, 13, 16$, we also find that 
a space akin to $\mathcal{F}_{g,n'(g)+1}$ is uniruled (dual $K3$ fibration). 

One observes that $n(g)$ is close to $n'(g)$ in relatively many cases. 
Thus sandwich by Mukai models and Borcherds products, 
two techniques of different nature,  
tells us a rather precise information 
on the transition of the birational type of ${\Fgn}$. 
This is what the title means. 
Moreover, in every $3\leq g\leq 10$, we find (\S \ref{mystery}) that the coincidence 
\begin{equation}\label{eqn:intro mystery}
n(g) = \dim V_{g} 
\end{equation}
holds, 
where $V_{g}$ is a representation of an algebraic group appearing in the classical/Mukai models. 
This might suggest a further link between 
the Borcherds product $F(g)$ and the Mukai model. 

A related subject is the Kodaira dimension of the moduli space $\mathcal{M}_{g,n}$ of $n$-pointed curves of genus $g$, 
which has been studied by Logan \cite{Lo}, Farkas \cite{Fa} and Agostini-Barros \cite{AB} 
(and for $n=0$, Harris-Mumford-Eisenbud \cite{HM}, \cite{EH} and Farkas-Jensen-Payne \cite{FJP}). 
Incidentally, in many cases in $7\leq g \leq 22$, $n(g)$ is near to 
the Logan-Farkas bound for $\kappa(\mathcal{M}_{g,n})\geq 0$.

As for the criterion (2) in Corollary \ref{criteria for kappa}, 
we can find a weight $k_{1}$ such that $\dim S_{k_{1}}({\Gg}, \det)>1$ 
by multiplying $F(g)$ and a space $M_{k}({\Gg})=M_{k}({\Gg}, 1)$ with $\dim M_{k}({\Gg})>1$. 
Such a space $M_{k}({\Gg})$ can be explicitly found, e.g., by the Jacobi lifting \cite{Gr}. 
Similarly, for the criterion (3), 
we can find a weight $k_{2}$ there by multiplying $F(g)$ and 
a space $M_{k}({\Gg})$ that gives a generically finite map ${\Fg}\dashrightarrow {\proj}^{N}$. 
Although we know that such a weight $k$ exists by the Baily-Borel theory, 
it is in general not easy to explicitly calculate it.  
Moreover,  the resulting bound $n(g)+k$ for $\kappa({\Fgn})=19$ 
would be far from the actual bound, 
which is expected to be near to $n(g)$. 

These points could be improved 
if ${\Fgn}$ admits a compactification $X$ 
which is an equidimensional family over some toroidal compactification of ${\Fg}$ in codimension $1$  
(\S \ref{ssec: main thm 2}). 
For such a compactification, 
the boundary obstruction for pluricanonical forms can be estimated in terms of modular forms. 
Consequently, \eqref{main eqn 1} extends to an isomorphism 
with the log canonical ring of ${\Fgn}\hookrightarrow X$, 
and moreover, 
assuming $X$ has canonical singularities, 
we have $\kappa({\Fgn})=19$ whenever $n>n(g)$.  
However, unlike the case of abelian varieties, 
no example of such a compactification has been known, 
which makes this part conditional. 
The singularities of ${\Fgn}$ 
can be studied through the theory of semi-universal deformation of rational double points 
and the theory of automorphisms of $K3$ surfaces. 
Anyway, Corollary \ref{criteria for kappa} enables  
to study $\kappa({\Fgn})$ without knowing explicit compactification. 

The origin of this paper goes back to the work of Shioda \cite{Sh} 
on the correspondence between 
elliptic cusp forms of weight $3$ and canonical forms on the elliptic modular surfaces. 
This was generalized by Shokurov \cite{Shoku} to $n$-pointed elliptic curves, 
and by Hatada \cite{Ha} 
to $n$-pointed abelian varieties and Siegel modular forms. 
Another generalization of elliptic curves are $K3$ surfaces. 
The new feature here is the degenerate family over the $(-2)$-Heegner divisor, 
which is responsible for the vanishing condition there. 

This paper is organized as follows. 
In \S \ref{sec:2} we give an analytic construction of ${\Fgn}$.  
In \S \ref{sec: modular form} we recall the basic theory of orthogonal modular forms. 
In \S \ref{sec: theorem 1} we prove Theorem \ref{main thm 1}. 
In \S \ref{sec: Mukai Borcherds} we prove Theorem \ref{main thm Mukai Borcherds}.  

I would like to thank Igor Dolgachev for his help in genus $3$,  
and Gavril Farkas, Shigeru Mukai and Alessandro Verra for their valuable comments. 
I would also like to thank the referee for many helpful comments.

\section{Universal $K3$ surface over the period domain}\label{sec:2}

In this section we give an analytic construction of ${\Fgn}$. 
This is necessary for the connection with modular forms. 
The effort is paid for equivariant extension over the $(-2)$-Heegner divisor. 
The main results are Propositions \ref{prop:extend family} and \ref{prop: ramification divisor}. 

Let $g\geq 2$. 
Let ${\LK}=3U\oplus 2E_{8}$ be the $K3$ lattice. 
We fix a primitive vector $l\in{\LK}$ of norm $2g-2$, 
which is unique up to the ${\OLK}$-action (\cite{Ni1}). 
The polarized $K3$ lattice of degree $2g-2$ is defined as its orthogonal complement  
\begin{equation*}
{\Lg} = l^{\perp}\cap {\LK} 
\simeq 2U \oplus 2E_{8} \oplus \langle 2-2g \rangle. 
\end{equation*} 
The polarized period domain is defined by 
\begin{equation*}
{\Dg} = 
\{ \: [\omega ] \in {\proj}({\Lg})_{{\C}} \: | \: 
(\omega, \omega)=0, \: (\omega, \bar{\omega})>0 \: \}. 
\end{equation*}
This consists of two connected components, 
each of which is a Hermitian symmetric domain of type IV. 

Let ${\OLg}$ be the kernel of the reduction map 
${\rm O}({\Lg}) \to {\rm O}(\Lambda_{g}^{\vee}/\Lambda_{g})$. 
By Nikulin \cite{Ni1}, ${\OLg}$ is identified with the stabilizer of $l$ in ${\rm O}(\Lambda_{K3})$. 
We set 
\begin{equation*}
{\Fg} = {\OLg} \backslash {\Dg}. 
\end{equation*}
Since ${\OLg}$ has an element exchanging the two components of ${\Dg}$, 
${\Fg}$ is irreducible. 
By Baily-Borel \cite{BB}, ${\Fg}$ is a normal quasi-projective variety.  

The $(-2)$-Heegner divisor of ${\Dg}$ is defined as 
\begin{equation}\label{eqn: (-2)-Heegner K3}
\mathcal{H} = 
\bigcup_{\begin{subarray}{c} \delta\in {\Lg} \\ (\delta, \delta)=-2 \end{subarray}} 
\delta^{\perp}\cap{\Dg}. 
\end{equation}
This is locally finite and descends to an algebraic divisor of ${\Fg}$. 
We write 
${\Dgo} = {\Dg} - \mathcal{H}$. 

A \textit{marked polarized $K3$ surface} of genus $g$ 
is a triplet $(S, L, \varphi)$ where 
$S$ is a smooth $K3$ surface, 
$L$ is an ample line bundle on $S$ of degree $2g-2$, and 
$\varphi$ is an isometry $H^{2}(S, {\Z})\to {\LK}$ such that $\varphi([L])=l$. 
(In particular, $L$ is assumed to be primitive.) 
When $L$ is nef and big, we say instead \textit{quasi-polarized}. 
By the period mapping, ${\Dgo}$ is identified with 
the fine moduli space of marked polarized $K3$ surfaces of genus $g$. 
Gluing the polarized Kuranishi families, 
we have a universal family 
$({\Xgo} \to {\Dgo}, \varphi)$ over ${\Dgo}$ (see \cite{Be}, \cite{Hu}).  
We have the equivariant action of ${\OLg}$ on ${\Xgo} \to {\Dgo}$ 
induced by changing the marking (cf.~\cite{Hu} p.120). 

Our purpose in this section is to extend ${\Xgo} \to {\Dgo}$ 
to a family over ${\Dg}$ together with the action of ${\OLg}$, 
and determine its fixed divisor. 
The results proved in \S \ref{ssec: extend family} and  \S \ref{ssec: OLg-action}, 
after preliminaries in \S \ref{ssec:BR domain}, are summarized as follows. 

\begin{proposition}\label{prop:extend family}
Let $g\geq 2$.  

(1) The family ${\Xgo} \to {\Dgo}$ extends to 
a projective flat family 
$\pi\colon {\Xg} \to {\Dg}$ over ${\Dg}$ 
of polarized $K3$ surfaces with at worst rational double points, 
and the ${\OLg}$-action on ${\Xgo}$ extends to a ${\OLg}$-action on ${\Xg}$. 

(2) The reflection with respect to a $(-2)$-vector $\delta\in {\Lg}$ 
acts trivially on the total space $\pi^{-1}(\delta^{\perp}\cap{\Dg})$ 
over $\delta^{\perp}\cap{\Dg}$. 
When $g\geq3$, every fixed divisor of the ${\OLg}$-action on ${\Xg}$ is of this form. 
When $g=2$, we have an additional fixed divisor formed by 
the ramification curves of the double planes. 
\end{proposition}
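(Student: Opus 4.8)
The plan for (1) is to construct $\Xg$ by gluing local \emph{polarized models}. Over a small polydisc $B\subset\Dg$, local Torelli and Kuranishi theory give a family $p\colon\mathcal S\to B$ of smooth marked $K3$ surfaces carrying the marking and the nef and big class $\mathcal L$ with $[\mathcal L_{b}]=\varphi_{b}^{-1}(l)$. Since $\mathcal L$ is nef and big on each fiber, Kawamata--Viehweg vanishing gives $R^{i}p_{*}\mathcal L^{\otimes m}=0$ for $i>0$, $m\ge 1$, while Riemann--Roch gives $h^{0}(\mathcal L^{\otimes m})=2+m^{2}(g-1)$ independently of the fiber; hence $p_{*}\mathcal L^{\otimes m}$ is locally free of constant rank and $\operatorname{Proj}_{B}\bigoplus_{m}p_{*}\mathcal L^{\otimes m}$ is a flat projective family over $B$ whose fibers are the polarized models, i.e.\ $K3$ surfaces with at worst rational double points. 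Two local lifts of $B$ to the marked moduli differ only by the Weyl group generated by the $(-2)$-curves orthogonal to $\mathcal L$, which does not change the polarized model; so these families glue to a global flat projective $\pi\colon\Xg\to\Dg$ restricting to $\Xgo$ over $\Dgo$. Finally, as $\pi$ is separated and $\Xgo$ is dense, the $\OLg$-action on $\Xgo$ extends uniquely to $\Xg$, with $\gamma$ carrying the polarized model over $[\omega]$ to that over $\gamma[\omega]$ by change of marking.

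For (2), fix a $(-2)$-vector $\delta\in\Lg$; I claim $s_{\delta}$ acts trivially on $\pi^{-1}(\delta^{\perp}\cap\Dg)$ for every $g\ge 2$. Since $(\delta,\delta)=-2$ and $\delta\in\Lg$, one checks $s_{\delta}$ is trivial on $\Lg^{\vee}/\Lg$, so $s_{\delta}\in\OLg$, and $s_{\delta}$ fixes $\delta^{\perp}\cap\Dg$ pointwise. Transversally to a generic point of $\delta^{\perp}$ the family $\pi$ is the semi-universal deformation of the node obtained by contracting the vanishing $(-2)$-curve $C$ with $[C]=\delta$. By Brieskorn's simultaneous resolution, the smooth resolution family is the double cover of this deformation branched along $\delta^{\perp}$, with branch coordinate $(\omega,\delta)$; the reflection $s_{\delta}\colon(\omega,\delta)\mapsto-(\omega,\delta)$ is precisely the deck transformation of this double cover, and therefore covers the identity on the deformation, i.e.\ on $\pi\colon\Xg\to\Dg$. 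Thus $s_{\delta}$ acts trivially on the fibers over $\delta^{\perp}$, as claimed.

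It remains to show these exhaust the fixed divisors when $g\ge 3$, and to produce the extra one when $g=2$. Let $1\ne\gamma\in\OLg$ fix a divisor $D\subset\Xg$ pointwise, so $\gamma$ fixes $\pi(D)$ pointwise. If $\gamma$ acts nontrivially on $\Dg$, then $\operatorname{Fix}(\gamma)\cap\Dg$ is a proper closed subset containing $\pi(D)$; as $\dim D=20=\dim\Xg-1$, it must be a divisor and $D=\pi^{-1}(\operatorname{Fix}(\gamma)\cap\Dg)$, so $\gamma$ fixes a hyperplane and is a reflection $s_{\delta}$ with $D=\pi^{-1}(\delta^{\perp}\cap\Dg)$. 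Pointwise fixing of these fibers forces $s_{\delta}=\mathrm{id}$ on the generic fiber; by the second paragraph this holds when $(\delta,\delta)=-2$, whereas if $(\delta,\delta)<-2$ the generic fiber over $\delta^{\perp}$ is a smooth $K3$ of Picard rank two on which $s_{\delta}$ (fixing $l$, sending $\delta\mapsto-\delta$, trivial on the transcendental lattice) induces a nontrivial symplectic involution with only finitely many fixed points, so $\operatorname{Fix}(s_{\delta})$ has codimension $\ge 2$ and contains no divisor. If instead $\gamma$ acts trivially on $\Dg$, then $\gamma\in\{\pm 1\}\cap\OLg$; for $g\ge 3$ one has $-1\notin\OLg$ (since $-1\ne 1$ on $\Lg^{\vee}/\Lg=\Z/(2g-2)$), forcing $\gamma=1$, whence no further fixed divisor. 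For $g=2$ one has $-1\in\OLg$: it acts trivially on $\Dg$ and induces on each fiber the anti-symplectic double-plane involution of $\bar S\to\proj^{2}$, whose ramification curves form the additional (horizontal) fixed divisor.

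The main obstacle is the transverse identification used in the second paragraph: one must show that, near a generic point of $\delta^{\perp}$, the period coordinate normal to $\delta^{\perp}$ is, up to a unit, $(\omega,\delta)$, and that the Kuranishi family realizes the $A_{1}$ simultaneous resolution, so that the period-theoretic reflection $s_{\delta}$ coincides with the Brieskorn deck transformation covering the identity downstairs. This amounts to comparing the Gauss--Manin connection with the geometry of the contracting $(-2)$-curve and checking compatibility with the gluing of part (1); the non-Hausdorffness of the marked moduli enters precisely here, through the two chambers exchanged by $s_{\delta}$. Granting this local analysis, both the triviality and the classification above follow.
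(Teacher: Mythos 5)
The central gap is in part (1), at the one sentence where you glue: ``two local lifts differ only by the Weyl group \dots which does not change the polarized model; so these families glue.'' Fiberwise independence of the polarized model from the marking is true but does not produce the required \emph{holomorphic family isomorphism}. Two local lifts literally coincide over $B\setminus\mathcal{H}$, so what must be shown is that the identity map of the contracted families over $B\setminus\mathcal{H}$ extends holomorphically across $\mathcal{H}$; for the smooth marked families this extension provably fails (that is exactly the non-Hausdorff/flop phenomenon), so the contraction must enter in an essential, non-formal way, and there is no general principle that hands you the extension. This is precisely the content of the paper's Proposition \ref{prop:descend family}, whose proof bounds the volumes of the graphs of the gluing maps (Burns--Rapoport), invokes Bishop's theorem to get an analytic limit cycle $\Gamma=\mathrm{graph}(g)+\sum_{i,j}a_{ij}C_i\times C_j'$, and then runs a Matsusaka--Mumford-type argument with $|3L|$ to show $(C_i,L)=(C_j',L')=0$. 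The same issue infects your extension of the group action: separatedness plus density of ${\Xgo}$ gives \emph{uniqueness} of an extension of $\gamma$, never existence (consider $z\mapsto 1/z$ on $\C^{*}\subset\C$); the paper needs a further limit-cycle compatibility argument (the lemma opening \S\ref{ssec: OLg-action}) to descend the action.

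For part (2), your Brieskorn simultaneous-resolution route is a genuinely different strategy from the paper's (which computes the limit cycle $\Gamma=\mathrm{graph}(g)+aE\times E$, deduces $g_{*}=\mathrm{id}$, and concludes by the strong Torelli theorem), and it is plausible; but you explicitly ``grant'' the key local comparison --- that the transverse slice of ${\Xg}$ is the pulled-back semi-universal $A_1$ deformation and that the period-theoretic reflection is the Brieskorn deck transformation --- so this is an outline resting on the unproved part (1), not a proof. There is also a concrete hole in your classification: from ``$\gamma$ fixes a divisor of ${\Dg}$ pointwise'' you conclude $\gamma$ is a reflection $s_{\delta}$, but a pointwise-fixed hyperplane of ${\proj}({\Lg})_{\C}$ only forces $\gamma=\pm s_{v}$ as a lattice isometry, since $-\mathrm{id}$ acts trivially on projective space. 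For $g>2$ the group ${\OLg}$ really does contain minus-reflections $-s_{v}$ in splitting vectors $v$ of norm $2-2g$, which fix $v^{\perp}\cap{\Dg}$ pointwise; they are covered neither by your norm $-2$ and norm $<-2$ reflection cases nor by your $\pm\mathrm{id}$ case, and your symplectic-involution argument does not apply to them (they act anti-symplectically, with fixed \emph{curves} in the fibers, so one must instead note that a nontrivial action on the general fiber bounds the fixed locus by dimension $19<20=\dim{\Xg}-1$, as the paper does in the remark following Proposition \ref{prop: fixed divisor}).
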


In \S \ref{ssec: Fgn} we take the $n$-fold fiber product ${\Xgn}$ of ${\Xg}\to{\Dg}$ 
and the quotient ${\Fgn}={\OLg}\backslash{\Xgn}$. 
This is an analytic construction of the moduli space 
of $n$-pointed $K3$ surfaces of genus $g$ with at worst rational double points, 
and will be a basis of the connection with modular forms.

\subsection{Polarized Burns-Rapoport period domain}\label{ssec:BR domain}

We recall the Burns-Rapoport period domain 
(\cite{BR}, \cite{Be})  
and consider its polarized version. 

\subsubsection{Burns-Rapoport period domain}\label{sssec:BR domain} 

Let 
\begin{equation*}
{\DK} = 
\{ \: [\omega ] \in {\proj}({\LK})_{{\C}} \: | \: 
(\omega, \omega)=0, \: (\omega, \bar{\omega})>0 \: \} 
\end{equation*}
be the period domain of $K3$ surfaces.   
The period of a marked $K3$ surface $(S, \varphi)$ is defined by 
$\mathcal{P}(S, \varphi) = \varphi(H^{2,0}(S)) \in {\DK}$.  
For $[\omega]\in {\DK}$ we write 
$NS(\omega)=\omega^{\perp}\cap {\LK}$,  
$H^{1,1}_{{\R}}(\omega)=\omega^{\perp}\cap ({\LK})_{{\R}}$ and 
$\mathcal{V}(\omega)= \{ x\in H^{1,1}_{{\R}}(\omega) | (x, x)>0 \}$. 
Let $\Delta(\omega)$ be the set of $(-2)$-vectors in $NS(\omega)$ 
and $W(\omega)$ be its Weyl group. 
Connected components of 
$\mathcal{V}(\omega)- \cup_{\delta\in \Delta(\omega)}\delta^{\perp}$ 
are called Weyl chambers of $\mathcal{V}(\omega)$. 
The group $W(\omega)\times \{ \pm {\rm id} \}$ acts freely and transitively 
on the set of Weyl chambers of $\mathcal{V}(\omega)$. 
The Burns-Rapoport period domain (\cite{BR}) is set-theoretically defined as 
\begin{equation*}
{\BR} = \{ \: ([\omega], \mathcal{C}) \: | \: [\omega]\in{\DK}, \; 
\mathcal{C} \: \textrm{a Weyl chamber of} \: \mathcal{V}(\omega) \: \}. 
\end{equation*}
If $(S, \varphi)$ is a marked $K3$ surface, 
its Burns-Rapoport period is defined by 
\begin{equation*}
\mathcal{P}_{BR}(S, \varphi) = 
(\varphi(H^{2,0}(S)), \varphi(\mathcal{K}_{S})) \in {\BR}, 
\end{equation*}
where $\mathcal{K}_{S}$ is the K\"ahler chamber of $S$. 
By the strong Torelli theorem, 
this identifies ${\BR}$ with the fine moduli space of marked $K3$ surfaces (\cite{BR}, \cite{Be}). 
The marked Kuranishi families equip ${\BR}$ with 
the structure of a non-Haussdorff complex manifold 
and a universal family 
$(\tilde{\mathcal{X}}_{K3} \stackrel{\pi}{\to}{\BR}, \varphi)$, 
where $\varphi$ is an isomorphism 
$R^2\pi_{\ast}{\Z}\to \underline{{\LK}}$ of local systems. 
The projection ${\BR}\to{\DK}$ is locally isomorphic by the local Torelli theorem.

The group ${\OLK}$ acts on 
$\tilde{\mathcal{X}}_{K3}\to {\BR}$ equivariantly as follows. 
Let $\gamma\in {\OLK}$. 
If $(S, \varphi)$ is a marked $K3$ surface, 
$\gamma$ sends $[(S, \varphi)]\in{\BR}$ to 
$[(S, \gamma\circ\varphi)]\in{\BR}$. 
The isomorphism between the fibers over them 
is ``the identity map'' $S\to S$ of $S$. 
To be more intrinsic, 
if $(S', \varphi')$ is a marked $K3$ surface 
isomorphic to $(S, \gamma\circ\varphi)$, 
the fiber map is the isomorphism $f:S\to S'$ with 
$\varphi' \circ f_{\ast} = \gamma \circ \varphi$ 
(which uniquely exists by the strong Torelli theorem). 
This defines the action of ${\OLK}$ on $\tilde{\mathcal{X}}_{K3}$.

\subsubsection{Polarized Burns-Rapoport period domain}

Next we consider the polarized version.  
Let ${\BRg}\subset{\BR}$ be the subset consisting of those $([\omega], \mathcal{C})$ 
such that $[\omega]\in{\Dg}$ and that the Weyl chamber $\mathcal{C}$ contains $l$ in its closure. 
By the Burns-Rapoport period mapping, 
${\BRg}$ is identified with the set of isomorphism classes of 
marked quasi-polarized $K3$ surfaces of genus $g$. 
The basic observation is 

\begin{proposition}\label{prop: pol BR domain}
${\BRg}$ is a complex submanifold of ${\BR}$. 
\end{proposition}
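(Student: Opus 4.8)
The plan is to show that $\BRg$ is a complex submanifold of $\BR$ by exhibiting it locally as the zero locus of finitely many holomorphic functions whose differentials are linearly independent, and then verifying that the chamber condition cuts out an open piece. First I would recall that the projection $\BR \to \DK$ is a local isomorphism by the local Torelli theorem, so locally near a point $([\omega], \mathcal{C}) \in \BRg$ the ambient space $\BR$ is identified with an open subset of $\DK$, which is a smooth complex manifold of dimension $20$. Under this identification, the condition $[\omega] \in \Dg$ is precisely the condition that $\omega$ be orthogonal to the fixed vector $l$, i.e.\ $(\omega, l) = 0$. Since $l$ is a fixed nonzero vector in $\LK$, the function $[\omega] \mapsto (\omega, l)$ is a nonzero holomorphic function on $\DK$ whose differential does not vanish (it is a nonzero linear form restricted to the quadric $\DK$, and $l$ is not isotropic), so its zero locus $\Dg$ is a smooth hypersurface-type locus of the expected codimension $1$ in $\DK$.

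The second ingredient is the chamber condition: $([\omega], \mathcal{C}) \in \BRg$ requires that $\mathcal{C}$ be the specific Weyl chamber of $\mathcal{V}(\omega)$ whose closure contains $l$. The key point here is that this is an \emph{open} condition rather than a further closed constraint, so it does not lower the dimension. I would argue that for $[\omega] \in \Dg$, the vector $l$ lies in $H^{1,1}_{\R}(\omega)$ and has positive norm $2g - 2 > 0$, hence $l \in \mathcal{V}(\omega)$; provided $l$ avoids the walls $\delta^{\perp}$ for $(-2)$-vectors $\delta$, it lies in the interior of a unique Weyl chamber, and that chamber is canonically determined by $[\omega]$. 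On the locus $\Dgo = \Dg - \mathcal{H}$ this choice is well-defined and varies continuously, so the map $[\omega] \mapsto ([\omega], \mathcal{C})$ is a holomorphic section of $\BR \to \DK$ over $\Dgo$ landing in $\BRg$. The subtlety is what happens over the Heegner divisor $\mathcal{H}$, where $l$ may lie on a wall: there the chamber containing $l$ in its closure need not be unique, and one must check that $\BRg$ still extends to a manifold across $\mathcal{H}$ using the non-Hausdorff structure of $\BR$.

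The main obstacle I expect is precisely this behavior over $\mathcal{H}$. Away from $\mathcal{H}$ the result is essentially immediate: $\BRg$ is locally the graph of a holomorphic section over the smooth locus $\Dg \cap (\text{complement of }\mathcal{H})$, hence a submanifold. Over a point of $\mathcal{H}$, several sheets of $\BR$ lie over the same period $[\omega] \in \Dg$ (one for each chamber adjacent to $l$), and I would need to verify that the condition ``$\mathcal{C}$ contains $l$ in its closure'' selects exactly those sheets forming a neighborhood that is itself a manifold mapping isomorphically to a neighborhood in $\Dg$. Concretely, I would analyze a local model: fix $[\omega_0] \in \mathcal{H}$, let $\delta_1, \dots, \delta_r$ be the $(-2)$-vectors in $NS(\omega_0)$ with $l \in \delta_i^{\perp}$, and track how the chambers of $\mathcal{V}(\omega)$ reorganize as $[\omega]$ moves off $\mathcal{H}$; the claim is that the locus in $\BR$ where $l$ lies in the closure of $\mathcal{C}$ is swept out by a single section through $[\omega_0]$ that agrees with the ambient complex structure. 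Because $\BR \to \DK$ is a local isomorphism on each sheet, once I identify $\BRg$ with the union of the correct local sections it is automatically a complex submanifold; the work is entirely in the chamber combinatorics near the walls passing through $l$.
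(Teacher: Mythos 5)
Your reduction of the problem is sound as far as it goes: since $\BR\to\DK$ is a local isomorphism and $\Dg$ is a complex submanifold of $\DK$, the preimage $p^{-1}(\Dg)\subset\BR$ is a complex submanifold, and everything hinges on showing that the condition ``$l$ lies in the closure of $\mathcal{C}$'' cuts out an \emph{open} subset of $p^{-1}(\Dg)$. Away from $\mathcal{H}$ your argument can be completed (there the condition is that $\varphi_u^{-1}(l)$ be ample on the fiber, and openness of ampleness in a family is standard). But over $\mathcal{H}$ --- which you yourself isolate as the main obstacle --- the proposal stops at a declaration of intent (``I would need to verify\dots'', ``the claim is that\dots''), and that verification is the entire content of the proposition. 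Moreover, the route you suggest, ``chamber combinatorics near the walls passing through $l$'', does not engage with the actual difficulty. The walls of $\mathcal{V}(\omega)$ are cut out by the $(-2)$-vectors of $NS(\omega)=\omega^{\perp}\cap\LK$, and this lattice jumps discontinuously as $[\omega]$ varies: at $[\omega_0]\in\mathcal{H}$ it contains a nonzero root lattice $R(\omega_0,l)$, at generic nearby points of $\Dg$ it is just $\Z l$, and along other components of $\mathcal{H}$ it is something else again. There is no fixed vector space in which the chambers ``reorganize''; the identification of a chamber at $[\omega_0]$ with a chamber at a nearby period is mediated by the Kuranishi family, i.e.\ by which classes are effective or K\"ahler on nearby fibers. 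So the statement you must prove is geometric, not combinatorial: if $\varphi_{u_0}^{-1}(l)$ is nef and big on $\mathcal{X}_{u_0}$, then $\varphi_{u}^{-1}(l)$ is nef and big on $\mathcal{X}_u$ for all nearby $u$ with $\mathcal{P}(u)\in\Dg$.

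This openness of nef-and-bigness is genuinely nontrivial: nefness is \emph{not} an open condition in arbitrary families of surfaces, and the way it can fail is exactly the situation at hand --- a class orthogonal to an effective $(-2)$-class on the central fiber may pair negatively with a $(-2)$-curve appearing on arbitrarily close special fibers. This is precisely why the paper devotes Lemma \ref{lem: nef big stable} and Proposition \ref{prop: h1=h2=0} to this point: on a $K3$ surface a class $L$ with $(L,L)>0$ is nef and big if and only if $h^{1}(mL)=h^{2}(mL)=0$ for some $m\geq 2$, and this cohomological reformulation is open in the family by upper semicontinuity of $h^{i}$; the characterization itself is proved by a non-formal argument using Weyl reflections, Saint-Donat's results and Riemann--Roch. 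Your proposal contains no mechanism of this kind --- no semicontinuity statement, no boundedness or effectivity argument controlling limiting $(-2)$-curves --- so the crucial step across $\mathcal{H}$ remains a gap rather than a proof. (A cosmetic point in the same direction: over $[\omega_0]\in\mathcal{H}$ the locus $\BRg$ is not ``a single section'' but finitely many sheets, one for each element of $W(\omega_0,l)$; what must be shown is that \emph{each} such sheet, intersected with $\BRg$, is open in that sheet, and this is again the nef-and-big openness above.)
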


This is a consequence of the following general property. 

\begin{lemma}\label{lem: nef big stable}
Let $X\to U$ be a family of $K3$ surfaces with a line bundle $L$ on $X$. 
Let $L_{u}=L|_{X_{u}}$ for $u\in U$. 
Assume that $L_{u_{0}}$ is nef and big for a point $u_{0}\in U$. 
Then $L_{u}$ is nef and big for all $u\in U$ in an open neighborhood of $u_{0}$. 
\end{lemma}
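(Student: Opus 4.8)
The plan is to prove Lemma \ref{lem: nef big stable} by separating the \emph{bigness} and the \emph{nefness} into two open conditions, each stable under small deformation. First I would establish bigness. Since $L_{u_0}$ is nef and big, the self-intersection $(L_{u_0})^2 = (L_{u_0} \cdot L_{u_0})$ is strictly positive. By flatness of $X \to U$, the intersection number $(L_u)^2$ is locally constant in $u$ (it is computed as the degree of $L_u^{\otimes 2}$ via Riemann-Roch, or as a topological self-intersection of a section of $R^2\pi_{\ast}\Z$, which is constant in a family over a connected base). Hence $(L_u)^2 = (L_{u_0})^2 > 0$ for all $u$ in the connected component of $u_0$, so each $L_u$ has positive self-intersection; on a $K3$ surface this gives bigness of $L_u$ (or of $-L_u$) as soon as $L_u$ lies in the positive cone.

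Next I would handle nefness, which is the genuinely open part of the argument. The point is that nefness of a line bundle on a $K3$ surface is detected by intersection with the finitely many $(-2)$-curves that it could meet negatively: $L_{u_0}$ is nef if and only if $(L_{u_0} \cdot C) \geq 0$ for every irreducible curve $C$, and for a big class the only possible obstructions come from effective $(-2)$-classes $\delta$ with $(L_{u_0} \cdot \delta) < 0$. In lattice terms, in the period picture of \S\ref{sssec:BR domain}, the vector $\varphi([L_{u_0}]) = l$ lies in the closure of the K\"ahler chamber $\mathcal{K}_{S_{u_0}}$, i.e.\ $(l, \delta) \geq 0$ for all $\delta \in \Delta(\omega_{u_0})$ that are effective. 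The deformation-invariance of the Picard lattice in a family (the classes in $NS$ that persist are exactly those $\delta$ orthogonal to the varying period $\omega_u$) means that as $u$ moves in a small neighborhood, the set of $(-2)$-classes $\delta \in NS(\omega_u)$ can only \emph{shrink}: a class $\delta$ that is algebraic at $u$ is in particular algebraic at $u_0$, because $(\delta, \omega_u) = 0$ forces $(\delta, \omega_{u_0}) = 0$ by continuity of $\omega_u$ and the discreteness of the lattice $\LK$.

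Combining these, for $u$ near $u_0$ any potential obstructing $(-2)$-curve on $X_u$ has its class $\delta$ already present and effective at $u_0$, where it satisfies $(l, \delta) \geq 0$; since the pairing $(\varphi([L_u]), \delta) = (l, \delta)$ is the same constant lattice pairing, we get $(L_u \cdot \delta) \geq 0$, so no such curve can violate nefness. More carefully, I would phrase this using semicontinuity of the positive cone / K\"ahler cone structure: the nef cone of $X_u$ varies upper-semicontinuously, and a class in the interior (or the nontrivial boundary) of the nef cone at $u_0$ remains nef for nearby $u$ because only finitely many $(-2)$-walls are relevant near a fixed point and none of them can suddenly separate $l$ from the chamber.

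The main obstacle I anticipate is making the nefness step fully rigorous, since nefness is in general only a \emph{closed} condition under specialization, not an open one; the content of the lemma is that for a \emph{big} class the direction of degeneration is controlled. The key technical input is the discreteness and deformation-invariance of $(-2)$-classes: one must argue that no \emph{new} $(-2)$-curve appears on the nearby fibers that could pair negatively with $L_u$. I would make this precise by appealing to the local structure of the period map and the fact (from Lemma \ref{lem: nef big stable}'s intended use via the K\"ahler chamber $\mathcal{K}_S$) that the chamber structure of $\mathcal{V}(\omega)$ varies continuously, so $l$ remaining in the closure of a fixed Weyl chamber is an open condition once bigness pins down the connected component of the positive cone. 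An alternative, cleaner route would be to invoke that the locus in $U$ where $L_u$ fails to be nef is a countable union of Noether-Lefschetz-type loci (where some effective $(-2)$-class becomes negative against $L$), each of which avoids $u_0$, and that near $u_0$ only finitely many of these are active; I would likely present the argument in this second form, as it most transparently yields an open neighborhood on which $L_u$ stays nef and big.
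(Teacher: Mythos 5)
Your proposal contains a genuine gap in the nefness step, and it is exactly at the point you flag as the "key technical input." The claim that ``a class $\delta$ that is algebraic at $u$ is in particular algebraic at $u_0$, because $(\delta,\omega_u)=0$ forces $(\delta,\omega_{u_0})=0$ by continuity and discreteness'' is false, and in fact backwards. The function $u\mapsto(\delta,\omega_u)$ is holomorphic; its vanishing at one nearby point $u$ says nothing at $u_0$. What is true is that the Noether--Lefschetz locus of a \emph{fixed} class $\delta$ is closed, so algebraicity along a sequence $u_n\to u_0$ implies algebraicity at $u_0$; but for a single nearby $u$ the set of algebraic $(-2)$-classes does not shrink --- on a dense set of nearby points it \emph{grows}. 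Your fallback formulation fares no better: the walls $\delta^{\perp}$ for $(-2)$-vectors $\delta\in{\LK}$ with $(\delta,l)<0$ do \emph{not} avoid a neighborhood of $u_0$, nor are only finitely many ``active'' there. For each fixed value $k=(\delta,l)$ the corresponding walls form a locally finite family in the polarized period domain, but the union over all $k<0$ is dense (the induced wall vectors have norms $-2-k^{2}/(l,l)\to-\infty$, and such Heegner-type divisors accumulate at every point). The lemma is rescued on those nearby walls not because the obstructing class is absent, but because the \emph{effective} representative there is $-\delta$ rather than $\delta$; controlling which of $\pm\delta$ is effective on the nearby fiber is precisely the content you have not supplied, and it cannot be extracted from continuity of the chamber structure alone. (There is also a small gap in the bigness step: $(L_u)^2>0$ only gives that $L_u$ or $-L_u$ is big; to exclude $-L_u$ one should note $h^{0}(-L_u)=h^{2}(L_u)\le h^{2}(L_{u_0})=0$ near $u_0$ by semicontinuity.)

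The paper avoids all wall-chasing: it proves the purely cohomological characterization (Proposition \ref{prop: h1=h2=0}) that on a $K3$ surface with $(L,L)>0$, $L$ is nef and big if and only if $h^{1}(mL)=h^{2}(mL)=0$ for some $m\geq 2$, and then Lemma \ref{lem: nef big stable} follows immediately from upper semicontinuity of $h^{i}(mL_u)$, since vanishing of cohomology is an open condition. If you want to repair your lattice-theoretic route, the standard way is: show $L_u$ is effective near $u_0$ (as above); observe that an irreducible curve $C$ with $(L_u,C)<0$ has $(C,C)=-2$ and is a fixed component of $|L_u|$, hence has degree bounded by $(\kappa_u,L_u)$ for a continuously varying K\"ahler class $\kappa_u$; deduce that the possible classes $\varphi_u([C])\in{\LK}$ lie in a finite set; and then, for a sequence $u_n\to u_0$ of non-nef points, extract a fixed class $\delta$, use closedness of its Noether--Lefschetz locus and semicontinuity of $h^{0}(\delta)$ to make $\delta$ effective at $u_0$ with $(L_{u_0},\delta)<0$, a contradiction. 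That argument works, but the boundedness and effectivity steps are essential and are missing from your write-up; as it stands, the proposal does not prove the lemma.
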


Indeed, for a marked Kuranishi family 
$(\mathcal{X}\to\mathcal{U}, \varphi)$ 
such that 
$\mathcal{P}_{BR}(\mathcal{X}_{u_{0}}, \varphi_{u_{0}})\in {\BRg}$, 
Lemma \ref{lem: nef big stable} implies that 
$\varphi_{u}^{-1}(l)\in NS(\mathcal{X}_{u})$ is nef and big 
for all $u\in \mathcal{P}^{-1}(\Omega_{g})$ 
in a neighborhood of $u_{0}$. 
Hence, shrinking $\mathcal{U}$ if necessary, we have 
$\mathcal{P}_{BR}^{-1}({\BRg})=\mathcal{P}^{-1}({\Dg})$ 
in $\mathcal{U}$. 
Since $\mathcal{P}\colon \mathcal{U}\to {\DK}$ is an open immersion 
and $\Omega_{g}$ is a complex submanifold of ${\DK}$, 
this shows that 
$\mathcal{P}_{BR}^{-1}({\BRg})$ 
is a complex submanifold of $\mathcal{U}$. 
Recalling that the complex structure of ${\BR}$ is induced from the Kuranishi families, 
this proves Proposition \ref{prop: pol BR domain}. 

Lemma \ref{lem: nef big stable} follows from   
the following cohomological characterization of nef and big line bundle and 
the upper semicontinuity of $h^{i}(mL_{u})$. 
 
\begin{proposition}\label{prop: h1=h2=0}
Let $S$ be a $K3$ surface and $L$ be a line bundle with $(L, L)>0$. 
Then $L$ is nef and big if and only if 
$h^{1}(mL)=h^{2}(mL)=0$ for some $m\geq 2$. 
\end{proposition}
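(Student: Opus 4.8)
The plan is to handle the two implications separately, with essentially all the content in the \emph{if} direction. Throughout, $S$ is a smooth K3 surface, so $K_{S}\simeq\mathcal{O}_{S}$; Riemann--Roch reads $\chi(mL)=\tfrac{1}{2}m^{2}(L,L)+2$ and Serre duality gives $h^{2}(mL)=h^{0}(-mL)$. Since $(L,L)>0$ the surface $S$ is projective, so we may speak of curves and apply vanishing theorems. For the \emph{only if} direction, if $L$ is nef and big then $mL$ is nef and big for every $m\geq1$, and as $K_{S}$ is trivial, Kawamata--Viehweg (equivalently Ramanujam) vanishing yields $h^{1}(mL)=h^{2}(mL)=0$ for all $m\geq1$, in particular for some $m\geq2$.

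For the \emph{if} direction, fix $m\geq2$ with $h^{1}(mL)=h^{2}(mL)=0$. Since $h^{2}(mL)=h^{0}(-mL)=0$ and $\chi(mL)>0$, Riemann--Roch forces $h^{0}(mL)=\chi(mL)>0$, so $mL$ is effective and nonzero. I would then reduce the whole statement to the nefness of $L$: once $L$ is nef, bigness is automatic, because a nef class of positive self-intersection on a surface is big.

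To prove $L$ is nef, suppose not, so there is an irreducible curve $C$ with $(L,C)<0$. As $mL$ is effective, $(mL,C)<0$ forces $C$ to be a component of $mL$, whence $(C,C)<0$; on a K3 surface this means $C$ is a smooth rational $(-2)$-curve. This is where $m\geq2$ enters in an essential way: $(L,C)$ is a negative integer, so $(mL,C)=m(L,C)\leq-m\leq-2$. Restricting via
\[
0\to\mathcal{O}_{S}(mL-C)\to\mathcal{O}_{S}(mL)\to\mathcal{O}_{C}(mL|_{C})\to0,
\]
and using $\deg(mL|_{C})=(mL,C)\leq-2$, one gets $h^{1}(C,\mathcal{O}_{C}(mL|_{C}))=-(mL,C)-1\geq1$. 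Feeding $h^{1}(mL)=h^{2}(mL)=0$ into the long exact sequence produces an isomorphism $H^{1}(C,\mathcal{O}_{C}(mL|_{C}))\cong H^{2}(mL-C)$. But $mL-C$ is effective and nonzero (it is $mL$ with one component removed, and $mL=C$ is impossible since $(mL,mL)>0>(C,C)$), so $H^{2}(mL-C)=H^{0}(C-mL)=0$, a contradiction. Hence $L$ is nef.

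The only real obstacle is this nef direction, and the subtle point, easy to overlook, is that the vanishing is imposed on $mL$ with $m\geq2$ and not on $L$ itself. For $m=1$ the assertion is false: if $H$ is ample and $C$ a $(-2)$-curve with $(H,C)=1$, then $L=H+C$ satisfies $h^{1}(L)=h^{2}(L)=0$ and $(L,L)>0$ yet is not nef, because $(L,C)=-1$ only contributes the harmless $\mathcal{O}_{C}(-1)$ in the restriction sequence. Passing to $m\geq2$ pushes the restricted degree down to $\leq-2$, which is exactly what creates the obstructing $H^{1}$ on $\mathbb{P}^{1}$ and drives the contradiction.
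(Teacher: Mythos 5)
Your proof is correct, but it takes a genuinely different route from the paper's. You argue by direct contradiction at the obstructing curve: if $L$ is not nef, the effectivity of $mL$ (forced by $h^{1}=h^{2}=0$ and Riemann--Roch) produces a smooth rational $(-2)$-curve $C$ with $(mL,C)\le -2$, and the restriction sequence for $C$, fed with the hypothesis $h^{1}(mL)=h^{2}(mL)=0$, identifies the nonzero group $H^{1}(C,\mathcal{O}_{C}(mL|_{C}))$ with $H^{2}(mL-C)$, which vanishes because $mL-C$ is effective and nonzero. The paper instead works with the Weyl group: it transports $L$ by a chain of reflections in effective $(-2)$-classes into the nef chamber, obtaining a nef and big class $L_{N}$ with $(L_{N},L_{N})=(L,L)$, and then contradicts the Riemann--Roch equality $h^{0}(mL_{N})=\chi(mL_{N})=\chi(mL)=h^{0}(mL)$ by proving the strict drop $h^{0}(mL_{N})<h^{0}(mL)$, via an exact-sequence argument on the nef side that invokes Saint-Donat to replace an effective $(-2)$-class by a smooth curve. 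Both proofs use $m\ge 2$ at the curve-restriction step, but in opposite directions: the paper needs $\deg((mL_{N}+C)|_{C})\ge 2g_{C}$ to force sections on $C$, whereas you need $\deg(mL|_{C})\le -2$ to force $h^{1}>0$ on $\mathbb{P}^{1}$. Your argument is shorter and more elementary---no Weyl-chamber combinatorics and no appeal to Saint-Donat---and it has the added merit of isolating exactly why the hypothesis $m\ge 2$ cannot be dropped, via your example $L=H+C$ with $(H,C)=1$; the paper's version, for its part, sits naturally alongside the Weyl-chamber formalism it uses for the Burns--Rapoport domain elsewhere.
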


\begin{proof}
The ``only if'' part is the Kodaira-Ramanujam vanishing theorem for $mL$. 
We prove the ``if'' part. 
Suppose $h^{1}(mL)=h^{2}(mL)=0$. 
Since $h^{0}(mL)>0$ by Riemann-Roch, 
$L$ is in the positive cone of $NS(S)_{{\R}}$. 
Assume to the contrary that $L$ is not nef and big. 
Then $L$ is not nef, i.e., not in the closure of the ample cone, 
because a nef bundle $L$ with $(L, L)>0$ is also big. 
Take a Weyl chamber $\mathcal{C}$ of $NS(S)_{{\R}}$ with $L\in \overline{\mathcal{C}}$. 
There exists a sequence of effective $(-2)$-vectors 
$\delta_{1}, \cdots, \delta_{N} \in NS(S)$ 
such that, if $s_{i}$ is the reflection by $\delta_{i}$ and 
$\mathcal{C}_{i}=s_{i}(\mathcal{C}_{i-1})$ with $\mathcal{C}_{0}=\mathcal{C}$, 
then $\mathcal{C}_{N}$ is the ample cone and 
$(\mathcal{C}_{i-1}, \delta_{i})<0$ for all $i$. 
(Connect $\mathcal{C}$ and the ample cone by a general segment.) 
We set $L_{0}=L$ and $L_{i}=s_{i}(L_{i-1})$. 
Then $L_{i}\in \overline{\mathcal{C}}_{i}$. 

We claim that 
$h^{0}(mL_{N})<h^{0}(mL)$. 
Indeed, since $L_{i}=L_{i-1}+(\delta_{i}, L_{i-1})\delta_{i}$ and 
$(\delta_{i}, L_{i-1})\leq 0$, 
we have $h^{0}(mL_{i})\leq h^{0}(mL_{i-1})$ for every $i$. 
We look at $i=N$. 
Note that $(L_{N}, \delta_{k})\geq 0$ for every $k$ because $L_{N}$ is nef. 
Let $j$ be the largest index such that $(L_{N}, \delta_{j})>0$, 
which exists because $L\ne L_{N}$. 
There exists an irreducible curve $C\leq \delta_{j}$ with $(L_{N}, C)>0$. 
By Saint-Donat's result (\cite{SD} \S 2 -- \S 3, see also \cite{Hu} Chapter 2.3), 
we may assume that $C$ is smooth. 
Consider the exact sequence 
\begin{equation*}
0 \to H^{0}(mL_{N}) \to H^{0}(mL_{N}+C) \to H^{0}((mL_{N}+C)|_{C}) \to H^{1}(mL_{N}). 
\end{equation*}
We have $h^{1}(mL_{N})=0$ because $L_{N}$ is nef and big, 
and $h^{0}((mL_{N}+C)|_{C})>0$ because 
${\rm deg}((mL_{N}+C)|_{C})\geq 2g_{C}$ by $m\geq 2$. 
It follows that 
\begin{equation*}
h^{0}(mL_{N}) 
< h^{0}(mL_{N}+C) 
\leq h^{0}(mL_{N}+\delta_{j}) 
= h^{0}(mL_{j}+\delta_{j}) 
\leq h^{0}(mL_{j-1}). 
\end{equation*}
This proves $h^{0}(mL_{N})<h^{0}(mL)$. 

On the other hand, since $(L_{N}, L_{N})=(L, L)$, 
this contradicts the consequence of the Riemann-Roch formula 
\begin{equation*}
h^{0}(mL_{N}) 
= m^{2}(L_{N}, L_{N})/2+2 
= m^{2}(L, L)/2+2 
= h^{0}(mL). 
\end{equation*}
This proves Proposition \ref{prop: h1=h2=0} and 
so finishes the proof of Lemma \ref{lem: nef big stable} and Proposition \ref{prop: pol BR domain}. 
\end{proof}

The fibers of the projection 
$p\colon {\BRg}\to {\Dg}$ are described as follows. 
For $[\omega]\in{\Dg}$ we let 
$NS(\omega, l)=l^{\perp}\cap NS(\omega)$ and 
$\Delta(\omega, l)$ be the set of $(-2)$-vectors in $NS(\omega, l)$. 
Since $NS(\omega, l)$ is negative-definite, 
the root lattice $R(\omega, l)\subset NS(\omega, l)$  
generated by $\Delta(\omega, l)$ is an orthogonal sum of some ADE root lattices. 
Let $W(\omega, l)$ be its Weyl group. 

\begin{lemma}\label{lem:fiber of pol BR}
The fiber $p^{-1}([\omega])$ is identified with 
the set of Weyl chambers of $R(\omega, l)_{{\R}}$. 
In particular, $W(\omega, l)$ acts on $p^{-1}([\omega])$ freely and transitively. 
\end{lemma}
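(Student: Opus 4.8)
The plan is to unwind the definitions of the polarized Burns--Rapoport domain and match the fiber $p^{-1}([\omega])$ with Weyl chambers of the root lattice $R(\omega,l)_{\mathbb{R}}$, then read off the free transitive $W(\omega,l)$-action. First I would recall that a point of ${\BRg}$ over $[\omega]$ is a pair $([\omega],\mathcal{C})$ where $\mathcal{C}$ is a Weyl chamber of $\mathcal{V}(\omega)$ whose closure contains $l$. So the fiber is exactly the set of Weyl chambers of $\mathcal{V}(\omega)$ having $l$ in their closure. The content of the lemma is that this set is canonically the set of Weyl chambers of the (much smaller) negative-definite lattice $R(\omega,l)_{\mathbb{R}}$.

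The key geometric step is to understand how the chamber structure of $\mathcal{V}(\omega)$, cut out by all hyperplanes $\delta^{\perp}$ for $\delta\in\Delta(\omega)$, looks near the point $l$. I would argue that the only walls $\delta^{\perp}$ passing through $l$ are those with $(\delta,l)=0$, i.e.\ $\delta\in\Delta(\omega,l)$, while the walls with $(\delta,l)\ne 0$ stay at positive distance from $l$ and hence do not separate chambers whose closure contains $l$. Thus, locally around $l$, the hyperplane arrangement on $\mathcal{V}(\omega)$ is identical to the arrangement of the $\delta^{\perp}$ with $\delta\in\Delta(\omega,l)$. Since $l\in \mathrm{span}(\Delta(\omega,l))^{\perp}$, splitting $H^{1,1}_{\mathbb{R}}(\omega)$ orthogonally as $R(\omega,l)_{\mathbb{R}}\oplus R(\omega,l)_{\mathbb{R}}^{\perp}$ shows that the chambers touching $l$ are in bijection with the connected components of $R(\omega,l)_{\mathbb{R}}$ minus the reflection hyperplanes, i.e.\ the Weyl chambers of $R(\omega,l)_{\mathbb{R}}$. (One must check $l$ lands in a single component of the orthogonal factor, which holds since the positivity $(l,l)=2g-2>0$ places $l$ in the positive cone and the relevant walls only live in the negative-definite factor.)

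For the final assertion, I would note that $W(\omega,l)$ is generated by the reflections $s_\delta$ with $\delta\in\Delta(\omega,l)$, and these act on $\mathcal{V}(\omega)$ fixing $l$ (since $(\delta,l)=0$) and permuting exactly the chambers adjacent to $l$. A finite real reflection group acts simply transitively on its Weyl chambers; this is the standard chamber geometry of a finite Coxeter group, applied to the Weyl group of the ADE root lattice $R(\omega,l)$. Transporting this along the bijection established above yields the free and transitive action of $W(\omega,l)$ on $p^{-1}([\omega])$.

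I expect the main obstacle to be the \emph{local} comparison of the two arrangements, i.e.\ showing rigorously that the walls $\delta^{\perp}$ with $(\delta,l)\ne 0$ do not subdivide any chamber whose closure contains $l$. This requires a neighborhood argument: since $\Delta(\omega)$ is locally finite and each such $\delta^{\perp}$ misses $l$, one can pass to a small cone around $l$ inside $\mathcal{V}(\omega)$ on which only the walls through $l$ are present, and then verify that the germ of the chamber decomposition there is governed solely by $R(\omega,l)$. The orthogonal decomposition of $NS(\omega,l)$ into ADE summands, already recorded before the lemma, is what guarantees $W(\omega,l)$ is the full finite reflection group controlling these local chambers.
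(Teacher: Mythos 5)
Your proposal is correct and takes essentially the same approach as the paper: both start from the fact that the fiber is the set of Weyl chambers of $\mathcal{V}(\omega)$ containing $l$ in their closure, identify these with the chambers of the sub-arrangement cut out by $\Delta(\omega,l)$ alone, and hence with the Weyl chambers of $R(\omega,l)_{\mathbb{R}}$, the free transitive action then being the standard simple transitivity of a finite reflection group on its chambers. The only cosmetic difference is that the paper phrases the middle step globally (each connected component of $\mathcal{V}^{+}(\omega)-\bigcup_{\delta\in\Delta(\omega,l)}\delta^{\perp}$ contains a unique Weyl chamber touching $l$), whereas you argue locally in a small ball around $l$ via local finiteness of the arrangement.
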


\begin{proof}
The fiber $p^{-1}([\omega])$ is the set of 
Weyl chambers of $\mathcal{V}(\omega)$ 
which contains $l$ in its closure. 
If $\mathcal{V}^{+}(\omega)$ is the component of 
$\mathcal{V}(\omega)$ containing $l$,  
every connected component of 
$\mathcal{V}^{+}(\omega)- \cup_{\delta\in \Delta(\omega, l)}\delta^{\perp}$  
contains a unique such Weyl chamber.  
Those components in turn correspond bijectively with the Weyl chambers of $R(\omega, l)_{{\R}}$. 
\end{proof}

This shows that 
${\Fg}\simeq {\OLg}\backslash {\BRg}$, 
so ${\Fg}$ is the moduli space of quasi-polarized $K3$ surfaces of genus $g$. 
Equivalently, contracting the $(-2)$-curves orthogonal to the quasi-polarization, 
${\Fg}$ is also the moduli space of 
polarized $K3$ surfaces of genus $g$ with at worst rational double points.

\subsubsection{Universal family over ${\BRg}$}\label{sssec: univ family over BRg}

Let $({\XBR}\to{\BR}, \varphi)$ be the universal marked family  
over ${\BR}$ as in \S \ref{sssec:BR domain}. 
By restriction to the submanifold ${\BRg}$, 
we obtain a marked family 
$({\XgBR}\stackrel{\pi}{\to}{\BRg}, \varphi)$ over ${\BRg}$. 
If we take a sufficiently fine open covering 
$(\mathcal{U}_{\alpha})_{\alpha}$ 
of ${\BRg}$ and put 
$\mathcal{X}_{\alpha}=\pi^{-1}(\mathcal{U}_{\alpha})$, 
there exists a collection 
$\mathcal{L}=(\mathcal{L}_{\alpha})_{\alpha}$ 
of line bundles, 
each $\mathcal{L}_{\alpha}$ on $\mathcal{X}_{\alpha}$, such that 
$[(\mathcal{L}_{\alpha})_{u}]=\varphi_{u}^{-1}(l)$ for all $u\in \mathcal{U}_{\alpha}$ 
(see, e.g., \cite{Hu} p.110). 
We will call 
$(\mathcal{X}_{\alpha}\to \mathcal{U}_{\alpha}, \mathcal{L}_{\alpha}, \varphi)$ 
a (marked quasi-polarized) Kuranishi family. 
Over $\mathcal{X}_{\alpha}\cap \mathcal{X}_{\beta}$ 
there exists an isomorphism 
$\mathcal{L}_{\alpha}|_{\mathcal{X}_{\alpha}\cap \mathcal{X}_{\beta}} \to 
\mathcal{L}_{\beta}|_{\mathcal{X}_{\alpha}\cap \mathcal{X}_{\beta}}$ 
unique up to 
$\mathcal{O}^{\ast}(\mathcal{U}_{\alpha}\cap \mathcal{U}_{\beta})$. 
In particular, for every $k$, 
the local ${\proj}^{N}$-bundles 
${\proj}(\pi_{\ast}\mathcal{L}_{\alpha}^{\otimes k})^{\vee}$ 
are canonically glued to a ${\proj}^{N}$-bundle over ${\BRg}$. 
By abuse of notation, we denote this by 
${\proj}(\pi_{\ast}\mathcal{L}^{\otimes k})^{\vee}$. 
(As we will use only the projective morphisms 
$\mathcal{X}_{\alpha}\to {\proj}(\pi_{\ast}\mathcal{L}_{\alpha}^{\otimes 3})^{\vee}$, 
the local collection $(\mathcal{L}_{\alpha})$ is sufficient for our purpose.)

Since the ${\OLg}$-action on ${\BR}$ preserves ${\BRg}$, 
the equivariant action on ${\XBR}$ preserves ${\XgBR}$. 
Thus ${\OLg}$ acts on ${\XgBR}\to{\BRg}$ equivariantly. 
If $\gamma\in{\OLg}$ and $[(S, \varphi)]\in{\BRg}$, 
then $\gamma$ sends $[(S, \varphi)]$ to $[(S, \gamma\circ\varphi)]$, 
and the isomorphism between the fibers over them is given by 
``the identity'' $S\to S$ of $S$.

\subsection{Extension of the universal family}\label{ssec: extend family}

We now construct a family ${\Xg}\to{\Dg}$  
as in Proposition \ref{prop:extend family} (1). 
We do this in two steps: 
first contract ${\XgBR}$ over ${\BRg}$, 
and then show that this descends to a family over ${\Dg}$. 

Let $({\XgBR}\stackrel{\pi}{\to}{\BRg}, \mathcal{L}, \varphi)$ be the universal family 
as in \S \ref{sssec: univ family over BRg}. 
Since $\mathcal{L}_{u}^{\otimes 3}$ is base point free for every $u\in{\BRg}$ 
(\cite{SD} Theorem 8.3, see also \cite{Hu} Chapter 2 Remark 3.4), 
we can take the relative projective morphism 
${\XgBR}\to {\proj}(\pi_{\ast}\mathcal{L}^{\otimes 3})^{\vee}$ 
over ${\BRg}$. 
Let ${\Xgcont}$ be the image of this morphism. 
This is a flat projective family of $K3$ surfaces with at worst rational double points. 
By \cite{EGA4} Corollaire 6.5.4, ${\Xgcont}$ is normal. 
At each fiber, $({\XgBR})_{u}\to ({\Xgcont})_{u}$ contracts 
all $(-2)$-curves orthogonal to $\mathcal{L}_{u}$. 

\begin{proposition}\label{prop:descend family}
There exists a family ${\Xg}\to{\Dg}$ of $K3$ surfaces with at worst rational double points 
such that ${\Xgcont}$ is isomorphic to the pullback of ${\Xg}$ to ${\BRg}$. 
\end{proposition}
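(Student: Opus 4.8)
The plan is to descend the contracted family ${\Xgcont}\to{\BRg}$ along the projection $p\colon{\BRg}\to{\Dg}$. The geometric content is that $p$ is \'etale, that its fibres over the Heegner divisor $\mathcal{H}$ are the extra sheets created by the choice of Weyl chamber, and that contracting the $(-2)$-curves orthogonal to $l$ precisely kills this ambiguity, so that ${\Xgcont}$ is constant along the fibres of $p$. First I would record the local structure of $p$. By Proposition \ref{prop: pol BR domain} the map $p$ is the restriction to the submanifolds ${\BRg}\subset{\BR}$, ${\Dg}\subset{\DK}$ of the projection ${\BR}\to{\DK}$, which is a local biholomorphism by local Torelli; since ${\BRg}$ and ${\Dg}$ have the same dimension, $p$ is itself a local biholomorphism. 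Hence every point of ${\Dg}$ has a neighborhood $V$ over which $p$ admits a holomorphic section $\sigma\colon V\to{\BRg}$, namely an inverse of $p$ on one sheet. Over $\Dgo$ there is no $(-2)$-vector of $\Lg$ orthogonal to $\omega$, so $R(\omega,l)=0$ and the fibres of $p$ are singletons by Lemma \ref{lem:fiber of pol BR}; thus $p$ restricts to an isomorphism $p^{-1}(\Dgo)\xrightarrow{\sim}\Dgo$.

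The key step is the constancy of ${\Xgcont}$ along the fibres of $p$. By Lemma \ref{lem:fiber of pol BR} the fibre $p^{-1}([\omega])$ is a single orbit of $W(\omega,l)$, the Weyl group of the root system generated by the $(-2)$-vectors $\delta\in NS(\omega,l)=l^{\perp}\cap NS(\omega)$. For such a $\delta$ the reflection $s_{\delta}$ lies in ${\OLg}$, since it acts trivially on $\Lg^{\vee}/\Lg$ (as $(\,\cdot\,,\delta)\in\Z$ on $\Lg^{\vee}$), and it fixes $l$ because $(l,\delta)=0$. Therefore $W(\omega,l)\subset{\OLg}$, and for $w\in W(\omega,l)$ the equivariant ${\OLg}$-action on ${\XgBR}$ sends a point $b=[(S,\varphi)]$ of $p^{-1}([\omega])$ to $w\cdot b=[(S,w\circ\varphi)]$ with fibre map the identity of $S$. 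Since $w$ fixes $l$, the quasi-polarization $\varphi^{-1}(l)$ is unchanged, so after passing to the contraction ${\Xgcont}$, which contracts exactly the $(-2)$-curves orthogonal to this class, the fibres of ${\Xgcont}$ over $b$ and over $w\cdot b$ are the same rational-double-point $K3$ surface and the induced fibre map is the identity. Hence ${\Xgcont}$ is canonically constant along the fibres of $p$.

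With these two facts the descent is routine. Over each $V$ as above I set ${\Xg}|_{V}:=\sigma^{*}{\Xgcont}$, which is a flat projective family of $K3$ surfaces with at worst rational double points, these properties being inherited from ${\Xgcont}$. On an overlap $V\cap V'$ the two sections $\sigma,\sigma'$ differ fibrewise by an element of $W(\omega,l)\subset{\OLg}$, and the constancy step supplies a canonical isomorphism $\sigma^{*}{\Xgcont}\xrightarrow{\sim}(\sigma')^{*}{\Xgcont}$ covering the identity of $V\cap V'$. These isomorphisms are induced by the genuine ${\OLg}$-action, each being the identity on the contracted surface, so they satisfy the cocycle condition automatically and the local families glue to a global family $\pi\colon{\Xg}\to{\Dg}$. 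By construction $p^{*}{\Xg}\cong{\Xgcont}$, since over a sheet $\sigma(V)$ the pullback returns ${\Xgcont}|_{\sigma(V)}$; and over $\Dgo$ this recovers the original universal family ${\Xgo}$, which gives the extension asserted in Proposition \ref{prop:extend family}(1).

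The main obstacle is the constancy step: one must verify that the sheets of $p$ glued non-Hausdorffly over $\mathcal{H}$ are identified by elements of ${\OLg}$ that fix $l$, so that the contraction is insensitive to the choice of Weyl chamber. Once the fibrewise identifications are recognized as the identity of a fixed rational-double-point surface, the non-Hausdorffness of ${\BRg}$ becomes harmless, because the gluing and the cocycle condition are checked entirely on the Hausdorff base ${\Dg}$.
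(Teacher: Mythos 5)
Your setup is sound and matches the paper's framework: $p\colon{\BRg}\to{\Dg}$ is a local biholomorphism by local Torelli, an isomorphism over ${\Dgo}$, the fibres over $\mathcal{H}$ are $W(\omega,l)$-orbits with $W(\omega,l)\subset{\OLg}$, and the Weyl action identifies the contracted fibres by the identity map. But the step you call routine is precisely where the entire difficulty of the proposition lives, and your argument for it is only fibrewise. To glue $\sigma^{*}{\Xgcont}$ and $(\sigma')^{*}{\Xgcont}$ you need a \emph{holomorphic} isomorphism of families over $V\cap V'$ covering the identity of the base. The equivariant action of $w$ cannot supply it directly: that action covers the nontrivial automorphism $w$ of ${\Dg}$ (it sends the fibre over $b$ to the fibre over $w\cdot b$, which lies over $w\cdot p(b)$, not over $p(b)$), so it is not a gluing datum over the identity. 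What you actually have is: (i) the canonical identification of the two pullbacks over $(V\cap V')\setminus\mathcal{H}$, where $\sigma=\sigma'$; and (ii) set-theoretic, fibre-by-fibre identifications over $\mathcal{H}$ coming from the Weyl action. The missing content is that (i) extends holomorphically across $\mathcal{H}$ (and is then compatible with (ii)); nothing in your argument addresses this extension.

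That this gap is genuine, and not a formality, is shown by the uncontracted family: your constancy argument applies verbatim to ${\XgBR}$ --- the fibres of ${\XgBR}$ over $b=[(S,\varphi)]$ and $w\cdot b=[(S,w\circ\varphi)]$ are literally the same surface $S$, identified by ${\rm id}_{S}$ --- yet ${\XgBR}$ does \emph{not} descend to ${\Dg}$. Indeed, by the paper's analysis (see \eqref{eqn:limit correspondence} and the proof of Proposition \ref{prop: reflection on fiber}), the limit as $u\to u_{0}\in\mathcal{P}^{-1}(\mathcal{H})$ of the graphs of the identifications off $\mathcal{H}$ is not a graph: it is ${\rm graph}(g)+\sum_{i,j}a_{i,j}C_{i}\times C_{j}'$ (in the generic case ${\rm graph}({\rm id})+E\times E$), so the identification has indeterminacy along the $(-2)$-curves and does not extend for the smooth family --- this is the flop phenomenon. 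The contraction must therefore enter at the level of the family, not merely of fibres. The paper's proof supplies exactly what is needed: a volume bound on the graphs in the style of Burns--Rapoport \cite{BR} plus Bishop's theorem \cite{Bi} to show the closure of the graph is analytic; the limit-cycle decomposition \eqref{eqn:limit correspondence}; and a Matsusaka--Mumford argument \cite{MM} proving $(C_{i},L)=(C_{j}',L')=0$ whenever $a_{i,j}>0$, so that the extra components are killed by the contraction and the map extends to an isomorphism of the contracted families off their singular loci, hence everywhere by normality. Your cocycle remark is fine \emph{once} this extension is established (it is the analytic continuation of the identity off $\mathcal{H}$), but without the extension the proof is incomplete.
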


\begin{proof}
Let  
$(\mathcal{X}\to \mathcal{U}, \mathcal{L}, \varphi)$ 
be a marked quasi-polarized Kuranishi family of 
$(S, L, \varphi)=(\mathcal{X}_{u_{0}}, \mathcal{L}_{u_{0}}, \varphi_{u_{0}})$. 
We set $[\omega]=\mathcal{P}(u_{0})$ and  
$\mathcal{U}^{\circ} = \mathcal{U} - \mathcal{P}^{-1}(\mathcal{H})$. 
We may assume that $[\omega]\in\mathcal{H}$.  
By Lemma \ref{lem:fiber of pol BR}, 
a point in the same fiber of ${\BRg}\to{\Dg}$ as $(S, L, \varphi)$ 
can be written as $(S', L', \varphi')=(S, L, w\circ \varphi)$ 
for some $w\in W(\omega, l)$. 
The Kuranishi family of $(S', L', \varphi')$ is 
$(\mathcal{X}'\to \mathcal{U}', \mathcal{L}', \varphi')=
(\mathcal{X}\to \mathcal{U}, \mathcal{L}, w\circ\varphi)$. 
Outside $\mathcal{H}$ 
the two marked families are isomorphic: 
the gluing is given by the equivariant action of $w^{-1}$ on $\mathcal{X}|_{\mathcal{U}^{\circ}}$, 
which we denote by 
\begin{equation*}
f : \mathcal{X}|_{\mathcal{U}^{\circ}} \to 
\mathcal{X}|_{\mathcal{U}^{\circ}} = \mathcal{X}'|_{(\mathcal{U}')^{\circ}}. 
\end{equation*}
It suffices to show that the closure of the graph of $f$ in 
$\mathcal{X}\times_{\mathcal{U}}\mathcal{X'}$ 
is an analytic set, 
and that its fiber over $u_{0}$, as a correspondence between $S$ and $S'$, 
gives an isomorphism between the complements of the exceptional divisors of 
$S\to \bar{S}$ and $S'\to \bar{S}'$. 
This would imply that $f$ extends to an isomorphism 
$\bar{\mathcal{X}}\backslash {\rm Sing}(\bar{\mathcal{X}}) \to 
\bar{\mathcal{X}}'\backslash {\rm Sing}(\bar{\mathcal{X}}')$ 
where $\bar{\mathcal{X}}$, $\bar{\mathcal{X}}'$ are 
the relative contractions of $\mathcal{X}$, $\mathcal{X}'$ respectively,  
which then extends to an isomorphism 
$\bar{\mathcal{X}} \to \bar{\mathcal{X}}'$ 
by normality. 

In order to prove our claim, we use a variant of the argument of Burns-Rapoport \cite{BR}. 
For each $u\in \mathcal{U}^{\circ}$, 
$f_{u}\colon \mathcal{X}_{u}\to \mathcal{X}_{w^{-1}u}$ satisfies 
\begin{equation*}
(f_u)_{\ast} = \varphi_{w^{-1}u}^{-1} \circ w^{-1} \circ \varphi_{u} : 
H^{2}(\mathcal{X}_{u}, {\Z}) \to H^{2}(\mathcal{X}_{w^{-1}u}, {\Z}). 
\end{equation*}
By the same argument as \cite{BR} p.248, the volume of the graph of $f_{u}$ is bounded. 
This shows that the graph of $f$ has finite volume, 
hence its closure is an analytic subset of 
$\mathcal{X}\times_{\mathcal{U}}\mathcal{X'}$ 
by \cite{Bi} Theorem 3. 
The limit cycle 
\begin{equation*}
\Gamma = \lim_{u\to u_{0}} {\rm graph}(f_{u})  
\; \; \subset \; \; S\times S'  
\end{equation*}
is of pure dimension $2$ and satisfies 
\begin{equation}\label{eqn: H2 action of limit cycle}
\Gamma_{\ast} = \varphi_{u_{0}}^{-1} \circ w^{-1} \circ \varphi_{u_{0}} : 
H^{2}(S, {\Z}) \to H^{2}(S', {\Z}).  
\end{equation}
By the same argument as \cite{BR} p.248 -- p.250, 
$\Gamma$ can be written as 
\begin{equation}\label{eqn:limit correspondence}
\Gamma = \textrm{graph}(g) + \sum_{i,j}a_{i,j}C_{i}\times C_{j}', 
\end{equation}
where $g$ is an isomorphism 
$S\to S'$, 
$a_{i,j}\geq0$, 
and $C_{i}, C_{j}'$ are irreducible curves on 
$S$, $S'$ respectively. 
What has to be shown is that 
$(C_{i}, L)=0$ and $(C'_{j}, L')=0$ 
whenever $a_{i,j}>0$. 
We use a variant of the argument of Matsusaka-Mumford 
in the proof of Theorem 2 of \cite{MM}. 

Let $U$ be the open set of $|3L|$ consisting of members $D\in |3L|$ 
such that $D\not\supset C_{k}$ for every $k$. 
Since $\Gamma_{\ast}([L])=[L']$, 
the correspondence by $\Gamma$ gives a birational map 
$\Gamma_{\ast}:|3L|\dashrightarrow |3L'|$, 
whose domain of definition contains $U$ 
and is injective over $U$ (\cite{MM} p.671).   
For $D\in U$ we have   
\begin{equation*}
\Gamma_{\ast}(D) = g(D) + \sum_{i,j}a_{i,j}(C_{i}, 3L)C_{j}'. 
\end{equation*}  
We suppose, to the contrary, that $(C_i, L)> 0$ for some $i$. 
For a general point $p$ of $C_{i}$ we set 
$U_{p}=U\cap |3L-p|$. 
Then $U_{p}$ is non-empty and of codimension $1$,  
so $\Gamma_{\ast}(U_p)$ is of codimension $1$ in $|3L'|$. 
If $a_{i,j}>0$, $\Gamma_{\ast}(U_{p})$ is contained in $|3L'-C_{j}'|+C_{j}'$. 
Since $3L'$ is base point free, 
we have $\dim |3L'-C_{j}'|<\dim |3L'|$, 
so $\Gamma_{\ast}(U_{p})$ is an open subset of $|3L'-C_{j}'|+C_{j}'$. 
Noticing that $p$ is any general point of $C_{i}$, 
we see that $U_{p}\cap U_{q}$ is open dense in both $U_{p}$ and $U_{q}$ 
for two general points $p, q$ of $C_i$. 
This contradicts that $3L$ separates two general points of $C_{i}$. 
Hence $(C_i, L)=0$ for every $i$. 
The assertion $(C'_j, L')=0$ can be proved similarly. 
\end{proof}

By construction, 
the isomorphism $g\colon S\to S'$ in \eqref{eqn:limit correspondence} 
is the minimal resolution of the gluing isomorphism $\bar{S}\to \bar{S}'$. 
As will be shown later (Corollary \ref{cor: Weyl group act trivially}), 
this coincides with the equivariant action by $w$.

\subsection{The ${\OLg}$-action}\label{ssec: OLg-action}

Next we prove that 
${\OLg}$ acts on ${\Xg}$ and determine its fixed divisor. 
Since the relative morphism ${\XgBR}\to {\proj}(\pi_{\ast}\mathcal{L}^{\otimes 3})^{\vee}$ 
is ${\OLg}$-equivariant, 
${\OLg}$ acts on ${\Xgcont}\to{\BRg}$ equivariantly. 

\begin{lemma}
The action of ${\OLg}$ on ${\Xgcont}\to{\BRg}$ 
descends to an equivariant action on ${\Xg}\to{\Dg}$. 
\end{lemma}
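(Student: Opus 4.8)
The plan is to show that the $\OLg$-action on $\Xgcont \to \BRg$ is compatible with the contraction of the fibration $p\colon \BRg \to \Dg$, so that it factors through a well-defined action on $\Xg \to \Dg$. First I would recall that by Lemma \ref{lem:fiber of pol BR} the fibers of $p$ are torsors under the finite Weyl groups $W(\omega,l)$, and that $\Xg$ was constructed precisely as the descent of $\Xgcont$ along the relation identifying fibers of $\Xgcont$ that lie over a single point of $\Dg$. The gluing data for this descent is exactly the equivariant action of $W(\omega,l) \subset \OLg$ furnished in Proposition \ref{prop:descend family}. So the content of the lemma is that the full group $\OLg$ permutes these fibers in a way that respects the descent equivalence.

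The key point is that the $\OLg$-action on $\BRg$ commutes with $p$ in the appropriate sense: for $\gamma \in \OLg$, the automorphism of $\BRg$ sending $[(S,\varphi)] \mapsto [(S, \gamma\circ\varphi)]$ covers the automorphism $[\omega] \mapsto [\gamma\omega]$ of $\Dg$ induced by $\gamma$ acting on $\Lg$. In particular, $\gamma$ carries the fiber $p^{-1}([\omega])$ to the fiber $p^{-1}([\gamma\omega])$. Since the Weyl group descent relation on $\Xgcont$ is the one that collapses each $p$-fiber to a single fiber of $\Xg$, I would verify that $\gamma$ intertwines the Weyl-group torsor structure: for $w \in W(\omega,l)$, conjugation sends $w$ to $\gamma w \gamma^{-1} \in W(\gamma\omega, l)$ (this uses that $\gamma$ fixes $l$, as $\gamma \in \OLg$ stabilizes $l$ by Nikulin's identification recalled in \S\ref{sec:2}, and maps $(-2)$-vectors of $NS(\omega,l)$ to those of $NS(\gamma\omega,l)$). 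Hence $\gamma$ maps the descent equivalence relation to itself, and the equivariant isomorphisms on total spaces — given fiberwise by "the identity map $S \to S$" as in \S\ref{sssec: univ family over BRg} — are compatible with the gluing isomorphisms $g$ of \eqref{eqn:limit correspondence}.

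Concretely, I would assemble these observations as follows. Because the descent $\Xgcont \to \Xg$ is a quotient by the étale equivalence relation generated by the $W(\omega,l)$-gluings, to produce an action of $\OLg$ on $\Xg$ it suffices to exhibit, for each $\gamma$, an automorphism of $\Xgcont$ covering the given action on $\BRg$ and commuting with the relation up to the identifications already built into the descent. The equivariant action of $\gamma$ on $\XgBR$ from \S\ref{sssec: univ family over BRg} descends to $\Xgcont$ (this was noted at the start of \S\ref{ssec: OLg-action}, using $\OLg$-equivariance of the morphism to $\proj(\pi_{\ast}\mathcal{L}^{\otimes 3})^{\vee}$), and the compatibility $\gamma w = (\gamma w \gamma^{-1})\gamma$ of the $W$-torsor structure shows this automorphism respects the fibers of $\Xgcont \to \Xg$. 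By the universal property of the quotient it therefore descends to an automorphism of $\Xg$ covering the $\OLg$-action on $\Dg$, and functoriality of descent gives that $(\gamma_1\gamma_2)$ acts as $\gamma_1 \circ \gamma_2$.

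The main obstacle I anticipate is the bookkeeping around the non-Hausdorff structure of $\BRg$ and the fact that the descent $\Xgcont \to \Xg$ is only an isomorphism away from $\mathcal{H}$, being a genuine contraction of graphs as in \eqref{eqn:limit correspondence} over $\mathcal{H}$. One must check that $\gamma$ carries the limit cycles $\Gamma$ of Proposition \ref{prop:descend family} for the pair $([\omega], w)$ to those for $([\gamma\omega], \gamma w \gamma^{-1})$ — i.e.\ that the extension of $f$ over $\mathcal{H}$ is itself $\OLg$-equivariant — so that the descended automorphism is well-defined and single-valued on $\Xg$ even along the singular locus. This should follow from the uniqueness of the analytic closure of the graph together with the $\OLg$-equivariance already available on $\XgBR$, but it is the step where I would be most careful, since it is precisely where the fibers degenerate and the Weyl-chamber combinatorics genuinely enter.
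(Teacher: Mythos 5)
Your proposal is correct and takes essentially the same route as the paper: both reduce the lemma to showing that $\gamma$ conjugates the gluing isomorphism $g$ attached to $([\omega],w)$ into the one $g'$ attached to $([\gamma\omega],\gamma w\gamma^{-1})$. The step you flagged at the end is precisely what the paper carries out, by extending $\gamma$ to a map of Kuranishi families and computing $\lim_{u'}{\rm graph}(f'_{u'})=\lim_{u}{\rm graph}(f'_{\gamma u})=\gamma(\lim_{u}{\rm graph}(f_{u}))$, whose main component is the graph of $\gamma\circ g\circ\gamma^{-1}$.
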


\begin{proof}
Let $\gamma\in{\OLg}$ and $[\omega]\in \mathcal{H}$. 
Let $[(S_{1}, \varphi_{1})]$, 
$[(S_{2}, \varphi_{2})]=[(S_{1}, w\circ \varphi_{1})]$ 
be two points in the fiber of ${\BRg}\to{\Dg}$ over $[\omega]$ where $w\in W(\omega, l)$,  
and $g\colon S_{1} \to S_{2}$ be the resolution of the gluing isomorphism between 
$\bar{S}_{1}$ and $\bar{S}_{2}$. 
Let  
$[(S_{1}', \varphi_{1}')]=\gamma[(S_{1}, \varphi_{1})]$, 
$w'=\gamma w \gamma^{-1}$, and 
$[(S_{2}', \varphi_{2}')]=[(S_{1}', w'\circ\varphi_{1}')]=\gamma[(S_{2}, \varphi_{2})]$. 
What has to be checked is that 
\begin{equation*}
\gamma \circ g \circ \gamma^{-1} : 
S_{1}' \to S_{1} \to S_{2} \to S_{2}' 
\end{equation*}
coincides with the resolution $g'$ of the gluing isomorphism between 
$\bar{S}_{1}'$ and $\bar{S}_{2}'$.  

Let 
$(\mathcal{X}\to\mathcal{U}, \mathcal{L}, \varphi_{i})$ 
and 
$(\mathcal{X}'\to\mathcal{U}', \mathcal{L}', \varphi_{i}')$
be Kuranishi families of 
$(S_{i}, L_{i}, \varphi_{i})$ 
and 
$(S_{i}', L_{i}', \varphi_{i}')$   
respectively.   
Then 
$\gamma \colon S_{i}\to S_{i}'$ extends to 
$\gamma \colon \mathcal{X}\to \mathcal{X}'$. 
Recall that 
${\rm graph}(g)$ and ${\rm graph}(g')$ 
are the main components of the limit cycles 
$\lim_{u}{\rm graph}(f_{u})$ and $\lim_{u'}{\rm graph}(f'_{u'})$ 
respectively, where 
$f$, $f'$ are the equivariant actions by $w^{-1}$, $(w')^{-1}$ 
on $\mathcal{X}, \mathcal{X}'$ respectively. 
Since 
$\gamma({\rm graph}(f_{u})) = {\rm graph}(f'_{\gamma u})$, 
we have   
\begin{equation*}
\lim_{u'}{\rm graph}(f'_{u'}) = 
\lim_{u} {\rm graph}(f'_{\gamma u}) = 
\lim_{u} \gamma({\rm graph}(f_{u})) = 
\gamma( \lim_{u}{\rm graph}(f_{u})). 
\end{equation*}
The main component of $\gamma( \lim_{u}{\rm graph}(f_{u}))$ 
is $\gamma({\rm graph}(g))$, 
which is the graph of $\gamma \circ g \circ \gamma^{-1}$. 
\end{proof}

We study the action of the $(-2)$-reflections. 

\begin{proposition}\label{prop: reflection on fiber}
Let $\delta$ be a $(-2)$-vector of ${\Lg}$, 
$s_{\delta}\in{\OLg}$ be the reflection with respect to $\delta$, 
and $\mathcal{H}_{\delta}=\delta^{\perp}\cap {\Dg}$. 
Then $s_{\delta}$ acts on ${\Xg}|_{\mathcal{H}_{\delta}}$ trivially. 
\end{proposition}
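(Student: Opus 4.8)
\emph{Proof proposal.} The plan is to use that $s_{\delta}$ fixes $\mathcal{H}_{\delta}$ pointwise, reduce to a single very general fiber where the degeneration is an $A_1$-node, and there identify the fiber action with the gluing isomorphism of Proposition \ref{prop:descend family}, which I compute to be the identity via the Torelli theorem.

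First I would observe that $s_{\delta}$ restricts to the identity on $\delta^{\perp}$, hence fixes every point of $\mathcal{H}_{\delta}=\delta^{\perp}\cap\Dg$. Therefore $s_{\delta}$ preserves each fiber of $\pi\colon\Xg\to\Dg$ over $\mathcal{H}_{\delta}$ and defines a fiberwise automorphism of $\Xg|_{\mathcal{H}_{\delta}}$. The locus $F=\{x : s_{\delta}(x)=x\}$ is a closed analytic subset of $\Xg|_{\mathcal{H}_{\delta}}$. Since $\dim\mathcal{H}_{\delta}=18$ and the fibers have dimension $2$, a dimension count shows that it suffices to prove $s_{\delta}$ acts trivially on the fiber $\Xg_{[\omega]}$ for $[\omega]$ very general in $\mathcal{H}_{\delta}$: indeed, if $F$ contains the $2$-dimensional fibers over the (Zariski dense) set of very general points, then $F$ has dimension $20$ on each $20$-dimensional component of $\Xg|_{\mathcal{H}_{\delta}}$, hence equals the whole space.

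Now fix such an $[\omega]$, so that $NS(\omega,l)=\Z\delta$ and $R(\omega,l)=\langle\delta\rangle$ is of type $A_{1}$. The fiber $\Xg_{[\omega]}=\bar S$ is a $K3$ surface with a single node, with minimal resolution the marked smooth $K3$ surface $(S,\varphi)$ of period $[\omega]$; put $D=\varphi^{-1}(\delta)$, the class of the contracted $(-2)$-curve $E$. By Lemma \ref{lem:fiber of pol BR} the fiber of $\BRg\to\Dg$ over $[\omega]$ consists of the two points $P_{1}=[(S,\varphi)]$ and $P_{2}=s_{\delta}P_{1}=[(S,s_{\delta}\circ\varphi)]$; as $s_{\delta}(l)=l$, these carry the same underlying surface $S$ and the same contracted surface $\bar S$. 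The equivariant action of $s_{\delta}$ on $\XgBR$, and hence on $\Xgcont$, sends the fiber over $P_{1}$ to the fiber over $P_{2}$ by ``the identity'' of $S$, while the descent to $\Xg$ in Proposition \ref{prop:descend family} identifies these two fibers through the gluing isomorphism $\bar g\colon\bar S\to\bar S$ obtained by contracting the resolution $g$ of \eqref{eqn:limit correspondence}. Chasing these identifications, the action of $s_{\delta}$ on $\Xg_{[\omega]}$ is, up to the canonical identifications, the map $\bar g$; so it suffices to show $\bar g={\rm id}$, equivalently $g={\rm id}_{S}$.

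To see this I would compute $g_{\ast}$ on $H^{2}(S,\Z)$. By \eqref{eqn: H2 action of limit cycle} the limit cycle satisfies $\Gamma_{\ast}=\varphi^{-1}\circ s_{\delta}\circ\varphi=s_{D}$, the reflection in $D$, while by \eqref{eqn:limit correspondence} we have $\Gamma={\rm graph}(g)+a\,E\times E$ with $a\geq0$, since $E$ is the only contracted curve. As the correspondence $E\times E$ sends $x$ to $a(x,D)D$, comparing components gives $g_{\ast}(x)=s_{D}(x)=x$ for all $x\in D^{\perp}$, and $g_{\ast}(D)=(2a-1)D$; since $g_{\ast}$ is an isometry this forces $a\in\{0,1\}$. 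The value $a=0$ would give $g_{\ast}=s_{D}$, sending the effective class $D$ to $-D$ and not preserving the ample cone, contradicting that $g$ is an isomorphism; hence $a=1$ and $g_{\ast}(D)=D$. Thus $g_{\ast}={\rm id}$ on all of $H^{2}(S,\Z)$, so by the strong Torelli theorem $g={\rm id}_{S}$, whence $\bar g={\rm id}$ and $s_{\delta}$ acts trivially on $\Xg_{[\omega]}$. The dimension argument of the first step then upgrades this to triviality on all of $\Xg|_{\mathcal{H}_{\delta}}$. The main obstacle I anticipate is making the identification in the third paragraph fully rigorous --- tracing the equivariant ``identity'' through the relative contraction and the descent of Proposition \ref{prop:descend family}, and handling the correction cycle $a\,E\times E$ in the cohomological bookkeeping --- after which the conclusion via Torelli and the density count is routine.
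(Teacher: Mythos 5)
Your proposal is correct and follows essentially the same route as the paper's proof: reduce to a (very) general point of $\mathcal{H}_{\delta}$ where the fiber of $\BRg\to\Dg$ is the two-point orbit $\{[(S,\varphi)],[(S,s_{\delta}\circ\varphi)]\}$, identify the action of $s_{\delta}$ with the resolution $g$ of the gluing isomorphism, use \eqref{eqn: H2 action of limit cycle} and \eqref{eqn:limit correspondence} to get $g_{\ast}=\mathrm{id}+(1-a)(\cdot,E)E$, force $a=1$ from $g_{\ast}([E])=(2a-1)[E]$ being effective, and conclude $g=\mathrm{id}$ by the strong Torelli theorem. Your added details (the dimension/density reduction and the explicit isometry argument pinning $a\in\{0,1\}$) are just more explicit versions of steps the paper leaves implicit.
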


\begin{proof}
It suffices to show that $s_{\delta}$ acts trivially on the fiber 
over a general point $[\omega]$ of $\mathcal{H}_{\delta}$; 
then $s_{\delta}$ does so for all $[\omega]\in \mathcal{H}_{\delta}$ by continuity. 
When $[\omega]$ is general, we have 
$\Delta(\omega, l)=\{ \pm\delta \}$ and 
$W(\omega, l)= \{ {\rm id}, s_{\delta} \}$, 
so the fiber of ${\BRg}\to{\Dg}$ over $[\omega]$ consists of two points. 
Let $(S, \varphi)$ and $(S', \varphi')=(S, s_{\delta}\circ \varphi)$ be the corresponding 
marked $K3$ surfaces. 
What has to be shown is that 
the resolution $g\colon S\to S'$ of the gluing isomorphism 
$\bar{S}\to \bar{S}'$ coincides with 
the equivariant action by $s_{\delta}$. 
After the original identification $S'=S$, 
the action of $s_{\delta}$ is the identity of $S$. 
So it suffices to show that 
$g\colon S\to S'=S$ is also the identity of $S$. 

We may assume that $\varphi^{-1}(\delta)$ is effective, 
so $\varphi^{-1}(\delta)=[E]$ for a $(-2)$-curve $E$ on $S$. 
By the argument after \eqref{eqn:limit correspondence}, 
the limit cycle $\Gamma$ in $S\times S'= S\times S$ can be written as 
\begin{equation}\label{eqn:limit correspondence II}
\Gamma = {\rm graph}(g) + a E\times E  \; \;  \subset \; \; S\times S 
\end{equation} 
for some $a\geq0$. 
On the other hand, by \eqref{eqn: H2 action of limit cycle}, 
the action of $\Gamma$ on $H^{2}(S, {\Z})$ is  
the reflection $\varphi^{-1}\circ s_{\delta} \circ \varphi$ 
with respect to $E$. 
Comparing this with \eqref{eqn:limit correspondence II}, 
we see that 
\begin{equation*}
g_{\ast} = {\rm id} + (1-a) ( \cdot, E)E
\end{equation*}
on $H^{2}(S, {\Z})$. 
If we send $[E]$ by this action, we obtain $g(E)\sim (2a-1)E$, so $a=1$. 
Hence $g_{\ast}={\rm id}$. 
By the strong Torelli theorem, $g={\rm id}$. 
\end{proof}

\begin{corollary}\label{cor: Weyl group act trivially}
Let $[\omega]\in{\Dg}$. 
The Weyl group $W(\omega, l)$ acts trivially 
on the fiber of ${\Xg}\to{\Dg}$ over $[\omega]$. 
\end{corollary}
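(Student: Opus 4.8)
The plan is to reduce the statement to Proposition \ref{prop: reflection on fiber} using the fact that $W(\omega, l)$ is generated by $(-2)$-reflections that fix $[\omega]$.

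First I would record the two lattice-theoretic points. By definition $W(\omega, l)$ is the Weyl group of the root lattice $R(\omega, l)$, so it is generated by the reflections $s_{\delta}$ in the $(-2)$-vectors $\delta\in \Delta(\omega, l)$. Since $\Delta(\omega, l)\subset NS(\omega, l) = l^{\perp}\cap \omega^{\perp}\cap {\LK} = \omega^{\perp}\cap{\Lg}\subset {\Lg}$, each such $\delta$ is a $(-2)$-vector of ${\Lg}$ orthogonal to $\omega$. Two consequences follow. First, the reflection formula $s_{\delta}(x) = x + (x, \delta)\delta$ shows that for $x\in {\Lg}^{\vee}$ one has $s_{\delta}(x)-x = (x,\delta)\delta\in{\Lg}$, so $s_{\delta}$ acts trivially on ${\Lg}^{\vee}/{\Lg}$ and hence $s_{\delta}\in{\OLg}$; in particular $W(\omega, l)\subset {\OLg}$. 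Second, $\delta\perp\omega$ gives both $[\omega]\in \delta^{\perp}\cap{\Dg}=\mathcal{H}_{\delta}$ and $s_{\delta}(\omega)=\omega$. Since every generator of $W(\omega, l)$ fixes $\omega$, the whole group fixes the point $[\omega]\in{\Dg}$, and therefore $W(\omega, l)$ acts on the fiber $\pi^{-1}([\omega])$ of ${\Xg}\to{\Dg}$.

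Next I would apply Proposition \ref{prop: reflection on fiber} to each generator. As $[\omega]\in\mathcal{H}_{\delta}$, that proposition says $s_{\delta}$ acts trivially on ${\Xg}|_{\mathcal{H}_{\delta}}$, and in particular trivially on the fiber over $[\omega]$. Thus each of the generating reflections of $W(\omega, l)$ induces the identity automorphism of $\pi^{-1}([\omega])$. Finally I would conclude by functoriality of the action: the induced map $W(\omega, l)\to \aut(\pi^{-1}([\omega]))$ is a group homomorphism, so its image is generated by the images of the $s_{\delta}$, all of which are the identity; hence the image is trivial and $W(\omega, l)$ acts trivially on the fiber.

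The substantive geometric work has already been done in Proposition \ref{prop: reflection on fiber}, so I do not expect any serious obstacle here. The only steps demanding care are the bookkeeping in the first paragraph: verifying that the generating reflections genuinely lie in ${\OLg}$ (so that they act on ${\Xg}$ at all) and that they fix $[\omega]$ (so that their action restricts to the single fiber in question). Both are immediate once one observes that $\delta$ is a $(-2)$-vector of ${\Lg}$ orthogonal to $\omega$.
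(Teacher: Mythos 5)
Your proof is correct and is exactly the argument the paper intends: the corollary is stated without proof immediately after Proposition \ref{prop: reflection on fiber} precisely because $W(\omega, l)$ is generated by the reflections $s_{\delta}$, $\delta\in\Delta(\omega, l)$, each of which lies in ${\OLg}$ and acts trivially on the fiber over $[\omega]$ by that proposition. Your bookkeeping (that $s_{\delta}$ preserves ${\Lg}^{\vee}/{\Lg}$ and fixes $[\omega]$) just makes explicit what the paper leaves implicit.
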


\begin{proof}
If $\delta\in \Delta(\omega, l)$, then $s_{\delta}$ acts trivially on ${\Xg}|_{\mathcal{H}_{\delta}}$ by Proposition \ref{prop: reflection on fiber}, 
which contains the fiber over $[\omega]$. 
Therefore every element of $W(\omega, l)$ acts trivially on the fiber over $[\omega]$. 
\end{proof}

\begin{remark}
When $g>2$, the divisor $\mathcal{H}_{\delta}$ parametrizes a family of 
lattice-polarized $K3$ surfaces whose very general Picard lattice is  
${\Z}L\oplus {\Z}E \simeq \langle 2g-2 \rangle \oplus \langle -2 \rangle$ 
or its overlattice of index $2$. 
Proposition \ref{prop: reflection on fiber} for $g>2$ also follows from 
the fact that no $K3$ surface with such a Picard lattice has an involution which acts trivially on the Picard lattice (\cite{Ni2}). 
\end{remark}

We can now determine the fixed divisor 
of the ${\OLg}$-action on ${\Xg}$. 
We need to separate the case $g=2$, 
where general members are double planes branched along sextics. 

\begin{proposition}\label{prop: fixed divisor}
Suppose that $\gamma\ne {\rm id} \in {\OLg}$ fixes a divisor of ${\Xg}$. 
Then, 

(1) If $g>2$, we have $\gamma=s_{\delta}$ for a $(-2)$-vector $\delta\in{\Lg}$. 

(2) If $g=2$, 
we have that, either 
$\gamma=s_{\delta}$ for a $(-2)$-vector $\delta\in \Lambda_{2}$, or 
$\gamma=-{\rm id}$ which acts as the covering transformation of the double planes. 
\end{proposition}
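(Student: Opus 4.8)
The plan is to analyze a fixed divisor through its image in the base under the projection $\pi\colon\Xg\to\Dg$, distinguishing according to whether $\gamma$ acts trivially on $\Dg$. First I would fix an irreducible component $Z$ of the fixed divisor; since $Z$ is pointwise fixed and $\pi$ is $\gamma$-equivariant, its image $\pi(Z)$ is contained in the fixed locus $\Dg^{\gamma}$ of the induced action on $\Dg$. In particular $\gamma$ has a fixed point on $\Dg$, hence preserves the two components and acts holomorphically (an element swapping the components has empty fixed locus on $\Dg$ and can fix no divisor of $\Xg$). Recording $\dim\Xg=21$, $\dim\Dg=19$ and $\dim Z=20$, the fibres of $Z\to\pi(Z)$ are at most $2$-dimensional, so $\dim\pi(Z)\geq18$.

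Suppose first that $\gamma$ acts trivially on $\Dg$. Then it acts trivially on $\proj(\Lg\otimes\C)$, hence is a scalar isometry, so $\gamma=\pm{\rm id}$; as $\gamma\ne{\rm id}$ we get $\gamma=-{\rm id}$. Now $-{\rm id}$ lies in $\OLg=\ker({\rm O}(\Lg)\to{\rm O}(\Lg^{\vee}/\Lg))$ exactly when it acts trivially on $\Lg^{\vee}/\Lg\simeq\Z/(2g-2)$, i.e. when $-1\equiv1\!\pmod{2g-2}$, which among $g\geq2$ holds only for $g=2$; this disposes of the case for $g>2$. For $g=2$ it remains to identify $-{\rm id}$ with the covering involution: for a very general fibre the Picard lattice is $\Z l$ and the transcendental lattice is $\Lg$, and the covering involution $\iota$ of the double plane acts as $+{\rm id}$ on the Picard lattice and as $-{\rm id}$ on the transcendental lattice, so via the marking the equivariant action of $-{\rm id}$ is realized by $\iota$ (using the strong Torelli theorem as in \S\ref{ssec: OLg-action}). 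The fixed locus of $\iota$ is the ramification curve, and these sweep out a divisor of $\Xg$ of dimension $19+1$; this is the second alternative of (2).

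Suppose instead that $\gamma$ acts nontrivially on $\Dg$. Then $\Dg^{\gamma}$ is a proper closed analytic subset, so $\dim\pi(Z)\leq18$; with the dimension count this forces $\dim\pi(Z)=18$ and that $Z$ is the whole preimage of $\pi(Z)$, i.e. $\gamma$ fixes the entire fibre over a general point of the codimension-one component $\pi(Z)\subset\Dg^{\gamma}$. A codimension-one component of $\Dg^{\gamma}$ can only come from a $(+1)$- or $(-1)$-eigenspace of $\gamma$ on $\Lg\otimes\R$ of signature $(2,18)$; a non-real eigenvalue would require two eigenspaces of complex dimension $\geq19$, impossible since $\mathrm{rk}\,\Lg=21$. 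The orthogonal complement of this rank-$20$ eigenspace is a negative-definite line $\R\delta$, and $\delta$ is a $(-2)$-vector by integrality and triviality on the discriminant. If the $(+1)$-eigenspace has rank $20$, then $\gamma=s_{\delta}$, which is $+{\rm id}$ on the transcendental lattice and fixes the whole fibre over $\mathcal{H}_{\delta}$ by Proposition \ref{prop: reflection on fiber}. If instead the $(-1)$-eigenspace has rank $20$, then $\gamma=-s_{\delta}$, which lies in $\OLg$ only for $g=2$ and acts on each fibre as the anti-symplectic covering involution (being $+{\rm id}$ on the Picard lattice $\langle l,\delta\rangle$ and $-{\rm id}$ on the transcendental lattice), whose fixed locus is a single curve; over the $18$-dimensional base this gives a fixed set of dimension $19$, not a divisor. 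Hence a fixed divisor forces $\gamma=s_{\delta}$, yielding (1) and the first alternative of (2).

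The delicate point is the $g=2$ analysis: identifying $-{\rm id}$ with the covering involution, and checking that the $(-1)$-reflection $-s_{\delta}$ fixes only a curve rather than the whole fibre. Both rest on knowing which geometric automorphism realizes a given isometry via the strong Torelli theorem, and on the fact that reflection in an effective $(-2)$-curve becomes geometrically trivial after contracting it, exactly as in the proof of Proposition \ref{prop: reflection on fiber}; the symplectic versus anti-symplectic dichotomy ($+{\rm id}$ versus $-{\rm id}$ on the transcendental lattice) is what separates $s_{\delta}$ from $-s_{\delta}$. The eigenspace signature bookkeeping and the discriminant computation pinning down when $-{\rm id}\in\OLg$ are routine by comparison.
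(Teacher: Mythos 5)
Your reduction to the two eigenvalue cases is fine, but the second case contains a genuine error. When the $(-1)$-eigenspace of $\gamma$ has rank $20$, so that $\gamma=-s_{\delta}$ with $\delta$ spanning the orthogonal line, the hypothesis ``trivial on the discriminant'' applies to $\gamma=-s_{\delta}$ itself, \emph{not} to $s_{\delta}$, and it does not force $(\delta,\delta)=-2$. Concretely, take $\delta=v$ to be the generator of the $\langle 2-2g\rangle$ summand of $\Lg=2U\oplus 2E_{8}\oplus\langle 2-2g\rangle$. Then $-s_{v}$ acts as $-{\rm id}$ on $2U\oplus 2E_{8}$ and as $+{\rm id}$ on ${\Z}v$; it is integral, and it fixes the generator $v/(2g-2)$ of $\Lambda_{g}^{\vee}/\Lambda_{g}\simeq{\Z}/(2g-2)$ modulo $\Lg$, so $-s_{v}\in{\OLg}$ for \emph{every} $g$, not only $g=2$. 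Moreover its fixed locus in ${\Dg}$ contains the $18$-dimensional divisor $v^{\perp}\cap{\Dg}$ (note $v^{\perp}\cap\Lg\simeq 2U\oplus 2E_{8}$ has signature $(2,18)$), which is not contained in $\mathcal{H}$. These are exactly the ``$-$reflections by splitting $(2-2g)$-vectors'' that the paper singles out in the remark following the proposition. So for $g>2$ your case analysis silently discards a nonempty family of elements of ${\OLg}$ that do fix a divisor of ${\Dg}$ pointwise, and statement (1) is not proved for them.

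The good news is that the repair lies entirely within your own framework: the argument you give for $g=2$ applies verbatim. Over a general point of $v^{\perp}\cap{\Dg}$ the fiber is a smooth $K3$ with $NS={\Z}l\oplus{\Z}v$ (no $(-2)$-classes there when $g>2$), and by the strong Torelli theorem the equivariant action of $-s_{v}$ on it is the automorphism $h$ with $\varphi\circ h_{\ast}=(-s_{v})\circ\varphi$; since $-s_{v}\neq{\rm id}$, $h$ is a nontrivial (non-symplectic) involution, whose fixed locus in the fiber has dimension at most $1$, so the total fixed locus over this $18$-dimensional base has dimension at most $19<20$. Note, by contrast, that the paper's proof never classifies the divisorial fixed loci in ${\Dg}$ at all: it shows that any fixed divisor of ${\Xg}$ must be of the form ${\Xg}|_{B}$ with $B\not\subset\mathcal{H}$ once $\gamma\neq\pm{\rm id},\pm s_{\delta}$ for $(-2)$-vectors $\delta$, and then pointwise fixing of a smooth marked fiber yields $\varphi\circ{\rm id}_{\ast}=\gamma\circ\varphi$, i.e.\ $\gamma={\rm id}$, a contradiction; this disposes of all remaining elements, including $-s_{v}$, uniformly. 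Your route, once corrected as above, is a legitimate alternative, but as written the claim ``$-s_{\delta}$ lies in ${\OLg}$ only for $g=2$'' is false and leaves a hole in (1).
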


\begin{proof}
(1) Let $g>2$ and $\gamma\ne {\rm id}, s_{\delta}$ for any $(-2)$-vector $\delta$. 
We assume to the contrary that $\gamma$ fixes an irreducible divisor $D$ of ${\Xg}$. 
Since $g>2$, ${\OLg}$ does not contain $-{\rm id}$, 
so $\gamma$ acts nontrivially on ${\Dg}$. 
Then $D={\Xg}|_{B}$ for an irreducible divisor $B$ of ${\Dg}$ fixed by $\gamma$. 
Since $B$ is not contained in $\mathcal{H}_{\delta}$ for any $(-2)$-vector $\delta$, 
a general fiber $S$ of $D\to B$ is nonsingular, equipped with a marking $\varphi$. 
Then the equivariant action of $\gamma$ on $S$ 
is the identity of $S$ 
and satisfies $\varphi\circ {\rm id} = \gamma \circ \varphi$. 
This is absurd. 

(2) Let $g=2$. 
In this case $\tilde{{\rm O}}(\Lambda_{2})$ contains $-{\rm id}$. 
When $\gamma\ne \pm{\rm id}, \pm s_{\delta}$ for any $(-2)$-vector $\delta$, 
$\gamma$ fixes no divisor by the same argument as (1). 
The covering transformation of the double planes acts by $-{\rm id}$ on $\Lambda_{2}$ (cf.~\cite{Ni2}), 
so $\gamma=-{\rm id}$ acts on $\mathcal{X}_{2}^{\circ}$ by this involution. 
It remains to check that $\gamma=-s_{\delta}$ acts on 
$\mathcal{X}_{2}|_{\mathcal{H}_{\delta}}$ nontrivially. 
As in the proof of Proposition \ref{prop: reflection on fiber}, 
let $[(S, \varphi)], [(S', \varphi')]$ be the two points of $\tilde{\Omega}_{2}$ 
lying over a general point of $\mathcal{H}_{\delta}$. 
Both $s_{\delta}$ and $-{\rm id}$ send 
$[(S, \varphi)]$ to $[(S', \varphi')]$ in its action on $\tilde{\Omega}_{2}$, 
so $-s_{\delta}$ preserves $(S, \varphi)$ 
in its action on $\tilde{\mathcal{X}}_{2}$. 
Since $-s_{\delta}\ne {\rm id}$, this is a nontrivial involution of $S$. 
\end{proof}

When $g>2$, ${\OLg}$ also contains 
$-$reflections by splitting $(2-2g)$-vectors 
which fixes a divisor of ${\Dg}$ (cf.~\cite{GHS1}). 
But they act nontrivially 
on the smooth fibers over the fixed divisor, 
so does not fix a divisor of ${\Xg}$.

Before moving on, 
we also study the relation with the automorphism group. 
For $[\omega]\in{\Dg}$ 
let $G(\omega, l)$ be the stabilizer of $[\omega]$ in ${\OLg}$. 

\begin{proposition}\label{prop: stabilizer and auto}
Let $(S, L)$ be the polarized fiber of ${\Xg}\to{\Dg}$ over $[\omega]$. 
We have a split exact sequence 
\begin{equation}\label{eqn: stabilizer and auto group}
0 \to W(\omega, l) \to G(\omega, l) \stackrel{\pi}{\to} {\rm Aut}(S, L) \to 0 
\end{equation}
where $\pi$ is induced from the equivariant action of $G(\omega, l)$ on ${\Xg}$. 
A splitting of \eqref{eqn: stabilizer and auto group} is obtained 
for each choice of a point of ${\BRg}$ over $[\omega]$. 
\end{proposition}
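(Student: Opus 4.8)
The plan is to fix a point $x_{0}$ of the fiber $p^{-1}([\omega])\subset{\BRg}$ and use it to compare the abstract stabilizer $G(\omega, l)$ with $\aut(S, L)$ through the strong Torelli theorem, so that the whole statement reduces to a semidirect product decomposition of $G(\omega, l)$. Throughout I write $\tilde{S}$ for the minimal resolution of $S$, so that $\tilde{S}$ is the fiber of ${\XgBR}\to{\BRg}$ over $x_{0}$ and $S$ is its contraction, and $\tilde{L}$ for the pullback of $L$. The first task is to place $W(\omega, l)$ inside $G(\omega, l)$ as a normal subgroup: each reflection $s_{\delta}$ with $\delta\in\Delta(\omega, l)$ satisfies $\delta\perp l$ and $\delta\perp\omega$, hence fixes both $l$ and $\omega$; moreover $s_{\delta}-{\rm id}$ sends $\Lambda_{g}^{\vee}$ into $\Lambda_{g}$, so $s_{\delta}\in{\OLg}$ and therefore $s_{\delta}\in G(\omega, l)$. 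Since any $\gamma\in G(\omega, l)$ preserves $NS(\omega, l)=l^{\perp}\cap NS(\omega)$ and hence the finite set $\Delta(\omega, l)$, conjugation gives $\gamma s_{\delta}\gamma^{-1}=s_{\gamma\delta}\in W(\omega, l)$, so $W(\omega, l)\trianglelefteq G(\omega, l)$.

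Next I would exploit the chamber structure. The group $G(\omega, l)$ acts on $R(\omega, l)$ and so permutes its Weyl chambers, which by Lemma \ref{lem:fiber of pol BR} are identified with the points of $p^{-1}([\omega])$; on this set $W(\omega, l)$ acts simply transitively. Consequently, writing $\mathcal{C}_{0}$ for the chamber corresponding to $x_{0}$ and $G(\omega, l)_{\mathcal{C}_{0}}$ for its stabilizer, one gets $W(\omega, l)\cap G(\omega, l)_{\mathcal{C}_{0}}=\{{\rm id}\}$ and, using normality, $G(\omega, l)=W(\omega, l)\rtimes G(\omega, l)_{\mathcal{C}_{0}}$.

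The point $x_{0}=[(\tilde{S}, \varphi)]$ provides a marking $\varphi\colon H^{2}(\tilde{S}, \Z)\to{\LK}$ with $\varphi([\tilde{L}])=l$, $\varphi(H^{2,0}(\tilde{S}))=[\omega]$ and $\varphi(\mathcal{K}_{\tilde{S}})=\mathcal{C}_{0}$. By the strong Torelli theorem the assignment $f\mapsto\varphi\circ f_{\ast}\circ\varphi^{-1}$ identifies $\aut(\tilde{S}, \tilde{L})$ with the group of Hodge isometries of ${\LK}$ fixing $l$ and preserving $\mathcal{C}_{0}$, that is, with $G(\omega, l)_{\mathcal{C}_{0}}$; and $\aut(\tilde{S}, \tilde{L})=\aut(S, L)$ because the minimal resolution is canonical and an automorphism preserving $\tilde{L}$ permutes the curves contracted by $\tilde{S}\to S$. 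I would then check that $\pi$ realizes the inverse of this identification: for $\gamma\in G(\omega, l)_{\mathcal{C}_{0}}$ the equivariant action fixes $x_{0}$ and acts on the fiber by the unique automorphism $f$ with $f_{\ast}=\varphi^{-1}\circ\gamma\circ\varphi$, by the intrinsic definition of the orthogonal action in \S\ref{sssec:BR domain}, so that $\pi|_{G(\omega, l)_{\mathcal{C}_{0}}}$ is an isomorphism onto $\aut(S, L)$, whereas $\pi|_{W(\omega, l)}$ is trivial by Corollary \ref{cor: Weyl group act trivially}. Feeding these two facts into the semidirect decomposition immediately yields that $\pi$ is surjective, that $\ker\pi=W(\omega, l)$, and that $\aut(S, L)\cong G(\omega, l)_{\mathcal{C}_{0}}\hookrightarrow G(\omega, l)$ is a splitting of \eqref{eqn: stabilizer and auto group}; since $\mathcal{C}_{0}$ is the chamber of $x_{0}$, this splitting depends precisely on the choice of $x_{0}\in p^{-1}([\omega])$.

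The routine parts are the lattice computation placing $W(\omega, l)$ in $G(\omega, l)$ and the semidirect product bookkeeping. The \emph{main obstacle}, and the step I would treat with the most care, is the compatibility asserted in the previous paragraph: that the geometrically defined homomorphism $\pi$, arising from the equivariant action on the descended family ${\Xg}\to{\Dg}$, agrees on $G(\omega, l)_{\mathcal{C}_{0}}$ with the Torelli automorphism $f$ and vanishes on $W(\omega, l)$. Establishing this requires tracking how $\gamma$ moves the base point $x_{0}$ within $p^{-1}([\omega])$, keeping the resolution $\tilde{S}$ carefully distinct from its contraction $S$, and invoking Corollary \ref{cor: Weyl group act trivially} for the Weyl part; once this is in place, the exactness and the splitting are formal.
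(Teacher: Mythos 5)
Your proposal is correct and follows essentially the same route as the paper's proof: choose a point of ${\BRg}$ over $[\omega]$ (equivalently a Weyl chamber), identify $\aut(S,L)\simeq\aut(\tilde{S},\tilde{L})$ with the chamber stabilizer in $G(\omega,l)$ via the strong Torelli theorem to obtain the section, use Corollary \ref{cor: Weyl group act trivially} to place $W(\omega,l)$ in the kernel, and invoke Lemma \ref{lem:fiber of pol BR} for exactness. The only difference is presentational: you spell out the normality of $W(\omega,l)$, the semidirect-product bookkeeping, and the compatibility of $\pi$ with the Torelli identification, which the paper leaves implicit in the definition of the equivariant action.
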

 
\begin{proof}
By Corollary \ref{cor: Weyl group act trivially}, $W(\omega, l)$ acts trivially on $S$. 
We take a point $([\omega], \mathcal{C})$ of ${\BRg}$ lying over $[\omega]$, 
and let $(\tilde{S}, \tilde{L}, \varphi)$ be the corresponding marked quasi-polarized $K3$ surface. 
Then $\tilde{S}$ is the minimal resolution of $S$ 
and $\tilde{L}$ is the pullback of $L$.  
Then ${\rm Aut}(S, L)$ is isomorphic to ${\rm Aut}(\tilde{S}, \tilde{L})$, 
which in turn is isomorphic to the stabilizer of $([\omega], \mathcal{C})$ in ${\OLg}$ 
by the strong Torelli theorem. 
The last group is the stabilizer of the chamber $\mathcal{C}$ in $G(\omega, l)$. 
This defines a section ${\rm Aut}(S, L)\hookrightarrow G(\omega, l)$ of $\pi$. 
In view of Lemma \ref{lem:fiber of pol BR},  
this also shows that \eqref{eqn: stabilizer and auto group} is exact at the middle. 
\end{proof}

\subsection{${\Fgn}$}\label{ssec: Fgn}

Let $n\geq 1$. 
We take the $n$-fold fiber product 
\begin{equation*}
{\Xgn} = {\Xg} \times_{{\Dg}} \cdots \times_{{\Dg}} {\Xg} 
\end{equation*}
and its quotient 
\begin{equation*}
{\Fgn} = {\OLg}\backslash {\Xgn}. 
\end{equation*}
Since ${\OLg}$ acts properly discontinuously on ${\Dg}$, 
it does so on ${\Xgn}$. 
Thus ${\Fgn}$ is an irreducible complex analytic space fibered over ${\Fg}$.  
Since ${\Xgn}$ is a flat family of normal varieties over the smooth base ${\Dg}$, 
${\Xgn}$ is normal by \cite{EGA4} Corollaire 6.5.4. 
Therefore ${\Fgn}$ is also normal. 

If $(S, L)$ is the (possibly singular) polarized $K3$ surface over $[\omega]\in{\Dg}$, 
the fiber of ${\Fgn}\to {\Fg}$ over $[\omega]\in{\Fg}$ is $S^{n}/{\rm Aut}(S, L)$ 
by Proposition \ref{prop: stabilizer and auto}. 
Thus ${\Fgn}$ is the moduli space 
of $n$-pointed polarized $K3$ surfaces of genus $g$ 
with at worst rational double points. 

\begin{proposition}\label{prop: ramification divisor}
Let $\pi\colon {\Xgn}\to{\Dg}$ be the projection. 

(1) Let $(g, n)\ne (2, 1)$. 
Then ${\Xgn}\to{\Fgn}$ is doubly ramified at $\pi^{-1}(\mathcal{H}_{\delta})$ 
for $(-2)$-vectors $\delta\in {\Lg}$ and has no other ramification divisor. 
The branch divisor of ${\Xgn}\to{\Fgn}$ is irreducible when $g\not\equiv 2$ mod $4$, 
and consists of two irreducible components when $g\equiv 2$ mod $4$.   

(2) Let $(g, n)=(2, 1)$. 
The ramification divisor of $\mathcal{X}_{2,1}\to \mathcal{F}_{2,1}$ consists of 
$\pi^{-1}(\mathcal{H}_{\delta})$ for $(-2)$-vectors $\delta\in {\Lg}$ 
and the closure of the locus of 
ramification curves of the double planes. 
The branch divisor consists of three irreducible components.  
\end{proposition}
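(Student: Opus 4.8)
The morphism ${\Xgn}\to{\Fgn}$ is the quotient by ${\OLg}$, so its ramification divisors are exactly the divisorial components of the fixed loci of nontrivial $\gamma\in{\OLg}$ acting on ${\Xgn}$. The plan is to classify, by a dimension count, those $\gamma$ whose fixed locus contains a divisor, and then to organize the resulting divisors into ${\OLg}$-orbits. First I would record the fiberwise shape of a fixed locus. A point of ${\Xgn}$ over $[\omega]\in{\Dg}$ is an $n$-tuple $(p_{1},\dots,p_{n})$ in the fiber $S=\pi^{-1}([\omega])$, and by Proposition \ref{prop: stabilizer and auto} the element $\gamma$ fixes it iff $\gamma\in G(\omega,l)$ and its image $\bar\gamma\in{\rm Aut}(S,L)$ fixes every $p_{i}$. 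Hence over a general point of $F_{\gamma}:={\rm Fix}(\gamma|_{{\Dg}})$ the fiber of ${\rm Fix}(\gamma)$ equals ${\rm Fix}(\bar\gamma|_{S})^{n}$, so with $d=\dim{\rm Fix}(\bar\gamma|_{S})$ and $e=\dim F_{\gamma}$ one gets $\dim{\rm Fix}(\gamma)=e+nd$.

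Next I would run the dimension count. Since $\dim{\Xgn}=19+2n$, a divisorial component requires $e+nd=18+2n$ with $e\le 19$ and $0\le d\le 2$, where $d=2$ occurs precisely when $\bar\gamma={\rm id}$, i.e.\ $\gamma\in W(\omega,l)$ by Proposition \ref{prop: stabilizer and auto}. The only admissible pairs are $(d,e)=(2,18)$, allowed for every $n\ge1$, and $(d,e)=(1,19)$, which forces $n=1$; every other choice gives $\dim{\rm Fix}(\gamma)<18+2n$. In the first case $F_{\gamma}$ is a divisor of ${\Dg}$ fixed pointwise by $\gamma$, so $\gamma$ acts as $\pm{\rm id}$ on a corank-$1$ sublattice of ${\Lg}$ and thus $\gamma=\pm s_{\delta}$ for a primitive $\delta$; the condition $\bar\gamma={\rm id}$ then selects $\gamma=s_{\delta}$ with $\delta$ a $(-2)$-vector, since by Proposition \ref{prop: reflection on fiber} these act trivially on the fiber, whereas the reflections in splitting $(2-2g)$-vectors and the elements $-s_{\delta}$ act nontrivially there (cf.\ the discussion after Proposition \ref{prop: fixed divisor}). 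This gives $F_{\gamma}=\mathcal{H}_{\delta}$ and ${\rm Fix}(\gamma)=\pi^{-1}(\mathcal{H}_{\delta})$; as $s_{\delta}$ has order $2$ and the generic stabilizer along $\pi^{-1}(\mathcal{H}_{\delta})$ is just $\{{\rm id},s_{\delta}\}$, the ramification there is double, for every $n$. In the second case $e=19$ forces $\gamma$ to act trivially on ${\Dg}$, hence $\gamma=-{\rm id}$, which lies in ${\OLg}$ exactly when $g=2$ (equivalently $2g-2\mid 2$); then $\bar\gamma$ is the covering involution of the double plane, ${\rm Fix}(\bar\gamma|_{S})$ is its ramification curve, and the component is the closure of the locus swept out by these curves. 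This yields the full list of ramification divisors in both (1) and (2), specializing to Proposition \ref{prop: fixed divisor} when $n=1$.

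Finally I would count the branch components as the images in ${\Fgn}$ of the ${\OLg}$-orbits of these ramification divisors. The divisors $\pi^{-1}(\mathcal{H}_{\delta})$ are grouped by the ${\OLg}$-orbits of the $(-2)$-vectors $\delta$. For $g>2$, Eichler's criterion (applicable since ${\Lg}$ contains $2U$; cf.~\cite{Ni1}) shows that all $(-2)$-vectors form a single ${\OLg}$-orbit, so their contribution is one irreducible divisor; as no other element contributes a divisor when $(g,n)\ne(2,1)$, the branch divisor is irreducible. For $g=2$ the discriminant group $\Lambda_{2}^{\vee}/\Lambda_{2}\simeq{\Z}/2$ carries the nonzero class $e^{\ast}$, and the $(-2)$-vectors fall into exactly two ${\OLg}$-orbits according to ${\rm div}(\delta)\in\{1,2\}$, equivalently $\delta^{\ast}\in\{0,e^{\ast}\}$; one checks $s_{\delta}\in{\OLg}$ in both cases, so there are two branch components. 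This proves (1). When $(g,n)=(2,1)$ the extra divisor coming from $\gamma=-{\rm id}$ supplies a third branch component, proving (2).

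The hard part will be this orbit analysis of $(-2)$-vectors: establishing a single ${\OLg}$-orbit for $g>2$ but exactly two for $g=2$ is a lattice-theoretic computation with the discriminant form, and it is precisely what makes the branch divisor irreducible for $g>2$ and reducible for $g=2$. A secondary technical point is to rule out larger generic stabilizers and spurious fixed divisors: one needs that a nontrivial automorphism of a $K3$ surface has fixed locus of dimension $\le1$ (so that $d\le1$ whenever $\bar\gamma\ne{\rm id}$), and that a general (nodal) polarized $K3$ surface of genus $g$ admits no extra polarized automorphism, so that the generic stabilizer along $\pi^{-1}(\mathcal{H}_{\delta})$ is exactly $\langle s_{\delta}\rangle$.
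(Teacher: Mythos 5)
Your proposal follows the same two\textendash step route as the paper's proof: (i) identify the ramification divisors of ${\Xgn}\to{\Fgn}$ with the divisorial fixed loci of elements of ${\OLg}$ and reduce their classification to Propositions \ref{prop: fixed divisor} and \ref{prop: reflection on fiber}; (ii) count branch components as ${\OLg}$-orbits of $(-2)$-vectors via the Eichler criterion. The paper performs step (i) in one sentence by citing Proposition \ref{prop: fixed divisor}; your fiberwise dimension count $\dim{\rm Fix}(\gamma)=e+nd$ is a more explicit version of that reduction, and it has the merit of making visible why the covering involution $-{\rm id}$ (for $g=2$) contributes a ramification divisor only when $n=1$, and why the generic stabilizer along $\pi^{-1}(\mathcal{H}_{\delta})$ is $\{{\rm id},s_{\delta}\}$ --- points the paper leaves implicit. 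The technical points you flag there are easily dispatched (a nontrivial automorphism of a surface fixes at most a curve, and no statement about automorphisms of general nodal $K3$s is needed, since a nontrivial $\bar\gamma$ never fixes a general $n$-tuple of points), so step (i) of your proposal is sound.

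The genuine gap is in step (ii), exactly at the place you defer as ``the hard part.'' You assert --- as does the paper's own proof --- that for $g>2$ all $(-2)$-vectors form a single ${\OLg}$-orbit, equivalently that $(\delta,{\Lg})=2{\Z}$ forces $g=2$. The discriminant-form computation does not bear this out. If ${\rm div}(\delta)=2$ then $\delta/2$ must be the unique order-two element $(g-1)e^{*}$ of ${\Lg}^{\vee}/{\Lg}\simeq {\Z}/(2g-2)$ and must have norm $-1/2$ in ${\Q}/2{\Z}$; since $(g-1)e^{*}$ has norm $-(g-1)/2$, the necessary and sufficient condition is $g\equiv 2 \bmod 4$, not $g=2$. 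Such vectors really exist for every $g\equiv 2\bmod 4$: take $\delta=2x+e$ with $x\in 2U\oplus 2E_{8}$ of norm $(g-2)/2$, so that $\delta^{2}=(2g-4)+(2-2g)=-2$, $\delta$ is primitive, and every pairing of $\delta$ with ${\Lg}$ is even. For $g=6$, for instance, $x$ of norm $2$ gives a primitive $(-2)$-vector with ${\rm div}(\delta)=2$, which by the very Eichler criterion you invoke is \emph{not} equivalent to a root of $E_{8}$ (which has ${\rm div}=1$); since $s_{\delta}\in{\OLg}$ and $\delta^{\perp}=(\delta')^{\perp}$ only for $\delta'=\pm\delta$, the two orbits produce two distinct branch components. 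So for $g=6,10,14,18,22,\dots$ the branch divisor has two irreducible components, and the asserted irreducibility for all $g>2$ fails; the correct dichotomy is $g\equiv 2\bmod 4$ versus $g\not\equiv 2\bmod 4$. This defect is inherited from the paper's proof, which you have reproduced faithfully, but a blind proof that actually carried out the deferred lattice computation would have had to correct it. Note that nothing downstream is endangered: the proof of Theorem \ref{main thm 1} uses only that the ramification divisor is $\pi^{-1}(\mathcal{H})$ with ramification index $2$, not the number of its components.
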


\begin{proof}
The ramification divisors are determined by Proposition \ref{prop: fixed divisor}. 
Classification of the irreducible components of the branch divisor reduces to 
that of ${\OLg}$-equivalence classes of $(-2)$-vectors in ${\Lg}$, 
which is given in \cite{GHS5} Proposition 2.4 (ii). 
\end{proof}

As our construction is done over the period domain  
(which is necessary for the connection with modular forms), 
${\Fgn}$ is a priori just complex analytic. 
But actually 

\begin{proposition}\label{prop: quasi-proj}
${\Fgn}$ is quasi-projective. 
\end{proposition}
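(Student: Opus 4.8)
The plan is to show that $\Fgn$ is quasi-projective by exhibiting it as a quotient of a quasi-projective variety by a reductive (or at least well-understood) group action, or by descending the known quasi-projectivity of $\Fg$ along the fibration $\Fgn \to \Fg$. The central difficulty is that $\Fgn$ was constructed purely analytically, as $\OLg \backslash \Xgn$, and the $\OLg$-action on $\Xgn$ is through an infinite discrete group that does not act on any obvious ambient projective space. So the first step I would take is to realize the fiber product $\Xgn$ algebraically. Since $\Xg \to \Dg$ is a projective flat family embedded fiberwise by $|3L|$ into $\proj(\pi_*\mathcal{L}^{\otimes 3})^{\vee}$ over the Burns--Rapoport cover $\BRg$, and since $\Fg = \OLg \backslash \BRg$ is already quasi-projective by Baily--Borel, the natural approach is to spread the local projective embeddings out to a genuine algebraic family over a finite level cover of $\Fg$.

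Concretely, I would first pass to a \emph{neat} finite-index subgroup $\Gamma' \subset \OLg$, so that $\Gamma'$ acts freely on $\Dgo$ and the quotient $\Gamma' \backslash \Dg$ carries a universal (or at least a non-degenerate) polarized $K3$ family in the algebraic category; such level covers of $\Fg$ are quasi-projective by Baily--Borel, and over them the line bundle $\mathcal{L}$ can be trivialized on the automorphy factor so as to define a genuine relatively very ample $\mathcal{L}^{\otimes 3}$. This gives an algebraic projective family $\Xg' \to \Gamma' \backslash \Dg$, whence its $n$-fold fiber product $\Xgn'$ is quasi-projective. The original $\Fgn$ is then the quotient of $\Xgn'$ by the finite group $\OLg / \Gamma'$ (or more carefully, $\Xgn = \Xgn' \times_{\Gamma' \backslash \Dg} \Dg$ with the full $\OLg$-action, and $\Fgn = \OLg \backslash \Xgn$ descends to a finite quotient of $\Xgn'$). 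Taking the quotient of a quasi-projective variety by a finite group yields a quasi-projective variety, and this finishes the argument.

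The main obstacle I anticipate is the existence of a suitable level cover carrying an algebraic universal family in the first place: the reduction map subgroup $\OLg$ may not act freely even after passing to a neat subgroup on the entire domain $\Dg$ (as opposed to $\Dgo$), because the $(-2)$-reflections fix the Heegner divisors $\mathcal{H}_\delta$ and, by Proposition~\ref{prop: reflection on fiber}, act trivially on the fibers of $\Xg$ over them. Thus one cannot naively hope for a free action on $\Xgn$ over all of $\Dg$; instead the quotient will always be ramified along the preimages of the Heegner divisors as recorded in Proposition~\ref{prop: ramification divisor}. The clean way around this is to invoke the fact that a quasi-projective variety modulo a \emph{finite} group is again quasi-projective (the quotient exists as a scheme and carries an ample line bundle by averaging, since finite groups are linearly reductive in characteristic zero), so the ramification poses no obstruction to quasi-projectivity per se.

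Alternatively, and perhaps more robustly, I would argue directly via the Baily--Borel/Satake theory applied to the family: one may use that $\Xg \to \Dg$ descends, over $\Fg$, to a family whose fiberwise $|3L|$-embedding provides a relatively ample line bundle, and that this relative polarization, combined with an ample line bundle pulled back from the quasi-projective base $\Fg$, furnishes an ample line bundle on a projective compactification of $\Fgn$ in the fibered sense. The key technical input I would isolate as a lemma is that the automorphic line bundle $\det$ on $\Dg$ descends to $\Fg$ up to torsion and that its appropriate power is ample on the Baily--Borel model; coupling this with the fiberwise projectivity of $\Xg$ gives the global projective (hence quasi-projective) structure on $\Fgn$. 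I expect the hardest step to be making precise the descent of the relative polarization through the non-free $\OLg$-action along $\mathcal{H}_\delta$, which is exactly why reducing to the finite-quotient statement is the cleaner route.
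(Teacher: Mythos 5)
Your reduction to a finite quotient is fine (and in fact mirrors a device the paper itself uses elsewhere: passing to a torsion-free normal subgroup of finite index, since a quasi-projective variety modulo a finite group is again quasi-projective). The genuine gap is earlier, and it is the entire difficulty of the statement: you assert that over a neat level cover $\Gamma'\backslash{\Dg}$ the analytic family $\Gamma'\backslash{\Xg}\to\Gamma'\backslash{\Dg}$ ``is'' an algebraic projective family, because $\mathcal{L}^{\otimes 3}$ is relatively very ample. But relative very ampleness in the analytic category does not algebraize the total space. The embedding it gives lands in the \emph{analytic} projective bundle ${\proj}(\pi_{\ast}\mathcal{L}^{\otimes 3})^{\vee}$ over the quasi-projective base; since the base is not proper, neither GAGA nor Chow's theorem applies, so there is no reason why this bundle is an algebraic ${\proj}^{N}$-bundle, nor why the analytic subvariety $\Gamma'\backslash{\Xg}$ inside it is algebraic (compare: the graph of a transcendental function is an analytic, non-algebraic subvariety of ${\C}^{2}$). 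Note also that Baily--Borel only algebraizes the \emph{base} $\Gamma'\backslash{\Dg}$; there is no analogue of that theorem for the total space of the family. The natural way to force algebraicity analytically would be to extend the family (or the bundle ${\proj}(\pi_{\ast}\mathcal{L}^{\otimes 3})^{\vee}$) over a compactification and apply GAGA there --- but the paper explicitly points out (\S\ref{ssec: main thm 2}) that no such compactification of the universal family is known, which is exactly why this route cannot be completed. The same objection applies to your ``alternative'' argument via the ampleness of the automorphic bundle on the Baily--Borel model: fiberwise projectivity plus a quasi-projective base does not yield a quasi-projective total space without an algebraization step.

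The paper avoids this trap by never trying to algebraize the analytic construction directly: it works in the algebraic category from the start. It takes the Hilbert scheme $H$ of ADE-polarized $K3$ surfaces embedded by $L^{\otimes 3}$ in ${\proj}^{N}$, proves $H$ is nonsingular (the new point being the vanishing of the obstruction space ${\rm Ext}^{1}(I_{S}/I_{S}^{2},\mathcal{O}_{S})$ at the singular fibers, using that rational double points are complete intersections), and then applies Viehweg's geometric-quotient machinery to the $n$-fold fiber product of the universal family $\frak{X}\to H$ modulo ${\rm PGL}_{N+1}$, obtaining a quasi-projective geometric quotient which is then identified \emph{biholomorphically} with ${\Fgn}$. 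If you want to salvage your approach, you would have to replace the assertion of algebraicity of the level-cover family by precisely this kind of GIT/Hilbert-scheme input.
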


\begin{proof}
We use the Hilbert scheme construction. 
Since the basic line of argument is similar to 
\cite{Vi} Chapter 7 and \cite{Hu} Chapter 5 
(where ${\OLg}\backslash \Omega_{g}^{\circ}$ is considered), 
we will be brief. 

Let $H$ be the Hilbert scheme of polarized $K3$ surfaces $(S, L)$ of degree $2g-2$ 
with at worst rational double points, projectively embedded by $L^{\otimes 3}$ 
into ${\proj}^{N}$ where $N=18g-17$. 
The first step is to show that $H$ is nonsingular. 
In the locus of smooth $K3$ surfaces, this is proved in \cite{Hu} Chapter 5.3.1. 
In the ADE locus, 
what has to be shown is that 
the obstruction space ${\rm Ext}^{1}(I_{S}/I_{S}^{2}, \mathcal{O}_{S})$ is trivial, 
where $I_{S}$ is the ideal sheaf of $S\subset {\proj}^{N}$. 
Although $S$ is singular, 
its singularities are complete intersections,  
so $I_{S}/I_{S}^{2}$ is locally free and sits in the short exact sequence 
\begin{equation*}
0 \to I_{S}/I_{S}^{2} \to \Omega_{{\proj}^{N}}^{1}|_{S} \to \Omega_{S}^{1} \to 0 
\end{equation*}
by \cite{Ei} Exercise 16.17. 
Taking the ${\rm Ext}(\cdot, \mathcal{O}_{S})$ long exact sequence, 
we obtain the exact sequence 
\begin{equation*}
{\rm Ext}^{1}(\Omega_{S}^{1}, \mathcal{O}_{S}) \stackrel{\phi}{\to} 
{\rm Ext}^{1}(\Omega_{{\proj}^{N}}^{1}|_{S}, \mathcal{O}_{S}) \to 
{\rm Ext}^{1}(I_{S}/I^{2}_{S}, \mathcal{O}_{S})  
 \to  {\rm Ext}^{2}(\Omega_{S}^{1}, \mathcal{O}_{S}). 
\end{equation*}
By the same calculation as in \cite{Hu} p.92, 
we see that 
\begin{equation*}
{\rm Ext}^{2}(\Omega_{S}^{1}, \mathcal{O}_{S}) 
\simeq H^{0}(S, \Omega_{S}^{1})^{\vee} = 0 
\end{equation*} 
and that the Serre dual of $\phi$, 
\begin{equation*}
H^{1}(S, \Omega_{{\proj}^{N}}^{1}|_{S}) \simeq {\C} 
 \to H^{1}(S, \Omega_{S}^{1}) 
\end{equation*}
is injective. 
Therefore 
${\rm Ext}^{1}(I_{S}/I_{S}^{2}, \mathcal{O}_{S})=0$. 

The next step is to apply the argument of \cite{Vi} \S 7.3 II. 
By construction we have a flat projective family $\frak{X}\to H$ 
of $K3$ surfaces with at worst rational double points, 
acted on by $G={\rm PGL}_{N+1}$. 
According to loc.~cit., with the smoothness of $H$, 
there exists 
a finite group $\Gamma$, 
quasi-projective varieties $V, Z$ with $\Gamma\times G$ acting on $V$, 
and morphisms $V\to H$, $V\to Z$, 
such that 
$V\to H$ is the quotient by $\Gamma$ 
and $V\to Z$ is a principal $G$-bundle. 
Furthermore, there exists a geometric quotient $X'=(\frak{X}\times_{H}V)/G$ 
with a projective morphism $X'\to Z$. 
In particular, the $n$-fold fiber product 
$X_{n}'=X'\times_{Z} \cdots \times_{Z}X'$ 
is quasi-projective. 
By \cite{Vi} Corollary 3.46.2, 
a geometric quotient $X'_{n}/\Gamma$ exists as a quasi-projective variety. 
The variety $X'_{n}/\Gamma$ is also a geometric quotient of 
$\frak{X}\times_{H}\cdots \times_{H}\frak{X}$ by $G$, 
which in turn is naturally biholomorphic to ${\Fgn}$. 
Therefore ${\Fgn}$ is quasi-projective. 
\end{proof}


\section{Modular forms}\label{sec: modular form}

In this section, we let $\Lambda$ be a general integral lattice of signature $(2, b)$ with $b\geq3$. 
(We will specialize to $\Lambda={\Lg}$ in the next section.) 
We recall the basic theory of modular forms for the orthogonal group of $\Lambda$. 
The material should be well-known, 
but since we could not find a suitable reference at some points, 
we tried to be rather self-contained. 
The results needed in \S \ref{sec: theorem 1} are 
Corollary \ref{cor: vanish at H} and Proposition \ref{prop: cusp form Petersson}.

\subsection{Modular forms}\label{ssec: modular forms}

We choose one of the two connected components of 
\begin{equation*}
\{ \: [\omega]\in{\proj}\Lambda_{{\C}} \: | \: 
(\omega, \omega)=0, (\omega, \bar{\omega})>0 \: \} 
\end{equation*} 
and denote it by ${\D}$. 
Let ${\OL}$ be the index $\leq 2$ subgroup of ${\rm O}(\Lambda)$ preserving ${\D}$. 
We write $\lambda$ for the restriction of 
$\mathcal{O}_{{\proj}\Lambda_{{\C}}}(-1)$ to ${\D}$. 
Then ${\OL}$ acts on $\lambda$ equivariantly. 
Let ${\G}$ be a finite-index subgroup of ${\OL}$ 
and $\chi$ be a unitary character of ${\G}$. 
The character $\chi$ corresponds to a ${\G}$-linearized line bundle on ${\D}$ 
which we again denote by $\chi$. 
In later sections, $\chi$ will be either trivial or the determinant character 
${\det}\colon {\G}\to \{ \pm 1 \}$. 

\begin{example}
We have the canonical isomorphism of line bundles  
\begin{equation}\label{eqn: KD}
K_{{\D}} \simeq \lambda^{\otimes b}\otimes {\det} 
\end{equation}
(cf.~\cite{GHS1}). 
This is a consequence of the Euler sequence for ${\proj}\Lambda_{{\C}}$ 
and the adjunction formula for the isotropic quadric. 
\end{example}

A ${\G}$-invariant holomorphic section of the line bundle 
$\lambda^{\otimes k}\otimes \chi$ 
on ${\D}$ is called a \textit{modular form} of weight $k$ and character $\chi$ 
with respect to ${\G}$. 
We denote by $M_{k}({\G}, \chi)$ the space of such modular forms. 
As in \eqref{eqn: (-2)-Heegner K3}, 
the $(-2)$-Heegner divisor, 
now restricted to the component $\mathcal{D}$, 
is defined by 
\begin{equation*}
\mathcal{H} = 
\bigcup_{\begin{subarray}{c} \delta\in \Lambda \\ (\delta, \delta)=-2 \end{subarray}} 
\delta^{\perp}\cap{\D}. 
\end{equation*}
For a natural number $m$ we denote by  
$M_{k}({\G}, \chi)^{(m)}\subset M_{k}({\G}, \chi)$ 
the subspace of modular forms 
which vanish to order $\geq m$ along every irreducible component of $\mathcal{H}$. 
When ${\G}$ is torsion-free, 
this is identified with 
\begin{equation*}
M_{k}({\G}, \chi)^{(m)} = 
H^{0}({\G}\backslash {\D}, \lambda^{\otimes k}\otimes \chi (-mH) ), 
\end{equation*}
where 
$\lambda, \chi$ in the right hand side stand for the descends of 
the linearized line bundles $\lambda, \chi$ to ${\G}\backslash{\D}$, 
and $H$ is the image of $\mathcal{H}$ in ${\G}\backslash{\D}$.  

\begin{lemma}\label{lem: reflection LB}
Let $F\in M_{k}(\Gamma, \det)$. 
Let $\delta\in \Lambda$ be a vector of negative norm 
and $s_{\delta}\in {\rm O}^{+}(\Lambda_{{\Q}})$ 
be the reflection by $\delta$. 
Suppose that either 
(i) $s_{\delta}\in \Gamma$ or 
(ii) $-s_{\delta}\in \Gamma$ and $k \equiv b$ mod $2$. 
Then $F$ vanishes at $\delta^{\perp}\cap \mathcal{D}$ 
with odd vanishing order. 
\end{lemma}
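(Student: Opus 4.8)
The plan is to realize $F$ as a holomorphic function on the punctured affine cone $\widetilde{\mathcal{D}}\subset\Lambda_{\C}\setminus\{0\}$ lying over $\mathcal{D}$, homogeneous of degree $-k$, i.e. $F(tZ)=t^{-k}F(Z)$ for $t\in\C^{\ast}$, and satisfying the transformation law $F(\gamma Z)=\det(\gamma)\,F(Z)$ for $\gamma\in\Gamma$; this is the function-theoretic description of a section of $\lambda^{\otimes k}\otimes\det$, and since $\det$ takes values in $\{\pm1\}$ there is no ambiguity in the sign of this automorphy factor. Writing $(\delta,\delta)<0$, I would introduce the linear coordinate $w=(Z,\delta)/(\delta,\delta)$, so that every $Z\in\Lambda_{\C}$ decomposes as $Z=Z'+w\delta$ with $Z'\in(\delta^{\perp})_{\C}$. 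The mirror $\delta^{\perp}\cap\mathcal{D}$ is then cut out by $\{w=0\}$, and since the reflection fixes $\delta^{\perp}$ and negates $\delta$, it acts by $s_{\delta}(Z'+w\delta)=Z'-w\delta$, i.e. $w\mapsto-w$ while fixing $Z'$. The goal is to show that $F$ is an odd function of $w$, from which odd vanishing order along $\{w=0\}$ follows at once.

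In case (i), $s_{\delta}\in\Gamma$ gives directly $F(s_{\delta}Z)=\det(s_{\delta})\,F(Z)=-F(Z)$, since a reflection has determinant $-1$; in the coordinate above this reads $F(Z',-w)=-F(Z',w)$, as desired. In case (ii) I would instead combine the transformation law for $-s_{\delta}\in\Gamma$ with the homogeneity. On the one hand $F(-s_{\delta}Z)=\det(-s_{\delta})\,F(Z)$, and since $\dim\Lambda=b+2$ one has $\det(-s_{\delta})=(-1)^{b+2}\det(s_{\delta})=(-1)^{b+1}$. On the other hand, applying homogeneity with $t=-1$ gives $F(-W)=(-1)^{k}F(W)$, so that $F(-s_{\delta}Z)=(-1)^{k}F(s_{\delta}Z)$. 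Equating the two expressions yields $F(s_{\delta}Z)=(-1)^{b+1-k}F(Z)$, and the hypothesis $k\equiv b\bmod 2$ forces $b+1-k$ to be odd, hence $F(s_{\delta}Z)=-F(Z)$. Thus again $F(Z',-w)=-F(Z',w)$, even though $s_{\delta}$ itself need not lie in $\Gamma$: this is a pointwise identity of holomorphic functions on $\widetilde{\mathcal{D}}$, valid because $s_{\delta}$ preserves $\mathcal{D}$.

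In either case $F$, regarded as a holomorphic function of $w$ with parameter $Z'\in(\delta^{\perp})_{\C}$, is odd, so its Taylor expansion along $\{w=0\}$ contains only odd powers of $w$; the lowest nonvanishing term therefore has odd degree, and (for $F\not\equiv0$) the vanishing order of $F$ along the irreducible divisor $\delta^{\perp}\cap\mathcal{D}$ is odd. The only points requiring care are sign- and parity-bookkeeping: checking $\det(s_{\delta})=-1$ and $\det(-\mathrm{id})=(-1)^{b+2}$ on $\Lambda_{\R}$, tracking the homogeneity sign $(-1)^{k}$, and verifying that $w$ is a transverse reduced coordinate so that ``odd function'' translates into ``odd vanishing order'' at the generic point of the mirror. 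There is no substantial analytic difficulty; the whole content is the reduction of the vanishing statement to the parity of the scalar by which the reflection acts on $F$, which in case (ii) is exactly where the parity hypothesis $k\equiv b\bmod 2$ is consumed.
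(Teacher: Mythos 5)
Your proof is correct and rests on exactly the same computation as the paper's: in both cases everything reduces to showing that $s_{\delta}$ (resp. $-s_{\delta}$) multiplies $F$ by $-1$, coming from $\det(s_{\delta})=-1$ together with the parity bookkeeping $(-1)^{k+b}=1$ when $k\equiv b \bmod 2$. The only difference is presentational: the paper performs this computation on the fibers $(\lambda^{\otimes k}\otimes\det)_{[\omega]}$ at points $[\omega]$ of the mirror, which literally yields only the vanishing of $F$ there and leaves the upgrade to \emph{odd} vanishing order implicit (with a reference to \cite{GHS1}, Lemma 5.2), whereas your affine-cone formulation --- the anti-invariance $F\circ s_{\delta}=-F$ plus the Taylor expansion in the transverse coordinate $w$ --- spells out that last step explicitly.
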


\begin{proof}
If $[\omega]\in \delta^{\perp}\cap \mathcal{D}$, 
then $s_{\delta}$ acts trivially on $\lambda_{[\omega]}={\C}\omega$ by definition. 
Since $s_{\delta}$ has determinant $-1$, 
it acts on $(\lambda^{\otimes k}\otimes \det)_{[\omega]}$ by $-1$. 
Thus, if $F$ is $s_{\delta}$-invariant, $F([\omega])$ must be zero. 
When $k\equiv b$ mod $2$, $-{\rm id}$ acts on 
$\lambda^{\otimes k}\otimes \det$ by $(-1)^{k+b}=1$, 
so $-s_{\delta}$ acts on 
$(\lambda^{\otimes k}\otimes \det)_{[\omega]}$ by $-1$. 
\end{proof}

This is a straightforward generalization of the calculation in 
\cite{GHS1} Lemma 5.2 and \cite{GHS3} p.420.  
(It seems that in \cite{GHS3} p.420 
one has to assume $a\in 2{\Z}$ when $-\sigma$ is a reflection.) 

Let $\tilde{{\rm O}}^{+}(\Lambda)$ be the kernel of 
${\rm O}^{+}(\Lambda)\to {\rm O}(\Lambda^{\vee}/\Lambda)$. 
Since $\tilde{{\rm O}}^{+}(\Lambda)$ contains $s_{\delta}$ 
for all $(-2)$-vectors $\delta\in \Lambda$, 
we have 

\begin{corollary}\label{cor: vanish at H}
$M_{k}(\tilde{{\rm O}}^{+}(\Lambda), \det)^{(1)} 
= M_{k}(\tilde{{\rm O}}^{+}(\Lambda), \det)$. 
\end{corollary}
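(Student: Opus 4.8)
The plan is to read off the corollary directly from Lemma \ref{lem: reflection LB}, the content being entirely in the inclusion $M_{k}(\tilde{{\rm O}}^{+}(\Lambda), \det) \subseteq M_{k}(\tilde{{\rm O}}^{+}(\Lambda), \det)^{(1)}$, since the reverse inclusion holds by the very definition of the superscript $(1)$. First I would recall that $\mathcal{H}$ is the union, over $(-2)$-vectors $\delta\in\Lambda$, of the hyperplane sections $\delta^{\perp}\cap\mathcal{D}$, so it suffices to show that an arbitrary $F\in M_{k}(\tilde{{\rm O}}^{+}(\Lambda), \det)$ vanishes along each such component.

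The key step is to verify that the reflection $s_{\delta}$ lies in $\tilde{{\rm O}}^{+}(\Lambda)$ whenever $\delta$ is a $(-2)$-vector, so that the invariance hypothesis of Lemma \ref{lem: reflection LB}(i) applies to $F$. Concretely, for $(\delta,\delta)=-2$ one has $s_{\delta}(x)=x+(x,\delta)\delta$; since $(x,\delta)\in\Z$ for $x\in\Lambda$, the reflection $s_{\delta}$ preserves $\Lambda$, and since $s_{\delta}(y)-y=(y,\delta)\delta\in\Lambda$ for every $y\in\Lambda^{\vee}$, it acts trivially on $\Lambda^{\vee}/\Lambda$, hence lies in the kernel $\tilde{{\rm O}}^{+}(\Lambda)$ of the reduction map. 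One also notes that a reflection by a negative-norm vector preserves each connected component of $\mathcal{D}$, so $s_{\delta}\in{\rm O}^{+}(\Lambda)$. This is precisely the observation already recorded in the running text immediately preceding the corollary.

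With this in hand, I would apply Lemma \ref{lem: reflection LB}(i) to conclude that $F$ vanishes along $\delta^{\perp}\cap\mathcal{D}$, indeed to odd (in particular positive) order. As $\delta$ ranges over all $(-2)$-vectors of $\Lambda$, $F$ vanishes to order $\geq 1$ along every irreducible component of $\mathcal{H}$, which is exactly the statement $F\in M_{k}(\tilde{{\rm O}}^{+}(\Lambda), \det)^{(1)}$. There is no genuine obstacle here: the only point requiring a line of justification is the membership $s_{\delta}\in\tilde{{\rm O}}^{+}(\Lambda)$, and once that is granted the corollary is immediate from the lemma.
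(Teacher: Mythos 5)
Your proposal is correct and follows exactly the paper's route: the paper likewise deduces the corollary from Lemma \ref{lem: reflection LB}(i) together with the observation that $s_{\delta}\in\tilde{{\rm O}}^{+}(\Lambda)$ for every $(-2)$-vector $\delta$, which you have merely verified in more detail.
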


Thus, for $\Gamma=\tilde{{\rm O}}^{+}(\Lambda)$ and $\chi=\det$, 
vanishing at $\mathcal{H}$ is automatic. 
For general $\Gamma$ and $\chi=\det$, 
vanishing at the ramification divisor of 
$\mathcal{D}\to \Gamma\backslash \mathcal{D}$ 
is automatic when $k\equiv b$ mod $2$. 

Next we explain the Petersson metric and the invariant volume form. 
The Hermitian form $( \cdot, \bar{\cdot}  )$ on $\Lambda_{{\C}}$ 
defines a Hermitian metric on the line bundle $\lambda$, 
whose positivity follows from the definition of ${\D}$. 
By construction this is ${\OLR}$-invariant. 
On the other hand, since the character $\chi$ is unitary, 
we also have a ${\G}$-invariant Hermitian metric on the corresponding line bundle. 
These define a ${\G}$-invariant Hermitian metric on the line bundle 
$\lambda^{\otimes k}\otimes \chi$  
which we call the (pointwise) Petersson metric on $\lambda^{\otimes k}\otimes \chi$. 

By the isomorphism \eqref{eqn: KD}, 
the Petersson metric on $\lambda^{\otimes b}\otimes {\det}$ 
defines an ${\rm O}^{+}(\Lambda_{{\R}})$-invariant Hermitian metric on $K_{{\D}}$ 
which we denote by $( \: , \: )_{K}$. 
We define a volume form ${\volD}$ on ${\D}$ by 
\begin{equation}\label{eqn: volD}
({\volD})_{[\omega]} = 
i^{b^{2}} \frac{\Omega \wedge \bar{\Omega}}{(\Omega, \Omega)_{K}}, 
\qquad [\omega]\in{\D}, 
\end{equation}
where $\Omega$ is an arbitrary nonzero vector of $(K_{{\D}})_{[\omega]}$. 
This does not depend on the choice of $\Omega$. 
Since $( \: , \: )_{K}$ is ${\rm O}^{+}(\Lambda_{{\R}})$-invariant, so is ${\volD}$.

\subsection{Tube domain realization}\label{ssec: tube domain}

We recall, in some detail, 
tube domain realization of ${\D}$ and Fourier expansion of modular forms. 
This is necessary for the proof of Propositions \ref{prop: cusp form Petersson} and 
\ref{prop: extension conditional}. 

Let $I$ be a rank $1$ primitive isotropic sublattice of $\Lambda$. 
This corresponds to a $0$-dimensional rational boundary component of ${\D}$ 
(cf.~\cite{BB}, \cite{Sc}). 
We put ${\LI}=I\otimes(I^{\perp}/I)$. 
The quadratic form on $I^{\perp}/I$ and an isomorphism $I\simeq {\Z}$ 
(unique up to $\pm1$) define a canonical quadratic form on ${\LI}$. 
We write ${\GIQ}$ for the stabilizer of $I_{{\Q}}$ in 
${\rm O}^{+}(\Lambda_{{\Q}})$. 
The unipotent radical ${\UIQ}$ of ${\GIQ}$ 
consists of the Eichler transvections (see \cite{Sc}, \cite{GHS2}) 
\begin{equation*}
E_{l,m} : v \mapsto v - (m, v)l +(l, v)m - \frac{1}{2}(m, m)(l, v)l, 
\quad v\in\Lambda_{{\Q}}, 
\end{equation*}
where 
$l\in I_{{\Q}}$ and $m\in I^{\perp}_{{\Q}}$. 
This induces a canonical isomorphism 
\begin{equation*}
{\LIQ} \to {\UIQ}, \quad l\otimes \bar{m} \mapsto E_{l,m}, 
\end{equation*}
where $m\in I_{{\Q}}^{\perp}$ is a lift of $\bar{m}\in (I^{\perp}/I)_{{\Q}}$.

The projection 
${\proj}\Lambda_{{\C}}\dashrightarrow {\proj}(\Lambda/I)_{{\C}}$ 
from 
${\proj}I_{{\C}}\in {\proj}\Lambda_{{\C}}$ 
defines an embedding 
\begin{equation}\label{eqn: tube domain}
{\D} \hookrightarrow {\proj}(\Lambda/I)_{{\C}} - {\proj}(I^{\perp}/I)_{{\C}}.  
\end{equation} 
If we choose a line $I'_{{\Q}}\subset \Lambda_{{\Q}}$ with $(I_{{\Q}}, I'_{{\Q}})\ne 0$, 
this defines an isomorphism 
\begin{equation*}
{\proj}(\Lambda/I)_{{\C}} - {\proj}(I^{\perp}/I)_{{\C}} \simeq 
{\hom}(I'_{{\C}}, (I^{\perp}/I)_{{\C}}) \simeq {\LIC}. 
\end{equation*}
Thus \eqref{eqn: tube domain} induces an embedding 
\begin{equation*}
\iota_{I'} : {\D} \hookrightarrow {\LIC}. 
\end{equation*}
The choice of the component ${\D}$  
determines a connected component of 
$\{ v\in {\LIR} | (v, v)>0 \}$ 
which we denote by $\mathcal{C}_{I}$. 
If we put 
\begin{equation*}
\mathcal{D}_{I} = \{ \: Z\in {\LIC} \: | \: {\rm Im}(Z) \in \mathcal{C}_{I} \: \},  
\end{equation*}
then $\iota_{I'}({\D})=\mathcal{D}_{I}$. 
This is a tube domain realization of ${\D}$. 
Via $\iota_{I'}$, 
${\UIQ}\simeq{\LIQ}$ acts on $\mathcal{D}_{I}\subset {\LIC}$ 
by translation. 

Let ${\G}$ be a finite-index subgroup of ${\rm O}^{+}(\Lambda)$. 
We set ${\GIZ}={\GIQ}\cap {\G}$ and ${\UIZ}={\UIQ}\cap {\G}$. 
Then ${\UIZ}$ is a lattice on ${\UIQ}$. 
For example, 
when $\Lambda$ is even and ${\G}=\tilde{{\rm O}}^{+}(\Lambda)$, 
we have ${\UIZ}={\LI}$. 
Let $\chi$ be a unitary character of ${\G}$. 
We assume that $\chi|_{{\UIZ}}=1$. 
(This is always satisfied for $\chi={\det}$.) 
We choose a generator $l$ of $I$. 
The homogeneous function $(\cdot , l)^{-1}$ on $\Lambda_{{\C}}$ of degree $-1$ 
defines a frame $s_{l}$ of $\lambda$. 
Its factor of automorphy is 
\begin{equation*}
j(\gamma, [\omega]) 
= \frac{(\gamma\omega, l)}{(\omega, l)} 
= \frac{(\omega, \gamma^{-1}l)}{(\omega, l)}, \qquad 
\gamma\in\Gamma, \; \;  [\omega]\in \mathcal{D}. 
\end{equation*}
We also choose a nonzero vector $w_{0}$ in the representation line of $\chi$. 
Then modular forms $F=f(s_{l}^{\otimes k}\otimes w_{0})$ of weight $k$ and character $\chi$ for ${\G}$ 
are identified with holomorphic functions $f$ on ${\D}$ satisfying 
\begin{equation*}
f(\gamma[\omega]) = \chi(\gamma) j(\gamma, [\omega])^{k}f([\omega]), 
\qquad [\omega]\in {\D}, \: \gamma \in {\G}. 
\end{equation*}
By our assumption $s_{l}^{\otimes k}\otimes w_{0}$ is invariant under ${\UIZ}$, 
so the function $f$ is ${\UIZ}$-invariant. 
If we regard $f$ as a function on $\mathcal{D}_{I}$ via $\iota_{I'}$, 
it is invariant under translation by the lattice ${\UIZ}$. 
Hence it admits a Fourier expansion 
\begin{equation}\label{eqn: Fourier expansion}
f(Z) = 
\sum_{m\in U(I)_{{\Z}}^{\vee}} a_{m}{\exp}(2\pi i(m, Z)), 
\qquad Z\in \mathcal{D}_{I}, 
\end{equation}
where $U(I)_{{\Z}}^{\vee}\subset {\UIQ}$ is the dual lattice of ${\UIZ}$. 
Let $\mathcal{C}_{I}^{+}$ be the union of $\mathcal{C}_{I}$ 
and the rays ${\R}_{\geq0}v$ for rational isotropic vectors $v\in {\UIQ}$ 
in the closure of $\mathcal{C}_{I}$. 
The Koecher principle says that $a_{m}\ne 0$ only when $m\in \mathcal{C}_{I}^{+}$. 
The modular form $F$ is called a \textit{cusp form} if $a_{m}=0$ for all $m$ with $(m, m)=0$ 
at all primitive rank $1$ isotropic sublattices $I$. 

Over $\mathcal{D}_{I}$ 
the Petersson metric and the invariant volume form 
are expressed as follows 
(compare with \cite{Br} p.96 for (2)).  

\begin{lemma}\label{prop: Petersson tube domain}
We choose a generator $l$ of $I$ and 
a line $I'_{{\Q}}\subset \Lambda_{{\Q}}$ with $(I_{{\Q}}, I'_{{\Q}})\ne 0$. 
We identify ${\D}\simeq \mathcal{D}_{I}$ by the tube domain realization $\iota_{I'}$. 

(1) Let $( \: , \: )_{1}$ be the Petersson metric on $\lambda$. 
Then we have up to a constant 
\begin{equation*}
(s_{l}(Z), s_{l}(Z))_{1} = ({\rm Im}(Z), {\rm Im}(Z)), 
\quad Z\in \mathcal{D}_{I}. 
\end{equation*}

(2) Let ${\rm vol}_{I}$ be a flat volume form on ${\LIC}$. 
Then we have up to a constant 
\begin{equation*} 
{\volD} = \frac{1}{({\rm Im}(Z), {\rm Im}(Z))^{b}} {\rm vol}_{I}. 
\end{equation*}

(3) Let 
$F_{1}=f_{1}(s_{l}^{\otimes k}\otimes w_{0})$ and 
$F_{2}=f_{2}(s_{l}^{\otimes k}\otimes w_{0})$ be local sections of 
$\lambda^{\otimes k}\otimes \chi$ where 
$f_{1}, f_{2}$ are local functions on ${\D}\simeq \mathcal{D}_{I}$. 
Let $(\: , \: )_{k,\chi}$ be the Petersson metric on $\lambda^{\otimes k}\otimes \chi$. 
Then we have up to a constant 
\begin{equation*}
(F_{1}(Z), F_{2}(Z))_{k,\chi} {\volD} = 
f_{1}(Z)\overline{f_{2}(Z)} ({\rm Im}(Z), {\rm Im}(Z))^{k-b} {\rm vol}_{I}, 
\quad Z\in \mathcal{D}_{I}. 
\end{equation*} 
\end{lemma}

\begin{proof}
(1) 
The choice of $l$ defines an isomorphism ${\LIQ}\simeq (I^{\perp}/I)_{{\Q}}$, 
and then that of $I_{{\Q}}'$ defines a lift  
$(I^{\perp}/I)_{{\Q}}\simeq (I_{{\Q}}\oplus I_{{\Q}}')^{\perp} \subset \Lambda_{{\Q}}$ 
of ${\LIQ}$ in $\Lambda_{{\Q}}$. 
If we take the isotropic vector $l'$ in $I_{{\Q}}\oplus I_{{\Q}}'$ with $(l, l')=1$, 
we have 
\begin{equation*}
s_{l}(Z)=l' + Z - \frac{(Z, Z)}{2}l \; \in \; \Lambda_{{\C}} 
\end{equation*}
for $Z\in \mathcal{D}_{I}\subset {\LIC}$. 
Hence 
\begin{equation*}
(s_{l}(Z), s_{l}(Z))_{1} 
= (s_{l}(Z), \overline{s_{l}(Z)}) 
= -{\rm Re}(Z, Z) + (Z, \overline{Z}) 
= 2 ({\rm Im}(Z), {\rm Im}(Z)). 
\end{equation*}

(2) 
We take coordinates $z_{1}, \cdots , z_{b}$ on ${\LIC}$. 
By the isomorphism \eqref{eqn: KD}, the flat canonical form 
$\Omega=dz_{1}\wedge \cdots \wedge dz_{b}$ 
corresponds to $s_{l}^{\otimes b}\otimes w_{0}$ up to a constant 
because both are ${\UIC}$-invariant. 
Then we have 
\begin{equation*}
{\volD} 
= i^{b^{2}} \frac{\Omega \wedge \bar{\Omega}}{(\Omega, \Omega)_{K}} 
= \frac{{\rm vol}_{I}}{(s_{l}(Z), s_{l}(Z))_{1}^{b}} 
= \frac{1}{({\rm Im}(Z), {\rm Im}(Z))^{b}} {\rm vol}_{I} 
\end{equation*}
by (1). 
Assertion (3) follows from (1) and (2). 
\end{proof}

\subsection{Cusp forms and Petersson norm}\label{ssec: cusp Petersson}

Cusp forms of weight $\geq b$ can be characterized among modular forms 
by convergence of the global Petersson norm. 
This is used in the proof of the isomorphism \eqref{main eqn 2}. 

\begin{proposition}\label{prop: cusp form Petersson}
Let $F$ be a modular form of weight $k\geq b$ and character $\chi$ for ${\G}$. 
Then $F$ is a cusp form if and only if 
$\int_{{\G}\backslash{\D}}(F, F)_{k,\chi}{\volD} < \infty$. 
\end{proposition}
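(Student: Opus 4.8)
The plan is to reduce the global integral to a local analysis at each $0$-dimensional cusp, performed in the tube domain realization of \S\ref{ssec: tube domain}, and then to read off convergence directly from the Fourier expansion \eqref{eqn: Fourier expansion}. First I would invoke reduction theory for $\G\backslash\D$: there are only finitely many $\G$-orbits of primitive rank $1$ isotropic sublattices $I\subset\Lambda$, and a fundamental set for $\G$ is covered by a relatively compact piece together with finitely many Siegel sets, one attached to each such cusp. On the compact piece the continuous integrand $(F,F)_{k,\chi}\volD$ is bounded and contributes a finite amount, so $\int_{\G\backslash\D}(F,F)_{k,\chi}\volD$ is finite if and only if its restriction to each Siegel set is finite; I would then treat one cusp $I$ at a time.

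Fix such a cusp and use $\D\simeq\mathcal{D}_I\subset\LIC$, writing $Z=X+iY$ with $X={\rm Re}(Z)\in\UIR$ and $Y={\rm Im}(Z)\in\mathcal{C}_I$. For $F=f(s_{l}^{\otimes k}\otimes w_{0})$, Proposition \ref{prop: Petersson tube domain}(3) gives, up to a constant,
\begin{equation*}
(F,F)_{k,\chi}\,\volD = |f(Z)|^{2}\,(Y,Y)^{k-b}\,{\rm vol}_{I},
\end{equation*}
with ${\rm vol}_{I}=dX\,dY$. The relevant Siegel set has the form $\{X\in P,\ Y\in\Sigma\}$, where $P\subset\UIR$ is a fundamental parallelepiped for the translation lattice $\UIZ$ and $\Sigma\subset\mathcal{C}_I$ is a fundamental region for the Levi part of $\GIZ$ intersected with $\{(Y,Y)\ge T\}$. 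Integrating first in $X$ over $P$, the orthogonality of the characters $e^{2\pi i(m,X)}$ and Parseval's identity yield
\begin{equation*}
\int_{P}|f(X+iY)|^{2}\,dX = {\rm vol}(P)\sum_{m\in U(I)_{\Z}^{\vee}}|a_{m}|^{2}\,e^{-4\pi(m,Y)},
\end{equation*}
a series of \emph{non-negative} terms which, by Koecher's principle, is supported on $m\in\mathcal{C}_I^{+}$.

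Now comes the dichotomy. If $F$ is a cusp form, then $a_{m}=0$ for every isotropic $m$ (in particular $a_{0}=0$), so every surviving $m$ lies in the interior of the forward cone and has $(m,m)>0$; such $m$ pairs strictly positively with every nonzero vector of $\overline{\mathcal{C}_I^{+}}$, whence $(m,Y)\to\infty$ as $Y$ goes to infinity in $\mathcal{C}_I$ in \emph{any} direction, including toward the hyperbolic cusps of $\Sigma$ that correspond to the $1$-dimensional boundary components. Using the standard polynomial bound on the $a_{m}$, the series then decays faster than any power of $(Y,Y)$, dominating the weight $(Y,Y)^{k-b}$, so $\int_{\Sigma}(\cdots)(Y,Y)^{k-b}\,dY$ converges; summing over the finitely many cusps gives $\int(F,F)_{k,\chi}\volD<\infty$. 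Conversely, if $F$ is not a cusp form there is a cusp $I$ and an isotropic $m_{0}$ (possibly $m_{0}=0$) with $a_{m_{0}}\ne0$, and by positivity the integral over the Siegel set is bounded below by the single term
\begin{equation*}
|a_{m_{0}}|^{2}\,{\rm vol}(P)\int_{\Sigma}e^{-4\pi(m_{0},Y)}\,(Y,Y)^{k-b}\,dY.
\end{equation*}
Since $m_{0}$ lies on the boundary light cone of $\overline{\mathcal{C}_I^{+}}$, the factor $(m_{0},Y)$ stays bounded as $Y$ runs to infinity toward the isotropic direction $m_{0}$ (it is identically zero when $m_{0}=0$), while $k\ge b$ keeps $(Y,Y)^{k-b}$ from decaying; a volume estimate on $\Sigma$ near the corresponding hyperbolic cusp then forces divergence of this integral, hence of the global one.

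The routine parts are the Parseval step and the exponential-versus-polynomial comparison in the cuspidal direction. The genuine work lies, on the one hand, in the reduction theory—pinning down the Siegel sets and justifying that convergence localizes at the cusps—and, on the other hand, in the non-cuspidal direction, where one must show that $e^{-4\pi(m_{0},Y)}(Y,Y)^{k-b}\,dY$ has infinite mass on $\Sigma$ for isotropic $m_{0}$. This is precisely where the hypothesis $k\ge b$ is used and where the $1$-dimensional boundary components (the hyperbolic cusps of the Levi action) enter; the nonzero isotropic case, in which one effectively recurs to a tube-domain analysis one dimension lower, is the most delicate point and the expected main obstacle.
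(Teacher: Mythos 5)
Your route is genuinely different from the paper's: the paper never uses reduction theory, but instead localizes the integral on a toroidal compactification $(\G\backslash\D)^{\Sigma}$ (AMRT), where convergence becomes a one-complex-variable polar-coordinate estimate $|f|^{2}(\log r)^{a(k-b)}r^{-1}dr\,d\theta$ near each boundary divisor, with the dichotomy read off from whether $f\to 0$ as $|q|\to 0$. Your Siegel-set/Parseval scheme is the classical alternative (it is how the $\mathrm{Sp}(2g,\Z)$ case is handled in Freitag, which the paper cites), and its skeleton is sound: positivity of the Parseval series, a single-term lower bound for divergence, exponential decay for convergence. In particular your divergence argument for $m_{0}=0$ is fine, since $(Y,Y)^{k-b}\geq 1$ on a region of infinite Lebesgue measure.

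However, as written the proof has genuine gaps, and both sit at the $1$-dimensional rational boundary components, i.e.\ at the hyperbolic cusps of your region $\Sigma$, whose structure you treat as a black box. (i) For divergence with $m_{0}\neq 0$ isotropic, you must show that the horoball-type locus $\{Y\in\mathcal{C}_{I}:(m_{0},Y)\leq C\}$ meets $\Sigma$ (or finitely many $\GIZ$-translates of it) in a set of infinite mass against $(Y,Y)^{k-b}dY$; this requires the horospherical description of $\Sigma$ near the cusp $[m_{0}]$, i.e.\ the parabolic attached to the flag $I\subset J$, and you explicitly defer it (``the expected main obstacle''). So the ``only if'' direction is unproved precisely for non-cusp forms whose nonvanishing isotropic coefficients all lie on nonzero rays. (ii) The convergence direction is also incomplete at the same place: the reverse Cauchy--Schwarz inequality $(m,Y)\geq\sqrt{(m,m)(Y,Y)}$ controls each term, but ``the standard polynomial bound on the $a_{m}$'' does not control the sum, because a hyperboloid $(m,m)=c$ contains infinitely many lattice points in general; uniform decay of $\sum_{m}|a_{m}|^{2}e^{-4\pi(m,Y)}$ as $Y\to\infty$ in $\Sigma$ toward a hyperbolic cusp (where moreover the radial scale $t$ of $Y$ may shrink while its horospherical coordinate $u$ blows up) again needs the flag coordinates and a multi-step estimate. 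Both difficulties are exactly what the paper's toroidal approach dissolves: every isotropic boundary ray of $\mathcal{C}_{I}^{+}$ is itself a ray of the fan $\Sigma_{I}$, so a hyperbolic cusp is handled by the same local estimate as any other boundary divisor (only the exponent $a$ drops from $2$ to $1$), and compactness of $(\G\backslash\D)^{\Sigma}$ replaces reduction theory. To complete your proof you would need to carry out, or cite, these horospherical estimates.
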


\begin{proof}
This should be a standard fact: 
see, e.g., \cite{Fr} Chapter III, \S 2 for 
a similar result in the case of ${\rm Sp}(2g, {\Z})$ and weight $g+1$. 
But since we could not find a suitable reference for the present case, 
we give an outline of the proof for the sake of completeness. 

It is convenient to work with a toroidal compactification (\cite{AMRT}). 
The input for constructing a toroidal compactification of ${\G}\backslash{\D}$ 
is a collection $\Sigma=(\Sigma_{I})_{I}$ of 
${\GIZ}$-admissible cone decomposition $\Sigma_{I}$ of $\mathcal{C}_{I}^{+}$, 
one for each ${\G}$-equivalence class of rank $1$ primitive isotropic sublattices $I$ 
(independently). 
We can take $\Sigma_{I}$ to be regular with respect to ${\UIZ}$. 
Let $\mathcal{T}_{I}={\UIZ}\backslash{\D}$. 
Via $\iota_{I'}$, 
$\mathcal{T}_{I}$ is realized as an open set of the torus 
${\UIZ}\backslash{\UIC}$, 
and the fan $\Sigma_{I}$ defines a toroidal embedding 
$\mathcal{T}_{I}\hookrightarrow \mathcal{T}_{I}^{\Sigma}$. 
This is the partial compactification in the direction of the $0$-dimensional cusp $I$. 
The domain ${\D}$ has also $1$-dimensional cusps, 
but the partial compactification there is canonical 
and has etale glueing maps to the partial compactifications 
at the adjacent $0$-dimensional cusps $I$. 
(This corresponds to a boundary ray in $\mathcal{C}_{I}^{+}$.) 
The toroidal compactification 
$({\G}\backslash{\D})^{\Sigma}$ of ${\G}\backslash{\D}$ 
is obtained by glueing ${\G}\backslash{\D}$ and 
quotients of the partial compactifications, 
essentially along small neighborhoods of the boundary. 
For a cone $\sigma\in\Sigma_{I}$, 
we write 
$\mathcal{T}_{I}^{\sigma}\subset \mathcal{T}_{I}^{\Sigma}$ 
for the toroidal embedding by $\sigma$, and let 
$U=U_{x}\subset \mathcal{T}_{I}^{\sigma}$ 
be a small neighborhood of an arbitrary point $x$ of 
the (snc) boundary divisor of $\mathcal{T}_{I}^{\sigma}$. 
Since $({\G}\backslash{\D})^{\Sigma}$ is compact,  
we have 
$\int_{{\G}\backslash{\D}}(F, F)_{k,\chi}{\volD} < \infty$ 
if and only if 
$\int_{U}(F, F)_{k,\chi}{\volD} < \infty$ 
for all $I$, $\sigma$, $x$.

We show that $F$ is a cusp form if and only if 
$\int_{U}(F, F)_{k,\chi}{\volD} < \infty$ 
for all $U$ with $\dim \sigma =1$. 
Let $\sigma={\R}_{\geq0}v_{0}$ with $v_{0}\in{\UIZ}$ primitive. 
We take a vector $m_{0}\in U(I)_{{\Z}}^{\vee}$ with $(v_{0}, m_{0})=1$ and put 
$q={\exp}(2\pi i(Z, m_{0}))$. 
Then $q$ gives a normal parameter around the boundary divisor of $\mathcal{T}_{I}^{\sigma}$. 
We write $F(Z)=f(Z)(s_{l}^{\otimes k}\otimes w_{0})$. 
We set $a=2$ if $(v_{0}, v_{0})>0$ and $a=1$ if $(v_{0}, v_{0})=0$. 
By Lemma \ref{prop: Petersson tube domain} (3) 
we have up to a constant 
\begin{eqnarray*}\label{eqn: asymptotic estimate}
(F(Z), F(Z))_{k,\chi}{\volD} 
& = & 
|f(Z)|^{2}({\rm Im}(Z), {\rm Im}(Z))^{k-b} {\rm vol}_{I} \\ 
& \sim & 
|f(Z)|^{2} (\log |q|)^{a(k-b)} |q|^{-2} dq \wedge d\bar{q} \wedge {\rm vol}'_{I} \\ 
& = & 
|f(Z)|^{2} (\log r)^{a(k-b)} r^{-1} dr \wedge d\theta \wedge {\rm vol}'_{I}
\end{eqnarray*}
as $|q|\to 0$, 
where $q=re^{i\theta}$ and 
${\rm vol}'_{I}$ is a flat volume form on the boundary divisor. 
Since $k\geq b$, this shows that 
$\int_{U}(F, F)_{k,\chi}{\volD} < \infty$ 
if and only if 
$f(Z)\to 0$ as $|q|\to 0$. 
In the Fourier expansion \eqref{eqn: Fourier expansion}, 
this is equivalent to $a_{m}=0$ for all $m$ with $(m, v_{0})=0$. 
When $(v_{0}, v_{0})>0$, this is equivalent to $a_{0}=0$ 
because $v_{0}^{\perp}\cap \mathcal{C}_{I}^{+} = \{ 0 \}$. 
When $(v_{0}, v_{0})=0$, we have $v_{0}^{\perp} \cap \mathcal{C}_{I}^{+} = \sigma$, 
so this is equivalent to $a_{m}=0$ 
for all $m\in \sigma\cap U(I)_{{\Z}}^{\vee}$. 
Since every boundary ray of $\mathcal{C}_{I}^{+}$ is also a ray in $\Sigma_{I}$, 
this implies the above equivalence and so the ``if '' part of the proposition. 

Conversely, if $F$ is a cusp form, 
we can check $\int_{U}(F, F)_{k,\chi}{\volD} < \infty$ 
for $U$ with $\dim \sigma$ general 
by a similar asymptotic estimate. 
\end{proof}

\section{Pluricanonical forms and modular forms}\label{sec: theorem 1}

In this section we prove Theorem \ref{main thm 1} 
based on the results of \S \ref{sec:2} and \S \ref{sec: modular form}. 
The proof will be completed at \S \ref{ssec: proof thm 1}. 
In \S \ref{ssec: main thm 2} we discuss 
extension over a certain compactification of ${\Fgn}$, 
assuming some properties. 

For the consistency with \S \ref{sec: modular form}, 
we need to restrict attention to a connected component. 
We choose a component $\mathcal{D}_{g}$ of ${\Dg}$ 
and put ${\Gg}={\OLg}\cap {\rm O}^{+}({\Lg})$. 
Let $\mathcal{X}_{g,n}^{+}$ be the connected component of ${\Xgn}$ over $\mathcal{D}_{g}$ 
and $\pi\colon \mathcal{X}_{g,n}^{+}\to \mathcal{D}_{g}$ be the projection. 
Since $\tilde{{\rm O}}({\Lg})$ contains an element exchanging the two components of ${\Dg}$, 
we have ${\Fgn} = {\Gg}\backslash \mathcal{X}_{g,n}^{+}$. 
We write $\lambda$ for the restriction of the tautological bundle $\mathcal{O}(-1)$ 
to ${\mathcal{D}_{g}}$ (or to ${\Dg}$).

\subsection{Relative canonical forms and modular forms}\label{ssec: Krelative}

First we clarify the relation between 
relative canonical forms on $\mathcal{X}_{g,n}^{+}$ and modular forms. 
The main attention will be on the degenerate family over $\mathcal{H}$. 
We begin with the following remark. 

\begin{lemma}
The canonical divisor $K_{{\Xgn}}$ of ${\Xgn}$ is Cartier. 
\end{lemma}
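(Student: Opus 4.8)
The plan is to deduce that $K_{\Xgn}$ is Cartier from the fact that $\Xgn$ is Gorenstein. Since $\Xgn$ is normal (as established in \S\ref{ssec: Fgn}), the canonical class $K_{\Xgn}$ is a well-defined Weil divisor; if $\Xgn$ is Gorenstein then its dualizing sheaf $\omega_{\Xgn}$ is invertible and, $\Xgn$ being Cohen--Macaulay, one has $\omega_{\Xgn}\simeq\mathcal{O}_{\Xgn}(K_{\Xgn})$, so that $K_{\Xgn}$ is Cartier. Thus the statement reduces to proving that $\Xgn$ is Gorenstein.

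To this end I would propagate the Gorenstein property along the projection $\pi\colon\Xgn\to\Dg$. The base $\Dg$ is smooth, hence Gorenstein. The morphism $\pi$ is flat: it is obtained from the flat family $\Xg\to\Dg$ of Proposition \ref{prop:extend family}(1) by iterated base change, and flatness is stable under base change and composition. Its fiber over $[\omega]\in\Dg$ is the $n$-fold product $S^{n}$ of the (possibly singular) $K3$ fiber $S$. Since the singularities of $S$ are rational double points, which are complete intersection, hence Gorenstein, singularities --- as already used in the proof of Proposition \ref{prop: quasi-proj} --- each $S$ is Gorenstein, and a product of Gorenstein varieties over a field is again Gorenstein. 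Therefore every fiber of $\pi$ is Gorenstein.

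The conclusion then follows from the standard ascent result that a flat morphism with Gorenstein base and Gorenstein fibers has Gorenstein total space (the local criterion for the Gorenstein property under flat base change). Applying this to $\pi$ shows that $\Xgn$ is Gorenstein, which completes the argument.

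I expect the only delicate point to be the bookkeeping at the singular strata: over $\mathcal{H}$ the fibers acquire rational double points, and the $n$-fold products then carry products of such singularities with more complicated local structure. What dissolves this is simply that the Gorenstein condition is stable both under taking products and under flat extension, so no resolution or explicit local computation is needed. Alternatively, one could argue directly with relative dualizing sheaves: $\omega_{\Xg/\Dg}$ is invertible because $\Xg\to\Dg$ is flat with Gorenstein fibers, its external tensor powers give $\omega_{\Xgn/\Dg}\simeq\bigotimes_{i=1}^{n}p_{i}^{\ast}\omega_{\Xg/\Dg}$ with $p_{i}$ the projections onto the factors, and then $K_{\Xgn}=\omega_{\Xgn/\Dg}\otimes\pi^{\ast}K_{\Dg}$ is a tensor product of line bundles, hence Cartier.
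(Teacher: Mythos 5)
Your proof is correct, but it takes a genuinely different route from the paper's. The paper argues via the simultaneous resolution: it passes to the $n$-fold fiber product $\mathcal{X}_{n}$ of the \emph{smooth} family over (local pieces of) the Burns--Rapoport domain, notes that $K_{\mathcal{X}_{n}}$ is trivial after shrinking the base (the relative canonical bundle is trivial on the smooth $K3$ fibers), observes that the contraction $\mathcal{X}_{n}\to\bar{\mathcal{X}}_{n}$ is an isomorphism outside a locus of codimension $\geq 3$, so that the canonical sheaf is trivial on the regular locus of $\bar{\mathcal{X}}_{n}$, and then extends the trivializing section across the singular locus using normality; crepancy of the contraction is the substance of that argument, and it yields the stronger conclusion that $K_{\Xgn}$ is trivial locally over $\Dg$. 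Your route bypasses the resolution entirely: rational double points are hypersurface singularities, so the fibers $S^{n}$ of $\pi\colon\Xgn\to\Dg$ are local complete intersections, hence Gorenstein; the Gorenstein property then ascends along the flat morphism $\pi$ with smooth base (Matsumura's flat-ascent theorem, or your equivalent formulation via invertibility of $\omega_{\Xg/\Dg}$ and its external tensor powers), and normality plus Cohen--Macaulayness identifies the invertible dualizing sheaf with $\mathcal{O}_{\Xgn}(K_{\Xgn})$, so $K_{\Xgn}$ is Cartier. What your approach buys is elementarity and robustness: no simultaneous resolution, no crepancy, and no codimension estimate are needed, and the argument works for any flat family whose fibers have Gorenstein singularities. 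What the paper's approach buys is the explicit local triviality of the relative canonical bundle, which is exactly what gets reused in the proof of Proposition \ref{prop: isom of LB}. The only point you should make explicit is that $\Xgn$ is a complex analytic space, so flatness, the lci and Gorenstein conditions, and the ascent theorem must be read on the analytic local rings; this is harmless, since all of these are statements about Noetherian local rings (and for the projective fibers $S^{n}$, Gorenstein-ness of the algebraic and analytic local rings agree, their completions being the same), but it deserves a sentence.
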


\begin{proof}
Let 
$(\mathcal{X}\to\mathcal{U}, \mathcal{L})$ 
be a quasi-polarized Kuranishi family, 
$\tilde{\pi}\colon \mathcal{X}_{n}\to \mathcal{U}$ 
be its $n$-fold fiber product, and 
$\bar{\mathcal{X}}_{n}\to\mathcal{U}$ 
be its relative contraction as before. 
It suffices to show that $K_{\bar{\mathcal{X}}_{n}}$ is Cartier. 
Since $K_{\tilde{\pi}}|_{F}\simeq \mathcal{O}_{F}$ 
for each fiber $F$ of $\tilde{\pi}$, 
we have 
$K_{\tilde{\pi}}\simeq \mathcal{O}_{\mathcal{X}_{n}}$ 
after shrinking $\mathcal{U}$, 
so $K_{\mathcal{X}_{n}}\simeq \mathcal{O}_{\mathcal{X}_{n}}$. 
This shows that 
$K_{(\bar{\mathcal{X}}_{n})_{reg}}\simeq \mathcal{O}_{(\bar{\mathcal{X}}_{n})_{reg}}$. 
Since $\bar{\mathcal{X}}_{n}$ is normal and 
its singular locus has codimension $\geq 3$, 
$K_{(\bar{\mathcal{X}}_{n})_{reg}}$ extends to 
an invertible sheaf isomorphic to $\mathcal{O}_{\bar{\mathcal{X}}_{n}}$. 
\end{proof}

Let 
$K_{\pi}=K_{\mathcal{X}_{g,n}^{+}}\otimes \pi^{\ast}K_{\mathcal{D}_{g}}^{-1}$ 
be the relative canonical bundle of $\pi$. 
Since $K_{\mathcal{X}_{g,n}^{+}}$ and $K_{\mathcal{D}_{g}}$ are ${\Gg}$-linearized, 
$K_{\pi}$ is also ${\Gg}$-linearized. 

\begin{proposition}\label{prop: isom of LB}
We have ${\Gg}$-equivariant isomorphisms 
\begin{equation}\label{eqn: modular Hodge}
\pi_{\ast}K_{\pi} \simeq \lambda^{\otimes n}, \quad 
K_{\pi} \simeq \pi^{\ast}\lambda^{\otimes n} 
\end{equation}
of line bundles on $\mathcal{D}_{g}$ and on $\mathcal{X}_{g,n}^{+}$. 
\end{proposition}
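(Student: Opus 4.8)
The plan is to establish the two isomorphisms in \eqref{eqn: modular Hodge} by first treating the case $n=1$ and then bootstrapping to general $n$. The key geometric input is that for a (possibly singular) $K3$ surface $S$ appearing as a fiber of $\pi$, the relative dualizing sheaf restricts to $\mathcal{O}_S$ on each fiber, reflecting the triviality of the canonical bundle of a $K3$ surface with rational double points. Concretely, I would work on a marked quasi-polarized Kuranishi family $(\mathcal{X}\to\mathcal{U}, \mathcal{L}, \varphi)$ and its relative contraction $\bar{\mathcal{X}}\to\mathcal{U}$. As in the proof of the previous lemma, $K_{\bar{\mathcal{X}}/\mathcal{U}}$ is trivial on each fiber and hence, after shrinking, $\pi_{\ast}K_{\bar{\mathcal{X}}/\mathcal{U}}$ is a line bundle on $\mathcal{U}$ whose fiber at $u$ is canonically $H^0(S_u, K_{S_u}) \simeq H^{2,0}(S_u)$. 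The point is to identify this Hodge line with $\lambda$: under the marking, $H^{2,0}(S_u)=\varphi_u^{-1}(\C\omega)$, which is exactly the fiber $\lambda_{[\omega]}=\C\omega$ of the tautological bundle. This identification is natural in the family and hence globalizes to the isomorphism $\pi_{\ast}K_{\pi}\simeq \lambda$ for $n=1$.

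Next I would verify this identification is $\Gg$-equivariant. Here I would invoke the equivariant structure built in \S\ref{sec:2}: for $\gamma\in\Gg$ the fiber isomorphism over $[\omega]$ and $\gamma[\omega]$ is ``the identity'' $S\to S$ (or, over $\mathcal{H}$, the resolution of the gluing isomorphism, which by Corollary \ref{cor: Weyl group act trivially} induces the same map on fibers of $\Xg$). Since $\varphi_{\gamma u}=\gamma\circ\varphi_u$ under change of marking, the action of $\gamma$ on $H^{2,0}$ matches the action of $\gamma$ on $\C\omega\subset\lambda$ tautologically. Thus the isomorphism of $\pi_{\ast}K_{\pi}$ with $\lambda$ intertwines the two $\Gg$-linearizations, giving the first isomorphism equivariantly for $n=1$.

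For general $n$, I would use that $\mathcal{X}_{g,n}^{+}$ is the $n$-fold fiber product over $\mathcal{D}_g$. The relative canonical bundle of a fiber product of flat families with trivial relative canonical bundles is the external tensor product of the individual relative canonical bundles, so $K_{\pi}\simeq \bigotimes_{i=1}^{n} p_i^{\ast}K_{\pi_1}$ where $p_i$ is the $i$-th projection to $\Xg$ and $\pi_1\colon\Xg\to\mathcal{D}_g$. Pushing forward along $\pi$ and using the projection formula together with $\pi_{1,\ast}K_{\pi_1}\simeq\lambda$, and the fact that the fiberwise sections are constants up to the one-dimensional $H^{2,0}$, I obtain $\pi_{\ast}K_{\pi}\simeq \lambda^{\otimes n}$. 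Since each fiber of $\pi$ is $S^n$ with $H^0(S^n, K_{S^n})$ one-dimensional (a Künneth/base-change computation), $\pi_{\ast}K_{\pi}$ is a line bundle and the canonical adjunction map $\pi^{\ast}\pi_{\ast}K_{\pi}\to K_{\pi}$ is an isomorphism, yielding $K_{\pi}\simeq \pi^{\ast}\lambda^{\otimes n}$.

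The main obstacle I anticipate is the behavior over the Heegner divisor $\mathcal{H}$, where the fibers $S$ acquire rational double points and are genuinely singular. I would need to ensure that $K_{\pi}$ (defined via the relative dualizing sheaf on the singular total space $\Xg$) is genuinely an invertible sheaf restricting to $\mathcal{O}$ on singular fibers, that base change $R^0\pi_{\ast}K_{\pi}\otimes k([\omega])\to H^0(S, K_S)$ holds there, and that the Hodge-theoretic identification of $H^0(S,K_S)$ with $\C\omega$ persists across $\mathcal{H}$. For this I would rely on the triviality $K_{\bar{\mathcal{X}}_n}\simeq\mathcal{O}$ established in the preceding lemma, the flatness and normality of $\Xgn$ from \S\ref{ssec: Fgn}, and the rational double point condition (which guarantees $H^0(S, K_S)\ne 0$ and that the dualizing sheaf of $S$ is trivial); combined with upper semicontinuity and the constancy of $h^0(K_S)=1$, Grauert's theorem then secures base change and the global isomorphism across $\mathcal{H}$.
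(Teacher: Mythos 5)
Your proposal is correct and follows essentially the same route as the paper's proof: reduce everything to the $n=1$ pushforward statement via triviality of the canonical bundle on fibers (so that $\pi^{\ast}\pi_{\ast}K_{\pi}\to K_{\pi}$ is an isomorphism, and the fiber-product case follows by the external tensor/induction argument), identify the Hodge line $H^{0}(K_{\bar{S}})\simeq H^{2,0}(S)$ with $\lambda_{[\omega]}=\C\omega$ through the marking, and control the locus $\mathcal{H}$ using the triviality of the Weyl-group action on fibers (Corollary \ref{cor: Weyl group act trivially}). The one presentational difference is that the paper isolates as a separate step the well-definedness of this identification over $\mathcal{H}$ — the map $H^{0}(K_{\bar{S}})\to\C\omega$ is unchanged when the marking $\varphi_{u}$ is replaced by $w\circ\varphi_{u}$ with $w\in W(\omega,l)$, because $w$ fixes $\omega$ — whereas you fold this descent check into the $\Gg$-equivariance paragraph; the facts you invoke there do cover it.
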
 

\begin{proof}
We argue for $\pi\colon {\Xgn}\to {\Dg}$. 
Since 
$K_{\pi}|_{\bar{F}}\simeq K_{\bar{F}}\simeq \mathcal{O}_{\bar{F}}$ 
for each fiber $\bar{F}$ of ${\Xgn}\to{\Dg}$, 
$\pi_{\ast}K_{\pi}$ is invertible 
and the natural homomorphism 
$\pi^{\ast}\pi_{\ast}K_{\pi}\to K_{\pi}$ 
is an isomorphism. 
Hence the two isomorphisms in \eqref{eqn: modular Hodge} 
are equivalent to each other. 
Furthermore, we can use induction on $n$ to reduce the problem to the case $n=1$. 
Indeed, in the cartesian diagram 
\begin{equation*}
\xymatrix{
{\Xgn} \ar[r]^{\pi_{2}} \ar[d]_{\pi_{4}} & {\Xg} \ar[d]^{\pi_{1}} \\ 
\mathcal{X}_{g,n-1} \ar[r]_{\pi_{3}}   &    {\Dg} 
}
\end{equation*}
we have 
\begin{equation*}
K_{\pi} 
\simeq K_{\pi_{4}}\otimes \pi_{4}^{\ast}K_{\pi_{3}} 
\simeq \pi_{2}^{\ast}K_{\pi_{1}}\otimes \pi_{4}^{\ast}K_{\pi_{3}}. 
\end{equation*}

So we let $n=1$ and prove 
$\pi_{\ast}K_{\pi}\simeq \lambda$. 
Let 
$({\XgBR}\stackrel{\tilde{\pi}}{\to}{\BRg}, \varphi)$ 
be the smooth universal family over ${\BRg}$  
as in \S \ref{sssec: univ family over BRg} and 
$\bar{\pi} \colon \bar{\mathcal{X}}_{g}\to {\BRg}$ 
be its contraction.  
The projection 
$p\colon {\BRg}\to{\Dg}$ 
is the period map for the family 
$({\XgBR}\to{\BRg}, \varphi)$. 
For each $u\in{\BRg}$, 
writing $S=({\XgBR})_{u}$, 
the marking $\varphi_{u}$ sends 
$H^{2,0}(S)\subset H^{2}(S, {\C})$ 
to the line in $({\LK})_{{\C}}$ corresponding to $p(u)\in{\Dg}$.  
This means that 
$\tilde{\pi}_{\ast}K_{\tilde{\pi}}\simeq p^{\ast}\lambda$. 
Since $\bar{\pi}_{\ast}K_{\bar{\pi}}\simeq \tilde{\pi}_{\ast}K_{\tilde{\pi}}$, 
we obtain 
$\bar{\pi}_{\ast}K_{\bar{\pi}}\simeq p^{\ast}\lambda$ 
over ${\BRg}$. 
We want to show that this descends to 
$\pi_{\ast}K_{\pi} \simeq \lambda$ 
over ${\Dg}$. 
At each $u\in {\BRg}$, 
writing $\bar{S}=(\bar{\mathcal{X}}_{g})_{u}$, 
the isomorphism 
$\bar{\pi}_{\ast}K_{\bar{\pi}} \to p^{\ast}\lambda$ 
is given by 
\begin{equation}\label{eqn: Hodge-modular fiber}
H^{0}(K_{\bar{S}})\simeq H^{0}(K_{S}) \stackrel{\varphi_{u}}{\to} 
\varphi_{u}(H^{2,0}(S)) \subset ({\LK})_{{\C}}. 
\end{equation}
Even if we use another marking $w\circ \varphi_{u}$  
where $w\in W(p(u), l)$, 
this composition map does not change because 
$w$ acts trivially on the line $\varphi_{u}(H^{2,0}(S))$. 
Hence 
$\bar{\pi}_{\ast}K_{\bar{\pi}}\simeq p^{\ast}\lambda$   
descends to  
$\pi_{\ast}K_{\pi}\simeq \lambda$. 
By construction, this is ${\OLg}$-equivariant. 
\end{proof}

The Petersson metric on $\lambda^{\otimes n}$ is the $L^{2}$ metric on $\pi_{\ast}K_{\pi}$. 

\begin{proposition}\label{prop: Petersson = L2}
Let $F$ be a section of $\lambda^{\otimes n}$ over a subset $U$ of $\mathcal{D}_{g}$. 
Let $\Omega$ be the relative canonical form on $\pi^{-1}(U)\subset \mathcal{X}_{g,n}^{+}$ 
corresponding to $F$.  
Then 
\begin{equation*}
(F, F)_{n} = \int_{\pi^{-1}(U)/U}\Omega\wedge \bar{\Omega},  
\end{equation*}
where $\int_{\pi^{-1}(U)/U}$ means the fiber integral. 
\end{proposition}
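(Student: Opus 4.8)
The plan is to verify the identity pointwise on $U$, exploiting that both sides are Hermitian metrics on the line bundle $\pi_{\ast}K_{\pi}\simeq\lambda^{\otimes n}$ (Proposition~\ref{prop: isom of LB}) and that a metric on a line is determined by its value on a single nonzero vector. So I would fix $[\omega]\in U$, write $\bar{S}$ for the (possibly singular) fiber of ${\Xg}\to{\Dg}$ over $[\omega]$ with minimal resolution $\tilde{S}$ and marking $\varphi=\varphi_{u}$, and write $\bar{S}^{n}$ for the fiber of $\pi$. Using the fiberwise map \eqref{eqn: Hodge-modular fiber} together with the K\"unneth isomorphism $H^{0}(\bar{S}^{n}, K_{\bar{S}^{n}})\simeq H^{0}(\bar{S}, K_{\bar{S}})^{\otimes n}$, I would reduce to testing both sides on the generator attached to the period: choose $\eta\in H^{0}(\tilde{S}, K_{\tilde{S}})=H^{2,0}(\tilde{S})$ with $\varphi(\eta)=\omega$, let $F_{0}$ be the corresponding section (so that $F_{0}([\omega])=\omega^{\otimes n}$), and let $\Omega_{0}=p_{1}^{\ast}\eta\wedge\cdots\wedge p_{n}^{\ast}\eta$ be the matching generator of $K_{\bar{S}^{n}}$, where $p_{i}\colon\bar{S}^{n}\to\bar{S}$ are the projections. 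Since a general $F$ on the fiber is a scalar multiple of $F_{0}$ and both sides scale by the square modulus, this reduces the claim to $F=F_{0}$.

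For $F_{0}$ the left-hand side equals $(\omega,\bar{\omega})^{n}$ directly from the definition of the Petersson metric on $\lambda^{\otimes n}$ as the $n$-th tensor power of the Hermitian form $(\,\cdot\,,\bar{\cdot}\,)$ on $\lambda_{[\omega]}={\C}\omega$. For the right-hand side I would first treat $n=1$. Because rational double points are canonical and $\tilde{S}\to\bar{S}$ is crepant, $\eta$ is the pullback of a generator of $H^{0}(\bar{S}, K_{\bar{S}})$ and the exceptional locus is a null set, so $\int_{\bar{S}}\eta\wedge\bar{\eta}=\int_{\tilde{S}}\eta\wedge\bar{\eta}$. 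The key input is that $\varphi$ comes from a lattice isometry $H^{2}(\tilde{S},{\Z})\to{\LK}$, hence an isometry $H^{2}(\tilde{S},{\C})\to({\LK})_{{\C}}$ carrying the cup-product pairing $\int_{\tilde{S}}\,\cdot\wedge\,\cdot\,$ to the bilinear form on $({\LK})_{{\C}}$; applying it to $\eta$ gives
\begin{equation*}
\int_{\tilde{S}}\eta\wedge\bar{\eta}
= (\varphi(\eta), \overline{\varphi(\eta)})
= (\omega, \bar{\omega}),
\end{equation*}
the normalizations defining ${\Dg}$ being exactly those that make both sides positive, so that no residual sign or $i$-factor survives.

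To pass to general $n$ I would assemble the factors by Fubini's theorem: regrouping $\Omega_{0}\wedge\bar{\Omega}_{0}$ into the $4$-forms $p_{i}^{\ast}(\eta\wedge\bar{\eta})$ pulled back from the distinct factors (a sign-free reordering, since these are even-degree forms) yields
\begin{equation*}
\int_{\bar{S}^{n}}\Omega_{0}\wedge\bar{\Omega}_{0}
= \prod_{i=1}^{n}\int_{\bar{S}}\eta\wedge\bar{\eta}
= (\omega,\bar{\omega})^{n}
= (F_{0}, F_{0})_{n},
\end{equation*}
which is the desired identity. The step I expect to cost the most care is the passage to the degenerate fibers over $\mathcal{H}$: one must know that the fiber integral stays finite there and that the Hodge-theoretic identification of $H^{0}(K)$ with the period line survives the singularities. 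Both should follow from the rational double points being canonical Gorenstein, so that the holomorphic two-form extends across them with finite $L^{2}$-norm and all the relevant cohomology and pairings are read off on the crepant resolution $\tilde{S}$; everywhere else the argument is just the isometry property of $\varphi$.
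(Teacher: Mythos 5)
Your proof is correct and takes essentially the same route as the paper's: a pointwise verification that combines the identification \eqref{eqn: Hodge-modular fiber}, the isometry property of the marking to convert $\int_{\bar{S}}\Omega\wedge\bar{\Omega}$ (computed on the crepant resolution) into the pairing $(\omega,\bar{\omega})$, and an iterated (Fubini) integral for the $n$-fold fiber product. The only cosmetic difference is your reduction by scaling to a single generator $F_{0}$ with $\varphi(\eta)=\omega$, where the paper simply computes with an arbitrary $F$ directly.
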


\begin{proof}
This is a pointwise assertion. 
Let $[\omega]\in \mathcal{D}_{g}$ and $(S, L, \varphi)$ be a corresponding marked quasi-polarized $K3$ surface 
with the contraction $\mu\colon S \to \bar{S}$. 
We first consider the case $n=1$. 
By \eqref{eqn: Hodge-modular fiber}, 
$F([\omega])\in {\C}\omega=\varphi(H^{2,0}(S))$ 
is the image of $\mu^{\ast}\Omega_{[\omega]}\in H^{0}(K_{S})$ by $\varphi$, 
where $\Omega_{[\omega]}$ is considered as a $2$-form on $\bar{S}$. 
Since $\varphi$ is an isometry between $H^{2}(S, {\C})$ and $(\Lambda_{K3})_{{\C}}$, 
we have 
\begin{eqnarray*}
\int_{\bar{S}}\Omega_{[\omega]}\wedge \overline{\Omega_{[\omega]}} 
& = & 
\int_{S}\mu^{\ast}\Omega_{[\omega]}\wedge \overline{\mu^{\ast}\Omega_{[\omega]}} 
= (\varphi(\mu^{\ast}\Omega_{[\omega]}), \: \overline{\varphi(\mu^{\ast}\Omega_{[\omega]})}) \\ 
& = & 
(F([\omega]), \: \overline{F([\omega])}) 
= (F([\omega]), \: F([\omega]))_{1}. 
\end{eqnarray*}
For general $n$, 
the isomorphism $\pi^{\ast}\lambda^{\otimes n}\simeq K_{\pi}$ 
sends 
$\pi^{\ast}(F_{1}\otimes \cdots \otimes F_{n})([\omega])$ 
to 
$p_{1}^{\ast}\Omega_{1}\wedge \cdots \wedge p_{n}^{\ast}\Omega_{n}$ 
where 
$p_{i}\colon \bar{S}^{n}\to \bar{S}$ is the $i$-th projection 
and $\Omega_{i}\in H^{0}(K_{\bar{S}})$ corresponds to $F_{i}([\omega])\in \lambda_{[\omega]}$. 
Then our assertion follows by iterated integral. 
\end{proof}

\subsection{Proof of Theorem \ref{main thm 1}}\label{ssec: proof thm 1}

Now we prove Theorem \ref{main thm 1}. 
Let $(g, n)\ne (2, 1)$. 
First we derive the isomorphism \eqref{main eqn 1}. 
As an intermediate step, we choose a torsion-free normal subgroup 
$\Gamma \lhd {\Gg}$ of finite index and put 
$\mathcal{F}_{g,n}'=\Gamma\backslash \mathcal{X}_{g,n}^{+}$ and 
$\mathcal{F}_{g}'=\Gamma\backslash \mathcal{D}_{g}$. 
Let $\pi\colon \mathcal{F}_{g,n}'\to\mathcal{F}_{g}'$ be the projection, 
$H'\subset \mathcal{F}_{g}'$ be the image of $\mathcal{H}$, 
and $B'=\pi^{\ast}H'\subset \mathcal{F}_{g,n}'$ be the total space over $H'$. 

\begin{lemma}
For each $m, l \geq 0$ we have 
\begin{equation}\label{eqn: intermediate isom torsion-free}
H^{0}(\mathcal{F}_{g,n}', K_{\mathcal{F}_{g,n}'}^{\otimes m}(-lB')) 
\simeq M_{(19+n)m}(\Gamma,  {\rm det}^{m})^{(l)}. 
\end{equation}
\end{lemma}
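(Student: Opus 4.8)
The plan is to realise $\mathcal{F}_{g,n}'$ as a \emph{free} quotient of $\mathcal{X}_{g,n}^{+}$, compute the canonical bundle upstairs by assembling the two earlier identifications, and then descend to $\mathcal{D}_{g}$ by the projection formula.

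First I would record that $\Gamma$ acts freely. Since the stabiliser in $\mathrm{O}^{+}((\Lambda_{g})_{\mathbb{R}})$ of any point of $\mathcal{D}_{g}$ is compact, a torsion-free discrete $\Gamma$ acts freely on $\mathcal{D}_{g}$; as the projection $\pi\colon \mathcal{X}_{g,n}^{+}\to\mathcal{D}_{g}$ is $\Gamma$-equivariant, $\Gamma$ acts freely on $\mathcal{X}_{g,n}^{+}$ too. Hence the quotient map $q\colon \mathcal{X}_{g,n}^{+}\to\mathcal{F}_{g,n}'$ is a local isomorphism of normal spaces, so $q^{*}K_{\mathcal{F}_{g,n}'}\simeq K_{\mathcal{X}_{g,n}^{+}}$. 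Writing $q_{0}\colon \mathcal{D}_{g}\to\mathcal{F}_{g}'$ for the induced quotient, torsion-freeness also forces the absence of reflections $s_{\delta}$ in $\Gamma$, so $q_{0}^{*}H'=\mathcal{H}$ is reduced and therefore $q^{*}B'=\pi^{*}\mathcal{H}$ is reduced. Taking $\Gamma$-invariant sections then gives
\[ H^{0}(\mathcal{F}_{g,n}', K_{\mathcal{F}_{g,n}'}^{\otimes m}(-lB')) \simeq H^{0}(\mathcal{X}_{g,n}^{+}, K_{\mathcal{X}_{g,n}^{+}}^{\otimes m}(-l\pi^{*}\mathcal{H}))^{\Gamma}. \]

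Next I would compute the canonical bundle upstairs. From $K_{\mathcal{X}_{g,n}^{+}}=K_{\pi}\otimes\pi^{*}K_{\mathcal{D}_{g}}$, the $\Gg$-equivariant isomorphism $K_{\pi}\simeq\pi^{*}\lambda^{\otimes n}$ of Proposition \ref{prop: isom of LB}, and the (equivariant) adjunction isomorphism $K_{\mathcal{D}_{g}}\simeq\lambda^{\otimes 19}\otimes\det$ of \eqref{eqn: KD} with $b=19$, I obtain a $\Gg$-equivariant, hence $\Gamma$-equivariant, isomorphism $K_{\mathcal{X}_{g,n}^{+}}^{\otimes m}\simeq \pi^{*}(\lambda^{\otimes(19+n)m}\otimes\det^{m})$. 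The fibres of $\pi$ are $n$-fold products of $K3$ surfaces with rational double points, so $H^{0}(\mathcal{O})=\mathbb{C}$ fibrewise and $\pi_{*}\mathcal{O}_{\mathcal{X}_{g,n}^{+}}=\mathcal{O}_{\mathcal{D}_{g}}$ by flatness. The projection formula therefore identifies the right-hand side above with $H^{0}(\mathcal{D}_{g}, \lambda^{\otimes(19+n)m}\otimes\det^{m}(-l\mathcal{H}))^{\Gamma}$, which is $M_{(19+n)m}(\Gamma, \det^{m})^{(l)}$ by the description of the vanishing subspace for torsion-free $\Gamma$ recalled in \S\ref{ssec: modular forms}.

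The routine steps are the freeness and the projection formula; the main obstacle is the careful bookkeeping of the twist and the character. Concretely, I would need to check (a) that the adjunction isomorphism \eqref{eqn: KD} is genuinely equivariant, so that precisely the character $\det^{m}$ (and no extra automorphy factor) appears; (b) that $q^{*}B'=\pi^{*}\mathcal{H}$ is reduced — this is exactly where torsion-freeness is indispensable, since a reflection in $\Gamma$ would create ramification along $\mathcal{H}$ and destroy the order-$l$ matching; and (c) that the sections in question extend across the singular locus of $\mathcal{X}_{g,n}^{+}$, which is harmless because that locus has codimension $\geq 3$ and the spaces are normal. Granting these, vanishing to order $l$ along $B'$ corresponds term-by-term to vanishing to order $l$ along $\mathcal{H}$, producing the superscript $(l)$.
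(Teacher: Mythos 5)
Your proposal is correct and follows essentially the same route as the paper: both combine the adjunction isomorphism \eqref{eqn: KD} with Proposition \ref{prop: isom of LB} to identify $K_{\mathcal{X}_{g,n}^{+}}^{\otimes m}(-l\pi^{\ast}\mathcal{H})$ with $\pi^{\ast}(\lambda^{\otimes(19+n)m}\otimes\det^{m}(-l\mathcal{H}))$ equivariantly, and then use the freeness of the torsion-free $\Gamma$-action to pass to the quotient and read off invariant sections as $M_{(19+n)m}(\Gamma,\det^{m})^{(l)}$. The only cosmetic difference is that you pull back to $\mathcal{X}_{g,n}^{+}$ and take $\Gamma$-invariants via the projection formula, whereas the paper descends the line bundle isomorphism to $\mathcal{F}_{g,n}'$ and takes global sections there.
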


\begin{proof}
By \eqref{eqn: KD} and Proposition \ref{prop: isom of LB}, 
we have ${\Gg}$-equivariant isomorphisms 
\begin{equation*}
K_{\mathcal{X}_{g,n}^{+}}^{\otimes m}(-l\pi^{\ast}\mathcal{H}) 
\simeq 
\pi^{\ast}K_{\mathcal{D}_{g}}^{\otimes m}(-l\mathcal{H}) \otimes K_{\pi}^{\otimes m}  
\simeq    
\pi^{\ast}(\lambda^{\otimes (19+n)m}\otimes {\det}^{m} (-l\mathcal{H})) 
\end{equation*}
of line bundles on $\mathcal{X}_{g,n}^{+}$. 
This descends to an isomorphism 
\begin{equation*}
K_{\mathcal{F}_{g,n}'}^{\otimes m}(-lB') 
\simeq 
\pi^{\ast} (\lambda^{\otimes (19+n)m}\otimes {\det}^{m} (-lH'))
\end{equation*}
of line bundles on $\mathcal{F}_{g,n}'$. 
Taking the global sections over $\mathcal{F}_{g,n}'$, 
we obtain \eqref{eqn: intermediate isom torsion-free}. 
\end{proof}

Let $G={\Gg}/\Gamma$. 
We take the $G$-invariant part of the isomorphism 
\eqref{eqn: intermediate isom torsion-free} with $l=m$. 
For the right hand side, we have 
\begin{equation*}
(M_{(19+n)m}(\Gamma, {\det}^{m})^{(m)})^{G} = 
M_{(19+n)m}({\Gg}, {\det}^{m})^{(m)} 
\end{equation*}
by definition. 
For the left hand side, 
let $p\colon \mathcal{F}_{g,n}'\to{\Fgn}$ be the quotient map by $G$ 
and $\mathcal{F}_{g,n}^{\circ}$ be the regular locus of ${\Fgn}$. 
Note that the singular locus of ${\Fgn}$ is of codimension $\geq 2$ by the normality of ${\Fgn}$.  
By the first statement of Proposition \ref{prop: ramification divisor} (1), 
the composition 
\begin{equation*}
\mathcal{X}_{g,n}^{+} \to \mathcal{F}_{g,n}' \stackrel{p}{\to} {\Fgn}
\end{equation*} 
is doubly ramified at $\pi^{-1}(\mathcal{H})$ 
and has no other ramification divisor. 
Since $\mathcal{X}_{g,n}^{+} \to \mathcal{F}_{g,n}'$ is unramified, 
we find that the ramification divisor of $p$ is $B'$ with ramification index $2$.  
This shows that   
$p^{\ast}K_{\mathcal{F}_{g,n}^{\circ}} \simeq K_{\mathcal{F}'_{g,n}}(-B')$ 
over $p^{-1}(\mathcal{F}_{g,n}^{\circ})\subset \mathcal{F}'_{g,n}$ 
by the Hurwitz formula. 
Therefore we obtain 
\begin{equation*}
H^{0}(\mathcal{F}_{g,n}', K_{\mathcal{F}_{g,n}'}^{\otimes m}(-mB'))^{G} 
= H^{0}(\mathcal{F}_{g,n}^{\circ}, K_{\mathcal{F}_{g,n}^{\circ}}^{\otimes m}). 
\end{equation*}
This proves the isomorphism \eqref{main eqn 1}. 
Compatibility of multiplication is evident.

Next we derive the isomorphism \eqref{main eqn 2}. 
We use the following equality. 

\begin{lemma}\label{lemma: Petersson and L2 norm}
Let $F$ be a modular form in 
$M_{19+n}({\Gg}, {\det})^{(1)}=M_{19+n}({\Gg}, {\det})$ 
and $\Omega$ be the corresponding canonical form on ${\Fgn}$. 
For any open set $U$ of ${\Fg}$ we have 
\begin{equation*}
\int_{U}(F, F)_{19+n, \det}{\volD} = \int_{\pi^{-1}(U)}\Omega \wedge \bar{\Omega}. 
\end{equation*}
\end{lemma}

\begin{proof}
Since the problem is local, we may assume that 
$U$ is a small open subset of $\mathcal{D}_{g}$, 
$F$ is a section of $\lambda^{\otimes 19+n}\otimes {\det}$ over $U$, 
and $\Omega$ is the canonical form on $\pi^{-1}(U)\subset \mathcal{X}_{g,n}^{+}$ 
corresponding to $\pi^{\ast}F$ by the isomorphism 
$K_{\mathcal{X}_{g,n}^{+}}\simeq \pi^{\ast}(\lambda^{\otimes 19+n}\otimes{\det})$. 
Since $U$ is small, we can decompose $F$ as 
$F=F_{1}\otimes F_{2}$ with 
$F_{1}$ a section of $\lambda^{\otimes 19}\otimes {\det}$ and 
$F_{2}$ a section of $\lambda^{\otimes n}$. 
Let $\Omega_{1}$ be the canonical form on $U$ corresponding to $F_{1}$, and 
$\Omega_{2}$ be the relative canonical form on $\pi^{-1}(U)$ corresponding to $F_{2}$. 
Then $\Omega=\pi^{\ast}\Omega_{1}\wedge\Omega_{2}$. 
By \eqref{eqn: volD} and Proposition \ref{prop: Petersson = L2}, 
we see that  
\begin{eqnarray*}
\int_{\pi^{-1}(U)}\Omega\wedge\bar{\Omega} 
& = & 
\int_{U} \Omega_{1}\wedge\bar{\Omega}_{1} \int_{\pi^{-1}(U)/U}\Omega_{2}\wedge\bar{\Omega}_{2} \\ 
& = & 
\int_{U}(F_{1}, F_{1})_{19,\det}(F_{2}, F_{2})_{n}{\volD} \\ 
& = & 
\int_{U}(F, F)_{19+n,\det}{\volD}. 
\end{eqnarray*}
This proves Lemma \ref{lemma: Petersson and L2 norm}.  
\end{proof}

The isomorphism \eqref{main eqn 2} now follows by comparing, 
via Lemma \ref{lemma: Petersson and L2 norm},  
Proposition \ref{prop: cusp form Petersson} with  
the well-known fact that 
a canonical form $\Omega$ on a Zariski open set $X^{\circ}$ of a smooth proper variety $X$ 
extends holomorphically over $X$ if and only if 
$\int_{X^{\circ}}\Omega\wedge\bar{\Omega}<\infty$. 
The proof of Theorem \ref{main thm 1} is completed. 
\qed 

\begin{example}
Let $g=2$. 
When $k$ is even, we have $M_{k}(\Gamma_{2}, \det)=\{ 0 \}$ 
because $-{\rm id}\in \Gamma_{2}$ acts by $-1$ on $\lambda^{\otimes k}\otimes \det$. 
Therefore, when both $n$ and $m$ are odd, 
there is no nonzero $m$-canonical form on $\mathcal{F}_{2,n}$ nor its smooth projective model. 
\end{example}

Explicitly, the correspondence \eqref{main eqn 2} is given as follows. 
We take $s_{l}, w_{0}$ as in \S \ref{ssec: tube domain}. 
Let $\Omega_{1}$ be the canonical form on $\mathcal{D}_{g}$ 
corresponding to $s_{l}^{\otimes 19}\otimes w_{0}$. 
(This is the flat canonical form $dz_{1}\wedge \cdots \wedge dz_{19}$ 
in the tube domain realization at ${\Z}l$.) 
Let $\Omega_{2}$ be the relative canonical form on $\mathcal{X}_{g,1}^{+}$ 
corresponding to $s_{l}$. 
If $F=f(s_{l}^{\otimes 19+n}\otimes w_{0})$ is a cusp form and 
$\Omega_{F}$ is the corresponding canonical form on $X$,  
the pullback of $\Omega_{F}$ by the projection 
$p\colon \mathcal{X}_{g,n}^{+} \to {\Fgn}$ is 
\begin{equation}\label{eqn: explicit correspondence}
p^{\ast}\Omega_{F} = 
(\pi^{\ast}f)\: \pi^{\ast}\Omega_{1}\wedge 
\pi_{1}^{\ast}\Omega_{2} \wedge \cdots \wedge \pi_{n}^{\ast}\Omega_{2}, 
\end{equation}
where $\pi_{i}\colon \mathcal{X}_{g,n}^{+} \to \mathcal{X}_{g,1}^{+}$ is the $i$-th projection. 
This description implies the following. 

\begin{corollary}\label{cor: canonical map}
The canonical map of $X$ factors through 
${\Fgn}\twoheadrightarrow{\Fg}\dashrightarrow {\proj}^{N}$ 
where ${\Fg}\dashrightarrow {\proj}^{N}$ is the rational map associated to 
$S_{19+n}({\Gg}, \det)$. 
\end{corollary}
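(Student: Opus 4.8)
The plan is to read off the factorization directly from the explicit description \eqref{eqn: explicit correspondence}. First I would invoke \eqref{main eqn 2} to identify $H^{0}(X, K_{X})$ with the cusp form space $S_{19+n}(\Gg, \det)$, fix a basis $F_{0}, \dots, F_{N}$, and write each $F_{i} = f_{i}(s_{l}^{\otimes 19+n}\otimes w_{0})$ in the frame of \S \ref{ssec: tube domain}. The canonical map of $X$ is then $[\Omega_{F_{0}} : \cdots : \Omega_{F_{N}}]$, where $\Omega_{F_{i}}$ is the canonical form attached to $F_{i}$. Pulling back along the quotient $p \colon \mathcal{X}_{g,n}^{+} \to \Fgn$ and applying \eqref{eqn: explicit correspondence}, each $p^{\ast}\Omega_{F_{i}}$ becomes $(\pi^{\ast}f_{i})$ times the single form $\pi^{\ast}\Omega_{1}\wedge \pi_{1}^{\ast}\Omega_{2}\wedge \cdots \wedge \pi_{n}^{\ast}\Omega_{2}$, which is independent of $i$.

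The crux is that this wedge product is a common factor. Since $\Omega_{1}$ is a nonzero canonical form on $\mathcal{D}_{g}$ and each $\Omega_{2}$ a nonzero relative canonical form, it is a nonzero canonical form on $\mathcal{X}_{g,n}^{+}$, invertible on a dense open set. Dividing it out, I would conclude that the pulled-back canonical map equals $[\pi^{\ast}f_{0} : \cdots : \pi^{\ast}f_{N}]$, the pullback under $\pi$ of $[f_{0} : \cdots : f_{N}]$. Because $p$ is surjective and $\pi \colon \mathcal{X}_{g,n}^{+}\to \mathcal{D}_{g}$ descends to the projection $\Fgn \twoheadrightarrow \Fg$, this exhibits the canonical map of $X$ as factoring through $\Fgn \twoheadrightarrow \Fg$, with residual map $\Fg \dashrightarrow \proj^{N}$ sending $[\omega]$ to $[f_{0}([\omega]) : \cdots : f_{N}([\omega])]$. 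The automorphy factor $\chi(\gamma)\, j(\gamma, [\omega])^{19+n}$ shared by every $f_{i}$ cancels in homogeneous coordinates, so this residual map is precisely the rational map attached to $S_{19+n}(\Gg, \det)$.

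The main obstacle I anticipate is bookkeeping between birational models rather than any genuine difficulty, since \eqref{eqn: explicit correspondence} is stated on $\mathcal{X}_{g,n}^{+}$ whereas the canonical map is defined on the smooth projective model $X$. I would handle this by restricting to a dense open $X^{\circ}$ over which $X$ is isomorphic to $\Fgn$ and over which the common factor is nonvanishing; there the forms $\Omega_{F_{i}}$, global on $X$ by the cusp condition in Theorem \ref{main thm 1}, restrict to those of \eqref{eqn: explicit correspondence}, so the canonical map may be computed on this locus. The one thing to verify is that dividing by the common factor leaves the image unchanged, which is immediate since it is a single generically invertible form.
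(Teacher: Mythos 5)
Your proposal is correct and is essentially the paper's own proof: the paper's one-line argument states that the canonical map of $X$ is $[\pi^{\ast}f_{0}\colon \cdots \colon \pi^{\ast}f_{N}]$ for a basis $F_{i}=f_{i}(s_{l}^{\otimes 19+n}\otimes w_{0})$ of $S_{19+n}({\Gg},\det)$, which is exactly your step of cancelling the common factor $\pi^{\ast}\Omega_{1}\wedge\pi_{1}^{\ast}\Omega_{2}\wedge\cdots\wedge\pi_{n}^{\ast}\Omega_{2}$ in \eqref{eqn: explicit correspondence}. Your additional care about the birational model, the nonvanishing of the common factor, and the $\Gg$-invariance of the ratios simply makes explicit what the paper leaves implicit.
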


\begin{proof}
If $f_{i}(s_{l}^{\otimes 19+n}\otimes w_{0})$, $0\leq i \leq N$, 
are basis of $S_{19+n}({\Gg}, \det)$, 
the canonical map of $X$ is given by 
$[\pi^{\ast}f_{0}\colon \cdots \colon \pi^{\ast}f_{N}]$. 
\end{proof}

\begin{corollary}
Let $Y$ be a smooth projective model of ${\Fgn}/\frak{S}_{n}$. 
Then \eqref{main eqn 2} also induces an isomorphism 
$S_{19+n}(\Gamma_{g}, \det )\simeq H^{0}(K_{Y})$. 
\end{corollary}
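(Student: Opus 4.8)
The plan is to present $\mathcal{F}_{g,n}/\frak{S}_{n}$ as a finite quotient of a smooth model of $\Fgn$ and to reduce $H^{0}(K_{Y})$ to the $\frak{S}_{n}$-invariant canonical forms. The symmetric group $\frak{S}_{n}$ acts on $\mathcal{X}_{g,n}^{+}$ by permuting the $n$ factors, and this action commutes with the $\Gg$-action, so it descends to $\Fgn={\Gg}\backslash\mathcal{X}_{g,n}^{+}$. I may therefore choose the smooth projective model $X$ of $\Fgn$ to be $\frak{S}_{n}$-equivariant, so that $Y$ is a smooth projective model of $X/\frak{S}_{n}$. First I would invoke the standard fact that, for a finite group acting on a smooth projective variety, the geometric genus of a resolution of the quotient equals the space of invariant canonical forms, i.e.\ $H^{0}(Y, K_{Y})\simeq H^{0}(X, K_{X})^{\frak{S}_{n}}$. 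Together with the isomorphism \eqref{main eqn 2}, this reduces the corollary to determining the $\frak{S}_{n}$-action on $S_{19+n}({\Gg}, \det)$ under the correspondence of Theorem \ref{main thm 1}.

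The key step is to read this action off from the explicit formula \eqref{eqn: explicit correspondence}. There $p^{\ast}\Omega_{F}$ is the wedge of a base contribution $(\pi^{\ast}f)\,\pi^{\ast}\Omega_{1}$, which is pulled back from $\mathcal{D}_{g}$ and hence $\frak{S}_{n}$-invariant, with the fiber contribution $\pi_{1}^{\ast}\Omega_{2}\wedge\cdots\wedge\pi_{n}^{\ast}\Omega_{2}$. A permutation $\sigma\in\frak{S}_{n}$ carries the latter to $\pi_{\sigma^{-1}(1)}^{\ast}\Omega_{2}\wedge\cdots\wedge\pi_{\sigma^{-1}(n)}^{\ast}\Omega_{2}$. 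Since each $\pi_{i}^{\ast}\Omega_{2}$ is a relative $2$-form and forms of even degree commute under the wedge product, reordering to the standard order introduces no sign. Hence the fiber contribution is already symmetric, every $\Omega_{F}$ is $\frak{S}_{n}$-invariant, and $H^{0}(X, K_{X})^{\frak{S}_{n}}=H^{0}(X, K_{X})\simeq S_{19+n}({\Gg}, \det)$. (The evenness of the fiber dimension is what makes the action trivial; for a family with odd-dimensional fibers one would instead pick up the sign character.)

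The main obstacle is the first reduction $H^{0}(Y, K_{Y})\simeq H^{0}(X, K_{X})^{\frak{S}_{n}}$, which rests on the extension behavior of top-degree forms across the quotient. The inclusion $H^{0}(Y, K_{Y})\hookrightarrow H^{0}(X, K_{X})^{\frak{S}_{n}}$ follows by restricting a canonical form on $Y$ to the smooth locus, pulling it back to $X$ away from a set of codimension $\geq 2$, and extending by Hartogs. For the reverse inclusion one must check that an invariant holomorphic form on $X$ descends to the regular locus of $X/\frak{S}_{n}$ and acquires no poles along the exceptional divisors of $Y\to X/\frak{S}_{n}$; this is exactly the statement that finite-quotient singularities are canonical, equivalently that they impose no condition on holomorphic $n$-forms, and this is where the argument should be made precise, in parallel with the boundary and ramification analysis already carried out for Theorem \ref{main thm 1}.
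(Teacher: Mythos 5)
Your proposal is correct in substance, and its central step coincides with the paper's: both read the $\frak{S}_{n}$-invariance of $\Omega_{F}$ off the explicit formula \eqref{eqn: explicit correspondence}, the point being that the fiber contributions $\pi_{i}^{\ast}\Omega_{2}$ are forms of even degree, so permuting them introduces no sign (your remark that odd-dimensional fibers would instead produce the sign character is exactly the right sanity check). Where you diverge is the extension step. The paper never passes to an equivariant smooth model: the invariant form simply descends to a canonical form on an open set of $Y$, and then extends over all of $Y$ because its $L^{2}$ norm is finite — a one-line reuse of Lemma \ref{lemma: Petersson and L2 norm}, Proposition \ref{prop: cusp form Petersson}, and the $L^{2}$ extension criterion already invoked for \eqref{main eqn 2}. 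You instead take an $\frak{S}_{n}$-equivariant smooth projective model $X$ (legitimate, via equivariant resolution) and reduce to $H^{0}(Y, K_{Y})\simeq H^{0}(X, K_{X})^{\frak{S}_{n}}$. Your route buys something: it delivers the isomorphism in both directions at once, whereas the paper's proof, read literally, only exhibits the map $S_{19+n}({\Gg},\det)\to H^{0}(K_{Y})$ and leaves surjectivity implicit. The paper's route is lighter: no equivariant resolution and no singularity theory.

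One repair is needed in your justification of $H^{0}(Y,K_{Y})\simeq H^{0}(X,K_{X})^{\frak{S}_{n}}$. The statement ``finite-quotient singularities are canonical, equivalently they impose no condition on holomorphic top-degree forms'' is wrong on both counts: quotient singularities are log terminal but not canonical in general (the cyclic quotient $\tfrac{1}{3}(1,1)$ has discrepancy $-\tfrac{1}{3}$), and ``canonical'' is not equivalent to the extension property for $1$-canonical forms — it governs pluricanonical forms of all orders. The fact you actually need is that finite quotient singularities are \emph{rational}, so that the pushforward of the dualizing sheaf from a resolution equals $\omega_{X/\frak{S}_{n}}$ and sections of the latter are exactly the invariant forms upstairs. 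Alternatively, and closer to the paper's own mechanism, you can bypass singularity theory entirely: an invariant canonical form on the compact smooth $X$ has finite $L^{2}$ norm, the descended form on the open part of $X/\frak{S}_{n}$ therefore has finite $L^{2}$ norm, and the well-known $L^{2}$ criterion extends it over $Y$. With that substitution your argument is complete.
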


\begin{proof}
By \eqref{eqn: explicit correspondence}, $\Omega_{F}$ is $\frak{S}_{n}$-invariant 
and hence descends to a canonical form on an open set of $Y$. 
Since it has finite $L^{2}$ norm, it extends over $Y$.  
\end{proof}

When $g>2$, $Y$ can be thought of as a smooth projective model of 
the universal family of the Hilbert schemes ${\rm Hilb}^{n}(S)$.

The divisor of $\Omega_{F}$ on ${\Fgn}$ is described as follows. 
Let ${\rm div}(F)'$ be the divisor of $F$ over ${\Fg}$, 
where the vanishing order at a branch divisor of $\mathcal{D}_{g}\to {\Fg}$ 
is counted as $1/2$ of the vanishing order at the ramification divisor over it. 
Let $H$ be the image of $\mathcal{H}$ in ${\Fg}$ 
and $\pi\colon {\Fgn}\to{\Fg}$ be the projection.  

\begin{corollary}\label{cor: zero divisor}
${\rm div}(\Omega_{F})|_{{\Fgn}} = 
\pi^{\ast}( {\rm div}(F)' - H/2)$. 
\end{corollary}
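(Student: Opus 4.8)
The plan is to pull everything back to the cover $\mathcal{X}_{g,n}^{+}$, where $\Omega_{F}$ is described explicitly by \eqref{eqn: explicit correspondence}, and then descend using the ramification data already recorded. Write $q\colon\mathcal{D}_{g}\to{\Fg}$ and $p\colon\mathcal{X}_{g,n}^{+}\to{\Fgn}$ for the two quotient maps by ${\Gg}$, and $\tilde{\pi}\colon\mathcal{X}_{g,n}^{+}\to\mathcal{D}_{g}$ for the projection, so that $\pi\circ p=q\circ\tilde{\pi}$. Throughout I work with $\mathbb{Q}$-divisors, which is legitimate because ${\Fg}$ is $\mathbb{Q}$-factorial (a quotient of the smooth $\mathcal{D}_{g}$ by ${\Gg}$) and $K_{\mathcal{X}_{g,n}^{+}}$ is Cartier.

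The first step is to compute the divisor of $p^{\ast}\Omega_{F}$ upstairs, where I claim
\begin{equation*}
{\rm div}(p^{\ast}\Omega_{F}) = \tilde{\pi}^{\ast}\,{\rm div}(F).
\end{equation*}
In the formula \eqref{eqn: explicit correspondence}, the factor $\tilde{\pi}^{\ast}\Omega_{1}\wedge\pi_{1}^{\ast}\Omega_{2}\wedge\cdots\wedge\pi_{n}^{\ast}\Omega_{2}$ corresponds, under the isomorphism $K_{\mathcal{X}_{g,n}^{+}}\simeq\tilde{\pi}^{\ast}(\lambda^{\otimes(19+n)}\otimes{\det})$ coming from Proposition \ref{prop: isom of LB} and \eqref{eqn: KD}, to the pullback of the frame $s_{l}^{\otimes(19+n)}\otimes w_{0}$; hence it is a nowhere-vanishing section of the Cartier bundle $K_{\mathcal{X}_{g,n}^{+}}$, with no zero even along the degenerate fibers over $\mathcal{H}$. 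Therefore the only contribution to ${\rm div}(p^{\ast}\Omega_{F})$ is that of the scalar factor $\tilde{\pi}^{\ast}f$, whose divisor is $\tilde{\pi}^{\ast}{\rm div}(f)=\tilde{\pi}^{\ast}{\rm div}(F)$, the last equality holding because $s_{l}^{\otimes(19+n)}\otimes w_{0}$ is a frame on $\mathcal{D}_{g}$.

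The second step is descent through the ramification of $p$. By Proposition \ref{prop: ramification divisor} the map $p$ is ramified precisely along $\mathcal{B}:=\tilde{\pi}^{-1}(\mathcal{H})$ with index $2$, so reading the standard ramification formula $K_{\mathcal{X}_{g,n}^{+}}=p^{\ast}K_{{\Fgn}}+\mathcal{B}$ off along each prime divisor gives ${\rm div}(p^{\ast}\Omega_{F})=p^{\ast}{\rm div}(\Omega_{F})+\mathcal{B}$, whence $p^{\ast}{\rm div}(\Omega_{F})=\tilde{\pi}^{\ast}{\rm div}(F)-\mathcal{B}$. Now I convert the right-hand side into a pullback along $p$. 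Since $q$ is ramified with index $2$ along $\mathcal{H}$ we have $q^{\ast}H=2\mathcal{H}$, while the half-order convention defining ${\rm div}(F)'$ is exactly the statement $q^{\ast}{\rm div}(F)'={\rm div}(F)$; moreover $\tilde{\pi}^{\ast}\mathcal{H}=\mathcal{B}$ since the extended family is flat with reduced fibers over $\mathcal{H}$. Using functoriality $\tilde{\pi}^{\ast}q^{\ast}=p^{\ast}\pi^{\ast}$, I then get $\mathcal{B}=\tfrac{1}{2}\tilde{\pi}^{\ast}q^{\ast}H=\tfrac{1}{2}p^{\ast}\pi^{\ast}H$ and $\tilde{\pi}^{\ast}{\rm div}(F)=p^{\ast}\pi^{\ast}{\rm div}(F)'$, so that
\begin{equation*}
p^{\ast}{\rm div}(\Omega_{F}) = p^{\ast}\pi^{\ast}\bigl({\rm div}(F)'-H/2\bigr).
\end{equation*}
As $p$ is surjective, $p^{\ast}$ is injective on $\mathbb{Q}$-divisors, and the claimed identity ${\rm div}(\Omega_{F})|_{{\Fgn}}=\pi^{\ast}({\rm div}(F)'-H/2)$ follows. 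Note that, pleasantly, the splitting-vector branch divisors of $q$ need no separate treatment: they enter ${\rm div}(F)'$ through the same half-order convention, and since $p$ is \emph{unramified} over them (Proposition \ref{prop: fixed divisor}), the correction term $\mathcal{B}$ has no component there, matching the fact that $-H/2$ only corrects at $H$.

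The main obstacle is the first step, namely verifying that the relative piece of \eqref{eqn: explicit correspondence} acquires no extra zeros along the singular fibers over the $(-2)$-Heegner divisor; this is precisely the force of the line-bundle isomorphism of Proposition \ref{prop: isom of LB}, which identifies the wedge of $\Omega_{1}$ and the $\Omega_{2}$'s with a genuine frame of the Cartier canonical bundle of the (singular) total space. Everything after that is formal: the coefficient $-H/2$ is produced by the integral ramification divisor $\mathcal{B}$ of $p$ transported down through the index-$2$ ramification of $q$. I would close by remarking that the output is an honest integral divisor: by Corollary \ref{cor: vanish at H} the order of $F$ along $\mathcal{H}$ is odd, so the coefficient $({\rm ord}_{\mathcal{H}}F-1)/2$ of $\pi^{\ast}({\rm div}(F)'-H/2)$ along $\pi^{-1}(H)$ is an integer, consistent with $\Omega_{F}$ being a genuine pluricanonical form.
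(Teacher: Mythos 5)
Your proof is correct and takes essentially the same route as the paper's own (one-line) proof, which likewise combines \eqref{eqn: explicit correspondence} with the ramification formula and the observations that $\pi^{\ast}H$ is reduced while $\pi^{\ast}H'$ has multiplicity $2$ for the other branch divisors; you have simply made explicit the pullback bookkeeping that the paper leaves implicit. One small slip in your closing remark: the \emph{odd} vanishing order of $F$ along $\mathcal{H}$ comes from Lemma \ref{lem: reflection LB}, not Corollary \ref{cor: vanish at H} (which only gives order $\geq 1$).
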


\begin{proof}
This follows from \eqref{eqn: explicit correspondence} and the Hurwitz formula, 
noticing that $\pi^{\ast}H$ is reduced and 
$\pi^{\ast}H'$ has multiplicity $2$ for other branch divisors $H'$. 
\end{proof}

\begin{remark}\label{remark: dispense with}
If one is interested only in \eqref{main eqn 2}, 
one can in fact dispense with the partial compactification ${\Fgn}$, because 
$\int_{\Gamma\backslash \mathcal{D}}(F, F)_{k,\chi}{\volD}$ 
must diverge if a (meromorphic) modular form $F$ has pole at $\mathcal{H}$. 
However, in order to understand a full picture 
of the connection of modular forms and the geometry of universal family, 
such as \textit{pluri}canonical forms and Eichler-Shimura theory, 
it is necessary to fill the $(-2)$-Heegner divisor. 
\end{remark}

\subsection{Extension over compactification}\label{ssec: main thm 2}

In this subsection we assume that $X$ is a complex analytic variety 
containing ${\Fgn}$ as a Zariski open set 
and regular in codimension $1$, such that 
${\Fgn}\to{\Fg}$ extends to a morphism $\pi\colon X\to {\Fgcpt}$ 
to some toroidal compactification ${\Fgcpt}$ of ${\Fg}$ with the property that 
every irreducible component of the boundary divisor $\Delta_{X}$ of $X$ 
dominates some irreducible component of the boundary divisor $\Delta_{\mathcal{F}}$ of ${\Fgcpt}$. 
We explain how the results of Theorem \ref{main thm 1} 
could be extended for such $X$, 
clarifying the contribution from the boundary $\Delta_{X}$. 
This assumption means  
that except some codimension $\geq 2$ locus 
$X$ is an equidimensional family over ${\Fgcpt}$, 
but at present no example of such a compactification has been known, 
which makes the next assertion conditional. 

\begin{proposition}\label{prop: extension conditional}
Let $X$ be a complex analytic variety as above. 

$(1)$ The isomorphism \eqref{main eqn 1} extends to 
\begin{equation*}
\bigoplus_{m} M_{(19+n)m}({\Gg}, {\rm det}^{m})^{(m)} 
\simeq \bigoplus_{m} H^{0}(X, K_{X}^{\otimes m}(m\Delta_{X})). 
\end{equation*}
This maps 
$S_{(19+n)m}({\Gg}, {\rm det}^{m})^{(m)}$ 
into $H^{0}(X, K_{X}^{\otimes m}((m-1)\Delta_{X}))$. 

$(2)$ Assume moreover that $X$ is compact and has canonical singularities. 
Then 
\begin{equation*}
\kappa({\Fgn}) \geq 
\kappa({\Fgcpt}, \: (19+n)\lambda - \Delta_{\mathcal{F}} - H/2), 
\end{equation*}
where the right hand side is the Iitaka dimension of the ${\Q}$-divisor 
$(19+n)\lambda - \Delta_{\mathcal{F}} - H/2$ of ${\Fgcpt}$. 
\end{proposition}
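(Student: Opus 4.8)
The plan is to reduce both parts to a codimension-one computation of the divisor of the modular canonical form along the boundary $\Delta_X$, taking Theorem \ref{main thm 1} as the input over the interior. Interpreting $K_X$ as the reflexive extension of the canonical bundle from the regular locus, the space $H^{0}(X, K_X^{\otimes m}(m\Delta_X))$ depends only on codimension-one data, and since $\Delta_X$ is the complement of ${\Fgn}$ in codimension one,
\begin{equation*}
H^{0}(X, K_X^{\otimes m}(m\Delta_X)) = \{ \, \omega \in H^{0}({\Fgn}, K_{{\Fgn}}^{\otimes m}) : {\rm ord}_{E}(\omega) \geq -m \text{ for every component } E \subset \Delta_X \, \}.
\end{equation*}
By Theorem \ref{main thm 1} every such $\omega$ is the form $\Omega_F$ of a unique $F \in M_{(19+n)m}({\Gg}, {\det}^{m})^{(m)}$, and conversely; so part $(1)$ becomes the assertion that $\Omega_F$ has pole order at most $m$ along each $E$, together with the cusp-form refinement.

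The key step is the boundary asymptotics of the frame $s_l^{\otimes(19+n)}\otimes w_0$ read through \eqref{eqn: explicit correspondence}. Near a general point of a component $E$ of $\Delta_X$, which by hypothesis dominates a component $\bar{E}$ of $\Delta_{\mathcal{F}}$, I would pass to the $0$-dimensional cusp $I$ whose toroidal chart contains $\bar{E}$ and use the tube-domain coordinates $Z=(z_1,\dots,z_{19})$ with the toroidal parameter $q=\exp(2\pi i (Z,m_0))$ of \S\ref{ssec: cusp Petersson}. Because one coordinate direction gives $d(Z,m_0)=dq/(2\pi i\, q)$, the base form $\Omega_1=dz_1\wedge\cdots\wedge dz_{19}$, hence $\pi^{\ast}\Omega_1$, has a pole of order exactly $1$ along $E$; the relative factors $\Omega_2$ should extend as nowhere-vanishing relative canonical frames, since $\lambda$ extends over ${\Fgcpt}$ at the $0$-dimensional cusp with $s_l$ as a local frame and $K_{\pi}\simeq \pi^{\ast}\lambda^{\otimes n}$ (Proposition \ref{prop: isom of LB}) extends accordingly in codimension one. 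Thus the rational $1$-canonical form attached to $s_l^{\otimes(19+n)}\otimes w_0$ has pole of order exactly $1$ along every component of $\Delta_X$, and its $m$-th power order $m$. As $F=f\,(s_l^{\otimes(19+n)m}\otimes w_0^{\otimes m})$ is holomorphic by the Koecher principle, $\pi^{\ast}f$ contributes no pole, which yields the pole bound; the interior divisor and the vanishing order $\geq m$ along $H$ are governed exactly as in Corollary \ref{cor: zero divisor}, and surjectivity is immediate from the displayed description, giving the isomorphism. When $F$ is a cusp form its Fourier coefficients on the isotropic rays vanish, so $\pi^{\ast}f$ vanishes to order $\geq 1$ along $E$ and absorbs one pole, placing $\Omega_F$ in $H^{0}(X, K_X^{\otimes m}((m-1)\Delta_X))$.

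For part $(2)$ I would extract from the case $m=1$ the $\Q$-divisor relation
\begin{equation*}
K_X + \Delta_X \ \sim_{\Q} \ \pi^{\ast}\big((19+n)\lambda - H/2\big),
\end{equation*}
the half-Heegner term coming from the $\det$ character via Lemma \ref{lem: reflection LB}, \eqref{eqn: KD} and Corollary \ref{cor: zero divisor}, and the $-\Delta_X$ from the pole computation. With $D=(19+n)\lambda-\Delta_{\mathcal{F}}-H/2$ this gives $K_X-\pi^{\ast}D \sim_{\Q}\pi^{\ast}\Delta_{\mathcal{F}}-\Delta_X$, which is effective: the codimension-one part of $\pi^{-1}(\Delta_{\mathcal{F}})$ is exactly $\Delta_X$ and every component occurs in $\pi^{\ast}\Delta_{\mathcal{F}}$ with multiplicity $\geq 1$, while $\Delta_X$ is reduced. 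Since $\pi$ has connected fibers, $\kappa(X,K_X)\geq \kappa(X,\pi^{\ast}D)=\kappa({\Fgcpt},D)$, and the assumption that $X$ is compact with canonical singularities identifies $\kappa(X,K_X)$ with $\kappa({\Fgn})$.

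The step I expect to be the main obstacle is the boundary behavior of the relative factor $\Omega_2$: verifying that the relative canonical form extends to a nowhere-vanishing frame along $E$ means controlling the possibly degenerate fibers of $X\to{\Fgcpt}$ over $\Delta_{\mathcal{F}}$, where only the hypothesis that $X$ is an equidimensional family in codimension one is available. The extension of $\lambda$ and of $s_l$ over the $0$-dimensional cusp is standard, but transporting this to the relative canonical frame of the universal family at the boundary, with no explicit model of these fibers, is the crux; this is precisely why the statement is left conditional.
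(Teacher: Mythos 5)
Your reduction of both parts to a pole-order bound for $\Omega$ along $\Delta_X$ is the same reduction the paper makes, and your derivation of part (2) from the $m=1$ divisor relation would be fine once that bound is in hand. But the way you try to establish the bound contains a genuine gap, which you yourself flag as the crux: you need the relative canonical frame $\Omega_2$ to extend as a nowhere-vanishing frame across the fibers of $X\to{\Fgcpt}$ over $\Delta_{\mathcal{F}}$, and nothing in the hypotheses makes this possible. Over the boundary there is no universal family, no $n$-fold product structure (so the decomposition $\pi^{\ast}\Omega_1\wedge\pi_1^{\ast}\Omega_2\wedge\cdots\wedge\pi_n^{\ast}\Omega_2$ has no meaning there), and no reason that $K_{\pi}\simeq\pi^{\ast}\lambda^{\otimes n}$, proved in Proposition \ref{prop: isom of LB} for the universal family over the period domain, persists along $\Delta_X$; the only control on the boundary fibers is equidimensionality. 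Moreover, your closing remark that this gap ``is precisely why the statement is left conditional'' misreads the logic: the proposition is conditional only on the \emph{existence} of such an $X$; granted the hypotheses, the paper proves both assertions in full, so a proof that still hinges on an unverifiable extension claim is not a proof of the proposition.

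The paper's argument is designed exactly to bypass your obstacle, and is analytic rather than frame-theoretic. It invokes the $L^{2/m}$-criterion of \cite{Ma3} \S 6: the $m$-canonical form $\Omega$ has pole order at most $m$ along a component $\Delta$ of $\Delta_X$ if and only if $\int_{U_{\varepsilon}}||\Omega||^{2/m}=o(\varepsilon^{-\alpha})$ for every $\alpha>0$, where $U_{\varepsilon}$ is an annulus bundle of radii $[\varepsilon,1]$ around a general point of $\Delta$. Since $\Delta_X$ has measure zero, this integral is computed entirely over the interior ${\Fgn}$, where the correspondence of Theorem \ref{main thm 1} and the fiber-integral identity of Lemma \ref{lemma: Petersson and L2 norm} convert it into the Petersson-norm integral $\int_{U_{\varepsilon}'}(F,F)_{k,\chi}^{1/m}{\volD}$ over the corresponding annulus bundle in the torus-embedding chart of ${\Fgcpt}$. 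The asymptotic estimate from the proof of Proposition \ref{prop: cusp form Petersson} bounds this by $\int_{\varepsilon}^{1}(\log r)^{an}r^{-1}dr=o(\varepsilon^{-\alpha})$, giving the pole bound, and when $F$ is a cusp form the integral converges, giving the refinement $H^{0}(X,K_X^{\otimes m}((m-1)\Delta_X))$. At no point does one need to know anything about the fibers over $\Delta_{\mathcal{F}}$. That integral criterion is the idea your proposal is missing.
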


\begin{proof}
Since this is similar to the case of Siegel modular forms \cite{Ma3}, 
we will just indicate the outline. 
Let $\Delta$ be a component of $\Delta_{X}$, 
and $\Delta'$ be a boundary divisor of a torus embedding 
which gives the component $\pi(\Delta)$ of $\Delta_{\mathcal{F}}$. 
We take a small neighborhood $\Delta_{x}\subset \Delta$ of a general point $x$ of $\Delta$  
and let $\Delta_{y}'\subset\Delta'$ be a small open set whose image is $\pi(\Delta_{x})$. 
Let $U_{\varepsilon}, U_{\varepsilon}'$ 
be annulus bundles around 
$\Delta_{x}, \Delta_{y}'$ 
respectively, say of radius $[\varepsilon, 1]$. 
Let $F$ be a local modular form of weight $k=(19+n)m$ and character $\chi={\rm det}^{m}$ 
on a neighborhood of $\pi(\Delta_{x})$, and 
$\Omega$ be the $m$-canonical form corresponding to $F$. 
Both (1) and (2) boil down to the assertion that 
$\Omega$ has at most pole of order $m$ along $\Delta_{x}$.  

By the $L^{2/m}$-criterion in \cite{Ma3} \S 6, 
it is sufficient to show that 
$\int_{U_{\varepsilon}}||\Omega||^{2/m} = o(\varepsilon^{-\alpha})$ 
for every $\alpha>0$. 
As before, 
we can relate the $L^{2/m}$ norm of $\Omega$ to the Petersson norm of $F$. 
Pulling back to the torus embedding, 
the problem is reduced to showing that 
\begin{equation*}
\int_{U_{\varepsilon}'}(F, F)_{k,\chi}^{1/m}{\volD} = o(\varepsilon^{-\alpha}) 
\end{equation*}
for every $\alpha>0$. 
This can be proved as in the proof of Proposition \ref{prop: cusp form Petersson}: 
\begin{eqnarray*}
\int_{U_{\varepsilon}'}(F, F)_{k,\chi}^{1/m}{\volD} 
& = & 
\int_{U_{\varepsilon}'}|f(Z)|^{2/m}({\rm Im}(Z), {\rm Im}(Z))^{n}{\rm vol}_{I} \\ 
& \sim & 
\int_{U_{\varepsilon}'}|f(Z)|^{2/m} (\log |q|)^{an} |q|^{-2} dq \wedge d\bar{q} \wedge {\rm vol}'_{I} \\ 
& \leq & 
\int_{\varepsilon}^{1} (\log r)^{an}r^{-1}dr \\ 
& = & 
o(\varepsilon^{-\alpha}). 
\end{eqnarray*}
When $F$ is a cusp form, 
the integral 
$\int_{U_{\varepsilon}'}(F, F)_{k,\chi}^{1/m}{\volD}$ 
converges and hence 
$\int_{U_{\varepsilon}}||\Omega||^{2/m}$ converges. 
This proves the second assertion in (1). 
\end{proof}

\section{Mukai models and quasi-pullback of $\Phi_{12}$}\label{sec: Mukai Borcherds}

In this section we prove Theorem \ref{main thm Mukai Borcherds}.  
In \S \ref{ssec: Phi12} 
we compute the weight $k(g)$ of the quasi-pullback $F(g)$ of the Borcherds $\Phi_{12}$ form 
for each $g\leq 22$.  
By Corollary \ref{criteria for kappa}, we have 
$\kappa({\Fgn})\geq 0$ for all $n\geq n(g)$ where $n(g)=k(g)-19$. 
In \S \ref{ssec: K3 model} we proceed from the opposite direction: 
for many $g\leq 20$, we study an upper bound $n'(g)$ where ${\Fgn}$ is unirational or uniruled 
by using classical and Mukai models of polarized $K3$ surfaces. 

With the geometric bound $n'(g)$, 
the isomorphism \eqref{main eqn 2} can also be used in the opposite direction: 
we find that $S_{k}(\Gamma_{g}, \det )=\{ 0 \}$ when $k\leq n'(g)+19$. 
The closeness of $n(g)$ and $n'(g)$ suggests that 
$F(g)$ would be the cusp form of minimal weight, at least for the character $\det$.

\subsection{Quasi-pullback of $\Phi_{12}$}\label{ssec: Phi12} 

Let $II_{2,26}=2U\oplus 3E_{8}$ be the even unimodular lattice of signature $(2, 26)$ 
and $\mathcal{D}_{2,26}$ be the associated Hermitian symmetric domain.  
In \cite{Bo}, Borcherds discovered a modular form $\Phi_{12}$ 
on $\mathcal{D}_{2,26}$ of weight $12$ and character $\det$ for ${\rm O}^{+}(II_{2,26})$, 
whose zero divisor is exactly the $(-2)$-Heegner divisor of $\mathcal{D}_{2,26}$. 
The quasi-pullback of $\Phi_{12}$ to $\mathcal{D}_{g}$ is defined as follows (\cite{Bo}, \cite{BKPSB}). 
We choose a primitive vector $v_{g}$ of norm $2-2g$ from $E_{8}$ 
and put $K_{g}=v_{g}^{\perp}\cap E_{8}$. 
(This is in general not unique even up to ${\rm O}(E_{8})$.) 
This defines a primitive embedding 
${\Lg}\hookrightarrow II_{2,26}$ 
with $\Lambda_{g}^{\perp}=K_{g}$, 
and thus an embedding 
$\mathcal{D}_{g}\hookrightarrow \mathcal{D}_{2,26}$ 
of domain. 
We divide $\Phi_{12}$ by all zeros containing $\mathcal{D}_{g}$ 
and restrict it to $\mathcal{D}_{g}$. 
More precisely, we put 
\begin{equation*}
F(g) = \left. \frac{\Phi_{12}}{\prod_{\delta}(\delta, \cdot)} \right|_{\mathcal{D}_{g}}
\end{equation*}
where $\delta$ ranges over all $(-2)$-vectors in $K_{g}$ up to $\pm 1$, 
and $(\delta, \cdot)$ is the section of $\mathcal{O}(1)$ defined by paring with $\delta$. 
The basic properties of $F(g)$ are summarized as follows. 

\begin{proposition}[\cite{BKPSB}, \cite{Ko}, \cite{GHS1}]\label{prop: quasi-pullback Phi12}
Let $r(g)$ be the number of $(-2)$-vectors in $K_{g}$. 
Then $F(g)$ is a modular form on $\mathcal{D}_{g}$ 
of weight $k(g)=12+r(g)/2$ and character $\det$ with respect to ${\Gg}$,  
and vanishes at the $(-2)$-Heegner divisor of $\mathcal{D}_{g}$. 
When $r(g)>0$, $F(g)$ is a cusp form. 
\end{proposition}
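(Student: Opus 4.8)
The plan is to realize $F(g)$ explicitly as a quasi-pullback and then check each assertion in turn. Write $\delta_{1},\dots,\delta_{r(g)/2}$ for one vector chosen from each pair $\{\pm\delta\}$ of $(-2)$-vectors of $K_{g}=\Lg^{\perp}\subset II_{2,26}$. For a $(-2)$-vector $\delta\in II_{2,26}$ one has $\mathcal{D}_{g}\subset\delta^{\perp}$ exactly when $\delta\perp\Lg$, i.e. $\delta\in K_{g}$, so these $r(g)/2$ hyperplanes are precisely the components of the zero divisor of $\Phi_{12}$ containing $\mathcal{D}_{g}$. First I would form, in the ambient domain $\mathcal{D}_{2,26}$, the ratio
\[
\Phi_{12} \Big/ \prod_{i=1}^{r(g)/2} (\,\cdot\,,\delta_{i}),
\]
where $(\,\cdot\,,\delta_{i})$ is the linear form on $(II_{2,26})_{\mathbb{C}}$ cutting out $\delta_{i}^{\perp}$ (note this vanishes identically on $\mathcal{D}_{g}$, which is why the division must be carried out upstairs), and then restrict the result to $\mathcal{D}_{g}$ to obtain $F(g)$.

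The weight and character are bookkeeping. Each $(\,\cdot\,,\delta_{i})$ is homogeneous of degree one, hence a section of $\lambda^{-1}$ of weight $-1$; dividing $\Phi_{12}$ (weight $12$) by the $r(g)/2$ factors yields weight $12+r(g)/2=k(g)$. For the character I would use that $\Gg$ consists of the isometries of $\mathrm{O}^{+}(\Lg)$ acting trivially on the discriminant group: by Nikulin \cite{Ni1} every $\gamma\in\Gg$ extends to $\tilde\gamma\in\mathrm{O}^{+}(II_{2,26})$ that is the identity on $K_{g}$, with $\det(\tilde\gamma)=\det(\gamma)$. Then $\Phi_{12}$ contributes the character $\det$, whereas $\prod_{i}(\,\cdot\,,\delta_{i})$ is genuinely $\Gg$-invariant, since $\tilde\gamma$ fixes each $\delta_{i}\in K_{g}$ and hence $(\tilde\gamma\,\omega,\delta_{i})=(\omega,\delta_{i})$. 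Therefore $F(g)$ transforms with character $\det\cdot 1=\det$ for $\Gg$.

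Next I would establish holomorphy, nonvanishing, and the vanishing along the Heegner divisor, all resting on the hypothesis that ${\rm div}(\Phi_{12})$ is reduced. At a general point $p\in\mathcal{D}_{g}$, viewed in $\mathcal{D}_{2,26}$, the only components of ${\rm div}(\Phi_{12})$ through $p$ are the $r(g)/2$ distinct divisors $\delta_{i}^{\perp}$, each of multiplicity one; since $\prod_{i}(\,\cdot\,,\delta_{i})$ is then a reduced defining equation for this union, $\Phi_{12}=u\prod_{i}(\,\cdot\,,\delta_{i})$ with $u$ a local unit, and the ratio restricts to the nonzero value $F(g)(p)=u(p)\neq 0$. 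This shows $F(g)$ is holomorphic and $\not\equiv 0$; along a codimension-one locus $\mu^{\perp}\cap\mathcal{D}_{g}$ coming from a $(-2)$-vector $\mu\in\Lg\subset II_{2,26}$ (which is not among the divided factors) the ambient ratio acquires a factor $(\,\cdot\,,\mu)$, so $F(g)$ vanishes exactly there, i.e. on the $(-2)$-Heegner divisor of $\mathcal{D}_{g}$. Since the only remaining indeterminacy sits in codimension $\geq 2$, $F(g)$ extends holomorphically over all of $\mathcal{D}_{g}$ by Hartogs.

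The cusp-form claim for $r(g)>0$ is the main obstacle, since the preceding steps reduce to lattice data and a local factorization, whereas this is a genuine statement about the boundary. The plan is to analyze the Fourier expansion \eqref{eqn: Fourier expansion} of $F(g)$ at each $0$-dimensional cusp $I$ of $\mathcal{D}_{g}$ and show that the coefficients $a_{m}$ on the isotropic boundary rays vanish. The key point is that the removed linear forms $(\,\cdot\,,\delta_{i})$, with $\delta_{i}\in K_{g}\subset E_{8}$ lying in finite (non-cusp) directions, are regular there, so that removing at least one of them ($r(g)>0$) is what forces the boundary coefficients to drop out; this is exactly where the hypothesis $r(g)>0$ is indispensable, as for $r(g)=0$ one simply restricts $\Phi_{12}$, which has weight $12$ and is not cuspidal. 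Making the boundary estimate precise uses the known leading Fourier--Jacobi behavior of $\Phi_{12}$ at the cusps, and here I would follow the quasi-pullback computations of \cite{Bo}, \cite{GHS1}, \cite{Ko}.
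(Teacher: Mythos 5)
The paper itself gives no proof of this proposition: it is imported, with attribution, from \cite{BKPSB}, \cite{Ko}, \cite{GHS1}, so there is no internal argument to compare against, and your reconstruction already supplies more detail than the paper does. The parts you work out are correct and follow the same standard route as those references: the divided factors are exactly the $\delta^{\perp}$ with $\delta\in K_{g}$ (since $\mathcal{D}_{g}$ spans ${\proj}(\Lambda_{g})_{\C}$), the weight count is right, and the character argument via the Nikulin extension of $\gamma\in{\Gg}$ to $\tilde\gamma\in{\rm O}^{+}(II_{2,26})$ acting as the identity on $K_{g}$ (so $\det\tilde\gamma=\det\gamma$ and the denominator is invariant) is precisely how \cite{GHS1} argue. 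Two small inaccuracies: your Hartogs step rests on a false premise, because the locus you have not treated is not of codimension $\geq 2$ --- it contains the codimension-one loci $\mu^{\perp}\cap\mathcal{D}_{g}$ coming from $(-2)$-vectors $\mu\in II_{2,26}$ with nonzero components in both $\Lambda_{g}^{\vee}$ and $K_{g}^{\vee}$; the clean fix is to note that ${\rm div}(\Phi_{12})\geq\sum_{i}\delta_{i}^{\perp}$ as divisors on all of $\mathcal{D}_{2,26}$ (each $\delta_{i}^{\perp}$ has multiplicity one), so the ratio is holomorphic on the whole ambient domain before restriction. Relatedly, $F(g)$ ``vanishes exactly on the $(-2)$-Heegner divisor'' is an overstatement: it also vanishes on those other Heegner divisors, as the paper's corollary computing ${\rm div}(\Omega_{F(g)})$ shows; only the vanishing, not exactness, is claimed or needed.

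The one place where a genuine gap would open if you executed your plan is the cuspidality claim, which is the only hard part. The mechanism you propose --- that the removed linear forms are ``regular'' in non-cusp directions, so removing at least one makes boundary coefficients drop out --- is not the actual argument and would not carry a proof. What \cite{GHS1} in fact do: at a $0$-dimensional cusp $I\subset\Lambda_{g}\subset II_{2,26}$, write $\Phi_{12}=\sum_{\ell}a(\ell)\,e((\ell,Z))$; since $\Phi_{12}$ vanishes on each $\delta_{i}^{\perp}$, division by $\prod_{i}(\cdot,\delta_{i})$ followed by restriction equals, up to a nonzero constant, the mixed derivative $\prod_{i}\partial_{\delta_{i}}\Phi_{12}$ restricted, so the Fourier coefficient of $F(g)$ at an exponent $m\in\Lambda_{g}^{\vee}$ is $c\sum_{\ell\mapsto m}a(\ell)\prod_{i}(\ell,\delta_{i})$, the sum over $\ell$ projecting to $m$. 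If $m$ is isotropic (or zero), then writing $\ell=m+\ell_{K}$ with $\ell_{K}\in K_{g}^{\vee}$ gives $(\ell,\ell)=(\ell_{K},\ell_{K})\leq 0$ because $K_{g}$ is negative definite; the Koecher principle for $\Phi_{12}$ then forces $\ell_{K}=0$, i.e.\ $\ell=m$, and for that single surviving term $\prod_{i}(\ell,\delta_{i})=\prod_{i}(m,\delta_{i})=0$ since $m\perp K_{g}$. This is exactly where $r(g)>0$ enters: one needs at least one factor to kill the term, and for $r(g)=0$ the restriction is indeed not cuspidal. Deferring this step to \cite{GHS1} is legitimate --- the paper does the same --- but the heuristic you offer in its place points in the wrong direction, so it should be replaced by (or at least aligned with) the computation above.
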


As noticed in Corollary \ref{cor: vanish at H}, 
vanishing at the $(-2)$-Heegner divisor is in fact automatic. 
Combining the results so far, we see the following. 

\begin{proposition}
Assume that $r(g)>0$ for our choice of $v_{g}\in E_{8}$. 
Then a smooth projective model of $\mathcal{F}_{g,n(g)}$ has positive geometric genus  
where $n(g)=r(g)/2-7$. 
In particular, $\kappa({\Fgn})\geq 0$ for all $n\geq n(g)$. 
Furthermore, if we have a compactification of ${\Fgn}$ for some $n>n(g)$ 
as in Proposition \ref{prop: extension conditional} (2), 
then $\kappa(\mathcal{F}_{g,n'})=19$ for all $n'\geq n$. 
\end{proposition}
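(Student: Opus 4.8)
The first two assertions are immediate from what is already in hand, so the plan is to dispose of them quickly and concentrate on the conditional third statement. Since $r(g)>0$, Proposition~\ref{prop: quasi-pullback Phi12} produces a \emph{nonzero} cusp form $F(g)\in S_{k(g)}(\Gg,\det)$ with $k(g)=12+r(g)/2=19+n(g)$. Thus $S_{19+n(g)}(\Gg,\det)\neq\{0\}$, and the isomorphism~\eqref{main eqn 2} shows that a smooth projective model $X$ of $\mathcal{F}_{g,n(g)}$ has $h^{0}(X,K_{X})=\dim S_{19+n(g)}(\Gg,\det)\geq 1$, i.e.\ positive geometric genus. In particular $\kappa(\mathcal{F}_{g,n(g)})\geq 0$; since $\kappa(\Fgn)$ is nondecreasing in $n$ \cite{Ka}, this gives $\kappa(\Fgn)\geq 0$ for every $n\geq n(g)$ (equivalently, apply Corollary~\ref{criteria for kappa}(1) with $k_{0}=k(g)$).

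For the third assertion, fix the postulated compactification $X$ of $\Fgn$ for the given $n>n(g)$, so that Proposition~\ref{prop: extension conditional}(2) applies and gives
\begin{equation*}
\kappa(\Fgn)\;\geq\;\kappa\bigl(\Fgcpt,\;D\bigr),\qquad D:=(19+n)\lambda-\Delta_{\mathcal{F}}-H/2 .
\end{equation*}
Writing $n=n(g)+j$ with $j\geq 1$ and using $19+n(g)=k(g)$, I would split
\begin{equation*}
D=\bigl(k(g)\lambda-\Delta_{\mathcal{F}}-H/2\bigr)+j\lambda .
\end{equation*}
The strategy is then to show $D$ is big, whence its Iitaka dimension equals $\dim\Fgcpt=19$; together with the general upper bound $\kappa\leq 19$ \cite{Ii} this forces $\kappa(\Fgn)=19$, and the monotonicity of $\kappa$ \cite{Ka} propagates the equality to all $n'\geq n$.

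To establish bigness I would write $D$ as an effective divisor plus a big one. The cusp form $F(g)$ vanishes along every boundary component of $\Fgcpt$ (being a cusp form) and along $H$ (Corollary~\ref{cor: vanish at H}); reading its zero divisor through Corollary~\ref{cor: zero divisor} yields a $\Q$-linear equivalence $k(g)\lambda\sim_{\Q}\Delta_{\mathcal{F}}+H/2+E$ with $E\geq 0$ an effective $\Q$-divisor (the residual zeros of the quasi-pullback), so that $k(g)\lambda-\Delta_{\mathcal{F}}-H/2\sim_{\Q}E$ and hence $D\sim_{\Q}j\lambda+E$. Finally $\lambda$ is big and nef on $\Fgcpt$, since its descent to the Baily--Borel compactification $\Fg^{*}$ is ample and $\Fgcpt\to\Fg^{*}$ is a birational morphism. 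As $j\geq 1$, the divisor $j\lambda$ is big, and adding the effective $E$ keeps it big; therefore $D$ is big and $\kappa(\Fgcpt,D)=19$, which completes the argument.

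The delicate step---and the one I expect to be the main obstacle---is the effectivity $k(g)\lambda-\Delta_{\mathcal{F}}-H/2\sim_{\Q}E\geq 0$ on the toroidal compactification. One must verify that $F(g)$ vanishes along each component of $\Delta_{\mathcal{F}}$ to an order matching the integral boundary divisor, and that the $\det$-character accounts for exactly the $H/2$ term, reflecting the ramification index $2$ of $\mathcal{D}_{g}\to\Fg$ along the reflection divisor; only then is the remainder $E$ genuinely effective. Once this boundary-and-character bookkeeping is pinned down using Corollary~\ref{cor: vanish at H} and Corollary~\ref{cor: zero divisor}, the bigness of $\lambda$ and the monotonicity of $\kappa$ render the remainder of the proof formal.
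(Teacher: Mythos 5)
Your proposal is correct and follows essentially the same route as the paper: the first two assertions via the nonzero cusp form $F(g)$ and the isomorphism \eqref{main eqn 2}, and the last via the decomposition $(19+n)\lambda-\Delta_{\mathcal{F}}-H/2 = (n-n(g))\lambda + \bigl(k(g)\lambda-\Delta_{\mathcal{F}}-H/2\bigr)$, where the second summand is effective because of $F(g)$ and the first is big, so Proposition \ref{prop: extension conditional}(2) gives $\kappa=19$. The only difference is that you spell out the effectivity and bigness bookkeeping that the paper compresses into ``big $+$ effective $=$ big.''
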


\begin{proof}
By Proposition \ref{prop: quasi-pullback Phi12}, 
$F(g)$ is a nonzero element of $S_{k(g)}({\Gg}, \det)$. 
By Theorem \ref{main thm 1}, this corresponds to 
a nonzero canonical form on a smooth projective model of $\mathcal{F}_{g,n(g)}$ 
where $n(g)=k(g)-19=r(g)/2-7$. 
As for the last assertion, when $n>n(g)$, the ${\Q}$-divisor 
\begin{equation*}
(19+n)\lambda - H/2 -\Delta_{\mathcal{F}} + \det 
 =  
(n-n(g))\lambda + (k(g)\lambda - H/2 - \Delta_{\mathcal{F}} +\det) 
\end{equation*} 
of ${\Fgcpt}$ is big + effective = big.  
Then $\kappa({\Fgn})=19$ by Proposition \ref{prop: extension conditional} (2). 
\end{proof}

Let $\Omega_{F(g)}$ be the canonical form corresponding to $F(g)$. 
Its divisor on $\mathcal{F}_{g,n(g)}$ can be explicitly calculated as follows. 
For simplicity we assume $g\not\equiv 2$ mod $4$. 
Let $A=\Lambda_{g}^{\vee}/\Lambda_{g}$ be the discriminant group of $\Lambda_{g}$. 
For $\lambda \in A$ and $x\in {\Q}_{<0}$, let  
$\mathcal{H}(\lambda, x)=\bigcup_{l} (l^{\perp}\cap \mathcal{D}_{g})$ 
be the reduced Heegner divisor of discriminant $(\lambda, x)$ on $\mathcal{D}_{g}$, 
where $l$ run over all vectors in 
$\Lambda_{g}+\lambda\subset \Lambda_{g}^{\vee}$ 
with $(l, l)=2x$. 
In particular, $\mathcal{H}=\mathcal{H}(0, -1)$. 
Let $H(\lambda, x)$ be the (reduced) image of $\mathcal{H}(\lambda, x)$ in ${\Fg}$.  
On the other hand, for $y\in {\Q}_{<0}$, 
let $c_{\lambda}(y)$ be the number of vectors $v$ in 
$K_{g}+\lambda\subset K_{g}^{\vee}$ with $(v, v)=2y$, 
where we identify $A\simeq K_{g}^{\vee}/K_{g}$ as abelian groups naturally (\cite{Ni1}).  

\begin{corollary}
Let $g\not\equiv 2$ mod $4$. 
We have 
\begin{equation*}
{\rm div}(\Omega_{F(g)})|_{\mathcal{F}_{g,n(g)}} = 
\sum_{\begin{subarray}{c} \lambda \in A/\pm 1 \\ (\lambda, \lambda)\not\in 2{\Z} \end{subarray}}
c_{\lambda}(-1-x(\lambda))\pi^{-1}(H(\lambda, x(\lambda))),  
\end{equation*}
where $x(\lambda)$ is the rational number with $-1< x(\lambda) <0$ and $x(\lambda)\equiv (\lambda, \lambda)/2$ mod ${\Z}$. 
In particular, $\Omega_{F(g)}$ does not vanish at a general point of $\pi^{-1}(H)$. 
\end{corollary}

\begin{proof}
The divisor of $F(g)$ on $\mathcal{D}_{g}$ is 
\begin{equation*}
\mathcal{H} + \sum_{\begin{subarray}{c} \lambda \in A/\pm 1 \\ \lambda \ne 0 \end{subarray}}
\sum_{\begin{subarray}{c} x\equiv (\lambda, \lambda)/2 \\ 0< x <1 \end{subarray}}
c_{\lambda}(-x)\mathcal{H}(\lambda, x-1). 
\end{equation*}
This can be seen directly as in \cite{BKPSB} 
or using the fact that $F(g)$ is the Borcherds lift of 
$\Theta_{K_{g}(-1)}/\Delta$ 
(see \cite{GHS1} Remark 1). 
Since $\mathcal{H}$ has no common component with $\mathcal{H}(\lambda, x-1)$ when $g\not\equiv 2$ mod $4$, 
our assertion follows from Corollary \ref{cor: zero divisor}. 
\end{proof}

In what follows, we explicitly take a vector $v_{g}\in E_{8}$  
for each $g\leq 22$ and calculate $n(g)=r(g)/2-7$. 
The result is summarized in Theorem \ref{main thm Mukai Borcherds}. 

Recall (\cite{CS}) that the $E_{8}$-lattice is defined as 
\begin{equation}\label{eqn: E8}
E_{8} = 
\{ \: (x_{i})\in {\Q}^{8} \: | \: 
\forall x_{i}\in {\Z} \: \textrm{or} \: \forall x_{i}\in {\Z}+1/2, \; 
x_{1}+ \cdots +x_{8}\in 2{\Z} \: \},  
\end{equation}
where we take the $(-1)$-scaling of the standard quadratic form on ${\Q}^{8}$. 
The roots of $E_{8}$ are as follows. 
For $j\ne k$ we define  
$\delta_{\pm j, \pm k}=(x_{i})\in E_{8}$ by 
$x_{j}=\pm 1$, $x_{k}=\pm 1$ and $x_{i}=0$ for $i\ne j, k$. 
For a subset $S\subset \{ 1, \cdots, 8 \}$ of even cardinality we define 
$\delta'_{S}=(x_{i})\in E_{8}$ by  
$x_{i}=1/2$ if $i\in S$ and $x_{i}=-1/2$ if $i\not\in S$. 
These are the $112+128=240$ roots of $E_{8}$. 
When $g\leq 8$, $g\ne 6$ and when $g=11, 16$, 
the lattice $K_{g}$ will be obtained by deleting one vertex from 
the Dynkin diagram of $E_{8}$. 
We number the vertices of $E_{8}$ as follows. 
\begin{equation*}\label{Dynkin diagram}
\xygraph{
    \bullet ([]!{+(0,-.3)} {\delta_1}) - [r]
    \bullet ([]!{+(0,-.3)} {\delta_2}) - [r]
    \bullet ([]!{+(0,-.3)} {\delta_3}) - [r]
    \bullet ([]!{+(0,-.3)} {\delta_4}) - [r]
    \bullet ([]!{+(.3,-.3)} {\delta_5}) (
        - [u] \bullet ([]!{+(.3,0)} {\delta_8}),
        - [r] \bullet ([]!{+(0,-.3)} {\delta_6})
        - [r] \bullet ([]!{+(0,-.3)} {\delta_7})
)}
\end{equation*}
The number $r(\Lambda)$ of roots
in other ADE root lattice $\Lambda$ is given by (\cite{CS}) 
\begin{equation*}
r(A_{N}) = N(N+1), \quad r(D_{N})=2N(N-1), \quad r(E_{7})=126, \quad r(E_{6})=72. 
\end{equation*}

\vspace{2mm} 

\underline{$g=2$}
 
\noindent
The roots $\delta_{2}, \cdots, \delta_{8}$ generate $E_{7}\subset E_{8}$ 
with $E_{7}^{\perp}=\langle -2 \rangle$. 
Thus $K_{2}=E_{7}$ and $r(2)=r(E_{7})=126$. 
Hence $n(2)=56$. 
 
\vspace{1mm} 

\underline{$g=3$}

\noindent
The roots $\delta_{1}, \cdots, \delta_{6}, \delta_{8}$ generate $D_{7}\subset E_{8}$ 
with $D_{7}^{\perp}=\langle -4 \rangle$. 
Thus $K_{3}=D_{7}$ and $r(3)=r(D_{7})=84$. 
Hence $n(3)=35$. 

\vspace{1mm} 

\underline{$g=4$}

\noindent
The roots $\delta_{1}, \delta_{3}, \cdots, \delta_{8}$ generate 
$A_{1}\oplus E_{6}\subset E_{8}$ 
with $(A_{1}\oplus E_{6})^{\perp}=\langle -6 \rangle$. 
Thus $K_{4}=A_{1}\oplus E_{6}$ and $r(4)=r(A_{1})+r(E_{6})=74$. 
Hence $n(4)=30$. 

\vspace{1mm} 

\underline{$g=5$}

\noindent
The roots $\delta_{1}, \cdots, \delta_{7}$ generate 
$A_{7}\subset E_{8}$ 
with $A_{7}^{\perp}=\langle -8 \rangle$. 
Thus $K_{5}=A_{7}$ and $r(5)=r(A_{7})=56$. 
Hence $n(5)=21$. 

\vspace{1mm} 

\underline{$g=6$}

\noindent
We take a norm $-10$ vector $v_{6}\in E_{8}$ by 
$(3, 1, 0, \cdots, 0)$ in the presentation \eqref{eqn: E8}. 
The roots of $E_{8}$ orthogonal to $v_{6}$ are 
$\delta_{\pm i, \pm j}$ with $i, j\geq 3$. 
Then 
$r(6)=2^{2}\cdot \dbinom{6}{2}=60$ 
and $n(6)=23$. 
These roots form a root system of type $D_{6}$. 

\vspace{1mm}

\underline{$g=7$}

\noindent
The roots $\delta_{1}, \delta_{2}, \delta_{4}, \cdots, \delta_{8}$ generate 
$A_{2}\oplus D_{5}\subset E_{8}$ 
with $(A_{2}\oplus D_{5})^{\perp}=\langle -12 \rangle$. 
Thus $K_{7}=A_{2}\oplus D_{5}$ and $r(7)=r(A_{2})+r(D_{5})=46$. 
Hence $n(7)=16$. 

\vspace{1mm}

\underline{$g=8$}

\noindent
The roots $\delta_{1}, \cdots, \delta_{5}, \delta_{7}, \delta_{8}$ generate 
$A_{6}\oplus A_{1}\subset E_{8}$ 
with $(A_{6}\oplus A_{1})^{\perp}=\langle -14 \rangle$. 
Thus $K_{8}=A_{6}\oplus A_{1}$ and $r(8)=r(A_{6})+r(A_{1})=44$. 
Hence $n(8)=15$. 

\vspace{1mm} 

\underline{$g=9$}

\noindent
We take $v_{9}=(3, 1, \cdots, 1)$. 
The roots orthogonal to $v_{9}$ are 
$\delta_{i, -j}$ with $i, j\geq 2$. 
Hence 
$r(9)= 7\cdot 6  = 42$ 
and $n(9)=14$. 
These roots form a root system of type $A_{6}$. 

\vspace{1mm} 

\underline{$g=10$}

\noindent
We take  
$v_{10}=(4, 1, 1, 0, \cdots, 0)$. 
The roots orthogonal to $v_{10}$ are 
$\delta_{\pm i,\pm j}$ with $i, j\geq 4$ 
and $\pm \delta_{2,-3}$. 
Hence $r(10)= 2^{2}\cdot \dbinom{5}{2} + 2 = 42$ 
and $n(10)=14$. 
These roots form a root system of type $D_{5}+A_{1}$.

\vspace{1mm} 

\underline{$g=11$}

\noindent
The roots $\delta_{1}, \delta_{2}, \delta_{3}, \delta_{5}, \cdots, \delta_{8}$ generate 
$A_{3}\oplus A_{4}\subset E_{8}$ 
with $(A_{3}\oplus A_{4})^{\perp}=\langle -20 \rangle$. 
Thus $K_{11}=A_{3}\oplus A_{4}$ and $r(11)=r(A_{3})+r(A_{4})=32$. 
Hence $n(11)=9$. 
 
\vspace{1mm} 

\underline{$g=12$}

\noindent
We take  
$v_{12}=(4, 1, \cdots, 1, 0)$. 
The roots orthogonal to $v_{12}$ are 
$\delta_{i,-j}$ with $2\leq i, j\leq 7$ 
and $\pm\delta'_{1,i}$ with $2\leq i \leq 7$. 
Hence $r(12)= 6\cdot 5 + 2\cdot 6 = 42$ 
and $n(12)=14$.

\vspace{1mm} 

\underline{$g=13$}

\noindent
We take 
$v_{13}=(4, 2, 1, 1, 1, 1, 0, 0)$. 
The roots orthogonal to $v_{13}$ are 
$\delta_{\pm7,\pm8}$, 
$\delta_{i,-j}$ with $3\leq i, j\leq 6$, 
$\pm \delta'_{1,i}$ with $3\leq i \leq 6$, and  
$\pm \delta'_{1,i,7,8}$ with $3\leq i \leq 6$. 
Hence $r(13)= 4+12+8+8=32$ 
and $n(13)=9$.

\vspace{1mm} 

\underline{$g=14$}

\noindent
We take 
$v_{14}=(3, 3, 2, 1, 1, 1, 1, 0)$. 
As before we can calculate $r(14)=32$ 
and $n(14)=9$.

\vspace{1mm} 

\underline{$g=15$}

\noindent
We take 
$v_{15}=(5, 1, 1, 1, 0, 0, 0, 0)$. 
Then $r(15)=30$ and $n(15)=8$.

\vspace{1mm} 

\underline{$g=16$}

\noindent
The roots $\delta_{1}, \cdots, \delta_{4}, \delta_{6}, \delta_{7}, \delta_{8}$ generate 
$A_{4}\oplus A_{2} \oplus A_{1}\subset E_{8}$ 
with $(A_{4}\oplus A_{2} \oplus A_{1})^{\perp}=\langle -30 \rangle$. 
Thus $K_{16}=A_{4}\oplus A_{2} \oplus A_{1}$ and 
$r(16)=r(A_{4})+r(A_{2})+r(A_{1})=28$. 
Hence $n(16)=7$. 

\vspace{1mm} 

\underline{$g=17$}

\noindent
We take 
$v_{17}=(5, 2, 1, 1, 1, 0, 0, 0)$. 
Then $r(17)=26$ 
and $n(17)=6$.

\vspace{1mm} 

\underline{$g=18$}

\noindent
We take 
$v_{18}=(5, 2, 1, \cdots, 1, 0)$. 
Then $r(18)=30$ 
and $n(18)=8$.

\vspace{1mm}

\underline{$g=19$}

\noindent
We take 
$v_{19}=(5, 2, 2, 1, 1, 1, 0, 0)$. 
Then $r(19)=24$ 
and $n(19)=5$.

\vspace{1mm} 

\underline{$g=20$}

\noindent
We take 
$v_{20}=(5, 2, 2, 2, 1, 0, 0, 0)$. 
The roots orthogonal to $v_{20}$ are 
$\delta_{i,-j}$ with $2\leq i, j\leq4$, 
$\delta_{\pm i, \pm j}$ with $i, j\geq 6$, 
$\pm \delta_{1,5}'$, and 
$\pm \delta_{1,5,i,j}'$ with $i, j\geq6$. 
Hence 
$r(20)=6+12+2+6=26$ 
and $n(20)=6$. 

\vspace{1mm} 

\underline{$g=21$}

\noindent
We take 
$v_{21}=(6, 1, 1, 1, 1, 0, 0, 0)$. 
Then $r(21)=24$ 
and $n(21)=5$. 

\vspace{1mm} 

\underline{$g=22$}

\noindent
We take 
$v_{22}=(6, 2, 1, 1, 0, 0, 0, 0)$. 
Then $r(22)=26$ 
and $n(22)=6$.

\subsection{Unirationality/Uniruledness}\label{ssec: K3 model}

Next we study a bound $n'(g)$ where ${\Fgn}$ is unirational or uniruled. 
We recall known results in $8\leq g \leq 11$, $g=14, 22$, 
and compute this bound for other $g\leq 20$ using classical and Mukai models of 
polarized $K3$ surfaces of genus $g$. 
The result is summarized in Theorem \ref{main thm Mukai Borcherds}. 
In a few cases, even rationality holds.

\subsubsection{The case $g=2$}

General $K3$ surfaces of degree $2$ are surfaces of weighted degree $6$ in 
$Y={\proj}(1, 1, 1, 3)$, parametrized by an open set of $|\mathcal{O}_{Y}(6)|$.  

\begin{proposition}
$\mathcal{F}_{2,38}$ is unirational. 
\end{proposition}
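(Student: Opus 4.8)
The plan is to exhibit a dominant rational map onto $\mathcal{F}_{2,38}$ from a rational variety, using the classical model of genus $2$ $K3$ surfaces inside $Y=\proj(1,1,1,3)$. Recall that a polarized $K3$ surface $(S,L)$ of genus $2$ is embedded in $Y$ as a member of $|\mathcal{O}_Y(6)|$ via the sections of $L$ (the weight-$1$ coordinates) and $L^{\otimes 3}$ (the weight-$3$ coordinate), with the points of $S$ realized as points of $Y$. The key numerical input is that $|\mathcal{O}_Y(6)|$ has dimension exactly $38$: counting weight-$6$ monomials in $x_0,x_1,x_2$ (weight $1$) and $w$ (weight $3$) gives $28+10+1=39$ sections. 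This is precisely the number $n'(2)=38$.

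First I would define the rational map
\[
\phi\colon Y^{38}\dashrightarrow \mathcal{F}_{2,38}, \qquad (P_1,\dots,P_{38})\longmapsto (S_P,P_1,\dots,P_{38}),
\]
where $S_P\in|\mathcal{O}_Y(6)|$ is the unique sextic in $Y$ passing through all of $P_1,\dots,P_{38}$. This is well defined on a dense open set because $38$ general points of $Y$ impose independent conditions on the $39$-dimensional space $H^0(Y,\mathcal{O}_Y(6))$, so the sextics through them form a single point of $|\mathcal{O}_Y(6)|\simeq\proj^{38}$, and the general such $S_P$ is a smooth $K3$ surface. Since $Y$, and hence $Y^{38}$, is rational, the unirationality of $\mathcal{F}_{2,38}$ follows once $\phi$ is shown to be dominant.

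The main step is therefore the dominance of $\phi$, which I would establish by checking that a general point of $\mathcal{F}_{2,38}$ lies in the image. Given a general $(S,L,p_1,\dots,p_{38})$, I would take $(P_1,\dots,P_{38})=(p_1,\dots,p_{38})$ under $S\hookrightarrow Y$ and verify that $S$ is the \emph{unique} sextic of $Y$ through the $p_i$, so that $\phi(P_1,\dots,P_{38})=(S,p_1,\dots,p_{38})$. For this I would use the restriction sequence: the kernel of $H^0(Y,\mathcal{O}_Y(6))\to H^0(S,\mathcal{O}_S(6))=H^0(S,L^{\otimes 6})$ is $H^0(Y,\mathcal{O}_Y)=\C$, spanned by the defining equation $f_S$ of $S$, and by Riemann--Roch $h^0(S,L^{\otimes 6})=\tfrac12(L^{\otimes 6},L^{\otimes 6})+2=36+2=38$, so the restriction is surjective with one-dimensional kernel. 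Composing with evaluation at the $38$ general points, which $L^{\otimes 6}$ separates, yields a surjection $H^0(Y,\mathcal{O}_Y(6))\twoheadrightarrow\bigoplus_i\C_{p_i}$ whose kernel is then forced to be exactly $\C\cdot f_S$. Hence $S$ is the only sextic of $Y$ through the $p_i$, and $\phi$ is dominant.

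I expect the only genuine subtlety to be the bookkeeping around the marking and the ${\rm Aut}(Y)$-action, namely making sure $\phi$ lands in the quotient $\mathcal{F}_{2,38}$ rather than in an intermediate parameter space. As a consistency check I would note that ${\rm Aut}(\proj(1,1,1,3))$ has dimension $9+1+10-1=19$ (from $GL_3$ on the $x_i$, the scaling of $w$, and the cubics added to $w$, modulo overall scaling), and that $\dim Y^{38}-\dim\mathcal{F}_{2,38}=114-95=19$; thus the fibers of $\phi$ are precisely the ${\rm Aut}(Y)$-orbits, confirming both that $\phi$ has the expected relative dimension and that it is dominant.
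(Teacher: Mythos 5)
Your proof is correct and is essentially the paper's argument: both use the classical model of general genus-$2$ $K3$ surfaces as sextics in $Y={\proj}(1,1,1,3)$, the count $h^{0}(\mathcal{O}_{Y}(6))=39$, and the incidence correspondence between sextics and $38$-tuples of points of $Y$. The only difference is packaging: the paper keeps the incidence variety $X_{38}\subset |\mathcal{O}_{Y}(6)|\times Y^{38}$, for which mere nonemptiness of the fibers of $X_{38}\to Y^{38}$ already makes $X_{38}$ birationally a projective-space bundle over $Y^{38}$, hence rational and (trivially) dominating $\mathcal{F}_{2,38}$, whereas you make that projection birational via your correct restriction-sequence/Riemann--Roch argument that a general pointed surface is the \emph{unique} sextic through its $38$ marked points, so that $Y^{38}$ itself dominates $\mathcal{F}_{2,38}$.
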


\begin{proof}
Consider the (complete) incidence correspondence 
\begin{equation*}
X_{n} = 
\{ \: (S, p_{1}, \cdots, p_{n}) \in |\mathcal{O}_{Y}(6)| \times Y^{n} \: | \: p_{i}\in S \: \}  
\; \subset \; |\mathcal{O}_{Y}(6)| \times Y^{n}  
\end{equation*}
and let $\pi\colon X_{n}\to Y^{n}$ be the projection. 
Then $X_{n}$ is irreducible and 
the $\pi$-fiber over $(p_{1}, \cdots, p_{n})\in Y^{n}$ is 
the linear system of surfaces in $|\mathcal{O}_{Y}(6)|$ 
passing through $p_{1}, \cdots, p_{n}$. 
This has dimension $\geq \dim |\mathcal{O}_{Y}(6)| - n = 38-n$, 
hence is non-empty for general $(p_{1}, \cdots, p_{n})$ when $n\leq 38$. 
Thus, if $n\leq 38$, 
$X_{n}$ is birationally a projective space bundle over $Y^{n}$ and so rational.   
Since $\mathcal{F}_{2,n}$ is dominated by $X_{n}$, 
it is unirational for $n\leq 38$. 
\end{proof}

\subsubsection{The case $g=3$}

$K3$ surfaces of degree $4$ are quartics in ${\proj}^{3}$. 

\begin{proposition}
$\mathcal{F}_{3,34}$ is rational. 
\end{proposition}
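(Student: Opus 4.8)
The plan is to realize $\mathcal{F}_{3,34}$ as a quotient of a projective space by ${\rm PGL}_{4}$ and then rigidify using the marked points. As in the genus $2$ case, I would work with the incidence correspondence
\begin{equation*}
X_{n} = \{ \: (S, p_{1}, \cdots, p_{n}) \in |\mathcal{O}_{\proj^{3}}(4)| \times (\proj^{3})^{n} \: | \: p_{i}\in S \: \}.
\end{equation*}
Since $h^{0}(\proj^{3}, \mathcal{O}(4))=35$, the space of quartics is $|\mathcal{O}_{\proj^{3}}(4)|=\proj^{34}$, and ${\rm PGL}_{4}$ acts on $X_{n}$ through its action on $\proj^{3}$. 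A general polarized $K3$ surface $(S, L)$ of genus $3$ has ${\rm Aut}(S, L)=\{ 1 \}$, so this action is generically free and $\mathcal{F}_{3,n}$ is birational to the quotient $X_{n}/{\rm PGL}_{4}$ (this is the $n$-pointed analogue of $\mathcal{F}_{3}\sim |\mathcal{O}_{\proj^{3}}(4)|/{\rm PGL}_{4}$). Note $\dim \mathcal{F}_{3,34}=19+2\cdot 34=87$.

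First I would project to the points. The morphism $\pi\colon X_{34}\to (\proj^{3})^{34}$ is ${\rm PGL}_{4}$-equivariant, and its fiber over $34$ points is the linear system of quartics through them. Because $34$ general points of $\proj^{3}$ impose independent conditions on $|\mathcal{O}_{\proj^{3}}(4)|=\proj^{34}$, this fiber is a single reduced point: there is a unique quartic through $34$ general points. Hence $\pi$ is birational, and equivariantly so, which gives a ${\rm PGL}_{4}$-equivariant birational identification $X_{34}\sim (\proj^{3})^{34}$. Consequently $\mathcal{F}_{3,34}$ is birational to $(\proj^{3})^{34}/{\rm PGL}_{4}$.

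The rationality of the latter is classical. Five general points of $\proj^{3}$ form a projective frame and are carried to the standard frame by a unique element of ${\rm PGL}_{4}$. Using the first five of the $34$ points to rigidify, the remaining $29$ points become free coordinates, so $(\proj^{3})^{34}/{\rm PGL}_{4}$ is birational to $(\proj^{3})^{29}$, which is rational; note $3\cdot 29=87$ matches the dimension above.

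The step requiring care --- and the main obstacle --- is to confirm that this construction really dominates $\mathcal{F}_{3}$ (so that the generic quartic produced is a smooth, hence $K3$, surface) and that $\pi$ is birational onto $\mathcal{F}_{3,34}$ itself rather than onto a smaller stratum. Both reduce to one general-position statement: $34$ general points lying on a fixed general quartic $S$ impose independent conditions on $|\mathcal{O}_{\proj^{3}}(4)|$. This holds because the restriction $H^{0}(\proj^{3}, \mathcal{O}(4))\to H^{0}(S, 4L)$ is surjective with kernel spanned by the equation of $S$, while $h^{0}(S, 4L)=(4L, 4L)/2+2=34$ by Riemann-Roch with $4L$ very ample; equivalently, $|4L|$ embeds $S$ in $\proj^{33}$ and $34$ general points there are in general linear position, hence span and impose independent conditions. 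Therefore the only quartic through them is $S$ itself. This simultaneously shows that the rational inverse of $\pi$ sends a general $(S, p_{1}, \cdots, p_{34})\in \mathcal{F}_{3,34}$ back to its $34$ points, and that the unique quartic through $34$ general points of $\proj^{3}$ is a general, hence smooth, quartic.
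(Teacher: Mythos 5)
Your proof is correct and takes essentially the same route as the paper: the same incidence correspondence $X_{34}$, the same projection to $({\proj}^{3})^{34}$ identifying $X_{34}$ birationally with $({\proj}^{3})^{34}$, and the same passage to the ${\rm PGL}_{4}$-quotient yielding $({\proj}^{3})^{29}$. The only difference is that you spell out what the paper leaves implicit, namely the general-position statement (that $34$ general points on a general quartic impose independent conditions, via $h^{0}(S,4L)=34$) and the frame argument rigidifying ${\rm PGL}_{4}$ by five of the points.
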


\begin{proof}
Consider the (complete) incidence correspondence 
\begin{equation*}
X_{n} = 
\{ \: (S, p_{1}, \cdots, p_{n}) \in |\mathcal{O}_{{\proj}^{3}}(4)| \times ({\proj}^{3})^{n} 
\: | \: p_{i}\in S \: \}  
\; \subset \; |\mathcal{O}_{{\proj}^{3}}(4)| \times ({\proj}^{3})^{n},  
\end{equation*} 
which is irreducible. 
The fiber of the projection 
$X_{n}\to ({\proj}^{3})^{n}$ over $(p_{1}, \cdots, p_{n})\in ({\proj}^{3})^{n}$ 
is the linear system of quartics through $p_{1}, \cdots, p_{n}$. 
Thus, when $n\leq \dim |\mathcal{O}_{{\proj}^{3}}(4)| = 34$, 
$X_{n}$ is birationally a ${\proj}^{34-n}$-bundle over $({\proj}^{3})^{n}$. 
In particular, $X_{34}\sim ({\proj}^{3})^{34}$. 
Therefore 
$\mathcal{F}_{3,34}\sim X_{34}/{\rm PGL}_{4}\simeq ({\proj}^{3})^{29}$. 
\end{proof}

More generally, when $3\leq n \leq 33$, 
$\mathcal{F}_{3,n}$ is rational too. 

\subsubsection{The case $g=4$}

General $K3$ surfaces of degree $6$ are $(2, 3)$ complete intersections in ${\proj}^{4}$. 
It is convenient to fix a smooth quadric $Q\subset {\proj}^{4}$ 
and consider surfaces in $|\mathcal{O}_{Q}(3)|=|\!-\! K_{Q}|$. 

\begin{proposition}
$\mathcal{F}_{4,29}$ is unirational. 
\end{proposition}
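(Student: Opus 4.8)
The plan is to mirror the incidence--correspondence argument of the cases $g=2, 3$, now carried out over a fixed smooth quadric threefold $Q\subset{\proj}^{4}$, exploiting that a general genus $4$ $K3$ surface is cut out on $Q$ by a cubic. First I would record the reduction that legitimizes fixing $Q$: a general $(2,3)$-complete intersection $S$ lies on a \emph{unique} quadric (one computes $h^{0}({\proj}^{4}, \mathcal{I}_{S}(2))=\binom{6}{4}-h^{0}(\mathcal{O}_{S}(2))=15-14=1$ by Riemann--Roch on $S$), and for general $S$ this quadric has full rank, hence is smooth. Since all smooth quadrics in ${\proj}^{4}$ are projectively equivalent, the assignment $S\mapsto Q$ shows that every general genus $4$ $K3$ surface occurs as a cubic section $S\in|\mathcal{O}_{Q}(3)|=|\!-\!K_{Q}|$ of the fixed $Q$, and that $\mathcal{F}_{4}$ is dominated by $|\mathcal{O}_{Q}(3)|$ modulo ${\rm Aut}(Q)$. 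This reduction is the crucial geometric input, and I expect it to be the main obstacle: one must verify that the quadric through a general $S$ is smooth and unique so that no residual moduli for $Q$ intervene (genus $4$ $K3$ surfaces split into a smooth-quadric family and a cone family, and it is the former that is dominant).

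The key dimension count comes from the restriction sequence
\begin{equation*}
0 \to \mathcal{O}_{{\proj}^{4}}(1) \to \mathcal{O}_{{\proj}^{4}}(3) \to \mathcal{O}_{Q}(3) \to 0,
\end{equation*}
which gives $h^{0}(\mathcal{O}_{Q}(3)) = \binom{7}{4} - 5 = 30$, so $\dim|\mathcal{O}_{Q}(3)| = 29$. With this in hand I would form the (complete) incidence correspondence
\begin{equation*}
X_{n} =
\{ \: (S, p_{1}, \cdots, p_{n}) \in |\mathcal{O}_{Q}(3)| \times Q^{n} \: | \: p_{i}\in S \: \}
\; \subset \; |\mathcal{O}_{Q}(3)| \times Q^{n},
\end{equation*}
which is irreducible, and consider the projection $\pi\colon X_{n}\to Q^{n}$. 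The fiber of $\pi$ over $(p_{1}, \cdots, p_{n})$ is the linear subsystem of $|\mathcal{O}_{Q}(3)|$ of divisors through $p_{1}, \cdots, p_{n}$, of dimension $\geq 29-n$; hence it is non-empty for general points when $n\leq 29$, and $X_{n}$ is birationally a projective space bundle over $Q^{n}$.

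Since $Q$ is a smooth quadric threefold it is rational, so $Q^{n}$ and therefore $X_{n}$ are rational. Taking $n=29$, the universal family over $X_{29}$ dominates $\mathcal{F}_{4,29}$ by the reduction above, the fibers being the ${\rm Aut}(Q)$-orbits; a dimension check $\dim X_{n}=(29-n)+3n=29+2n$ against $\dim\mathcal{F}_{4,n}=19+2n$ confirms that the ten-dimensional fibers match $\dim{\rm Aut}(Q)=10$. As $\mathcal{F}_{4,29}$ is dominated by the rational variety $X_{29}$, it is unirational. The same argument in fact yields unirationality of $\mathcal{F}_{4,n}$ for all $n\leq 29$.
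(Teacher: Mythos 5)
Your proof is correct and follows essentially the same route as the paper: fix a smooth quadric $Q\subset{\proj}^{4}$, form the incidence correspondence $X_{n}\subset|\mathcal{O}_{Q}(3)|\times Q^{n}$, observe that $X_{n}\to Q^{n}$ is birationally a ${\proj}^{29-n}$-bundle for $n\leq 29$, and conclude unirationality of $\mathcal{F}_{4,n}$ since $Q^{n}$ is rational. The extra material you supply (uniqueness and smoothness of the quadric through a general $S$, the computation $\dim|\mathcal{O}_{Q}(3)|=29$, and the dimension check against $\dim{\rm Aut}(Q)=10$) is accurate and simply makes explicit the reduction the paper treats as standard.
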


\begin{proof}
Consider the (complete) incidence correspondence 
\begin{equation*}
X_{n} = 
\{ \: (S, p_{1}, \cdots, p_{n}) \in |\mathcal{O}_{Q}(3)| \times Q^{n} 
\: | \: p_{i}\in S \: \}  
\; \subset \; |\mathcal{O}_{Q}(3)| \times Q^{n}.  
\end{equation*}  
The fiber of the projection  
$X_{n}\to Q^{n}$ over $(p_{1}, \cdots, p_{n})\in Q^{n}$ 
is the linear system of surfaces in $|\mathcal{O}_{Q}(3)|$ 
passing through $p_{1}, \cdots, p_{n}$. 
Thus, when $n\leq \dim |\mathcal{O}_{Q}(3)|=29$, 
$X_{n}\to Q^{n}$ is birationally a ${\proj}^{29-n}$-bundle and hence $X_{n}$ is rational. 
Therefore $\mathcal{F}_{4,n}$ is unirational. 
\end{proof}

\subsubsection{The case $g=5$}

General $K3$ surfaces of degree $8$ are $(2, 2, 2)$ complete intersections in ${\proj}^{5}$. 
They are parametrized by an open subset $G(3, V)^{\circ}$ 
of the Grassmannian $G(3, V)$ where $V=H^{0}(\mathcal{O}_{{\proj}^{5}}(2))$. 
For $W\in G(3, V)^{\circ}$ we denote by $S_{W}\subset {\proj}^{5}$ 
the $K3$ surface defined by $W$. 

\begin{proposition}
$\mathcal{F}_{5,18}$ is rational. 
\end{proposition}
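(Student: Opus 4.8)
The plan is to adapt the incidence-correspondence argument of the previous cases, replacing the linear system by the Grassmannian $G(3,V)$, and then to resolve the ${\rm PGL}_{6}$-quotient by normalizing a projective frame. First I would form the (complete) incidence correspondence
\begin{equation*}
X_{n} = \{ \: (W, p_{1}, \cdots, p_{n}) \in G(3, V) \times (\proj^{5})^{n} \: | \: p_{i}\in S_{W} \: \}
\end{equation*}
and study the projection $\pi\colon X_{n}\to (\proj^{5})^{n}$. The fibre of $\pi$ over $(p_{1}, \cdots, p_{n})$ consists of the nets $W$ all of whose members vanish at $p_{1},\cdots,p_{n}$, that is, $G(3, V(p_{1},\cdots,p_{n}))$, where $V(p_{1},\cdots,p_{n})\subset V$ is the subspace of quadrics through the $p_{i}$. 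Since $\dim V = 21$ and $n$ general points impose $n$ independent conditions on quadrics, we have $\dim V(p_{1},\cdots,p_{n}) = 21-n$ for general points.

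The decisive numerical feature is that at $n=18$ this subspace is exactly $3$-dimensional, so there is a \emph{unique} net $W = V(p_{1},\cdots,p_{18})$ and the fibre $G(3,3)$ is a single point. Fibering $X_{18}$ over $G(3,V)^{\circ}$ with irreducible fibres $(S_{W})^{18}$ shows that $X_{18}$ is irreducible of dimension $54+36 = 90 = \dim(\proj^{5})^{18}$; as $X_{18}\to G(3,V)^{\circ}$ is surjective, a general $18$-tuple lies on a K3 in $G(3,V)^{\circ}$, so the unique net through it lies in $G(3,V)^{\circ}$. Hence $\pi$ is dominant with generic fibre one reduced point, so $\pi$ is birational and, following the pattern of the preceding propositions, $\mathcal{F}_{5,18} \sim X_{18}/{\rm PGL}_{6} \sim (\proj^{5})^{18}/{\rm PGL}_{6}$.

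The remaining and main step is to prove that $(\proj^{5})^{18}/{\rm PGL}_{6}$ is rational. For this I would use that ${\rm PGL}_{6}$ acts simply transitively on projective frames of $\proj^{5}=\proj(\C^{6})$, i.e. on ordered $7$-tuples of points in general position. Over the open locus where the first seven of the eighteen points form a frame, normalizing them to the standard frame identifies $(\proj^{5})^{18}$ with ${\rm PGL}_{6}\times (\proj^{5})^{11}$, with ${\rm PGL}_{6}$ acting by translation on the first factor (the no-name lemma). Therefore $(\proj^{5})^{18}/{\rm PGL}_{6}\sim (\proj^{5})^{11}$, which is rational, and so $\mathcal{F}_{5,18}$ is rational.

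The delicate part will be the genericity verification at $n=18$: that $18$ general points impose independent conditions on the $21$-dimensional space of quadrics (so that the net $W$ is unique and the fibre of $\pi$ is a single reduced point), and that this unique net lies in $G(3,V)^{\circ}$, i.e. defines a K3 surface with at worst ADE singularities. Once this is granted, the rationality of the quotient follows cleanly from the frame normalization, and no harder input is needed.
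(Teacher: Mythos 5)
Your proposal is correct and takes essentially the same route as the paper: the same incidence correspondence $X_{18}\subset G(3,V)\times({\proj}^{5})^{18}$, the same count $18=\dim V-3$ making the generic fibre of $\pi$ a single net so that $X_{18}\sim({\proj}^{5})^{18}$, and the same reduction $\mathcal{F}_{5,18}\sim({\proj}^{5})^{18}/{\rm PGL}_{6}\sim({\proj}^{5})^{11}$. The only differences are presentational: you spell out the quotient step via frame normalization and isolate the genericity at $n=18$ (that the unique net through $18$ general points lies in $G(3,V)^{\circ}$), both of which the paper simply asserts by saying the fibre over a general tuple is an open dense subset of $G(3,V_{p_{1},\cdots,p_{n}})$.
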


\begin{proof}
Consider the incidence correspondence    
\begin{equation*}
X_{n} = 
\{ \: (W, p_{1}, \cdots, p_{n}) \in G(3, V)^{\circ} \times ({\proj}^{5})^{n} 
\: | \: p_{i}\in S_{W} \: \}  
\; \subset \; G(3, V)^{\circ} \times ({\proj}^{5})^{n}.  
\end{equation*}  
The fiber of the projection $X_{n}\to ({\proj}^{5})^{n}$ 
over general $(p_{1}, \cdots, p_{n})\in ({\proj}^{5})^{n}$ 
is an open dense subset of the sub Grassmannian $G(3, V_{p_{1},\cdots,p_{n}})$,
where $V_{p_{1},\cdots,p_{n}}\subset V$ is 
the space of quadratic forms vanishing at $p_{1},\cdots,p_{n}$. 
When $n\leq h^{0}(\mathcal{O}_{{\proj}^{5}}(2))-3=18$, we have $\dim V_{p_{1},\cdots,p_{n}} \geq 3$, 
so $X_{n}$ is birationally a $G(3, 21-n)$-bundle over $({\proj}^{5})^{n}$. 
Therefore 
\begin{equation*}
\mathcal{F}_{5,18} \sim 
X_{18}/{\rm PGL}_{6} \sim 
({\proj}^{5})^{18}/{\rm PGL}_{6} \sim 
({\proj}^{5})^{11}
\end{equation*}
is rational.  
\end{proof}

More generally, when $7\leq n \leq 17$, 
$\mathcal{F}_{5,n}$ is rational too. 

\subsubsection{The case $g=6$}

In \cite{Mu1} \S 4, Mukai proved that 
general $K3$ surfaces of genus $6$ are 
anti-canonical sections of the Fano $3$-fold $Y$ of index $2$ and degree $5$. 
The variety $Y$ is unique up to isomorphism and is quasi-homogeneous for ${\rm PGL}(2)$. 
Since $h^{0}(-K_{Y})=23$ and $Y$ is rational, 
the same argument as before implies 

\begin{proposition}
$\mathcal{F}_{6,22}$ is unirational. 
\end{proposition}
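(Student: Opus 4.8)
The plan is to repeat the incidence-correspondence argument from the cases $g=2,4$, the only genuinely new ingredient being Mukai's identification of general genus $6$ $K3$ surfaces with anti-canonical sections of the fixed Fano threefold $Y$. I would set
\[
X_n = \{\, (S, p_1, \dots, p_n) \in |{-K_Y}| \times Y^n \mid p_i \in S \ \text{for all } i \,\} \subset |{-K_Y}| \times Y^n,
\]
and consider the projection $\pi \colon X_n \to Y^n$ onto the last factor.

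First I would establish that $X_n$ is rational for $n \le 22$. The fibre of $\pi$ over a point $(p_1,\dots,p_n)$ is the linear system of anti-canonical divisors of $Y$ through $p_1, \dots, p_n$, that is, a linear subspace of $|{-K_Y}|$; since $\dim |{-K_Y}| = h^0(-K_Y) - 1 = 22$, for general points it has dimension $22 - n \ge 0$ and so is non-empty. Hence $\pi$ is birationally a $\proj^{22-n}$-bundle over $Y^n$, and because $Y$ is rational the total space $X_n$ is irreducible and rational. Unirationality of $\mathcal{F}_{6,22}$ would then follow once I exhibit a dominant rational map $X_{22} \dashrightarrow \mathcal{F}_{6,22}$.

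For the dominance I would use the classifying map sending $(S, p_1, \dots, p_n)$ to the $n$-pointed $K3$ surface it defines. By Mukai's theorem a general polarized $K3$ surface of genus $6$ arises as such a section, and the marked points are arbitrary, so the map is dominant; a dimension count confirms this, as $\dim X_n = 3n + (22-n) = 2n + 22$ exceeds $\dim \mathcal{F}_{6,n} = 19 + 2n$ by exactly $\dim \mathrm{Aut}(Y) = \dim \mathrm{PGL}_2 = 3$. Therefore $\mathcal{F}_{6,22}$ is dominated by the rational variety $X_{22}$ and is unirational.

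I expect the main obstacle to be verifying the dominance cleanly: namely that the general fibre of the classifying map is exactly an $\mathrm{Aut}(Y)$-orbit, and that a general member of $|{-K_Y}|$ is a $K3$ surface with at worst rational double points on which $n$ general points impose independent conditions. This is where Mukai's description of $Y$ as the unique index-$2$ degree-$5$ Fano threefold and its quasi-homogeneity under $\mathrm{PGL}_2$ enter. Note that, unlike the cases $g=3,5$, I would not try to show that the quotient by $\mathrm{Aut}(Y)$ is itself rational: unirationality suffices here, and it follows directly from the rationality of $X_{22}$ together with dominance.
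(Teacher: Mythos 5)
Your proposal is correct and takes essentially the same route as the paper: the paper likewise cites Mukai's identification of general genus-$6$ $K3$ surfaces with anti-canonical sections of the quintic del Pezzo threefold $Y$, notes that $h^{0}(-K_{Y})=23$ and that $Y$ is rational, and then runs exactly the incidence-correspondence argument used for $g\leq 5$. The extra dimension count and the discussion of $\mathrm{Aut}(Y)$-orbits are harmless but not needed, since dominance of $X_{22}\dashrightarrow \mathcal{F}_{6,22}$ already follows from Mukai's theorem together with the arbitrariness of the marked points.
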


\subsubsection{The case $g=7$}

Mukai \cite{Mu1}, \cite{Mu5} found two methods of 
constructing general $K3$ surfaces of genus $7$. 
Farkas-Verra \cite{FV} proved that $\mathcal{F}_{7,n}$ is unirational in $n\leq 8$ 
using the first model (\cite{Mu1}). 
Here we use the second model (\cite{Mu5} \S 3). 
Let $G(2, 5)\subset {\proj}^{9}$ be the Pl\"ucker embedding of the Grassmannian $G(2, 5)$, 
and $Y=G(2, 5)\cap P$ be the intersection with 
a general codimension $2$ linear subspace $P\subset {\proj}^{9}$. 
The variety $Y$ is a quintic del Pezzo $4$-fold (unique up to isomorphism), 
and the stabilizer $G\subset {\rm PGL}_{5}$ of $Y$ has dimension $8$. 
Let $\mathcal{E}$ be the dual of the universal sub bundle over $G(2, 5)$, 
and put $\mathcal{E}_{Y}=\mathcal{E}\otimes \mathcal{O}(1)|_{Y}$. 
By \cite{Mu5}, we have $h^{0}(\mathcal{E}_{Y})=30$, 
and the zero locus of a general section of $\mathcal{E}_{Y}$ 
is a $K3$ surface of genus $7$. 
This gives a dominant map 
${\proj}H^{0}(\mathcal{E}_{Y})/G \dashrightarrow \mathcal{F}_{7}$ 
whose general fibers are birationally $K3$ surfaces 
(Fourier-Mukai partner of the original $K3$ surface). 
This construction tells us 

\begin{proposition}
$\mathcal{F}_{7,14}$ is uniruled. 
\end{proposition}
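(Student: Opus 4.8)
The plan is to mark points on the zero loci in Mukai's construction and to exhibit a covering family of rational curves that survives in moduli. Consider the incidence variety
\[
X_{n} = \bigl\{\, (s, p_{1}, \dots, p_{n}) \in \mathbb{P}H^{0}(\mathcal{E}_{Y}) \times Y^{n} \;\bigm|\; p_{i}\in Z(s) \ \ (1\le i\le n) \,\bigr\},
\]
where $Z(s)$ denotes the zero locus of $s$, together with the projection $q\colon X_{n}\to Y^{n}$ and the classifying map $\Psi\colon X_{n}\dashrightarrow \mathcal{F}_{7,n}$, $(s, p_{1}, \dots, p_{n})\mapsto (Z(s), p_{1}, \dots, p_{n})$. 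First I would analyze the fibers of $q$: over a general point the fiber is $\mathbb{P}(H^{0}(\mathcal{E}_{Y}\otimes \mathcal{I}_{\{p_{i}\}}))$, the linear system of sections vanishing at $p_{1}, \dots, p_{n}$. Since $\mathcal{E}_{Y}$ is globally generated of rank $2$, the evaluation map $H^{0}(\mathcal{E}_{Y})\to \bigoplus_{i=1}^{n}(\mathcal{E}_{Y})_{p_{i}}$ is surjective for general $(p_{i})$, so this fiber is a projective space of dimension $h^{0}(\mathcal{E}_{Y})-2n-1 = 29-2n$. Taking $n=14$ gives exactly $\mathbb{P}^{1}$; thus $q\colon X_{14}\to Y^{14}$ is generically a $\mathbb{P}^{1}$-bundle and its fibers form a family of rational curves covering $X_{14}$. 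It is this threshold $2\cdot 14 < 30 \le 2\cdot 15$ that pins down $n'(7)=14$.

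Next I would verify that $\Psi$ is dominant. By Mukai's theorem the unpointed map $\mathbb{P}H^{0}(\mathcal{E}_{Y})/G\dashrightarrow \mathcal{F}_{7}$ is dominant, and once $s$ (hence $Z(s)=S$) is general the marked points range freely over $S$; so $\Psi$ hits a dense subset of $\mathcal{F}_{7,14}$. A dimension count is consistent: $\dim X_{14}=4\cdot 14+(29-28)=57$ dominates $\dim \mathcal{F}_{7,14}=19+28=47$ with $10$-dimensional fibers, namely the $8$-dimensional $G$-orbit times the $2$-dimensional Fourier--Mukai partner of $S$.

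The crux, and the step I expect to be the main obstacle, is to show that the rational curves covering $X_{14}$ are not contracted by $\Psi$. A $q$-fiber over a fixed $(p_{1}, \dots, p_{14})$ is a pencil $\{Z(s_{t})\}_{t\in\mathbb{P}^{1}}$ of genus-$7$ $K3$ surfaces all passing through the $14$ points, and I must check that $t\mapsto (Z(s_{t}), p_{1}, \dots, p_{14})$ is non-constant in $\mathcal{F}_{7,14}$ for a general choice. Equivalently, the pencil must be non-isotrivial and must not be swallowed by a single $10$-dimensional $\Psi$-fiber. This should follow from a tangent-space computation: the classifying map $s\mapsto[Z(s)]$ is dominant with fibers of the expected dimension $10$, so its differential is generically surjective, and one checks that the pencil direction $[s']\in H^{0}(\mathcal{E}_{Y}\otimes\mathcal{I}_{\{p_{i}\}})/\langle s\rangle$ is generically transverse to the tangent space of the fiber through $s$; since the $14$ general points impose independent conditions, a generic such pencil cannot lie inside a single fiber. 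Granting non-contraction, $\Psi$ carries the covering family of rational curves on $X_{14}$ to rational curves through a general point of $\mathcal{F}_{7,14}$, and being dominant it forces $\mathcal{F}_{7,14}$ to be uniruled. One cannot conclude unirationality directly, because rationality need not descend through the Fourier--Mukai fibers of $\Psi$; this is exactly why the claim is uniruledness rather than (uni)rationality.
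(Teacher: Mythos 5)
Your setup coincides with the paper's: the same incidence variety, the same count $h^{0}(\mathcal{E}_{Y})-2\cdot 14\geq 2$ making $q\colon X_{14}\to Y^{14}$ birationally a projective-space bundle of fiber dimension $\geq 1$, the same dominance of the classifying map, and the same reduction of uniruledness to showing the covering rational curves are not contracted by $\Psi$. But at exactly the step you flag as the crux, your argument has a genuine gap. Generic surjectivity of $d\Psi$ (equivalently, dominance in characteristic $0$) does not prevent a covering family of curves from being contracted: the projection $\mathbb{P}^{1}\times A\to A$, with $A$ an abelian variety, is dominant with everywhere surjective differential, yet it contracts the covering family $\{\mathbb{P}^{1}\times\{a\}\}$ and $A$ is not uniruled. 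So ``the differential is generically surjective'' buys nothing here, and your stated reason --- ``since the $14$ general points impose independent conditions, a generic such pencil cannot lie inside a single fiber'' --- is a non sequitur: independence of the point conditions computes the dimension of the pencil, but says nothing about the position of that pencil relative to the fibers of $\Psi$. The transversality you need is precisely the assertion to be proved, and it is only asserted.

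The paper closes this gap with two structural facts rather than a tangent-space computation. First, it passes to the quotient by $G$ before mapping to moduli: since a general $\mathbf{p}\in\pi(X_{14})$ has \emph{trivial} stabilizer in $G$ (this is the point of the citation to Prokhorov--Zaidenberg), two distinct points of one $q$-fiber can never lie in a single $G$-orbit, so each general $q$-fiber injects into $X_{14}/G$ and $X_{14}/G$ is uniruled. Second, the general fibers of the induced dominant map $X_{14}/G\dashrightarrow \mathcal{F}_{7,14}$ are birationally $K3$ surfaces (the Fourier--Mukai partners you correctly identified in your dimension count), and a $K3$ surface is not uniruled; hence the rational curve through a general point of $X_{14}/G$ cannot lie inside a fiber, so it is not contracted and maps to a rational curve through a general point of $\mathcal{F}_{7,14}$. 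Replacing your transversality sketch by these two facts --- trivial generic stabilizer plus non-uniruledness of the $K3$ fibers --- turns your outline into the paper's proof. A minor further point: your claim that the generic $q$-fiber is exactly $\mathbb{P}^{1}$ needs surjectivity of the evaluation map at $14$ general points, which global generation alone does not supply; the paper avoids this by using only the lower bound $\dim V_{\mathbf{p}}\geq 2$, which suffices.
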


\begin{proof}
Let $U\subset {\proj}H^{0}(\mathcal{E}_{Y})$ be the open locus of 
sections $\tau$ whose zero locus $S_{\tau}$ is a $K3$ surface. 
Consider the incidence correspondence 
\begin{equation*}
X_{14} = 
\{ \: ([\tau], p_{1}, \cdots, p_{14}) \in U \times Y^{14} 
\: | \: p_{i}\in S_{\tau} \: \}  
\; \subset \; U \times Y^{14},  
\end{equation*} 
which is irreducible. 
Let $\pi\colon X_{14}\to Y^{14}$ be the projection. 
If $\mathbf{p}=(p_{1}, \cdots, p_{14})\in \pi(X_{14})$, 
the fiber $\pi^{-1}(\mathbf{p})$ is a non-empty open set of 
the linear subspace ${\proj}V_{\mathbf{p}}\subset {\proj}H^{0}(\mathcal{E}_{Y})$ 
of sections vanishing at $p_{1}, \cdots, p_{14}$. 
Since $\dim V_{\mathbf{p}}\geq h^{0}(\mathcal{E}_{Y})-2\cdot 14 =2$, 
then $X_{14}$ is birationally a ${\proj}^{m}$-bundle over $\pi(X_{14})$ with $m\geq 1$. 
Since a general $\mathbf{p}\in \pi(X_{14})$ has trivial stabilizer in $G$ (cf.~\cite{PZ}), 
$X_{14}/G$ is uniruled. 
We have a dominant map 
$X_{14}/G\dashrightarrow \mathcal{F}_{7,14}$ 
whose general fibers are birationally $K3$ surfaces. 
Thus $\mathcal{F}_{7,14}$ is also uniruled.  
\end{proof}

The proof shows that 
the fibration of Fourier-Mukai partners over $\mathcal{F}_{7,14}$ 
(dual $K3$ fibration) is uniruled. 
This space is akin to $\mathcal{F}_{7,15}$, which is the only missing case. 

\begin{remark}
The $D_{5}$ root system appears in two contexts for $g=7$: 
as a sub root system of $K_{7}=D_{5}\oplus A_{2}$, 
and as the Dynkin diagram of ${\rm SO}(10)$ whose spinor variety is 
used in the first Mukai model 
(cf.~\cite{Mu3} (1.11)). 
\end{remark}

\subsubsection{The case $8\leq g \leq10$}\label{sssec:g=8,9,10}

Mukai \cite{Mu1} proved that 
general $K3$ surfaces of genus $7\leq g \leq 10$ are 
linear sections of a homogeneous space $\Sigma_{g}$ 
embedded in ${\proj}V_{g}$ for a representation space $V_{g}$. 
Using this fact, Farkas-Verra \cite{FV} proved that 
${\Fgn}$ is unirational in $n\leq g+1$.

\subsubsection{Interlude: a coincidence}\label{mystery}
As we have seen from the classical and Mukai models \cite{Mu1}, 
general $K3$ surfaces of genus $3\leq g \leq 10$ are linear sections of 
a (quasi-)homogeneous space embedded in ${\proj}V_{g}$ 
for a representation $V_{g}$ of an algebraic group $G_{g}$. 
Then we find that the following coincidence always holds: 
\begin{equation*}
n(g) = \dim V_{g}, 
\end{equation*}
namely, 
the weight $k(g)$ of $F(g)$ minus $19$ (the moduli number) coincides with $\dim V_{g}$. 
Is this accidental? 
Is there some deeper link between $K3$ surfaces and the Borcherds product $F(g)$ behind this coincidence? 

We summarize $(G, V)=(G_{g}, V_{g})$ in the following table.  
\begin{center}
\begin{tabular}{cccccccccccc} \toprule  
$g$       & 3   & 4   & 5   & 6   & 7 & 8 & 9 & 10    
\\ \midrule  
$G$       & ${\rm SL}(4)$   & ${\rm SO}(5)$   & ${\rm SL}(6)$   & ${\rm SL}(2)$  
 & ${\rm SO}(10)$ & ${\rm SL}(6)$ & ${\rm Sp}(6)$ & $G_{2}$    
\\ \midrule  
$V$  & $S^{4}{\C}^{4}$ & $ S^{3}{\C}^{5}/{\C}^{5}$ & 
$S^{2}{\C}^{6}$ & $V_{6}$ & 
$S^{+}$ & $\bigwedge^{2}{\C}^{6}$ & $\bigwedge^{3}{\C}^{6}/{\C}^{6} $ & $\frak{g}_{2}$ 
\\ \bottomrule \\ 
\end{tabular}
\end{center}
Here $V_{6}=S^{0}{\C}^{2}\oplus S^{8}{\C}^{2} \oplus S^{12}{\C}^{2}$ and 
$S^{+}$ is the half spin representation. 

In $g=3, 4, 6, 12$, 
$n(g)$ also coincides with $h^{0}(-K_{Y})$ 
for Fano $3$-folds $Y$ containing the $K3$ surfaces. 
This would be a uniform reason why $n(g)=n'(g)+1$ is attained at these $g$.

\subsubsection{The case $g=11$}

Barros \cite{Ba} proved that $\mathcal{F}_{11,n}$ is unirational in $n\leq 6$ and 
uniruled in $n=7$.  
Barros-Mullane \cite{BM} proved that $\kappa(\mathcal{F}_{11,9})=0$. 
Thus the cusp form $F(11)$ gives the unique nonzero canonical form 
on a smooth projective model of  $\mathcal{F}_{11,9}$. 
Farkas-Verra \cite{FV} proved that $\kappa(\mathcal{F}_{11,11})=19$.

\subsubsection{The case $g=12$}

Mukai (\cite{Mu3}, \cite{Mu5}) proved that 
general $K3$ surfaces of genus $12$ are 
anti-canonical sections of Fano $3$-folds of genus $12$ and Picard number $1$. 
We first recall the construction of such Fano $3$-folds following \cite{Mu3} \S 5. 
Let $Y=G(3, 7)$ and $\mathcal{E}$ be the dual of the universal sub bundle over $Y$. 
We set $V=H^{0}(\bigwedge^{2}\mathcal{E}) \simeq \bigwedge^{2}({\C}^{7})^{\vee}$. 
For a $3$-dimensional subspace $N$ of $V$, 
we let $Y_{N}\subset Y$ be the common zero locus of sections in $N$. 
If $N$ is general, then $Y_{N}$ is a Fano $3$-fold of genus $12$ and Picard number $1$. 
Such Fano $3$-folds are rational (\cite{IP} Theorem 4.6.7). 
Restriction of the Pl\"ucker embedding 
$Y\hookrightarrow {\proj}(\bigwedge^{3}{\C}^{7})$ 
gives the anti-canonical embedding 
$Y_{N}\hookrightarrow {\proj}N^{\perp}\simeq {\proj}^{13}$ 
where $N^{\perp}\subset \bigwedge^{3}{\C}^{7}$ 
is the annihilator of $N\subset \bigwedge^{2}({\C}^{7})^{\vee}$. 
If $H$ is a general hyperplane of ${\proj}N^{\perp}$, 
then $S_{N,H}=Y_{N}\cap H$ is a $K3$ surface of genus $12$. 
By Mukai \cite{Mu5}, $Y_{N}$ can be uniquely recovered from $S_{N,H}$ inside $Y$, 
so that $\mathcal{F}_{12}$ is birationally a ${\proj}^{13}$-bundle over 
the moduli space $G(3, V)/{\rm PGL}_{7}$ 
of these Fano $3$-folds with fibers the anti-canonical systems. 

\begin{proposition}
$\mathcal{F}_{12,13}$ is birational to 
the moduli space of $13$-pointed Fano $3$-folds of genus $12$ and Picard number $1$. 
In particular, $\mathcal{F}_{12,13}$ is rationally connected. 
\end{proposition}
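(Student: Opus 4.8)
The plan is to make the ``forget the hyperplane section'' map precise and to invert it using Mukai's reconstruction. Write $\mathcal{M}=G(3,V)/{\rm PGL}_{7}$ for the moduli of Fano $3$-folds $Y_{N}$ of genus $12$ and Picard number $1$, let $\mathcal{Y}\to\mathcal{M}$ be the universal Fano $3$-fold, and let $\mathcal{M}_{13}$ denote the moduli of $13$-pointed such Fano $3$-folds, that is, birationally, the $13$-fold fiber product of $\mathcal{Y}$ over $\mathcal{M}$. First I would define a rational map $\mathcal{F}_{12,13}\dashrightarrow\mathcal{M}_{13}$ sending a general $(S,q_{1},\dots,q_{13})$ to $([Y_{N}],q_{1},\dots,q_{13})$, where $S=S_{N,H}=Y_{N}\cap H$ and $Y_{N}$ is the Fano $3$-fold reconstructed from $S$ inside $Y$ by \cite{Mu5}; since $q_{i}\in S\subset Y_{N}$, the image is a genuine $13$-pointed Fano $3$-fold.

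The crux is to invert this by recovering $H$ from the points. In the anticanonical embedding $Y_{N}\hookrightarrow\proj N^{\perp}\simeq\proj^{13}$ the variety $Y_{N}$ is irreducible and nondegenerate, so a standard general-position argument (induction on the number of points, using that $Y_{N}$ lies in no hyperplane) shows that $13$ general points $q_{1},\dots,q_{13}\in Y_{N}$ are linearly independent and hence span a unique hyperplane $H=\langle q_{1},\dots,q_{13}\rangle\simeq\proj^{12}$. Then $S_{N,H}=Y_{N}\cap H$ is a K3 surface containing every $q_{i}$, and $([Y_{N}],q_{1},\dots,q_{13})\mapsto(S_{N,H},q_{1},\dots,q_{13})$ is the inverse map. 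The two maps are mutually inverse on a dense open set because the $13$ general points of $S\subset H\simeq\proj^{12}$ already span $H$ (a genus $12$ K3 is nondegenerate in $\proj^{12}$), so $H$ is reconstructed as $\langle q_{1},\dots,q_{13}\rangle$. This is exactly where the value $n=13$, equal to the dimension of the anticanonical system $|{-}K_{Y_{N}}|$, is forced, and it establishes the first assertion.

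For rational connectedness I would use the fibration $\mathcal{M}_{13}\to\mathcal{M}$ with fiber $Y_{N}^{13}$. Each $Y_{N}$ is a Fano $3$-fold, hence rationally connected (Campana, Koll\'ar--Miyaoka--Mori), so the product $Y_{N}^{13}$ is rationally connected. The base $\mathcal{M}=G(3,V)/{\rm PGL}_{7}$ is the image of the rational variety $G(3,V)$ under a dominant map, hence unirational and in particular rationally connected. By the theorem of Graber--Harris--Starr, a family with rationally connected base and rationally connected general fiber has rationally connected total space, so $\mathcal{M}_{13}\simeq\mathcal{F}_{12,13}$ is rationally connected.

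The main obstacle I anticipate is the verification that the two birational maps are genuinely inverse on a common dense open subset: this requires Mukai's reconstruction \cite{Mu5} to be well defined birationally, so that $Y_{N}$ is canonically attached to a general $S$, together with the general-position statement above; both are routine but must be combined carefully, and one should also check that for a general configuration $S_{N,H}$ is a K3 surface with at worst rational double points, so that the resulting point indeed lies in $\mathcal{F}_{12,13}$.
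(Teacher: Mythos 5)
Your proposal is correct and matches the paper's proof essentially step for step: the birational equivalence is established by the same pair of mutually inverse maps (span the hyperplane $H=\langle q_{1},\dots,q_{13}\rangle$ in one direction, apply Mukai's reconstruction of $Y_{N}$ from $S_{N,H}$ in the other), and rational connectedness follows from Graber--Harris--Starr applied to the fibration over the unirational base $G(3,V)/{\rm PGL}_{7}$. The only cosmetic difference is that the paper notes the general fibers $Y_{N}^{13}$ are rational (self-products of the rational genus-$12$ Fano $3$-folds), whereas you invoke rational connectedness of Fano varieties and of their products; both suffice for the Graber--Harris--Starr argument.
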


\begin{proof}
Let $\mathcal{F}_{12,13}'$ be the moduli space of 
$13$-pointed Fano $3$-folds of genus $12$ and Picard number $1$. 
We have the rational map 
$\mathcal{F}_{12,13}'\dashrightarrow \mathcal{F}_{12,13}$ 
which sends $(Y_{N}, p_{1}, \cdots, p_{13})$ to $(S_{N,H}, p_{1}, \cdots, p_{13})$ 
where $H=\langle p_{1}, \cdots, p_{13} \rangle$ is the hyperplane of ${\proj}N^{\perp}$ 
spanned by $p_{1}, \cdots, p_{13}$. 
Since $Y_{N}$ can be recovered from $S_{N,H}$, 
we also have the inverse map 
$\mathcal{F}_{12,13}\dashrightarrow \mathcal{F}_{12,13}'$ 
sending $(S_{N,H}, p_{1}, \cdots, p_{13})$ to $(Y_{N}, p_{1}, \cdots, p_{13})$. 
Therefore $\mathcal{F}_{12,13}$ is birational to $\mathcal{F}_{12,13}'$. 

Since $\mathcal{F}_{12,13}'$ is fibered over the unirational base 
$G(3, V)/{\rm PGL}_{7}$ with general fibers rational 
(self products of the Fano $3$-folds), 
it is rationally connected by \cite{GrHaSt}. 
\end{proof}

\subsubsection{The case $g=13$}

By Mukai \cite{Mu5}, general $K3$ surfaces of genus $13$ can be constructed as follows. 
Consider the Grassmannian $G(3, 7)$ and let $\mathcal{E}$ be the dual of the universal sub bundle. 
We take a general $2$-dimensional subspace of 
$H^{0}(\bigwedge^{2}\mathcal{E})\simeq \bigwedge^{2}({\C}^{7})^{\vee}$, 
which is unique up to the action of ${\rm SL}_{7}$, 
and let $Y\subset G(3, 7)$ be its common zero locus. 
Then $Y$ is a Fano $6$-fold of index $3$, 
and its stabilizer $G_{Y}$ in ${\rm SL}_{7}$ has dimension $10$. 
Let $\mathcal{F}$ be 
the third exterior power of the universal quotient bundle over $G(3, 7)$, 
and $\mathcal{F}_{Y}=\mathcal{F}|_{Y}$ 
be its restriction to $Y$. 
Then $\mathcal{F}_{Y}$ has rank $4$, 
$h^{0}(\mathcal{F}_{Y})=32$, 
and the zero locus of a general section of $\mathcal{F}_{Y}$ is a $K3$ surface of genus $13$. 
The resulting moduli map 
${\proj}H^{0}(\mathcal{F}_{Y})/G_{Y}\dashrightarrow \mathcal{F}_{13}$ 
is dominant and its general fibers are birationally $K3$ surfaces 
(Fourier-Mukai partner of the original $K3$ surface). 

\begin{proposition}
$\mathcal{F}_{13,7}$ is uniruled. 
\end{proposition}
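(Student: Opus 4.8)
I want to show that $\mathcal{F}_{13,7}$ is uniruled. The setup is exactly parallel to the case $g=7$ that appears earlier in the paper: there is a Fano variety $Y\subset G(3,7)$ (here a Fano $6$-fold of index $3$), a rank-$4$ bundle $\mathcal{F}_Y$ with $h^0(\mathcal{F}_Y)=32$, and a dominant moduli map $\mathbb{P}H^0(\mathcal{F}_Y)/G_Y\dashrightarrow \mathcal{F}_{13}$ whose general fibers are birationally $K3$ surfaces (Fourier–Mukai partners). The plan is to mimic the $g=7$ incidence-correspondence argument verbatim, replacing the quintic del Pezzo $4$-fold by $Y$ and $\mathcal{E}_Y$ by $\mathcal{F}_Y$.

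**The construction.** Let $U\subset\mathbb{P}H^0(\mathcal{F}_Y)$ be the open locus of sections $\tau$ whose zero locus $S_\tau\subset Y$ is a smooth $K3$ surface of genus $13$. I would form the incidence correspondence
\begin{equation*}
X_7 = \{\,([\tau],p_1,\dots,p_7)\in U\times Y^7 \mid p_i\in S_\tau\,\}\subset U\times Y^7,
\end{equation*}
which is irreducible, and let $\pi\colon X_7\to Y^7$ be the projection. For $\mathbf{p}=(p_1,\dots,p_7)$ in the image, the fiber $\pi^{-1}(\mathbf{p})$ is a nonempty open subset of the linear subspace $\mathbb{P}V_{\mathbf{p}}\subset\mathbb{P}H^0(\mathcal{F}_Y)$ of sections vanishing at $p_1,\dots,p_7$. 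Since imposing vanishing of a rank-$4$ bundle at a point cuts down $H^0$ by at most $4$, I get
\begin{equation*}
\dim V_{\mathbf{p}}\geq h^0(\mathcal{F}_Y)-4\cdot 7 = 32-28 = 4,
\end{equation*}
so $\mathbb{P}V_{\mathbf{p}}$ has dimension $\geq 3\geq 1$. Hence $X_7$ is birationally a $\mathbb{P}^m$-bundle over $\pi(X_7)$ with $m\geq 1$.

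**Descending to the quotient and to $\mathcal{F}_{13,7}$.** A general $\mathbf{p}\in\pi(X_7)$ should have trivial (or at worst finite) stabilizer in $G_Y$, so that $X_7/G_Y$ inherits the ruling and is therefore uniruled. Finally, the dominant map $X_7/G_Y\dashrightarrow\mathcal{F}_{13,7}$ — obtained by sending $([\tau],\mathbf{p})$ to $(S_\tau,p_1,\dots,p_7)$ and composing with the birational Fourier–Mukai correspondence $S_\tau\dashrightarrow S$ onto the actual polarized $K3$ surface $S$ of genus $13$ — has general fibers birational to $K3$ surfaces (the Fourier–Mukai partners), which are themselves uniruled only in the trivial sense, so uniruledness passes to the target: a dominant image of a uniruled variety, with the rulings on $X_7$ mapping to rational curves sweeping out $\mathcal{F}_{13,7}$, is uniruled.

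**The main obstacle.** The delicate point, exactly as in the $g=7$ case, is the stabilizer claim: one must check that a general $7$-tuple $\mathbf{p}\in Y^7$ has finite stabilizer in $G_Y$ (here $\dim G_Y=10$), so that the rational quotient $X_7/G_Y$ is well-behaved and the fibral rulings survive to the quotient. Since $\dim G_Y=10$ and seven general points of the $6$-dimensional $Y$ impose more than enough conditions, a dimension count makes this plausible, but it is the step that genuinely requires the geometry of the $G_Y$-action on $Y$ rather than bookkeeping; I would settle it by the same reference/argument used for the $G$-action in genus $7$. Everything else is the routine incidence-variety-over-base argument already deployed repeatedly in this section.
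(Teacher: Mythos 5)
Your proof is correct and essentially identical to the paper's argument: the same incidence correspondence $X_7\subset U\times Y^7$ with projection to $Y^7$, the same count $\dim V_{\mathbf{p}}\geq 32-4\cdot 7=4$ (the paper records the ruling as a $\mathbb{P}^m$-bundle with $m\geq 3$, which makes it immediate that the $\mathbb{P}^m$ cannot lie in a two-dimensional fiber), the same finite-stabilizer step, and the same final descent using that the general fibers of $X_7/G_Y\dashrightarrow \mathcal{F}_{13,7}$ are birationally $K3$ surfaces, hence not uniruled, so the ruling is not contracted. The stabilizer claim you flag as the main obstacle is simply asserted in the paper for $g=13$ (only the genus-$7$ case carries the reference \cite{PZ}), so your treatment matches the paper's there as well.
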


\begin{proof}
Let $U\subset {\proj}H^{0}(\mathcal{F}_{Y})$ 
be the open set of sections $\tau$ whose zero locus $S_{\tau}$ is a $K3$ surface. 
Consider the incidence correspondence 
\begin{equation*}
X_{n} = 
\{ \: ([\tau], p_{1}, \cdots, p_{n}) \in U\times Y^{n}\: | \: 
p_{i}\in S_{\tau} \: \},  
\end{equation*}
which is irreducible, 
and let $\pi\colon X_{n}\to Y^{n}$ be the projection. 
The $\pi$-fiber over 
$\mathbf{p}=(p_{1}, \cdots, p_{n})\in \pi(X_{n})$ 
is an open dense subset of 
the linear subspace 
${\proj}V_{\mathbf{p}}\subset {\proj}H^{0}(\mathcal{F}_{Y})$ 
of sections vanishing at $p_{1}, \cdots, p_{n}$. 
When $n=7$, $V_{\mathbf{p}}$ has dimension 
$\geq h^{0}(\mathcal{F}_{Y})-4\cdot 7=4$, 
so $X_{7}\to \pi(X_{7})$ is birationally a ${\proj}^{m}$-bundle with $m\geq 3$. 
Since the stabilizer in $G_{Y}$ of general $\mathbf{p}\in \pi(X_{7})$ is finite, 
$X_{7}/G_{Y}$ is uniruled. 
Since the fibers of the moduli map 
$X_{7}/G_{Y}\dashrightarrow \mathcal{F}_{13,7}$ 
are $K3$ surfaces, 
$\mathcal{F}_{13,7}$ is also uniruled. 
\end{proof}

The proof shows that 
the fibration of Fourier-Mukai partners over $\mathcal{F}_{13,7}$ 
(dual $K3$ fibration) is also uniruled. 
Note that $\mathcal{F}_{13,8}$ is the only missing case.

\subsubsection{The case $g=14$}

Farkas-Verra \cite{FV} proved that $\mathcal{F}_{14,1}$ is rational 
using special cubic $4$-folds.

\subsubsection{The case $g=16$}

By Mukai \cite{Mu6}, general $K3$ surfaces of genus $16$ can be constructed as follows. 
Let $\mathcal{T}$ be the ${\rm GL}_{4}$-equivariant compactification of 
the space of twisted cubics in ${\proj}^{3}$ constructed by 
Ellingsrud-Piene-Stromme \cite{EPS}. 
The variety $\mathcal{T}$ is smooth of dimension $12$ and acted on by ${\rm GL}_{4}$. 
There exist vector bundles $\mathcal{E}$, $\mathcal{F}$ on $\mathcal{T}$ 
of rank $3$, $2$ respectively and equivariant linear maps 
$(S^{2}{\C}^{4})^{\vee}\to H^{0}(\mathcal{E})$ and  
$(S^{2,1}{\C}^{4})^{\vee}\to H^{0}(\mathcal{F})$. 
Here $S^{2}{\C}^{4}$ is the symmetric square of ${\C}^{4}$ and 
$S^{2,1}{\C}^{4}$ is the kernel of ${\C}^{4}\otimes S^{2}{\C}^{4}\to S^{3}{\C}^{4}$. 
We write $V=(S^{2}{\C}^{4})^{\vee}$. 
If $M\subset V$ is a general $2$-dimensional subspace, 
its common zero locus $Y_{M}$ has dimension $6$. 
Let ${\it Syz}_{M}\subset S^{2,1}{\C}^{4}$ be the kernel of 
${\C}^{4}\otimes M^{\perp}\to S^{3}{\C}^{4}$, 
which has dimension $12$. 
Then a general $2$-dimensional subspace $N$ of ${\it Syz}_{M}^{\vee}$ 
cuts out a $K3$ surface $S_{M,N}\subset Y_{M}$ of genus $16$. 

The subspaces $M$ are parametrized by an open set 
$G(2, V)^{\circ}$ of $G(2, V)$, 
and the pairs $(M, N)$ are parametrized by an open set $\mathcal{P}^{\circ}$ 
of the $G(2, 12)$-bundle $\mathcal{P}$ over $G(2, V)^{\circ}$ 
formed by $G(2, {\it Syz}_{M}^{\vee})$. 
By Theorem 1.2 and Remark 7.3 of \cite{Mu6}, 
the moduli map 
$\mathcal{P}/{\rm PGL}_{4}\dashrightarrow \mathcal{F}_{16}$ 
is dominant and its general fibers are birationally $K3$ surfaces 
(Fourier-Mukai partner of the original $K3$ surface). 

\begin{proposition}
$\mathcal{F}_{16,4}$ is uniruled. 
\end{proposition}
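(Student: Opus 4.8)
The plan is to mimic the argument already used for $g=7$ and $g=13$: build an incidence correspondence between Mukai's data $(M,N)$ and $n$-tuples of points on the K3 surface $S_{M,N}$, and extract rational curves from a positive-dimensional rational fiber. For a general $M\in G(2,V)^{\circ}$ I would fix the Fano $6$-fold $Y_{M}$ together with the rank $2$ bundle $\mathcal{F}|_{Y_{M}}$, and recall that $S_{M,N}$ is the common zero locus of the two sections spanning $N\in G(2,{\it Syz}_{M}^{\vee})$, where $\dim {\it Syz}_{M}^{\vee}=12$. The object to study is the irreducible incidence variety
\begin{equation*}
X_{n}=\{(M,N,p_{1},\dots,p_{n}):\ N\in G(2,{\it Syz}_{M}^{\vee}),\ p_{i}\in S_{M,N}\},
\end{equation*}
with its projection $\pi$ onto the space $\mathcal{B}_{n}$ of tuples $(M,p_{1},\dots,p_{n})$ with $p_{i}\in Y_{M}$; the key feature is that each fiber of $\pi$ is an open subset of a sub-Grassmannian of $G(2,{\it Syz}_{M}^{\vee})$.

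The first computation I would carry out is the fiber dimension that singles out $n=4$. The condition $p_{i}\in S_{M,N}$ means that every section in $N$ vanishes at $p_{i}$, i.e. $N\subset \ker(\mathrm{ev}_{p_{i}})$ for the evaluation $\mathrm{ev}_{p_{i}}\colon {\it Syz}_{M}^{\vee}\to (\mathcal{F}|_{Y_{M}})_{p_{i}}$ into a $2$-dimensional fiber. Hence the $\pi$-fiber over $\mathbf{b}=(M,p_{1},\dots,p_{n})$ is an open subset of $G(2,W_{\mathbf{b}})$ with $W_{\mathbf{b}}=\bigcap_{i}\ker(\mathrm{ev}_{p_{i}})$, and $\dim W_{\mathbf{b}}\geq 12-2n$. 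For $n=4$ this gives $\dim W_{\mathbf{b}}\geq 4$, so the fiber contains $G(2,4)$, a positive-dimensional rational variety covered by lines; consequently $X_{4}$ is uniruled, the uniruling curves being these lines, which move $N$ while keeping $M$ and the four points fixed. (For $n=5$ the bound drops to $\dim W_{\mathbf{b}}\geq 2$, i.e. a single $N$, which explains why $4$ is the critical value.)

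It then remains to transfer uniruledness to $\mathcal{F}_{16,4}$. As in the earlier cases, a general configuration has finite stabilizer in ${\rm PGL}_{4}$, so the quotient $X_{4}/{\rm PGL}_{4}$ is again uniruled; and the dominant moduli map $X_{4}/{\rm PGL}_{4}\dashrightarrow \mathcal{F}_{16,4}$, sending $(M,N,p_{1},\dots,p_{4})$ to $(S_{M,N},p_{1},\dots,p_{4})$, has general fibers of dimension $2$, namely the Fourier--Mukai partner K3 surfaces (extending the dominance of the unpointed map $\mathcal{P}/{\rm PGL}_{4}\dashrightarrow \mathcal{F}_{16}$ established above).

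The step I expect to be the main obstacle is showing that the uniruling lines are \emph{not contracted} by this moduli map, so that their images are genuine rational curves through a general point of $\mathcal{F}_{16,4}$. I would settle this on dimension grounds: each line sits inside a $4$-dimensional fiber $G(2,W_{\mathbf{b}})\simeq G(2,4)$, which injects into $X_{4}/{\rm PGL}_{4}$ and cannot be absorbed into a $2$-dimensional fiber of the moduli map; since lines sweep out $G(2,4)$ with a positive-dimensional family through each point, a general one must map nonconstantly. Pinning down this non-contraction cleanly, along with the dominance of the pointed moduli map and the two genericity inputs (generic surjectivity of the evaluations and finiteness of the stabilizer), is the delicate part; the rest is the routine bookkeeping inherited from the $g=13$ argument.
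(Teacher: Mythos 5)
Your proposal is correct and follows essentially the same route as the paper: the same two-step incidence correspondence (your $\mathcal{B}_{n}$ is the paper's $Z_{n}$), the same fiber computation $\dim W_{\mathbf{b}}\geq 12-2n\geq 4$ for $n=4$ realizing $X_{4}\to\pi(X_{4})$ as birationally a Grassmannian bundle, the same finite-stabilizer step for the ${\rm PGL}_{4}$-quotient, and the same descent through the dominant moduli map whose general fibers are birationally K3 surfaces (Fourier--Mukai partners). Your explicit dimension count for non-contraction (a $\geq 4$-dimensional rational fiber cannot lie in a $2$-dimensional K3 fiber) is exactly the reason implicit in the paper's final sentence, so the points you flag as delicate are precisely the inputs the paper asserts from Mukai's work and genericity.
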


\begin{proof}
As in the case $g=12$, we consider the incidence correspondence in two steps: 
\begin{equation*}
Z_{n} = 
\{ \: (M, p_{1}, \cdots, p_{n}) \in G(2, V)^{\circ}\times \mathcal{T}^{n} \: | \: p_{i}\in Y_{M} \: \}, 
\end{equation*} 
\begin{equation*}
X_{n} = 
\{ \: (M, N, p_{1}, \cdots, p_{n}) \in \mathcal{P}^{\circ}\times \mathcal{T}^{n} 
\: | \: p_{i}\in S_{M, N} \: \}.  
\end{equation*} 
Let $\pi\colon X_{n}\to Z_{n}$ be the projection. 
If $(M, \mathbf{p})=(M, p_{1}, \cdots, p_{n})\in \pi(X_{n})$, 
the $\pi$-fiber over $(M, \mathbf{p})$ is an open dense subset of the sub Grassmannian 
$G(2, W_{M,\mathbf{p}})\subset G(2, {\it Syz}_{M}^{\vee})$ 
where 
$W_{M,\mathbf{p}}\subset {\it Syz}_{M}^{\vee}$ 
is the kernel of the evaluation map at $p_{1}, \cdots, p_{n}$. 
If $n\leq 4$, $W_{M,\mathbf{p}}$ has dimension 
$\geq \dim {\it Syz}_{M}^{\vee}-2n\geq 4$, 
so $X_{n}\to \pi(X_{n})$ is birationally a $G(2, m)$-bundle with $m\geq 4$. 
When $n=4$, 
a general $(M, \mathbf{p})\in \pi(X_{4})$ has finite stabilizer in ${\rm PGL}_{4}$, 
and hence $X_{4}/{\rm PGL}_{4}$ is uniruled. 
Since the moduli map 
$X_{4}/{\rm PGL}_{4}\dashrightarrow \mathcal{F}_{16,4}$ 
is dominant and its general fibers are birationally $K3$ surfaces, 
$\mathcal{F}_{16,4}$ is also uniruled. 
\end{proof}

As in the cases $g=7, 13$, the proof shows that 
the dual $K3$ fibration over $\mathcal{F}_{16,4}$ is also uniruled.

\subsubsection{The case $g=18$}

By Mukai \cite{Mu2}, 
general $K3$ surfaces of genus $18$ can be constructed as follows. 
Fix a smooth quadric $Q$ in ${\proj}^{8}$ and 
let $Y=OG(3, 9)$ be the orthogonal Grassmannian 
parametrizing $2$-planes contained in $Q$. 
We have a rank $2$ homogeneous vector bundle $\mathcal{F}$ over $Y$ 
such that $V=H^{0}(Y, \mathcal{F})$ is the 
$16$-dimensional spin representation of ${\rm Spin}(9)$. 
If $N$ is a general $5$-dimensional subspace of $V$, 
its common zero locus $S_{N}$ is a $K3$ surface of genus $18$. 
The moduli map 
$G(5, V)/{\rm SO}(9) \dashrightarrow \mathcal{F}_{18}$ 
is birational. 

\begin{proposition}
$\mathcal{F}_{18,5}$ is uniruled. 
\end{proposition}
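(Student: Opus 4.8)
The plan is to run the same incidence-correspondence argument as in the cases $g=7,13,16$, simplified here by the fact that the moduli map $G(5,V)/{\rm SO}(9)\dashrightarrow\mathcal{F}_{18}$ is birational rather than a $K3$-fibration. Let $G(5,V)^{\circ}\subset G(5,V)$ be the open locus on which $S_N$ is a $K3$ surface, and form the incidence variety
\[
X_5=\{\,(N,p_1,\dots,p_5)\in G(5,V)^{\circ}\times Y^5\mid p_i\in S_N\,\},
\]
with its projection $\pi\colon X_5\to Y^5$. The group ${\rm SO}(9)$ acts diagonally on $X_5$, acting on $G(5,V)$ through the spin representation $V$ and on the homogeneous space $Y=OG(3,9)$.

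First I would compute the fibers of $\pi$. For $p\in Y$ the evaluation $\mathrm{ev}_p\colon V\to\mathcal{F}_p$ onto the rank-$2$ fiber is surjective for general $p$, and the condition $p_i\in S_N$ is precisely $N\subseteq\ker(\mathrm{ev}_{p_i})$. Hence $\pi^{-1}(\mathbf{p})$ is an open subset of the sub-Grassmannian $G(5,V_{\mathbf{p}})$ with $V_{\mathbf{p}}=\bigcap_{i=1}^{5}\ker(\mathrm{ev}_{p_i})$. For general $\mathbf{p}$ the five evaluations are surjective and in general position, so $\dim V_{\mathbf{p}}=16-2\cdot 5=6$ and the fiber is $G(5,6)\simeq\proj^{5}$, of positive dimension. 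Thus $X_5$ is birationally a $\proj^{5}$-bundle over $Y^5$; since $Y$ is rational, $X_5$ is rational, in particular uniruled.

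Next I would descend to the quotient. Since $\dim {\rm SO}(9)=36$ is smaller than $\dim Y^5=60$ (as $\dim OG(3,9)=12$), a general $\mathbf{p}\in Y^5$ has finite stabilizer in ${\rm SO}(9)$. Therefore a line in a general $\proj^{5}$-fiber of $\pi$ cannot be contained in a single ${\rm SO}(9)$-orbit --- two of its points lying in one orbit would be exchanged by an element of the finite group ${\rm Stab}(\mathbf{p})$ --- so it descends to a genuine rational curve through a general point of $X_5/{\rm SO}(9)$. Hence $X_5/{\rm SO}(9)$ is uniruled. Finally, the pointed moduli map $X_5/{\rm SO}(9)\dashrightarrow\mathcal{F}_{18,5}$, sending $(N,p_1,\dots,p_5)$ to $(S_N,p_1,\dots,p_5)$, is dominant (indeed birational, since the unpointed map is), so $\mathcal{F}_{18,5}$ is uniruled.

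The main obstacle is the dimension bookkeeping that keeps the fiber positive-dimensional: one needs $\dim V_{\mathbf{p}}=6>5$, and this is exactly the margin that disappears at $n=6$, where $\dim V_{\mathbf{p}}=4$, which is why the bound is $n'(18)=5$. The only other point requiring care is the transversality of the covering rational curves to the ${\rm SO}(9)$-orbits, which is guaranteed by the finiteness of the generic stabilizer.
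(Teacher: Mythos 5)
Your construction coincides with the paper's proof: the same incidence variety $X_5\subset G(5,V)^{\circ}\times Y^5$, the same identification of the $\pi$-fibers with open subsets of $G(5,V_{\mathbf{p}})$ with $\dim V_{\mathbf{p}}\geq 16-2\cdot 5=6$, and the same descent of rational curves in the fibers to the quotient. The gap is in the justification of the crucial descent step. You infer that a general $\mathbf{p}\in Y^5$ has finite ${\rm SO}(9)$-stabilizer \emph{from} the inequality $\dim{\rm SO}(9)=36<60=\dim Y^5$. There is no such implication: generic stabilizer dimensions are not controlled by $\dim G-\dim X$, and for this very action naive dimension counting provably fails. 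Concretely, the stabilizer in ${\rm SO}(9)$ of two general points of $Y=OG(3,9)$ is isomorphic to ${\rm GL}_3\times{\rm SO}(3)$ (write $\C^9=(W_1\oplus W_2)\perp U$ with $W_2\cong W_1^{\vee}$), of dimension $12$; the stabilizer of a general third point is then the group of $g\in{\rm SL}_3$ preserving a general bilinear form on $\C^3$, which is a one-dimensional torus $T$. So at three points the stabilizer is positive-dimensional even though $\dim{\rm SO}(9)-\dim Y^3=0$, and a dimension comparison alone cannot exclude that such jumping persists at five points. The statement you want is true, but it needs an argument; for instance: since ${\rm SO}(9)$ acts faithfully on $Y$, the torus $T$ has proper fixed locus in $Y$, so a fourth general point is not fixed by $T$, whence the stabilizer of four (a fortiori five) general points has finite intersection with $T$ and is therefore finite. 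It is worth noting that the paper is cautious at exactly this point: it asserts only the weaker bound $\dim{\rm Stab}(\mathbf{p})<\dim\pi^{-1}(\mathbf{p})$ for $\mathbf{p}$ general in $\pi(X_5)$, which is all the descent requires.

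A secondary issue: your claim that the five evaluation maps are jointly surjective for general $\mathbf{p}$, so that $\dim V_{\mathbf{p}}=6$ exactly and $X_5$ is birationally a $\proj^5$-bundle over $Y^5$, is also asserted rather than proved, and your phrase ``general $\mathbf{p}\in Y^5$'' silently presupposes that $\pi$ is dominant, which is equivalent to that unproved genericity. Neither is needed: as in the paper, take $\mathbf{p}$ general in $\pi(X_5)$ and use only the inequality $\dim V_{\mathbf{p}}\geq 6$, valid for \emph{every} $\mathbf{p}$, which already gives a positive-dimensional Grassmannian fiber covered by lines; together with a correct stabilizer bound this yields uniruledness of $\mathcal{F}_{18,5}$ exactly as you intend.
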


\begin{proof}
Let $G(5, V)^{\circ}$ be the open set of $G(5, V)$ 
where the common zero locus $S_{N}$ is a $K3$ surface. 
Consider the incidence correspondence 
\begin{equation*}
X_{n} = 
\{ \: (N, p_{1}, \cdots, p_{n}) \in G(5, V)^{\circ} \times Y^{n} 
\: | \: p_{i}\in S_{N} \: \},  
\end{equation*} 
and let $\pi\colon X_{n}\to Y^{n}$ be the projection. 
For $\mathbf{p}=(p_{1}, \cdots, p_{n})\in \pi(X_{n})$,  
the fiber $\pi^{-1}(\mathbf{p})$ is an open dense subset of 
the sub Grassmannian $G(5, V_{\mathbf{p}})$ 
where $V_{\mathbf{p}}\subset V$ is the subspace of sections vanishing at $p_{1}, \cdots, p_{n}$. 
When $n=5$, $V_{\mathbf{p}}$ has dimension 
$\geq \dim V - 5 \cdot {\rm rk}(\mathcal{F}) = 6$, 
so $\pi^{-1}(\mathbf{p})$ has positive dimension. 
Since the stabilizer in ${\rm SO}(9)$ of general $\mathbf{p}\in \pi(X_{5})$  
has dimension $< \dim \pi^{-1}(\mathbf{p})$,  
the fiber $\pi^{-1}(\mathbf{p})$ is not contracted to one point by the moduli map 
$X_{5}\dashrightarrow X_{5}/{\rm SO}(9) \sim \mathcal{F}_{18,5}$. 
Therefore $\mathcal{F}_{18,5}$ is uniruled.  
\end{proof}

\subsubsection{The case $g=20$}

Let $\mathcal{E}$ be the dual of the universal sub bundle over 
the Grassmannian $Y=G(4, 9)$,  
and put $V=H^{0}(\bigwedge^{2}\mathcal{E})\simeq \bigwedge^{2}({\C}^{9})^{\vee}$. 
By Mukai \cite{Mu2}, if $N$ is a general $3$-dimensional subspace of $V$, 
its common zero locus $S_{N}$ is a $K3$ surface of genus $20$. 
The moduli map 
$G(3, V)/{\rm PGL}_{9}\dashrightarrow \mathcal{F}_{20}$ 
is birational. 

\begin{proposition}
$\mathcal{F}_{20,5}$ is uniruled. 
\end{proposition}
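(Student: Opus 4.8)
The plan is to follow the template already used for $g=7,13,16,18$: present $\mathcal{F}_{20,5}$ as a quotient of an incidence correspondence which fibers, with positive-dimensional rational fibers, over a configuration space, and then verify that these fibers are not collapsed in the quotient. Writing $G(3,V)^{\circ}$ for the open locus where $S_{N}$ is a K3 surface and $Y=G(4,9)$, I would set
\begin{equation*}
X_{n} = \{ \: (N, p_{1}, \cdots, p_{n}) \in G(3,V)^{\circ}\times Y^{n} \: | \: p_{i}\in S_{N} \: \},
\end{equation*}
with projection $\pi\colon X_{n}\to Y^{n}$. Since the moduli map $G(3,V)/{\rm PGL}_{9}\dashrightarrow \mathcal{F}_{20}$ is birational, $X_{5}/{\rm PGL}_{9}$ is birational to $\mathcal{F}_{20,5}$, and it suffices to show the former is uniruled.

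First I would carry out the fiber dimension count. For $\mathbf{p}=(p_{1},\cdots,p_{5})$ the fiber $\pi^{-1}(\mathbf{p})$ is open dense in the sub-Grassmannian $G(3,V_{\mathbf{p}})$, where $V_{\mathbf{p}}\subset V$ is the space of sections vanishing at $p_{1},\cdots,p_{5}$. As $\bigwedge^{2}\mathcal{E}$ has rank $6$ and is globally generated, evaluation at five general points is surjective, so $\dim V_{\mathbf{p}}=\dim V-5\cdot 6=6$ for general $\mathbf{p}$; thus $\pi$ is dominant and $\dim\pi^{-1}(\mathbf{p})=\dim G(3,6)=9$. Being a Grassmannian, $\pi^{-1}(\mathbf{p})$ is rational, so $X_{5}$ is swept out by rational curves lying in these fibers.

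The main obstacle is the descent to the ${\rm PGL}_{9}$-quotient: I must show a general fiber $\pi^{-1}(\mathbf{p})$ is not contained in a single ${\rm PGL}_{9}$-orbit, equivalently (since ${\rm PGL}_{9}$ respects $\pi$, so the orbits meeting the fiber are ${\rm Stab}(\mathbf{p})$-orbits) that $\dim {\rm Stab}_{{\rm PGL}_{9}}(\mathbf{p})<9$. As $\pi$ is dominant, a general $\mathbf{p}$ is a tuple of five general $4$-planes $U_{1},\cdots,U_{5}\subset \C^{9}$, and its infinitesimal stabilizer $\{X\in\frak{sl}_{9}\mid X(U_{i})\subseteq U_{i}\ \forall i\}$ is cut out by $5\cdot 20$ linear conditions in the $80$-dimensional $\frak{sl}_{9}$; the expected dimension is negative, and a transversality check (the successive parabolic intersections drop by the full $20$ until only the scalars survive, already for four planes) shows it vanishes, so ${\rm Stab}(\mathbf{p})$ is finite. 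The delicate point to keep honest is that $\mathbf{p}$ ranges only over $\pi(X_{5})$; but dominance of $\pi$ makes $\pi(X_{5})$ dense in $Y^{5}$, so genericity in $Y^{5}$ applies. With $0=\dim{\rm Stab}(\mathbf{p})<9$, a general rational curve in $\pi^{-1}(\mathbf{p})$ maps to a rational curve through a general point of $\mathcal{F}_{20,5}$, proving uniruledness.
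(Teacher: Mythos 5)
Your proposal has exactly the skeleton of the paper's proof: the same incidence correspondence $X_{n}\subset G(3,V)^{\circ}\times Y^{n}$, the identification of $\pi^{-1}(\mathbf{p})$ with an open subset of the sub-Grassmannian $G(3,V_{\mathbf{p}})$, the count $\dim V_{\mathbf{p}}\geq \dim V-5\cdot{\rm rk}(\bigwedge^{2}\mathcal{E})=6$ giving positive-dimensional rational fibers, and the descent step that finiteness of ${\rm Stab}_{{\rm PGL}_{9}}(\mathbf{p})$ prevents these fibers from being contracted in $X_{5}/{\rm PGL}_{9}\sim\mathcal{F}_{20,5}$.

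There is, however, a genuine gap in the extra step you insert to justify genericity, namely dominance of $\pi$. First, global generation of $\bigwedge^{2}\mathcal{E}$ gives surjectivity of evaluation at a \emph{single} point, not jointly at five points (compare $\mathcal{O}_{\proj^{1}}(1)$ and three points), so that inference is a non sequitur. Second, and more seriously, even granting that the restriction map $V\to\bigoplus_{i}(\bigwedge^{2}\mathcal{E})_{p_{i}}$ is surjective for general $\mathbf{p}\in Y^{5}$, dominance does not follow: $\dim V_{\mathbf{p}}=6$ only says the sections vanishing at $\mathbf{p}$ form a $6$-dimensional space; it does not produce an $N\subset V_{\mathbf{p}}$ lying in $G(3,V)^{\circ}$, i.e.\ an actual $K3$ surface through the five points, which is what $\mathbf{p}\in\pi(X_{5})$ means. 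Since your reduction of the stabilizer computation to ``five general $4$-planes in $\C^{9}$'' rests on this dominance, the argument becomes circular precisely at the point you yourself flagged as delicate. The paper's own wording avoids this trap: it uses only the linear-algebra inequality $\dim V_{\mathbf{p}}\geq 6$, valid for \emph{every} $\mathbf{p}\in\pi(X_{5})$ with no genericity in $Y^{5}$, and asserts finiteness of the stabilizer for a general point of $\pi(X_{5})$ itself (a statement about configurations of five points lying on a $K3$, not about arbitrary general configurations). To repair your version, either prove dominance honestly --- e.g.\ exhibit one $N\in G(3,V)^{\circ}$ and five points of $S_{N}$ imposing independent conditions, so that the open rank-$30$ locus meets the irreducible set $\pi(X_{5})$ and is therefore generic on it --- or drop dominance and verify finiteness of ${\rm Stab}(\mathbf{p})$ directly for one five-point configuration on one $K3$ (upper semicontinuity of stabilizer dimension then handles general $\mathbf{p}\in\pi(X_{5})$); your $5\cdot 20>80$ parabolic-intersection count is the right tool for that verification once a concrete configuration is in hand.
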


\begin{proof}
Let $G(3, V)^{\circ}$ be the open set of $G(3, V)$ 
where the common zero locus $S_{N}$ is a $K3$ surface. 
Consider the incidence correspondence 
\begin{equation*}
X_{n} = \{ \: (N, p_{1}, \cdots, p_{n})\in 
G(3, V)^{\circ} \times Y^{n} \: | \: p_{i}\in S_{N} \: \},  
\end{equation*}
and let 
$\pi\colon X_{n}\to Y^{n}$ be the projection. 
If $\mathbf{p}=(p_{1}, \cdots, p_{n})\in \pi(X_{n})$, 
then $\pi^{-1}(\mathbf{p})$ is an open dense subset of the sub Grassmannian 
$G(3, V_{\mathbf{p}})\subset G(3, V)$ 
where 
$V_{\mathbf{p}}\subset V$ is the subspace of sections vanishing at $p_{1}, \cdots, p_{n}$. 
When $n=5$,  
$V_{\mathbf{p}}$ has dimension  
$\geq \dim V - 5\cdot {\rm rk}(\bigwedge^{2}\mathcal{E}) = 6$, 
so $\pi^{-1}(\mathbf{p})$ has positive dimension. 
Since the stabilizer in ${\rm PGL}_{9}$ of general $(p_{1}, \cdots, p_{5})\in \pi(X_{5})$ is finite, 
$\pi^{-1}(\mathbf{p})$ is not contracted by the moduli map 
$X_{5}\dashrightarrow X_{5}/{\rm PGL}_{9} \sim \mathcal{F}_{20,5}$. 
Therefore 
$\mathcal{F}_{20,5}$ is uniruled. 
\end{proof}

\subsubsection{The case $g=22$}

Farkas-Verra \cite{FV2} proved that $\mathcal{F}_{22,1}$ is unirational 
using special cubic $4$-folds.



\begin{thebibliography}{99}

\bibitem{AB}D. Agostini; I. Barros, 
\textit{Pencils on surfaces with normal crossings and the Kodaira dimension of $\mathcal{M}_{g,n}$.} 
Forum Math. Sigma \textbf{9} (2021), Paper No. e31. 

\bibitem{AMRT}A. Ash; D. Mumford; M. Rapoport; Y. Tai,  
\textit{Smooth compactifications of locally symmetric varieties.} 2nd ed. 
Cambridge. Univ. Press (2010).  

\bibitem{BB}W.~L. Baily; A. Borel, 
\textit{Compactification of arithmetic quotients of bounded symmetric domains.} 
Ann. of Math. (2) \textbf{84} (1966), 442--528. 

\bibitem{Ba}I. Barros,  
\textit{Geometry of the moduli space of $n$-pointed $K3$ surfaces of genus 11.} 
Bull. Lond. Math. Soc. \textbf{50} (2018), no.~6, 1071--1084.

\bibitem{BM}I. Barros; S. Mullane,  
\textit{Two moduli spaces of Calabi-Yau type.} 
Int. Math. Res. Notices 2021, no.~20, 15833--15849. 

\bibitem{Be}A. Beauville, 
\textit{Application aux espaces de modules.} 
in ``G\'eom\'etrie des surfaces K3: modules et p\'eriode.'' 
Ast\'erisque \textbf{126} (1985), 141--152. 

\bibitem{Bi}E. Bishop, 
\textit{Conditions for the analyticity of certain sets.} 
Michigan Math. J. \textbf{11} (1964), 289--304. 

\bibitem{Bo}R. Borcherds, 
\textit{Automorphic forms on $O_{s+2,2}({\R})$ and infinite products.} 
Invent. Math. \textbf{120} (1995), no.~1, 161--213. 

\bibitem{BKPSB}R. Borcherds; L. Katzarkov; T. Pantev; N.~I. Shepherd-Barron, 
\textit{Families of K3 surfaces.} 
J. Algebraic Geom. \textbf{7} (1998), no.~1, 183--193. 

\bibitem{Br}J.~H. Bruinier,
\textit{Borcherds products on $O(2, l)$ and Chern classes of Heegner divisors.} 
Lecture Notes in Math. \textbf{1780}, Springer-Verlag, 2002. 

\bibitem{BR}D. Burns; M. Rapoport, 
\textit{On the Torelli problem for k\"ahlerian $K3$ surfaces.}  
Ann. Sci. \'Ecole Norm. Sup. (4) \textbf{8} (1975), no.~2, 235--273. 

\bibitem{CS}J.~H. Conway; N.~J.~A. Sloane, 
\textit{Sphere packings, lattices and groups.}  
Third edition. Springer, 1999. 

\bibitem{Ei}D. Eisenbud, 
\textit{Commutative algebra: with a view toward algebraic geometry.}
GTM \textbf{150}, Springer (1995). 

\bibitem{EH}D. Eisenbud; J. Harris, 
\textit{The Kodaira dimension of the moduli space of curves of genus $\geq 23$.}
Invent. Math. \textbf{90} (1987), no.~2, 359--387.

\bibitem{EPS}G. Ellingsrud; R. Piene; S.~A. Stromme,
\textit{On the variety of nets of quadrics defining twisted cubics.} 
in ``Space curves (Rocca di Papa, 1985)'', 84--96, 
Lect. Notes in Math., \textbf{1266}, Springer, 1987. 

\bibitem{Fa}G. Farkas, 
\textit{Koszul divisors on moduli spaces of curves.} 
Amer. J. Math. \textbf{131} (2009), 819--867. 

\bibitem{FJP}G. Farkas; D. Jensen; S. Payne, 
\textit{The Kodaira dimensions of $\overline{\mathcal{M}_{22}}$ and $\overline{\mathcal{M}_{23}}$.} 
arXiv:2005.00622

\bibitem{FV}G. Farkas; A. Verra, 
\textit{The universal $K3$ surface of genus 14 via cubic fourfolds.} 
J. Math. Pures Appl. (9) \textbf{111} (2018), 1--20. 

\bibitem{FV2}G. Farkas; A. Verra, 
\textit{The unirationality of the moduli space of K3 surfaces of genus $22$.} 
Math. Ann. \textbf{380} (2021), no.~3-4, 953--973. 

\bibitem{Fr}E. Freitag, 
\textit{Siegelsche Modulfunktionen.} 
Springer, 1983.  

\bibitem{GrHaSt}T. Graber; J. Harris; J. Starr, 
\textit{Families of rationally connected varieties.} 
J. Amer. Math. Soc. \textbf{16} (2003), no.~1, 57--67. 

\bibitem{Gr}V.~A. Gritsenko, 
\textit{Modular forms and moduli spaces of abelian and K3 surfaces.} 
St. Petersburg Math. J. \textbf{6} (1995), no.~6, 1179--1208. 

\bibitem{GHS1}V.~A. Gritsenko; K. Hulek; G.~K. Sankaran, 
\textit{The Kodaira dimension of the moduli of K3 surfaces.}
Invent. Math. \textbf{169} (2007), no.~3, 519--567. 

\bibitem{GHS4}V.~A. Gritsenko; K. Hulek; G.~K. Sankaran, 
\textit{The Hirzebruch-Mumford volume for the orthogonal group and applications.} 
Doc. Math. \textbf{12} (2007), 215--241. 

\bibitem{GHS5}V.~A. Gritsenko; K. Hulek; G.~K. Sankaran, 
\textit{Hirzebruch-Mumford proportionality and locally symmetric varieties of orthogonal type.} 
Doc. Math. \textbf{13} (2008), 1--19. 

\bibitem{GHS2}V.~A. Gritsenko; K. Hulek; G.~K. Sankaran, 
\textit{Abelianisation of orthogonal groups and the fundamental group of modular varieties.}  
J. Algebra \textbf{322} (2009), no.~2, 463--478. 

\bibitem{GHS3}V.~A. Gritsenko; K. Hulek; G.~K. Sankaran, 
\textit{Moduli spaces of irreducible symplectic manifolds.}  
Compositio Math. \textbf{146} (2010), no.~2, 404--434. 

\bibitem{EGA4}A. Grothendieck,
\textit{\'El\'ements de g\'eom\'etrie alg\'ebrique. IV. \'Etude locale des sch\'emas et des morphismes de sch\'emas. II.} 
Publ. Math. IHES \textbf{24} (1965). 

\bibitem{HM}J. Harris; D. Mumford,  
\textit{On the Kodaira dimension of the moduli space of curves.}
Invent. Math. \textbf{67} (1982), no.~1, 23--88. 

\bibitem{Ha}K. Hatada, 
\textit{Siegel cusp forms as holomorphic differential forms on certain compact varieties.} 
Math. Ann. \textbf{262} (1983), no.~4, 503--509. 

\bibitem{Hu}D. Huybrechts, 
\textit{Lectures on $K3$ surfaces.} 
Cambridge Univ. Press, 2016. 

\bibitem{Ii}S. Iitaka,
\textit{Algebraic geometry.} 
GTM \textbf{76}, Springer, 1982.

\bibitem{IP}V.~A. Iskovskikh; Y.~G. Prokhorov, 
\textit{Fano varieties.} 
Algebraic geometry, V, 
Encyclopaedia Math. Sci., \textbf{47}, Springer, 1999. 

\bibitem{Ka}Y. Kawamata, 
\textit{The Kodaira dimension of certain fiber spaces.} 
Proc. Japan Acad. Ser. A \textbf{55} (1979), no.~10, 406--408.  

\bibitem{Ko}S. Kond\=o, 
\textit{On the Kodaira dimension of the moduli space of K3 surfaces. II.}
Compositio Math. \textbf{116} (1999), no.~2, 111--117. 

\bibitem{Lo}A. Logan, 
\textit{The Kodaira dimension of moduli spaces of curves with marked points.} 
Amer. J. Math. \textbf{125} (2003), 105--138. 

\bibitem{Ma3}S. Ma, 
\textit{Universal abelian variety and Siegel modular forms.} 
Algebra Number Theory \textbf{15} (2021), no.~8, 2089--2122. 

\bibitem{MM}T. Matsusaka; D. Mumford, 
\textit{Two fundamental theorems on deformations of polarized varieties.}  
Amer. J. Math. \textbf{86} (1964), 668--684. 

\bibitem{Mu1}S. Mukai, 
\textit{Curves, $K3$ surfaces and Fano 3-folds of genus $\leq 10$.} 
in ``Algebraic geometry and commutative algebra'', Vol. I, 357--377, Kinokuniya, 1988. 

\bibitem{Mu2}S. Mukai,  
\textit{Polarized $K3$ surfaces of genus 18 and 20.} 
in ``Complex projective geometry' (Trieste, 1989/Bergen, 1989)'', 264--276, 
London Math. Soc. Lect. Note Ser., \textbf{179}, Cambridge Univ. Press, 1992. 

\bibitem{Mu3}S. Mukai, 
\textit{New developments in Fano manifold theory related to the vector bundle method and moduli problems.} 
S\=ugaku 47 (1995), no. 2, 125--144. 

\bibitem{Mu4}S. Mukai, 
\textit{Curves and K3 surfaces of genus eleven.} 
in ``Moduli of vector bundles (Sanda, 1994; Kyoto, 1994)'', 189--197, 
Lect. Notes in Pure Appl. Math., \textbf{179}, Dekker, 1996. 

\bibitem{Mu5}S. Mukai, 
\textit{Polarized K3 surfaces of genus thirteen.} 
in ``Moduli spaces and arithmetic geometry'', 315--326, 
Adv. Stud. Pure Math., 45, 2006. 

\bibitem{Mu6}S. Mukai, 
\textit{$K3$ surfaces of genus sixteen},  
in ``Minimal Models and Extremal Rays (Kyoto 2011)'', 
379--396, ASPM \textbf{70}, MSJ, 2016

\bibitem{Ni1}V.~V. Nikulin,
\textit{Integral symmetric bilinear forms and some of their applications.}
Math. USSR Izv. \textbf{14} (1980), 103--167. 

\bibitem{Ni2}V.~V. Nikulin,
\textit{Factor groups of groups of automorphisms of hyperbolic forms 
with respect to subgroups generated by 2-reflections.} 
J. Soviet Math. \textbf{22} (1983), 1401--1476.

\bibitem{PS}I.~I. Piateckii-Shapiro; I.~R. Shafarevich, 
\textit{A Torelli theorem for algebraic surfaces of type $K3$.}  
Izv. Akad. Nauk SSSR Ser. Mat. \textbf{35} (1971), 530--572. 

\bibitem{PZ}Y. Prokhorov; M. Zaidenberg,  
\textit{Fano-Mukai fourfolds of genus $10$ as compactifications of ${\C}^{4}$.}
Euro. J. Math. \textbf{2018}, 1--67. 

\bibitem{SD}B. Saint-Donat,  
\textit{Projective models of $K3$ surfaces.} 
Amer. J. Math. \textbf{96} (1974), 602--639.

\bibitem{Sc}F. Scattone,  
\textit{On the compactification of moduli spaces for algebraic K3 surfaces.} 
Mem. Amer. Math. Soc. \textbf{70} (1987), no.~374. 

\bibitem{Sh}T. Shioda, 
\textit{On elliptic modular surfaces.} 
J. Math. Soc. Japan \textbf{24} (1972), 20--59. 

\bibitem{Shoku}V.~V. Shokurov, 
\textit{Holomorphic differential forms of higher degree on Kuga's modular varieties.} 
Math. USSR-Sb., \textbf{30} (1976), no. 1, 119--142.  

\bibitem{Vi}E. Viehweg, 
\textit{Quasi-projective moduli for polarized manifolds.} 
Springer, 1995. 

\end{thebibliography}
\end{document}